\theoremstyle{definition}
\newtheorem{dfn}{Definition}[section]
\newtheorem*{dfn*}{Definition}
\newtheorem{rem}[dfn]{Remark}
\theoremstyle{plain}
\newtheorem{cor}[dfn]{Corollary}
\newtheorem*{cor*}{Corollary}
\newtheorem{thm}[dfn]{Theorem}
\newtheorem*{thm*}{Theorem}
\newtheorem{lem}[dfn]{Lemma}
\newtheorem{fact}[dfn]{Fact}
\newtheorem{clm}{Claim}
\newtheorem{conj}[dfn]{Conjecture}
\newtheorem{ques}[dfn]{Question}
\theoremstyle{remark}
\newtheorem{sclm}{Subclaim}
\newtheorem{case}{Case}
\newcommand{\iso}{\cong}
\newcommand{\Cc}{\mathbb C}
\newcommand{\RR}{\mathbb R}
\newcommand{\PP}{\mathbb P}
\newcommand{\sub}{\subseteq}
\newcommand{\cross}{\times}
\newcommand{\all}{\forall}
\newcommand{\ex}{\exists}
\newcommand{\inter}{\cap}
\newcommand{\bigun}{\bigcup}
\newcommand{\om}{\omega}
\newcommand{\pow}{\mathcal{P}}
\newcommand{\OR}{\mathrm{OR}}
\newcommand{\Def}{\mathrm{Def}}
\newcommand{\Hull}{\mathrm{Hull}}
\newcommand{\transcl}{\mathrm{tranclos}}
\newcommand{\cut}{-}
\newcommand{\N}{N}
\newcommand{\Aa}{\mathcal{A}}
\newcommand{\Tt}{\mathcal{T}}
\newcommand{\Ss}{\mathcal{S}}
\newcommand{\Uu}{\mathcal{U}}
\newcommand{\Vv}{\mathcal{V}}
\newcommand{\Ww}{\mathcal{W}}
\newcommand{\Ll}{\mathcal{L}}
\newcommand{\Ttmeas}{\mathcal{T}_{\mathrm{meas}}}
\newcommand{\Uubar}{\bar{\mathcal{U}}}
\newcommand{\Ttbar}{\bar{\mathcal{T}}}
\newcommand{\rg}{\mathrm{rg}}
\newcommand{\dom}{\mathrm{dom}}
\newcommand{\pins}{\lhd}
\newcommand{\ins}{\trianglelefteqslant}
\newcommand{\crit}{\mathrm{crit}}
\newcommand{\union}{\cup}
\renewcommand{\empty}{\emptyset}
\newcommand{\rest}{\!\upharpoonright\!}
\newcommand{\com}{\circ}
\newcommand{\comp}{\com}
\renewcommand{\int}{\cap}
\newcommand{\fully}{(\omega,\omega_1,\omega_1+1)}
\newcommand{\lh}{\mathrm{lh}}
\newcommand{\Ult}{\mathrm{Ult}}
\newcommand{\Ebar}{\bar{E}}
\newcommand{\Fbar}{\bar{F}}
\newcommand{\sats}{\models}
\newcommand{\elem}{\preccurlyeq}
\newcommand{\J}{\mathcal{J}}
\newcommand{\Union}{\bigcup}
\newcommand{\VHOD}{\mathsf{V=HOD}}
\newcommand{\AD}{\mathsf{AD}}
\newcommand{\AC}{\mathsf{AC}}
\newcommand{\DC}{\mathsf{DC}}
\newcommand{\VK}{\mathsf{V=K}}
\newcommand{\HOD}{\mathrm{HOD}}
\newcommand{\HC}{\mathrm{HC}}
\newcommand{\ZFC}{\mathsf{ZFC}}
\newcommand{\KP}{\mathsf{KP}}
\newcommand{\ZFmin}{\mathsf{ZF^{-}}}
\newcommand{\theory}{\mathsf{KP^*}}
\newcommand{\pistol}{\P}
\newcommand{\longsword}{\mathrm{long}}
\newcommand{\Coll}{\mathrm{Col}}
\newcommand{\es}{\mathbb{E}}
\newcommand{\abar}{\bar{a}}
\newcommand{\qbar}{\bar{q}}
\newcommand{\xbar}{\bar{x}}
\newcommand{\nubar}{{\bar{\nu}}}
\newcommand{\mubar}{{\bar{\mu}}}
\newcommand{\taubar}{{\bar{\tau}}}
\newcommand{\betabar}{{\bar{\beta}}}
\newcommand{\gammabar}{{\bar{\gamma}}}
\newcommand{\thetabar}{{\bar{\theta}}}
\newcommand{\etabar}{{\bar{\eta}}}
\newcommand{\Pbar}{\bar{P}}
\newcommand{\Rbar}{\bar{R}}
\newcommand{\eps}{\varepsilon}
\newcommand{\Qbar}{{\bar{Q}}}
\newcommand{\Ubar}{{\bar{U}}}
\newcommand{\jbar}{{\bar{j}}}
\newcommand{\tbar}{{\bar{t}}}
\newcommand{\ph}{\mathcal{P}}
\newcommand{\omi}{\zeta}
\newcommand{\core}{\mathfrak{C}}
\newcommand{\Core}{\mathfrak{C}}
\newcommand{\her}{\mathcal{H}}
\newcommand{\pred}{\mathrm{-pred}}
\newcommand{\tc}{\mathrm{tc}}
\newcommand{\trivcom}{\mathrm{tc}}
\newcommand{\dam}{\mathrm{dam}}
\newcommand{\dirlim}{\mathrm{dir lim}}
\newcommand{\un}{\union}
\newcommand{\super}{\supseteq}
\newcommand{\into}{\to}
\newcommand{\goesto}{\longmapsto}
\newcommand{\id}{\mathrm{id}}
\newcommand{\sq}{\mathrm{sq}}
\newcommand{\nth}{\textrm{th}}
\newcommand{\conc}{\ \widehat{\ }\ }
\newcommand{\supconc}{\widehat{\ }}
\newcommand{\supseg}{\unrhd}
\newcommand{\Col}{\Coll}
\newcommand{\forces}{\sats}
\newcommand{\bfPi}{\mathbf{\Pi}}
\newcommand{\bfSigma}{\mathbf{\Sigma}}
\newcommand{\bfDelta}{\mathbf{\Delta}}
\DeclareMathOperator{\Th}{Th}
\DeclareMathOperator{\card}{card}
\DeclareMathOperator{\cof}{cof}
\DeclareMathOperator{\wfp}{wfp}
\begin{document}
\title{Measures in Mice}
\author{Farmer Schlutzenberg\\farmer@math.berkeley.edu}
\date{December, 2007}
\maketitle
\begin{center}\begin{large}
Ph.D. thesis, UC Berkeley\\
Dissertation Advisor: John Steel\\
\end{large}\end{center}

\section*{Abstract}
This thesis analyses extenders appearing in fine structural mice. Kunen showed that in the inner model for one measurable cardinal, there is a unique normal measure. This result is generalized, in various ways, to mice below a superstrong cardinal.

The analysis is then used to show that certain tame mice satisfy $\VHOD$. In particular, the approach provides a new proof of this result for the inner model $M_n$ for $n$ Woodin cardinals. It is also shown that in $M_n$, all homogeneously Suslin sets of reals are $\mathbf{\Delta}^1_{n+1}$.
\pagebreak
\tableofcontents
\pagebreak

\section{Introduction}\label{sec:intro}
In \cite{kunen}, Kunen showed that if $V=L[U]$ where $U$ is a normal measure, then $U$ is the unique normal measure, and all measures are reducible to finite products of this normal measure. Mitchell constructed inner models with sequences of measurables in \cite{mitchell} and \cite{mitchellrevisit}, and proved related results characterizing measures in those models. Dodd similarly characterized the extenders appearing in his inner models for strong cardinals, in \cite{doddstrong}. The first few sections of this thesis extend these results to inner models below a superstrong cardinal. However, the models we deal with are fine structural premice as in \cite{outline}.

Given a mouse $N$ satisfying ``$E$ is a total, wellfounded extender'', we are interested in how $E$ relates to $N$'s extender sequence $\es^N$. In particular, we would like to know whether $E$ is on $\es^N$, or on the sequence of an iterate, or more generally, whether $E$ is the extender of an iteration map on $N$. Although we have only a partial understanding, we do, happily, have some affirmative results.

Some of the following theorems are stated without any smallness assumption on the mice involved. However, the premice we work with do not have extenders of superstrong type indexed on their sequence (see \cite{outline}). Removing this restriction, a counterexample to \ref{thm:cohere} is soon reached, though it seems the statement of the theorem might be adapted to deal with this. In the following, $\nu_E$ denotes the natural length (or support) of an extender $E$. (See the end of this introduction for notation and definitions.)

\begin{cor*}[\ref{cor:coarse} (Steel, Schlutzenberg)]
Let $N$ be an $\fully$-iterable mouse satisfying $\ZFC$, and suppose
\[ N\sats E \textrm{ is a short, total extender, } \nu=\nu_E \textrm{ is a cardinal, and } \her_\nu\sub\Ult(V,E). \]
Then the trivial completion of $E$ is on $\es^N$.
\end{cor*}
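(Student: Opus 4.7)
The plan is to compare $N$ against a phalanx built from $N$ and the ultrapower $M := \Ult(N, E)$. Since $E$ is total, short, and wellfounded from $N$'s point of view, $M$ is a wellfounded premouse and the ultrapower map $i_E : N \to M$ is elementary. The hypothesis $\her_\nu^N \sub M$ yields the strong agreement $\her_\nu^N = \her_\nu^M$, and in particular $\es^N \rest \nu = \es^M \rest \nu$; moreover $\nu$ is a cardinal of both $N$ and $M$.

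Next, form the phalanx $\mathcal{P} = \langle (N, M); \nu \rangle$ in the sense of \cite{outline}. The first task is to establish that $\mathcal{P}$ is $\fully$-iterable. This follows by the standard phalanx iterability argument, using $N$'s full iterability: any putative iteration tree $\Tt$ on $\mathcal{P}$ which uses extenders of critical point $\ge \nu$ issuing from the $M$-root can be copied through $i_E$ to a tree $\Tt^*$ on $N$ via the Shift Lemma, and any branch illfoundedness on $\Tt$ descends to $\Tt^*$, contradicting iterability of $N$.

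Now coiterate $\mathcal{P}$ against $N$, producing trees $\Tt$ on $\mathcal{P}$ and $\Uu$ on $N$. By standard comparison together with the Dodd--Jensen lemma applied to the $N$-root of $\mathcal{P}$, there is no dropping on the $N$-side and the resulting final models agree. The usual analysis then shows $\Uu$ is trivial and $\Tt$ moves only on the $M$-root, using extenders above $\nu$; combined with the agreement below $\nu$, this forces $M$ itself to arise as the one-step ultrapower of $N$ by some extender $F$ already appearing on $\es^N$. By coherence and the fact that $F$ and $E$ induce the same ultrapower (up to the $\her_\nu$-agreement), $F$ must be a completion of $E$.

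The main obstacle will be pinning down $F$ as the \emph{trivial} completion of $E$, rather than a longer extender on $\es^N$ whose cut-down to $\nu$ equals $E$. Here the hypothesis that $\nu$ is a cardinal is essential: it ensures, via the initial segment condition valid in the below-superstrong setting of the paper, that $F$ cannot be strictly longer than the trivial completion of $E$ while still sitting coherently on $\es^N$. Conversely, a shorter extender could not produce the full $\her_\nu$-agreement, so $F$ is precisely the trivial completion of $E$, as desired.
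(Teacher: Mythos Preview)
Your overall strategy---compare $N$ with the phalanx $(N,\Ult(N,E),\nu)$---is the right one, but there is a genuine gap at the iterability step, and the comparison analysis that follows is also inverted.

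The phalanx iterability does \emph{not} follow by ``copying through $i_E$''. The map $i_E:N\to\Ult(N,E)$ points the wrong way: to lift a tree on $(N,\Ult(N,E),\nu)$ you need maps from both roots into something iterable, agreeing below $\nu$. You have $\id:N\to N$, but there is no natural map $\Ult(N,E)\to N$, and copying everything into $\Ult(N,E)$ is useless because $\Ult(N,E)$ is not known to be iterable in $V$---that is precisely the difficulty. The paper handles this by first passing to a countable elementary hull $M$ of a level of $N$ (pulling $E$ back to $\bar E$), and then showing that \emph{inside $N$} one can find embeddings $\psi:M\to N|\gamma$ and $\sigma:\Ult(M,\bar E)\to N|\xi$ which agree below $\bar\theta$. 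This uses the hypothesis $\her_\nu\sub\Ult(N,E)$ in an essential way (to transport the relevant partial map into $\Ult(N,E)$ and run an absoluteness argument there), together with the condensation facts of Lemmas~\ref{lem:escpassive} and~\ref{lem:esccond}. The strength hypothesis on $E$ is doing real work here, not merely arranging sequence agreement below $\nu$.

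Your description of the comparison outcome is also backward. The tree on $N$ is \emph{not} trivial. Both sides reach a common last model $Q$; on the phalanx side $Q$ sits above $\Ult(N,E)$ with critical point at least $\nu$; and on the $N$-side the first extender $G$ along the main branch is compatible with $E$ and has $\nu_G\geq\nu_E$. The initial segment condition applied to $G$ then puts the trivial completion of $E$ on $\es^N$. So the extender is read off the $N$-side of the comparison, not by recognizing $\Ult(N,E)$ as a one-step iterate; and the worry in your last paragraph about ruling out a strictly longer $F$ is exactly what the initial segment condition handles.
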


Here and below, one can make do with much less than $\ZFC$ (see \ref{cor:coarse}). A stronger theorem is actually proven (see \ref{thm:coarseDodd}). Notice that if $E$ is a normal measure in $N$, then $\nu_E=(\crit(E)^+)^N$ is an $N$-cardinal, and $\her_{\nu_E}^N\sub\Ult(N,E)$. As a corollary to the proof, we'll also obtain that if $N$ is a mouse modelling $\ZFC$, and $\kappa$ is uncountable in $N$, then $L(\pow(\kappa)\int N)\sats\AC$. This confirms a conjecture of Hugh Woodin. The hypothesis ``$\kappa$ uncountable in $N$'' is necessary, since Woodin proved that $\AD^{L(\RR)}$ holds assuming there are $\om$ Woodin cardinals with a measurable above (see \cite{detLR}).

\begin{thm*}[\ref{thm:cohere}]
Let $N$ be an $\fully$-iterable mouse satisfying $\ZFC$, and suppose in $N$, $E$ is a wellfounded extender, which is its own trivial completion, and $(N||\lh(E),E)$ is a premouse. Then $E$ is on $\es^N$.\end{thm*}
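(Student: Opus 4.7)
Set $\alpha = \lh(E)$ and $P = (N||\alpha, E)$; by hypothesis $P$ is a premouse whose top extender is $E$. The goal is to show that the extender at index $\alpha$ on $\es^N$ is exactly $E$.

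\textbf{Plan.} The strategy is to transfer iterability of $N$ to $P$ and then coiterate $P$ against $N$. For iterability, any iteration tree $\Tt$ on $P$ can be copied to a tree $\Tt^*$ on $N$: below $\alpha$ the two premice agree, so tree segments using extenders from $\es^{N||\alpha}$ copy identically, and applying the top extender $E$ of $P$ corresponds to applying $E\in N$ to $N$ itself, with natural copy map $\Ult(P,E)\to\Ult(N,E)$ that is elementary enough to continue the copying construction. Thus $\Sigma^N$ induces a strategy $\Sigma^P$, making $P$ $\fully$-iterable.

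Next, I would coiterate $P$ with $N$ using $\Sigma^P$ and $\Sigma^N$. Since $P||\alpha = N||\alpha$, the first disagreement is at index $\alpha$. Let $F = E^N_\alpha$. If $F = E$ we are done; otherwise there are two cases. First, if $F$ is a nonempty extender different from $E$, then $(N||\alpha, E, F)$ is a bicephalus whose two tops are both iterable (via $P$ and via $N||(\alpha+1)$). The standard bicephalus argument---iterate both tops in parallel and apply the Dodd--Jensen lemma to the resulting branches to force the two tops to coincide---yields $E = F$, a contradiction. Second, if $F = \emptyset$, then $P$ is a pseudo-premouse relative to $N$: $P$ carries $E$ at index $\alpha$ while $N$ has a gap there. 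The coiteration hits $E$ on the $P$-side at stage $0$ and some extender of index $>\alpha$ on the $N$-side. Running the coiteration to completion and invoking Dodd--Jensen together with the soundness and premouse axioms satisfied by $P$ forces the iterates to agree in a way that requires $N$ to carry $E$ on $\es^N$ at $\alpha$---a contradiction.

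\textbf{Main obstacle.} The iterability transfer relies crucially on $E$ being its own trivial completion, which ensures $\lh(E)$ is the canonical index and the copy construction is unambiguous. The pseudo-premouse case ($F = \emptyset$) is the most delicate: ruling out that $N$ can ``skip'' $E$ requires combining the initial-segment condition built into the premouse-hood of $(N||\alpha, E)$ with the solidity and universality properties enjoyed by the final models of the coiteration, and a careful analysis of where drops can occur on each side. The bicephalus case is cleaner but still demands careful bookkeeping of the two-extender ultrapowers over a common base, particularly in tracking the behaviour of the copy maps at and above $\crit(E)$.
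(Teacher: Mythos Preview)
Your iterability transfer for $P=(N||\alpha,E)$ is where the argument breaks. A normal iteration tree on $N$ may only use extenders taken from the sequences of its models; since the entire question is whether $E$ sits on $\es^N$, there is nothing on the $N$-side corresponding to the use of $F^P=E$ (or its images) in a tree on $P$. Your workaround---``applying $E\in N$ to $N$ itself''---takes you outside the class of normal trees on $N$, so $\Sigma^N$ no longer tells you how to choose branches. What you would need is iterability of $\Ult(N,E)$, but the hypothesis gives you only its wellfoundedness. There is also no initial copy map $P\to N|\eta$ respecting active extenders: $P$ is active with $E$ at $\alpha$, while $N|\alpha$ either is passive or carries a different extender---exactly the disagreement you are trying to resolve. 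Without iterability of $P$ (or of the bicephalus $(N||\alpha,E,F)$), neither the bicephalus comparison nor the pseudo-premouse comparison can be carried out.

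The paper's route is structurally different. It passes to a pointwise-definable hull $M$ of a level of $N$, forms the phalanx $(M,\Ult(M,\bar E),\theta)$ with $\theta$ the largest cardinal of $M||\lh(\bar E)$, and proves \emph{this phalanx} iterable by embedding $\Ult(M,\bar E)$ into a proper initial segment of $N$: since $\Ult(N,E)$ agrees with $N$ below $\lh(E)$, a suitable hull of $\Ult(N,E)$ of size below $\lh(E)$ condenses to a level of $N||\lh(E)$, and this provides the missing lifting map. One then compares $M$ with the phalanx (not $P$ with $N$). The exchange ordinal $\theta$ is forced to lie below $\nu_E$ when $E$ is type~2, so generators of $\bar E$ can move along the $\Ult(M,\bar E)$-side; tracking this requires the Dodd-fragment parameter and projectum, the damage structure of the branch extender $G$, and the core-sequence decomposition showing that $\Ult(M,\bar E)$ appears as an intermediate model $Q_\eps$ along that decomposition. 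None of this apparatus is visible from a bicephalus or pseudo-premouse viewpoint, and it is precisely what replaces the unavailable direct iterability of $P$.
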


Again a stronger theorem is proven, in which $E$ fits on the sequence of an iterate of $N$ instead.

An extender $F$ (possibly partial) is \emph{finitely generated} if there is $s\in[\nu_F]^{<\om}$ so that for each $\alpha<\nu_F$, there's $f$ satisfying $\alpha=[s,f]_F$.

\begin{thm*}[\ref{thm:exactmeas}]
Let $N$ be an $\fully$-iterable mouse satisfying $\ZFC$, and suppose in $N$, $E$ is a countably complete ultrafilter. Then there is a finite normal (fine-structural) iteration tree $\Tt$ on $N$, with last model $\Ult(N,E)$, and $i^N_E$ is the main branch embedding of $\Tt$. Moreover, $\Tt$'s extenders are all finitely generated.\end{thm*}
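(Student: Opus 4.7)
The plan is to realize $i_E^N$ as the main branch embedding of a normal iteration tree on $N$, by coiterating $N$ with $\Ult(N,E)$ and exploiting the single-generator nature of the ultrafilter $E$ to control both the length of the tree and the generators of its extenders.

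First I would argue that $\Ult(N,E)$ is wellfounded and $\fully$-iterable. Wellfoundedness follows from countable completeness of $E$ in $N$ together with iterability of $N$, via a standard countable-hull argument: any external descending chain in $\Ult(N,E)$ would reflect into a countable substructure whose collapse has an ill-founded ultrapower, contradicting iterability of the collapse. Iterability of $\Ult(N,E)$ transfers from $N$ along $i_E^N$ by the usual copying construction.

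Next, I would coiterate $N$ against $\Ult(N,E)$, producing normal fine-structural iteration trees $\Tt$ on $N$ and $\Uu$ on $\Ult(N,E)$. Since both models satisfy $\ZFC$ and so are proper class, the coiteration yields a common last model $M_\infty^{\Tt}=M_\infty^{\Uu}$ with no initial-segment issues. Using $i_E^N\colon N\to\Ult(N,E)$ as witness, a Dodd--Jensen-style minimality argument comparing $i^\Tt\colon N\to M_\infty^\Tt$ with $i^\Uu\circ i_E^N\colon N\to M_\infty^\Uu$ shows that $\Uu$ has trivial main branch; hence $\Ult(N,E)=M_\infty^{\Tt}$ and $i^\Tt=i_E^N$ with no drops on the main branch. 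Here one may need a hands-on argument about the choice of comparison extenders rather than the canonical Dodd--Jensen lemma, since we have not yet exhibited $\Ult(N,E)$ as an iterate of $N$.

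The crux of the proof is then finite length and finite generation. Because $E$ is an ultrafilter, the single point $t=[\id]_E\in\Ult(N,E)$ satisfies $\Ult(N,E)=\{i_E^N(f)(t) : f\in N\}$; in particular, $E$ is itself finitely generated by $\{t\}$. For each $\alpha<\lh(\Tt)$, every generator $\gamma<\nu_{E_\alpha^\Tt}$ maps under $i^{\Tt}_{\alpha+1,\infty}$ into $\Ult(N,E)$, hence has the form $i_E^N(f_\gamma)(t)$; pulling $t$ back through the direct limit system of $\Tt$ yields a finite set $s_\alpha\in[\nu_{E_\alpha^\Tt}]^{<\omega}$ that generates $\nu_{E_\alpha^\Tt}$ over $\rg(i_{E_\alpha^\Tt})$. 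The same pullback analysis, counting how many stages can contribute independent information about the single ordinal $t$, bounds $\lh(\Tt)$ by a finite number. The main technical obstacle will be this generator-tracking bookkeeping across $\Tt$, especially handling the cross-tree dependencies induced by the $T$-predecessor structure, and verifying that the finite $s_\alpha$ so produced lies below $\nu_{E_\alpha^\Tt}$ and truly generates the whole natural length.
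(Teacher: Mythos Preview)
Your proposal has a genuine gap at the Dodd--Jensen step. You assert that comparing $i^\Tt$ with $i^\Uu\circ i_E^N$ shows $\Uu$ has trivial main branch and $i^\Tt=i_E^N$, but Dodd--Jensen only gives pointwise minimality of iteration maps, not that the other side doesn't move. In the paper's proof the tree on $\Ult(M,\Ebar)$ is \emph{not} known to be trivial at this stage: one obtains a common last model $Q$ and a commuting triangle $i^\Uu\circ i_{\Ebar}=i^\Tt$, but $i^\Uu$ may be a genuine iteration map with $\crit(i^\Uu)>\crit(\Ebar)$. Getting commutativity at all requires first passing to a countable, \emph{pointwise definable} hull $M$ (so that there is only one elementary embedding from $M$ to any target), which your outline omits. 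For a general $N\sats\ZFC$ there is no reason two elementary maps $N\to\Ult(N,E)$ should coincide.

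The finiteness/finite-generation step is also underspecified in a way that hides real work. Taking a finite support tree for the single generator $t=[\id]_E$ does give a finite tree $\Ttbar$ with $t\in\rg(\pi)$, but its extenders need not be finitely generated. The paper handles this with a two-part argument: first, a Dodd-fragment preservation analysis (Lemma~\ref{lem:Dfpres}) matching the fragment ordinals of $i_E$, $i^{\Ttbar}$, and $i^\Tt$ via the commuting maps $\psi,\pi,i^\Uu$; second, an inductive refinement (Claim~\ref{clm:finite}) that walks backward through $\Ttbar$, replacing each non-measure extender $F$ by a Dodd-sound sub-measure $F\rest(a\cup t_F)$, which is found on an iterate by the inductive hypothesis applied to a proper level of $M$. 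Only after this refinement does one conclude $U=\Qbar=Q$, so that $\Uu$ was in fact trivial and $\Tt=\Ttbar$ is the desired finite tree. Your ``counting stages contributing independent information about $t$'' gestures at finite support but does not account for why the resulting extenders are measures, which is where most of the difficulty lies.
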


In \S\ref{sec:meas} we also investigate partial measures: if $N$ is a mouse, with a normal measure $E$ as its active extender, we consider when $E\int N||\alpha$ is on the $N$ sequence (for $\alpha<(\crit(E)^+)^N$).

Steel showed that for $n\leq\om$, $M_n$ satisfies a variant of $\VK$ in the intervals between its Woodin cardinals. This gives that $M_n$ satisfies $\VHOD$. (Of course, $M_n$ isn't allowed to refer to $\es^{M_n}$ to achieve this feat!) A proof for the $n<\om$ case can be seen in \cite{cmpw}. This method extends somewhat further into the mouse hierarchy, but just how far seems to be unknown. We show $\VHOD$ in certain mice by another (but related) method. A premouse is \emph{self-iterable} if it satisfies ``I am iterable''. (This isn't intended to be precise - there are of course varying degrees of self-iterability.) In \S\ref{sec:stacking}, we isolate a quality, \emph{extender-fullness} (closely related to the reuslts of \S\ref{sec:cohere}), that an iteration strategy might enjoy. We show that a premouse modelling $\ZFC$ and being sufficiently extender-full self-iterable can identify its own extender sequence. Indeed, it is the unique one that the premouse thinks is sufficiently extender-full iterable. Therefore $\VHOD$ in such a premouse. For $n\leq\om$, the results of \S\ref{sec:cohere} will show that $M_n$'s self-iteration strategy is extender-full, so we obtain a new proof of $\VHOD$ there. We then prove that various other mice also have (barely) enough extender-full self-iterability. It's critical, though, that the mouse $M$ in question is \emph{tame}, a fine-structural statement of ``there is no $\kappa$ that's strong past a Woodin''. Our proof of $\VHOD$ works in particular below the strong cardinal of the least non-tame mouse. Our approach breaks down soon after non-tame mice are reached, because of a lack of self-iterability (see \ref{fact:nontame}).

In \S\ref{sec:hom} we look at homogeneously Suslin sets in mice. Using Kunen's analysis of measures in $L[U]$, Steel observed that all homogeneously Suslin sets in $L[U]$ are $\mathbf{\Pi}^1_1$ (so the two pointclasses coincide, by (\cite{kanamori}, 32.1) or (\cite{jech}, 33.30)). This was generalized by Schindler and Koepke in \cite{hsssim} (we give some details in \S\ref{sec:hom}). We show that in mice in the region of $0^\pistol$ or below, and modelling $\ZFC$, all homogeneously Suslin sets are $\mathbf{\Pi}^1_1$. We also show that for $n<\om$, in $M_n$, all homogeneously Suslin sets are $\mathbf{\Delta}^1_{n+1}$; in fact they are correctly so, in that the definition also yields a $\mathbf{\Delta}^1_{n+1}$ set in $V$. By Martin and Steel's results in \cite{projdet}, all $\bfPi^1_n$ sets are homogeneously Suslin in $M_n$. Therefore:
\begin{cor*}[\ref{cor:whomMn}]
In $M_n$, the weakly homogeneously Suslin sets of reals are precisely the $\bfSigma^1_{n+1}$ sets.
\end{cor*}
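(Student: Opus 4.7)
The plan is to obtain this immediately from two ingredients already in hand: (i) the preceding result of \S\ref{sec:hom} that in $M_n$, every homogeneously Suslin set of reals is $\bfDelta^1_{n+1}$, and (ii) Martin and Steel's theorem from \cite{projdet} that in $M_n$, every $\bfPi^1_n$ set of reals is homogeneously Suslin. The glue is the standard characterization of weak homogeneous Suslinness as existential real projection of a homogeneously Suslin set.

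Concretely, I would first recall that standard fact (see e.g.\ \cite{kanamori}): for $A\sub\RR$, $A$ is weakly homogeneously Suslin iff there is a homogeneously Suslin $B\sub\RR\cross\RR$ with $A=\{x:\ex y\ (x,y)\in B\}$. The proof uses only the existence of sufficient measurable cardinals to construct and absorb the auxiliary towers, so it relativises without change to $M_n$, which has a proper class of measurables.

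For the $\sub$ direction of the corollary, let $A\sub\RR^{M_n}$ be weakly homogeneously Suslin in $M_n$, and pick $B$ homogeneously Suslin in $M_n$ with $A=\{x:\ex y\ (x,y)\in B\}$. By ingredient (i), $B$ is $\bfDelta^1_{n+1}$ in $M_n$; projecting, $A$ is $\bfSigma^1_{n+1}$ in $M_n$. For the $\super$ direction, if $A$ is $\bfSigma^1_{n+1}$ in $M_n$, write $A=\{x:\ex y\ (x,y)\in B\}$ with $B$ a $\bfPi^1_n$ set. By ingredient (ii), $B$ is homogeneously Suslin in $M_n$, so $A$ is weakly homogeneously Suslin in $M_n$.

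There is no real obstacle remaining: all the substantive work, in particular the upper bound on homogeneously Suslin sets in $M_n$, has already been carried out in \S\ref{sec:hom}. The only thing to double-check is that the equivalence ``weakly hom.\ Suslin $\iff \ex^\RR$ of hom.\ Suslin'' is available inside $M_n$ with its own measurables; this is standard and needs no fine-structural input, so the corollary follows as a short formal consequence.
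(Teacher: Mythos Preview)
Your proposal is correct and follows essentially the same route as the paper: one direction uses Theorem~\ref{thm:homMn} together with the fact that weakly homogeneously Suslin sets are exactly projections of homogeneously Suslin sets (the paper cites \cite{stattower} for this), and the other direction uses Martin--Steel \cite{projdet}. One small remark: the equivalence ``weakly hom.\ Suslin $\iff$ $\exists^\RR$ of hom.\ Suslin'' is a ZFC fact about homogeneity systems and does not require additional measurables, so your aside about $M_n$ having a proper class of measurables (which it does not, for finite $n$) is unnecessary but harmless.
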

We also show that the correctly $\bfDelta^1_{n+1}$ sets of $M_n$ are exactly the $\Col(\om,\delta_0)$-universally Baire sets of $M_n$, where $\delta_0$ is the least Woodin cardinal of $M_n$. The question of the precise extent of the homogeneously Suslin sets in the projective hiearchy of $M_n$ remains unsolved. (\S\ref{sec:hom} is fairly independent of the rest of this thesis. It uses the notion of finite support discussed in \S\ref{sec:meas}, but this is straightforward.)

Finally, in \S\ref{sec:copying}, we discuss and correct some problems in the copying construction of \cite{fsit} (some details are supplied in \cite{outline}), and simultaneously prove that ``freely dropping'' iterability follows from normal iterability. Here the antagonist of the iterability game may enforce drops in model and degree at will. This fact is needed in proving some of the preceding theorems.\\
\pagebreak

\noindent\emph{Conventions and Notation}\\

Our notion of \emph{premouse}\index{premouse} is that of \cite{outline}.

For $\nu$ a cardinal, $\her_\nu$\index{$\her_\nu$} denotes the collection of sets of size hereditarily less than $\nu$.

Whenever we refer to an ordering on $\OR^{<\om}$, it is to the lexiographic order with larger ordinals considered first. For instance, $\{4,12,182\}<\{3,15,182\}$ and $\{6\}<\{4,6\}$.

We discuss the use of the terms \emph{pre-extender} and \emph{extender}. Given a rudimentarily closed $M$, a \emph{pre-extender over} $M$ and an \emph{extender over} $M$ are as in \cite{outline}. Otherwise, we liberally use the term \emph{extender} to mean ``pre-extender over some $M$''. An extender is \emph{total} if its measures measure all sets in $V$. We sometimes emphasise that an extender need not be total by calling it \emph{partial}. If $E$ is a total extender, $E$ can be \emph{countably complete} or \emph{wellfounded} as usual.

All extenders we use have support of the form $X\sub\OR$. Typically $X=\gamma\un q$ for some ordinal $\gamma$ and $q\in\OR^{<\om}$. Suppose $E$ is an extender. $\tc(E)$\index{$\trivcom(E)$} denotes the trivial completion of $E$, $\nu_E$\index{$\nu_E$} denotes the natural length of $E$, and $\lh(E)$\index{$\lh(E)$} denotes the length of $\trivcom(E)$ (see \cite{outline} for definitions). For $X\sub\nu_E$, $E\rest X$\index{$E\rest X$} is the sub-extender using only co-ordinates in $X$. If $\kappa=\crit(E)$, $E\sub M\cross[\gamma]^{<\om}$ and $E$'s component measures are $M$-total, then we say $E$ \emph{measures exactly}\index{measures exactly} $\pow(\kappa)\int M$. If also $\alpha\in M$ whenever there is a wellorder of $\kappa$ of ordertype $\alpha$ in $M$, then $(\kappa^+)^E$ denotes $(\kappa^+)^M$. Given some $N$ over which $E$ is a pre-extender, $i^N_E$\index{$i_E$, $i^N_E$} denotes an ultrapower embedding from $N$ to $\Ult(N,E)$, or $\Ult_k(N,E)$, depending on context; $i_E$ will be used when $N$ is understood. The notation $\Ult_k(E,F)$ (for $F$ another extender) and $\com_k$ are introduced in \ref{dfn:extcomp}.

For $P$ a premouse, $\es^P$\index{$\es$, $\es_+$} denotes the extender sequence of $P$, not including any active extender. $F^P$\index{$F^P$ ($P$ a premouse)} denotes the active extender. $\es^P_+$ denotes $\es^P\conc F^P$. Let $\alpha\leq\OR^P$ be a limit ordinal. Let's define $P|\alpha$\index{$P|\alpha$, $P||\alpha$} and $P||\alpha$. $P|\alpha\ins P$, and $\OR^{P|\alpha}=\alpha$. $P||\alpha$ is the passive premouse of height $\alpha$ agreeing with $P$ strictly below $\alpha$. If $\es$ is a good extender sequence, $\J^\es$\index{$\J^\es$} denotes the premouse constructed from $\es$. $\J^\es_\alpha$ denotes $\J^\es|(\alpha\cdot\alpha)$. (I.e. it is the $\alpha^\nth$ level in the $\J^\es$-hierarchy. We generally only use this $\J$-notation when a premouse needs to refer to its own levels.) If $P$ is sound, $\J_1(P)$ denotes the premouse of height $\OR^P+\om$ extending $P$. If $P$ is active, $\mu_P$\index{$\mu_P$ ($P$ a premouse)} and $\nu_P$\index{$\nu_P$ ($P$ a premouse)} denote $\crit(F^P)$ and $\nu_{F^P}$ respectively.

For definability over premice, we use the $r\Sigma_n$ hierarchy as in \cite{outline}. Since $r\Sigma_1=\Sigma_1$, we just write ``$\Sigma_1$''. For $P$ a premouse, $X\sub P$ and $n\leq\om$, $\Def^P_n(X)$ denotes the set of points in $P$ definable with an $r\Sigma_n$-term from parameters in $X$. $\Hull_n^P(X)$ denotes the transitive collapse of $(\Def^P_n(X),\es^P\int P,F^P\int P)$ (where $F^P$ is coded amenably as for a premouse).

Given an iteration tree $\Tt$, $\kappa^\Tt_\alpha=\crit(E^\Tt_\alpha)$, $\nu^\Tt_\alpha=\nu_{E^\Tt_\alpha}$ and $\lh^\Tt_\alpha=\lh(E^\Tt_\alpha)$. $(M^*)^\Tt_{\alpha+1}$ is the model to which $E^\Tt_\alpha$ applies after any drop in model, and
\[ (i^*)^\Tt_{\alpha+1,\beta}:(M^*)^\Tt_{\alpha+1}\to M^\Tt_\beta \]
is the canonical embedding, if it exists. If $\Tt$ has a last model and there is no drop on $\Tt$'s main branch (from its root to its last model), then $i^\Tt$ denotes the corresponding embedding. When $\Tt$ is clear from context, we may drop the superscript in any of this notation.

Iterability for a phalanx is for iterations such that $\lh(E)$ is strictly above all exchange ordinals for each $E$ used in the iteration.

$\theory$\index{$\theory$} is the theory $\KP$ + ``There are unboundedly many $\alpha$'s such that $\J^\es_\alpha\sats\KP$''.

\pagebreak
\section{Extenders Strong Below a Cardinal}\label{sec:esc}
Suppose $N$ is a fully iterable mouse modelling $\ZFC$, and $E$ is a total, wellfounded extender in $N$. We are interested in just how $E$ was constructed from $\es^N$. In this section we'll show that if $E$ is nice enough, things are simple as possible: $E$'s trivial completion is $\es^N_\alpha$ for some $\alpha$. In order to state the main theorem of this section (\ref{thm:coarseDodd}) in full generality, we first need to discuss \emph{Dodd soundness}. However, this definition isn't required for the statement or proof of the simpler corollary \ref{cor:coarse}, which still carries much of the utility of the theorem, so the impatient reader might skip ahead to there.

\begin{dfn}[Generators]\label{dfn:gen}\index{generate}\index{suffices}\index{finitely generated} Let $E$ be a short extender with $\crit(E)=\kappa$. Suppose $P$ is a premouse such that $E$ measures exactly $\pow(\kappa)\int P$ and $X\sub\nu_E$. $X$ \emph{generates} $\alpha$ if $\alpha=[a,f]^P_E$ for some $f\in P$ and $a\in X^{<\om}$. $X$ \emph{generates} $E$ or \emph{suffices as generators} for $E$ if every $\alpha<\nu_E$ is generated by $X$. Let $\alpha<\nu_E$ and $t\in\nu_E^{<\om}$. Then $\alpha$ is a $t$-\emph{generator} of $E$ iff $\alpha$ is not generated by $\alpha\un t$. Note that an $\empty$-generator is just a generator (in the sense of \cite{outline}). $E$ is \emph{finitely generated} if some finite set generates $E$.
\end{dfn}

The following notion, due to Steel and Dodd, is taken from (\cite{combin}, \S3). Notation there is a little different.

\begin{dfn}[Dodd parameter and projectum]\label{dfn:Dodd}\index{Dodd parameter}\index{Dodd projectum}
Let $E$ be a short extender. We define $E$'s Dodd parameter $t_E=\{(t_E)_0,\ldots,(t_E)_{k-1}\}$ and Dodd projectum $\tau_E$. Given $t_E\rest i=\{(t_E)_0,\ldots,(t_E)_{i-1}\}$, $(t_E)_i$ is the largest $t_E\rest i$-generator of $E$ that is $\geq(\kappa^+)^E$. Notice $(t_E)_i<(t_E)_{i-1}$; $k=|t_E|$ is large as possible. $\tau_E$ is the sup of $(\kappa^+)^E$ and all $t_E$-generators of $E$.
\end{dfn}

The following is straightforward to prove (see (\cite{combin}, \S3) for some details).

\begin{fact}\label{fact:Dodd} Let $E$ be a short extender. Then $\tau_E$ is the least $\tau\geq(\kappa^+)^E$ such that there is $t\in\OR^{<\om}$ with $\tau\un t$ generating $E$. $t_E$ is least in $\OR^{<\om}$ witnessing this fact.

Suppose that $P$ is a premouse, $\kappa=\crit(E)$, $E$ measures exactly $\pow(\kappa)\int P$, and
\[ P|(\kappa^+)^P = \Ult(P,E)|(\kappa^+)^{\Ult(P,E)}. \]
 If $\tau_E\in\wfp(\Ult(P,E))$ then $\tau_E$ is a cardinal of $\Ult(P,E)$. If $\tau_E=(\kappa^+)^P$ then $E$ is generated by $t_E\un\{\crit(E)\}$. If $\tau_E>(\kappa^+)^E$ then $E$ is not finitely generated.
\end{fact}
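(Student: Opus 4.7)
The plan is to prove part (a) first (the fact that $\tau_E$ is the least candidate projectum), since (b) follows from the same greedy analysis, (c) follows by pulling back a putative collapse of $\tau_E$ inside $\Ult(P,E)$ to contradict (a), and (d), (e) are short applications of (a). For the ``$\leq$'' direction of (a), I would show that $\tau_E \cup t_E$ generates $E$ by transfinite induction on $\alpha < \nu_E$: for $\alpha < \tau_E$ the claim is trivial, and for $\alpha \geq \tau_E$ the defining property of $\tau_E$ as the sup of $(\kappa^+)^E$ together with the $t_E$-generators forces $\alpha$ not to be a $t_E$-generator. Hence $\alpha = [a, f]^P_E$ for some $a \subseteq \alpha \cup t_E$ and $f \in P$; I would then recursively replace each $\beta \in a \cap [\tau_E, \alpha)$ by its representation in terms of $\tau_E \cup t_E$ and collapse the resulting nested ultrapower expression by standard extender algebra.

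For the ``$\geq$'' direction, suppose $\tau \cup t'$ generates $E$ with $(\kappa^+)^E \leq \tau < \tau_E$. Then some $t_E$-generator $\gamma$ lies in $[\tau,\tau_E)$, and since $\tau \cup t'$ generates $\gamma$ we have $\gamma = [a, f]^P_E$ with $a \subseteq \tau \cup t'$. The core argument is a downward induction comparing a top-down greedy selection from $t'$ with $t_E$ itself: at each stage $i$ the $i$-th greedy entry from $t'$ must lie $\geq (t_E)_i$ (otherwise the $(t_E)_i$-level generator could not be reached from below $(t_E)_i$), and once the greedy selection from $t'$ is exhausted the sup of the remaining generators is $\geq \tau_E$, contradicting $\tau < \tau_E$. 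The main obstacle, and the step I expect to take the most care with, is executing this inductive matching cleanly while tracking how a single element of $t'$ can ``cover'' multiple smaller generators via extender algebra: specifically, one has to show that if the top $i$ entries of $t'$ fail to dominate $(t_E)_0,\ldots,(t_E)_{i-1}$ pointwise, then the $t_E$-generators they are supposed to replace cannot actually be represented using only $\tau \cup t'$. Part (b) then follows from essentially the same comparison: the greedy top-down rule defining $t_E$ produces the lexicographically least finite $t$ such that $\tau_E \cup t$ generates $E$.

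For (c), assuming for contradiction that $\tau_E \in \wfp(\Ult(P,E))$ is not a cardinal there, I would fix $\delta < \tau_E$ and a surjection $g : \delta \to \tau_E$ in $\Ult(P,E)$; enlarging $\delta$ if necessary we may take $\delta \geq (\kappa^+)^E$. Writing $g = [b, h]^P_E$ with $b \in [\nu_E]^{<\om}$ and $h \in P$, every $\alpha < \tau_E$ is generated by $\delta \cup b$, and combining with the ``$\leq$'' direction of (a) and collapsing nested ultrapower terms yields that $\delta \cup (b \cup t_E)$ generates $E$, contradicting the ``$\geq$'' direction of (a). For (d), since $\tau_E = (\kappa^+)^P$ and $P | (\kappa^+)^P = \Ult(P,E) | (\kappa^+)^{\Ult(P,E)}$, each $\alpha < \tau_E$ admits a wellorder of $\kappa$ of ordertype $\alpha$ in $P$, so $\alpha$ is generated by $\{\crit(E)\}$; substituting these representations into the representation of arbitrary $\beta < \nu_E$ via $\tau_E \cup t_E$ shows $t_E \cup \{\crit(E)\}$ generates $E$. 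Finally, (e) is immediate from (a): if $E$ is finitely generated by $t$, then $(\kappa^+)^E \cup t$ generates $E$, whence $\tau_E \leq (\kappa^+)^E$, and combined with the trivial $\tau_E \geq (\kappa^+)^E$ this forces $\tau_E = (\kappa^+)^E$.
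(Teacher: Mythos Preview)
The paper does not prove this fact; it labels it ``straightforward'' and refers to \cite{combin}, \S3, for details. So there is no in-paper proof to compare against.

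Your plan is correct for (c), (d), (e), and for the ``$\leq$'' direction of (a). For the ``$\geq$'' direction of (a) and for (b), you have correctly located the main obstacle, but the pointwise comparison $(t')_i \geq (t_E)_i$ by itself is not enough to close the argument: if $(t')_j > (t_E)_j$ strictly, then $(t')_j$ may generate ordinals that $(t_E)_j$ does not, so at stage $i+1$ the induction does not obviously reduce to $(t_E)_{i+1}$ being a $\{(t')_0,\ldots,(t')_i\}$-generator. Your parenthetical (``how a single element of $t'$ can cover multiple smaller generators'') is exactly this issue, and it is not resolved by the scheme you describe.

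The standard fix is to pass to a lex-minimal counterexample $s$ (lex-least finite set such that some $\tau$ with $(\kappa^+)^E\leq\tau<\tau_E$ has $\tau\cup s$ generating $E$). Minimality forces each $s_i$ to be an $s\rest i$-generator: otherwise write $s_i=[a,f]$ with $a\sub s_i\cup s\rest i$ and replace $s_i$ by $a\cap s_i$, obtaining a lex-smaller set with the same generating power. Once every $s_i$ is an $s\rest i$-generator, comparison with the greedy definition of $t_E$ forces $s_i=(t_E)_i$ \emph{exactly} for $i<|t_E|$ (if $s_i<(t_E)_i$ then $(t_E)_i\notin\Hull(\tau\cup s)$; if $s_i>(t_E)_i$ then $s_i$ would exceed the largest $s\rest i$-generator). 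The contradiction is now immediate: if $|s|\leq|t_E|$ one reads off $\tau\geq\tau_E$ directly; if $|s|>|t_E|$ the extra elements of $s$ beyond $t_E$ are themselves $t_E$-generators, hence all below $\tau_E$, and any $t_E$-generator above them (these are cofinal in $\tau_E$) is then an $s$-generator $\geq\tau$, contradicting the assumed generating power of $\tau\cup s$. Part (b) follows by the same reduction applied to lex-minimal $t$ with $\tau_E\cup t$ generating $E$.
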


\begin{rem}
If $\tau_E\notin\wfp(\Ult(P,E))$ then $\tau_E=\nu_E$ is a limit of $\Ult(P,E)$-cardinals.
\end{rem}

\begin{dfn}\index{Dodd soundness} Let $E,P$ be as in \ref{dfn:gen} and $t=t_E$. $E$ is \emph{Dodd-solid} iff for each $i<|t|$, $E\rest (t_i\un t\rest i)\in\Ult(P,E)$. $E$ is \emph{Dodd-sound} iff it is Dodd-solid and, if $\tau_E>(\kappa^+)^E$ then for each $\alpha<\tau_E$, $E\rest(\alpha\un t_E)\in\Ult(P,E)$.\end{dfn}

\begin{rem} With $E,P,\kappa$ as above, if $\tau_E=(\kappa^+)^E$, it may be the case that $\kappa$ is a $t_E$-generator. In this case one might expect Dodd-solidity should require $E\rest t_E\in\Ult(P,E)$ also. This, however, is not the standard definition. It may follow from the proof of \ref{fact:Doddsound}, though we haven't investigated this.\end{rem}

The following critical fact is proven by (\cite{combin}, 3.2) and a correction in  (\cite{deconstruct}, 4.1).

\begin{fact}[Steel]\label{fact:Doddsound}
Let $P$ be an active $1$-sound, $(0,\om_1,\om_1+1)$-iterable premouse. Then $F^P$ is Dodd-sound.\end{fact}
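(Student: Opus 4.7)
The plan is to imitate the proof of standard parameter solidity, replacing the standard parameter by the Dodd parameter and the solidity witness (a theory) by an appropriate sub-extender. Fix $E=F^P$, let $\kappa=\crit(E)$, $t=t_E$, and suppose for contradiction that Dodd-solidity fails. Take $i<|t|$ least such that $E\rest(t_i\cup t\rest i)\notin\Ult(P,E)$. I would form the hull $\bar P=\Hull^P_1((\kappa^+)^E\cup\{\nu_E\}\cup(t\rest i)\cup t_i)$, transitively collapse it, and let $\sigma:\bar P\to P$ be the anticollapse. Because $t_i$ is not generated from $t\rest i$ together with ordinals below $t_i$ (by the definition of the Dodd parameter, \ref{dfn:Dodd} and \ref{fact:Dodd}), $\sigma$ has critical point $\bar t_i<t_i$, and $\bar P$ carries an active extender $\bar E=F^{\bar P}$ which, by the agreement coming from $\sigma$, is essentially the putative solidity witness $E\rest(\bar t_i\cup t\rest i)$ coded as an extender over $\bar P$.

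Next I would set up a phalanx $\Phi=(\bar P,P;\lambda)$, where $\lambda$ is chosen so that $\bar P$ and $P$ agree strictly below $\lambda$ (specifically $\lambda$ is slightly above $\bar t_i$ and below any generator of $E$ that $\sigma$ moves), and transfer $(0,\om_1,\om_1+1)$-iterability from $P$ to $\Phi$ via copying; this is precisely the place where the ``freely dropping'' iterability discussed in \S\ref{sec:copying} enters. Then I would coiterate $\Phi$ against $P$. Standard arguments, using 1-soundness of $P$ on one side and the fact that $\bar P$ projects to (something below) $\lambda$ on the other, should force: (i) the $\bar P$-side's main branch does not drop and produces an iteration map $\pi:\bar P\to Q$; (ii) the $P$-side does not move past $P$. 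Therefore $Q\ins P$ and the agreement of $\pi$ with $\sigma$ on the hull forces $Q=P$ with $\pi=\sigma$. Reading off the last extender used on the $\bar P$-side yields a copy of $\bar E$, hence of $E\rest(t_i\cup t\rest i)$, inside $\Ult(P,E)$, contradicting our assumption.

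For the full Dodd-\emph{sound} conclusion when $\tau_E>(\kappa^+)^E$, I would run the same argument one more time, parameterizing the hull by $t_E$ together with ordinals below each $\alpha<\tau_E$ at which $E\rest(\alpha\cup t_E)\notin\Ult(P,E)$. The structural ingredients are identical; what changes is that $\tau_E$, being a cardinal of $\Ult(P,E)$ by \ref{fact:Dodd} in the wellfounded case, is exactly the projectum-like level at which the sub-extender is required to appear as an initial segment of the $\Ult(P,E)$ sequence, so the coiteration still places $\bar E$ correctly.

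The main obstacle, and presumably the reason \cite{combin} required the correction in \cite{deconstruct}, is the fine-structural bookkeeping in the copying step: the ``active-extender agreement'' between $\bar P$ and $P$ is delicate because $\bar E$ is not a standard ultrapower image of $E$ but is carved out by the generators up to $t_i$, so one must verify with care that trees on $\Phi$ lift to trees on $P$ and that the embeddings extracted from the coiteration really are $r\Sigma_1$-elementary in the sense needed to identify the copied $\bar E$ with $E\rest(t_i\cup t\rest i)$. A secondary worry is ruling out that the $P$-side's comparison tree drops below the active extender, which would otherwise allow $P$ to be moved before the relevant sub-extender is read off; here one uses that $P$ is 1-sound together with the standard argument that a drop on the $P$-side would contradict the universality of the last model.
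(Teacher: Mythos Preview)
The paper does not actually prove this fact; it is stated as a \emph{Fact} (numbered \ref{fact:Doddsound}) with citation to (\cite{combin}, 3.2) and the correction in (\cite{deconstruct}, 4.1), and no argument is given beyond those references. So there is no ``paper's own proof'' to compare against here.

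That said, your outline is recognizably the Schimmerling argument from \cite{combin}: hull off the top of the Dodd parameter, form a phalanx, lift iterability by copying, coiterate against $P$, and read off the missing fragment. A few of your details are off, though. The hull should be generated by $t_i\cup t\rest i$ (ordinals below $t_i$ together with the top part of the parameter); throwing in $\{\nu_E\}$ is not what you want, since $\nu_E$ sits above the whole Dodd parameter and would collapse to the wrong thing. More importantly, the endgame of the comparison is not ``$Q=P$ with $\pi=\sigma$'': the standard analysis shows the last model on the phalanx side sits above $\bar P$, the branch does not drop, and the resulting embedding together with the initial segment condition places (a copy of) $\bar E$ on the sequence of a model agreeing with $\Ult(P,E)$ through the relevant height---hence the fragment is in $\Ult(P,E)$. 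You do not get the phalanx side literally returning to $P$.

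The ``main obstacle'' you flag is exactly right, and it is the content of the correction in \cite{deconstruct}: the copying/lifting step is delicate, particularly for type 3 extenders, and the original argument in \cite{combin} had a gap there. Your sketch of the Dodd-soundness (as opposed to mere solidity) case is also on track, though again the mechanism by which the fragment lands in $\Ult(P,E)$ is via the initial segment condition and agreement of models in the comparison, not via an identification $Q=P$.
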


\begin{lem}\label{lem:Ecc} Suppose $N$ is a premouse modelling $\theory$ and ``$E$  a total (pre-)extender with critical point $\kappa$, and for each $\alpha$, $\alpha^\kappa$ exists''. Then $\Ult(N,E)$ is wellfounded iff $E$ is countably complete in $N$. If this is so, then $\Ult(N,E)$ is a transitive class of $N$.\end{lem}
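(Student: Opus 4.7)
The plan is to prove both directions by standard extender-ultrapower arguments, using that $\alpha^\kappa$ exists in $N$ (together with $N\sats\theory$) to keep $\Ult(N,E)$ definably inside $N$.

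For ``countable completeness $\Rightarrow$ wellfoundedness'', I would argue by contradiction. Given an $\in_E$-descending chain $\langle[a_n,f_n]_E:n<\om\rangle$ in $\Ult(N,E)$, merge supports so that $a_n\sub a_{n+1}$ and lift each $f_n$ to a function on $[\kappa]^{|a_{n+1}|}$; this stays in $N$ by the $\alpha^\kappa$ hypothesis. The defining extender relation then produces sets $X_n\in E_{a_{n+1}}\cap N$ with $u\in X_n \Rightarrow f_{n+1}(u)\in f_n(u\rest a_n)$ (writing $f_n$ for the lift). Countable completeness applied to $\langle(a_{n+1},X_n):n<\om\rangle$ yields an order-preserving $\pi:\bigcup_n a_n\to\Ord$ with $\pi[a_{n+1}]\in X_n$, and then $\langle f_n(\pi[a_n]):n<\om\rangle$ is an infinite $\in$-descending sequence in $V$ of elements of $N$---impossible by transitivity of $N$.

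For ``wellfoundedness $\Rightarrow$ countable completeness'', I would assume $\Ult(N,E)$ is wellfounded and identify it with its Mostowski collapse, so $i_E:N\to\Ult(N,E)$ elementarily embeds $N$ into a transitive class. Given a countable $\langle(a_n,X_n)\rangle$ with $X_n\in E_{a_n}\cap N$, take a countable $H\elem V_\theta$ containing $E$, $N$, and the sequence, with anti-collapse $\sigma:\bar H\to H$. By elementarity, $\Ult(\bar N,\bar E)$ is wellfounded. Since $\bar H$ is countable, inductively construct a realization $k:\bar H\to V$ by threading, stage by stage, the countably many extender-membership witnesses $\bar a_n\in i^{\bar N}_{\bar E}(\bar X_n)$ using the wellfoundedness of $\Ult(\bar N,\bar E)$. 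The composition $\pi=k\comp\sigma^{-1}$ restricted to $\bigcup_n a_n$ is the required order-preserving thread, with $\pi[a_n]\in X_n$ since $k$ realizes each $\bar a_n\in i^{\bar N}_{\bar E}(\bar X_n)$ statement internally in $V$.

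Finally, to see $\Ult(N,E)$ is a transitive class of $N$ when wellfounded: the $\alpha^\kappa$ hypothesis guarantees that for each $a\in[\nu_E]^{<\om}$ and each ordinal $\alpha$, the representing functions $f:[\kappa]^{|a|}\to\her_\alpha^N$ form a set of $N$, and the $E$-equivalence and $\in_E$-relation on such representatives are definable over $N$ from the measures $E_a\in N$ (by totality). Wellfoundedness of this definable class relation is absolute between $V$ and $N$, so $N\sats\theory$ (which provides cofinally many admissible levels) lets $N$ carry out the Mostowski collapse, exhibiting $\Ult(N,E)$ as a transitive class of $N$. The main obstacle I anticipate is the realization step in the second direction: producing $k$ consistently with the countably many simultaneous threading requirements is the classical delicate argument, whereas the surrounding setup and the converse direction are essentially bookkeeping.
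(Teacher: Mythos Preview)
Your last paragraph correctly identifies the paper's main point: the hypothesis ``$\alpha^\kappa$ exists'' makes the $\in_E$-relation set-like in $N$, so $\theory$ suffices for the standard argument. But your detailed arguments for the two directions conflate the internal and external perspectives. ``Countably complete in $N$'' means $N\sats$ ``$E$ is countably complete'': for every $\omega$-sequence \emph{in $N$} there is a thread \emph{in $N$}. In your first direction you take an $\in_E$-descending chain (apparently in $V$, given your appeal to transitivity of $N$ at the end) and then apply countable completeness in $N$ to the resulting sequence of measure-one sets---but that sequence need not lie in $N$. In your second direction, your Skolem-hull construction is both unclear (what exactly is the map $k$, and what is it realizing?) and off-target: even granting it produces a thread, that thread lies in $V$, not in $N$.

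The fix is simply to run both directions inside $N$; this is exactly what the paper means by ``the argument is as for when $N\sats\ZFC$''. For wf $\Rightarrow$ cc (the direction you overcomplicate): given $\langle(a_n,X_n)\rangle\in N$, form the tree $T\in N$ of finite threads. Since $a_n\in i_E(X_n)$, the sequence $\langle a_n\rangle\in N$ is a branch through $i_E(T)$, so $i_E(T)$ is illfounded in $N$; rank-function absoluteness between $N$ and its transitive class $\Ult(N,E)$ then forces $\Ult(N,E)\sats$ ``$i_E(T)$ is illfounded'', whence by elementarity of $i_E$, $N\sats$ ``$T$ is illfounded''.
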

\begin{proof} Because $N$ satisfies ``$\alpha^\kappa$ exists for each $\alpha$'', the membership relation of $\Ult(N,E)$ is essentially set-like in $N$. Otherwise the argument is as for when $N\sats\ZFC$.\renewcommand{\qedsymbol}{$\Box$(Lemma \ref{lem:Ecc})}\end{proof}

We're now ready to state the main theorem of this section.

\begin{thm}\label{thm:coarseDodd} Let $N$ be an $\fully$-iterable mouse such that
\[ N\sats \theory\ +\ E\ \textrm{is a total, short, countably complete, Dodd-sound extender,}\ \crit(E)=\kappa,\]
\[ \tau=\tau_E\ \textrm{is a cardinal,}\ (\tau^\kappa)^+\ \textrm{exists, and}\ \her_\tau \sub\Ult(L_{\kappa^+}[\es],E). \]
Then $E$ is on $\es^N$.\end{thm}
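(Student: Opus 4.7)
Assume toward contradiction that $\tc(E)\notin\es^N$. Form $U=\Ult(N,E)$ with ultrapower embedding $i=i^N_E\colon N\to U$. Since $N\sats\theory$ and $E$ is countably complete in $N$, Lemma \ref{lem:Ecc} gives that $U$ is wellfounded and realizes as a transitive class of $N$. The hypothesis $\her_\tau\sub\Ult(L_{\kappa^+}[\es],E)$, combined with $\tau$ being an $N$-cardinal and with $E$ measuring $\pow(\kappa)\cap N$, yields the agreement
\[
N||\tau = U||\tau,
\]
so the phalanx $\mathcal{P}=(N,U,\tau)$ is well formed. By copying trees on $\mathcal{P}$ through $i$ to trees on $N$, the $\fully$-iterability of $N$ transfers to $\fully$-iterability of $\mathcal{P}$.

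The plan is then to compare $\mathcal{P}$ against $N$, producing iteration trees $\Tt$ on $\mathcal{P}$ and $\Uu$ on $N$. Since $N||\tau=U||\tau$, any extender applied to the $U$-root of $\mathcal{P}$ must have critical point at least $\tau$; in particular the $U$-root of $\mathcal{P}$ is unperturbed below height $\tau$. The assumption that $(\tau^\kappa)^+$ exists in $N$ supplies the cardinal arithmetic for the standard termination argument, so the comparison succeeds. I aim to show that the first extender $E^\Uu_0$ used on the $N$-side is exactly $\tc(E)$; this immediately places $\tc(E)$ on $\es^N$, contradicting our assumption.

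The Dodd-soundness of $E$ is what forces $E^\Uu_0=\tc(E)$. By Fact \ref{fact:Dodd}, $\tau_E=\tau$ is a cardinal of $U$; and by Dodd-soundness $E\rest(\alpha\un t_E)\in U$ for every $\alpha<\tau_E$ (in the case $\tau_E>(\kappa^+)^E$), with the analogous solidity witnesses present otherwise. These restrictions assemble $\tc(E)$ inside $U$ from the Dodd parameter $t_E$. Any extender $F$ on the $N$-side that coheres with $E$ below $\tau$ and resolves the first discrepancy must therefore agree with $\tc(E)$ on all generators up to $\tau$; the initial segment condition for $\es^N$ then identifies $F$ with $\tc(E)$ and places $\tc(E)$ on $\es^N$.

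The main obstacle is the detailed comparison bookkeeping. One must verify (i) that the $U$-root of $\mathcal{P}$ indeed does not move below $\lh(\tc(E))$, and (ii) that Dodd-soundness genuinely pins down $\tc(E)$ from the data visible inside $U$. Step (ii) splits by cases: when $\tau_E=(\kappa^+)^E$, so that $E$ is finitely generated by $t_E\un\{\kappa\}$, the argument reduces to a finitely-generated comparison of Mitchell type; when $\tau_E>(\kappa^+)^E$, the full strength of Dodd-soundness (namely $E\rest(t_i\un t\rest i)\in U$ for all $i<|t_E|$, together with the extra closure at every $\alpha<\tau_E$) is needed to propagate the match past each finite level. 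Handling both cases uniformly, while working inside $\theory$ rather than full $\ZFC$ in $N$, is the technical crux.
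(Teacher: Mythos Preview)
Your proposal has a genuine gap at the very first technical step: the iterability of the phalanx $\mathcal{P}=(N,U,\tau)$ is \emph{not} obtained by ``copying through $i$''. The map $i=i_E$ goes from $N$ into $U$, not the other way, so it gives no embedding of $U$ into an iterable structure. To lift a tree on $\mathcal{P}$ to a tree on $N$ (or on something known to be iterable) you would need a map $\sigma:U\to N|\xi$ for some $\xi$, agreeing with $\id$ below $\tau$; the ultrapower embedding does not provide this. Nor do we know abstractly that $U=\Ult(N,E)$ is iterable: $E$ is an arbitrary extender of $N$, and showing $U$ iterable is essentially equivalent to what the theorem asserts. The paper resolves this by first passing to a countable hull $M=\Hull_\om^{N|\lambda}(\emptyset)$, with $\pi:M\to N|\lambda$ and $\bar E=\pi^{-1}(E)$. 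One then proves (Claim~\ref{clm:escit}) that $(M,\Ult(M,\bar E),\bar\theta)$ is $(\omega_1+1)$-iterable by finding, \emph{inside $N$}, embeddings $\psi:M\to N|\gamma$ and $\sigma:\Ult(M,\bar E)\to N|\xi$ which agree below $\bar\theta$; this uses condensation (Lemmas~\ref{lem:escpassive} and \ref{lem:esccond}) together with the absoluteness of the relevant tree of attempts between $N$ and $\Ult(N,E)$. The hypotheses $\her_\tau\subseteq\Ult$ and ``$(\tau^\kappa)^+$ exists'' are used precisely here, not merely for termination of comparison.

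Even granting iterability, your endgame is too quick. In the type~1 and type~3 cases (where $\tau=\nu_E$) the argument you sketch does work: the first extender $G$ on the $M$-side is compatible with $E$ through $\nu_E$, and the initial segment condition puts $E$ on the sequence. But in the type~2 case the exchange ordinal $\tau$ lies strictly below $\nu_E$, so $i^\Uu$ may move generators of $E$, and there is no reason for the first extender on the $M$-side to equal $\tc(E)$ outright. The paper handles this with a substantial analysis (Claims~\ref{clm:escG}--\ref{clm:escG*=E}): one shows there is a unique extender $G$ on the $M$-branch with $\nu_G=i^\Uu(\max t_E)+1$, introduces the Dodd-fragment parameter/projectum, proves they are preserved by the relevant ultrapowers (Lemmas~\ref{lem:Dsp3}--\ref{lem:Dsp5}), and finally identifies $E$ with the Dodd-sound core $G^*$ sitting on $\es^M$. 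Your appeal to Dodd-soundness is on the right track, but the mechanism by which it ``pins down $\tc(E)$'' is this Dodd-fragment tracking, not a direct match at the first disagreement.
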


\begin{cor}[Steel, Schlutzenberg]\label{cor:coarse} Let $N$ be an $\fully$-iterable mouse such that
\[ N\sats \theory\ +\ E\ \textrm{is a total, short, countably complete extender,}\ \crit(E)=\kappa,\]
\[ \nu=\nu_E\ \textrm{is a cardinal,}\ (\nu^\kappa)^+\ \textrm{exists, and}\ \her_\nu\sub\Ult(L_{\kappa^+}[\es],E). \]
Then $E$ is on $\es^N$.\end{cor}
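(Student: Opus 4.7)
The plan is to reduce Corollary \ref{cor:coarse} directly to Theorem \ref{thm:coarseDodd} by verifying the theorem's two extra hypotheses --- Dodd-soundness of $E$ and cardinality in $N$ of $\tau := \tau_E$ --- from the corollary's assumptions. Since $\tau \le \nu := \nu_E$ always, the containment $\her_\tau \sub \Ult(L_{\kappa^+}[\es],E)$ and the existence of $(\tau^\kappa)^+$ are immediate from the corresponding statements for $\nu$.

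For $\tau$ being a cardinal of $N$: since $E$ is countably complete in $N$, the ultrapower $\Ult := \Ult(L_{\kappa^+}[\es]^N, E)$ is wellfounded by the usual thread argument (any countable illfounded chain lifts to $N$, where countable completeness gives a thread), so $\tau \in \wfp(\Ult)$. Fact \ref{fact:Dodd} then gives that $\tau$ is a cardinal of $\Ult$. If $\tau = \nu$ we have it by hypothesis; if $\tau < \nu$, any hypothetical surjection $f:\beta\to\tau$ with $\beta<\tau$ lying in $N$ has $|\tc(f)|^N < \nu$, hence $f \in \her_\nu^N \sub \Ult$, contradicting that $\tau$ is a cardinal of $\Ult$.

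For Dodd-soundness, the fact that $N$ is an acceptable fine-structural mouse gives GCH in $N$, so $(2^\kappa)^N = (\kappa^+)^N \le \nu$. For any bounded $Y\sub\nu$, the sub-extender $E\rest Y$ is coded by a subset of $[Y]^{<\om}\times\pow(\kappa)^N$, so has $N$-cardinality at most $\max(|Y|,(\kappa^+)^N)$. When $\nu > (\kappa^+)^N$, this is $<\nu$, so $E\rest Y \in \her_\nu^N \sub \Ult$ for each of the index sets $Y = (t_E)_i \un (t_E)\rest i$ ($i < |t_E|$) and $Y = \alpha \un t_E$ ($\alpha<\tau$) appearing in the Dodd-solidity and extra $\alpha$-clause conditions. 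When instead $\nu = (\kappa^+)^N$, $t_E$ is forced empty (its entries must lie in $[(\kappa^+)^E,\nu)=\empty$) and $\tau_E = (\kappa^+)^E$, so both clauses are vacuous.

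All hypotheses of Theorem \ref{thm:coarseDodd} are then satisfied, and the theorem gives $E$ on $\es^N$. The main (modest) obstacle is the Dodd-soundness verification, which rests on GCH in the fine-structural mouse $N$; no genuine iteration-theoretic work is done in the reduction itself, all of it being encapsulated by Theorem \ref{thm:coarseDodd}.
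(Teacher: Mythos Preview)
Your reduction is correct, but it is not the paper's argument. The paper explicitly says (in the remark following the corollary) that ``the proof of \ref{cor:coarse} is the first half of the proof of \ref{thm:coarseDodd}, with $\tau=\nu$; this half does not involve the Dodd soundness of $E$.'' Concretely, one re-runs Claims \ref{clm:escit} and \ref{clm:esccoh} with the exchange ordinal set to $\nu_E$ rather than $\tau_E$; the comparison then yields $\crit(i^\Uu)\geq\nu_E$, so the first extender on $\Tt$'s main branch is compatible with $E$ through all its generators, and the initial segment condition finishes. Dodd-soundness never enters.

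Your route instead applies Theorem \ref{thm:coarseDodd} as a black box after checking that its two extra hypotheses --- $\tau_E$ a cardinal of $N$, and $E$ Dodd-sound --- are automatic under the corollary's assumptions. Your verification is sound: the $\tau$-cardinality argument via Fact \ref{fact:Dodd} and $\her_\nu\sub\Ult$ is fine, and the Dodd-soundness argument (every proper fragment $E\rest Y$ with $\sup Y<\nu$ lies in $\her_\nu^N\sub\Ult$, using acceptability/GCH in $N$ to bound $|E\rest Y|^N$) is correct, with the $\nu=(\kappa^+)^N$ boundary case handled by $t_E=\empty$. What your approach buys is a clean reduction requiring no re-examination of the proof; what the paper's approach buys is the conceptual point that Dodd-soundness is genuinely irrelevant here --- it is only needed in Theorem \ref{thm:coarseDodd} to control the type 2 case where $\tau_E<\nu_E$ and generators above $\tau_E$ may move under $i^\Uu$.
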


\begin{rem} The basic idea behind the proof of \ref{thm:coarseDodd} is like that of the initial segment condition in (\cite{fsit}, \S10). Steel first proved \ref{cor:coarse} in the case that $\nu_E$ is regular in $N$ and $E$ coheres $\es^N$ below $\nu_E$. The author then generalized this to obtain \ref{thm:coarseDodd}. The proof of \ref{cor:coarse} is the first half of the proof of \ref{thm:coarseDodd}, with $\tau=\nu$. This half does not involve the Dodd soundness of $E$.\end{rem}

\begin{proof}[Proof of Theorem \ref{thm:coarseDodd}]\setcounter{clm}{0}
After some motivation, the proof will work through claims 1 to 6 below.

Suppose for the moment that $E$ is type 3, and that $\Ult(N,E)$ is sufficiently iterable that we can successfully compare $N$ with $\Ult(N,E)$. Suppose this results in iteration trees $\Tt$ on $N$ and $\Ss$ on $\Ult(N,E)$, such that
\begin{itemize}
 \item Both trees have the same final model $Q$,
 \item Neither tree drops on the branch leading to $Q$,
 \item The resulting embeddings commute; i.e., $i^\Ss\com i_E=i^\Tt$,
 \item $\crit(i^\Ss)\geq\nu_E$.
\end{itemize}
Then by standard arguments, the first extender $F$ used on $\Tt$'s main branch is compatible with $E$, and $\nu_F\geq\nu_E$, and it follows that $E$ is on $\es^N$. We won't reach this directly, but first replace $N$ with a hull $M$ of $N|\lambda$ for some $\lambda$. With $\Ebar$ the collapse of $E$, the countable completeness of $E$ will guarantee the iterability of $\Ult(M,\Ebar)$ and (the $M$ level versions of) the first three properties listed above. The fourth will require the iterability of the phalanx $(M,\Ult(M,\Ebar),\nu_{\Ebar})$. Most of the proof of the type 3 case is in establishing that iterability. In general, $\nu_E$ will be replaced with $\tau_E$ (these coincide when $E$ is type 3).

We now drop the assumptions of the previous paragraph, and proceed with the proof. We first obtain the hull $M$.

We may assume that $\lambda=((\tau^\kappa)^+)^N$ is the largest cardinal of $N$. $E\in N|\lambda$ and $\lh(E)<\lambda$, since $E$ is coded by a subset of $\tau$ and computes a surjection of $\tau$ onto $\lh(E)=(\nu_E^+)^{\Ult(N|(\kappa^+)^N,E)}$. We may also assume $E$ is the least counter-example to the theorem in the order of construction of $N$. Then $E$ is actually definable over $N|\lambda$, since it's the least $E'$ in $N|\lambda$ such that $N|\lambda$ satisfies ``$E'\notin\es$, $\tau_{E'}^{\crit(E')}$ exists'', and all first-order hypotheses of the theorem bar ``$(\tau_{E'}^{\crit(E')})^+$ exists''. Let
\[ M=\Hull_\om^{N|\lambda}(\empty) \]
and
\[ \pi:M\to N|\lambda \]
be the hull embedding. So $E\in\rg(\pi)$.

By \ref{lem:Ecc} and that $(\tau^\kappa)^+$ is the largest cardinal of $N$, $\Ult(N,E)$ is a (wellfounded) transitive class of $N$. Let $\eta$ be the index of least difference between $N$ and $\Ult(N,E)$. Let $\theta=\card^N(\eta)$. Let $\pi(\bar{E})=E$, $\pi(\thetabar)=\theta$, etc. The first claim will get us half way.

\begin{clm}\label{clm:escit} The phalanx
\[ \ph=(M, \Ult(M,\bar{E}), \thetabar) \]
is $\omega_1+1$ iterable.\end{clm}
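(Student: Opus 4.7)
The plan is to reduce the iterability of $\ph$ to the hypothesized $\fully$-iterability of $N$ by copying, with the countable completeness of $E$ providing an embedding of $\Ult(M,\bar E)$ directly into $N$ (rather than only into $\Ult(N,E)$). First, by standard reflection, it suffices to show that every countable putative iteration tree $\Tt$ on $\ph$ has wellfounded ultrapowers and, at each limit stage, a cofinal wellfounded branch; this is the usual reduction for a countable phalanx. Fix such a countable $\Tt$.

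Next I construct a second copy map $\sigma\colon \Ult(M,\bar E)\to N$ satisfying $\sigma\rest\thetabar=\pi\rest\thetabar$ and $\sigma\com i^M_{\bar E}=\pi$ on the appropriate portion of $M$. Since $M$ is countable, so is $\Ult(M,\bar E)$; enumerate its elements as $[a_n,f_n]^M_{\bar E}$ for $n<\om$. By the usual ultrapower construction, each $\in$- or equality-relation between these representatives corresponds to a measure-one set for the component of $\bar E$ at $a_n\un a_m$; applying $\pi$ produces a countable family of $E$-measure-one sets in $N$. Countable completeness of $E$ in $N$ then yields a single order-preserving realization $h\colon\bigcup_n\pi(a_n)\to\kappa$ satisfying all of them simultaneously, and the definition $\sigma([a_n,f_n]):=\pi(f_n)(h[\pi(a_n)])$ is a wellfounded embedding $\sigma\colon\Ult(M,\bar E)\to N$. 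The rigidity $\sigma\rest\thetabar=\pi\rest\thetabar$ is arranged by taking $h$ to be the identity on the $\pi$-images of those generators of $\bar E$ that lie below $\thetabar$; this is legal because $\theta=\card^N(\eta)$ with $\eta$ the least index of difference between $N$ and $\Ult(N,E)$, so generators of $E$ below $\theta$ are fixed by $i_E$ in a way compatible with $\pi$, and the countable-completeness realization can be tuned to agree with $\pi$ there.

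With $\pi\colon M\to N|\lambda$ and $\sigma\colon\Ult(M,\bar E)\to N$ in hand, I lift $\Tt$ to a tree $\Tt^*$ on the single premouse $N$ via the usual shift-lemma construction: extenders applied to the $M$-root of $\ph$ are copied via (successive shifts of) $\pi$, those applied to the $\Ult(M,\bar E)$-root via shifts of $\sigma$, and the agreement $\sigma\rest\thetabar=\pi\rest\thetabar$ guarantees that the two shift-systems fit together into a legitimate normal iteration tree on $N$ (the copying machinery of \S\ref{sec:copying} handles any drops in model or degree). $N$'s $\fully$-iterability then supplies a cofinal wellfounded branch through $\Tt^*$, and the standard pullback yields one through $\Tt$; similarly each candidate ultrapower in $\Tt$ is wellfounded, because its shift into $N$ is.

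The main obstacle is step~2: constructing $\sigma$ so that it is simultaneously wellfounded, sufficiently elementary, and strictly agrees with $\pi$ below $\thetabar$. Wellfoundedness and elementarity are precisely what countable completeness delivers (as in Kunen's analysis and its descendants), but securing the agreement $\sigma\rest\thetabar=\pi\rest\thetabar$ — which is what lets us copy onto $N$ rather than onto the phalanx $(N,\Ult(N,E),\theta)$ — requires the careful choice of the realization $h$, exploiting that $N$ and $\Ult(N,E)$ share their extender sequences below $\eta$ and that $\theta=\card^N(\eta)$. Once $\sigma$ is in place, the rest — normality of $\Tt^*$, bookkeeping of drops and degrees, and pullback of branches — is routine.
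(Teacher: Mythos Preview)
Your step~2 does not work: the countable-completeness realization cannot be made to agree with $\pi$ above the critical point. Countable completeness of $E$ gives you an order-preserving $h:\bigcup_n\pi(a_n)\to\kappa$ with range contained in $\kappa=\crit(E)$. Hence for any generator $\alpha$ of $\bar E$ with $\bar\kappa\leq\alpha<\thetabar$ (and $\bar\kappa$ itself is such a generator, represented by $[\{\bar\kappa\},\id]$), you get $\sigma(\alpha)=\pi(\id)(h(\pi(\alpha)))=h(\pi(\alpha))<\kappa$, whereas $\pi(\alpha)\geq\pi(\bar\kappa)=\kappa$. So $\sigma(\bar\kappa)<\kappa=\pi(\bar\kappa)$ automatically, and no tuning of $h$ can repair this: the range of $h$ is forced below $\kappa$ by the very definition of countable completeness. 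Your appeal to ``generators of $E$ below $\theta$ are fixed by $i_E$'' is simply false; generators above $\kappa$ are exactly what the realization collapses.

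The paper's argument avoids this obstruction by working on the other root of the phalanx. The shift lemma already gives $\sigma^*=\pi\rest\Ult(M,\bar E):\Ult(M,\bar E)\to\Ult(N,E)$ agreeing with $\pi$ below $\thetabar$; the problem is that the target is $\Ult(N,E)$, not $N$. The paper fixes this not by redirecting $\sigma^*$, but by squeezing $M$ down: using condensation (Lemmas~\ref{lem:escpassive} and~\ref{lem:esccond}) it produces $\psi:M\to N|\gamma$ with $\gamma<\theta$ and $\psi\rest\thetabar=\pi\rest\thetabar$. Since $N$ and $\Ult(N,E)$ agree below $\theta$, both maps now land in structures visible to $N$, and an absoluteness argument (the existence of such maps is first-order, and $i_E(M)=M$) locates witnesses inside $N$. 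The copying then goes through. The singular-$\theta$ case requires a further refinement using a cofinal sequence of partial maps. None of this is what your countable-completeness idea is doing; countable completeness is used in the paper only to ensure $\Ult(N,E)$ is wellfounded (Lemma~\ref{lem:Ecc}), not to build the copy map.
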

\begin{proof} 
\begin{case}\label{case:escreg} $\theta$ is regular in $N$.\end{case}

We want to obtain ordinals $\gamma,\xi$ and embeddings
\[ \psi:M\to N|\gamma,\ \ \sigma:\Ult(M,\Ebar)\to N|\xi,\]
such that
\[ \psi\rest\thetabar=\sigma\rest\thetabar.\]
Using the freely dropping iterability of $N$, established in \S\ref{sec:copying}, these maps will allow us to copy an iteration tree on $(M,\Ult(M,\Ebar),\thetabar)$ to a tree on $N$, completing the proof of the claim. We will in fact find such a $\psi,\sigma,\gamma,\xi$ inside $N$. The existence of such objects is a first order fact about $N$, and $i_E(M)=M$, so it suffices to show it is true in $\Ult(N,E)$ instead.

Let $\sigma^*=\pi\rest\Ult(M,\Ebar)$. Note that
\begin{equation}\label{eqn:escshift} \sigma^*:\Ult(M,\Ebar)\to\Ult(N|\lambda,E) = \Ult(N,E)|i_E(\lambda)=\Ult(N,E)|\lambda, \end{equation}
$\sigma^*$ is elementary, and $\sigma^*\rest\thetabar=\pi\rest\thetabar$. (Applying the shift lemma also yields $\sigma^*$.) In fact there is $\sigma'$ with these properties in $\Ult(N,E)$. For $\theta$ is regular in $N$ and $\pi, M\in N$, so we have $\pi\rest\thetabar$ is bounded in $\theta$, and thus $\pi\rest\thetabar\in N|\theta$. By agreement below $\theta$, $\pi\rest\thetabar\in\Ult(N,E)$. So $\Ult(N,E)$ has the (illfounded) tree searching for an embedding $\sigma'$ with the properties above, and since $\Ult(N,E)$ is wellfounded and models $\theory$, it has a branch. So we have
\[ \Ult(N,E)\sats \sigma':\Ult(M,\Ebar)\to\J^\es_{i_E(\lambda)}\ \&\ \sigma'\rest\thetabar = \pi\rest\thetabar. \]

We now want to convert $\pi$ into an appropriate map $\psi:M\to N|\gamma$ for some $\gamma<\theta$. This will be done by taking some hulls of $N|\lambda$; for that we need some condensation facts.

\begin{lem}\label{lem:escpassive}
Suppose $P$ is an $\fully$-iterable mouse satisfying $\theory$, $\delta$ is a cardinal of $P$ and $\cof^P(\delta)>\om$. Suppose $H\pins P$, $\delta\leq\OR^H$ and $H$ projects to $\delta$. Then there are unboundedly many $\delta'<\delta$ such that
\[ \Hull_\om^H(\delta')\ins P.\]
\end{lem}
\begin{proof}
Let $\beta<\delta$. Let $P_0=\Def^P(\beta)$, and 
\[ P_{n+1}=\Def_\om^P(P_n\union\{P_n\inter\delta\}).\]
Let $\delta'=\sup_n(P_n\inter\delta)$ and $f:P'\to P$
be the uncollapse embedding of $\Def_\om^P(\delta')=\Union_n P_n$. Since $P$ sees this construction, $\delta'<\delta$. We claim $\delta'$ works. For $\crit(f)=\delta'$ and $f(\delta')=\delta$, so $\rho_\omega(P')=\delta'$. Thus by condensation (\cite{outline}, 5.1), we either have $P'\ins P$ or $P'\pins\Ult(P|\delta',\es^P_{\delta'})$. In the latter case, $\delta'$ is a successor cardinal in the ultrapower, but the $P_n$'s are constructed from $P'$ in the same way they were from $P$, showing that $\cof^\Ult(\delta')=\om$, a contradiction.\renewcommand{\qedsymbol}{$\Box$(Lemma \ref{lem:escpassive})}\end{proof}

\begin{lem}\label{lem:esccond}
Suppose $P$ be an $\fully$-iterable mouse, not of the form $\J_1(P')$, $\delta<\zeta<\OR^P$, $\delta$ is a $P$-cardinal, $P|\zeta$ is passive and $P|\zeta\sats\ZFmin$. Let $H=\Hull_\om^{P|\zeta}(\delta)$. Then $H\pins P$; moreover $\rho_1^{\J_1(H)}=\delta$, $p_1^{\J_1(H)}=\{\OR^H\}$, $\J_1(H)$ is $\om$-sound.\end{lem}

Granting this lemma, we can establish the iterability of our phalanx (of Claim \ref{clm:escit}). Let $H=\Hull_\om^{N|\lambda}(\theta)$. The lemmas give $\J_1(H)\ins N$, and a $\theta'<\theta$ such that $\J_1(H')\ins N$, where $\J_1(H')=\Hull_\om^{\J_1(H)}(\theta')$ and $\rg(\pi)\int\theta\sub\theta'$. There is a unique elementary $\psi':M\to H'=N|\gamma$, and it agrees with $\pi$, and $\sigma'$, below $\thetabar$. Since $N|\theta=\Ult(N,E)|\theta$, we have $\psi'\in\Ult(N,E)$, so we're done.

\begin{proof}[Proof of Lemma \ref{lem:esccond}]\setcounter{sclm}{0}
A strengthening of this lemma, in which $P=\J_1(P')$ is allowed, can be proven using the degree 1 version of condensation (\cite{fsit}, \S8). However, the stated version is sufficient for our purposes, and we give a direct argument which involves a little less fine structure, and involves some arguments to be used later.

The failure of the lemma is a first order statement satisfied by $P$. So we may assume that $P=\Hull_\om^P(\phi)$. Let $H=\Hull^{P|\zeta}(\delta)$.

\begin{sclm}\label{sclm:cond.proj} $\rho_1^{\J_1(H)}=\delta$ and $\J_1(H)$ is $\omega$-sound.\end{sclm}
\begin{proof}  As $H=\Hull^H(\delta)$, we get $\rho_1^{\J_1(H)}\leq\delta$. (Note $\Th^H(\delta)$ is not in $H$ and $H\sats\ZFmin$, so it's not in $\J_1(H)$ either.) In fact $\rho_1^{\J_1(H)}=\delta$, as $\delta$ is a cardinal of $P$, $\J_1(H)\in P$, and $P$ and $\J_1(H)$ agree below $\delta$.

We claim that since $H\elem_1\J_1(H)$. This is because $H\sats\ZFmin$. For if $\varphi$ is pure $\Sigma_1$ (\cite{fsit}, \S2), in the language of passive premice, let $\varphi'_n$, in the same language, be such that for any sound passive premouse $B$, and $x\in B$,
\[ S_n(B)\sats\varphi(x)\ \iff\ B\sats\varphi'_n(x). \]
(Here $S_n(B)$ is the transitive structure $n$ levels into the rud closure of $B\un\{B\}$; $S_\om(B)=\J_1(B)$.) Suppose $\J_1(H)\sats\varphi(x)$ with $x\in H$. Then there's $n\in\om$ such that $S_n(B)\sats\varphi(x)$. Let $\theta\leq\OR^H$ be minimal such that $x\in H||\theta$ and $H||\theta\sats\varphi'_n(x)$. Since $H\sats\ZFmin$, $\theta<\OR^H$, and by minimality, $H||\theta\not\sats\ZFmin$. Therefore $H|\theta$ is passive, so $H|(\theta+\omega)\sats\varphi(x)$, so $H\sats\varphi(x)$.

So the theories in question agree about pure $\Sigma_1$ formulae, which in fact implies they also agree about generalized $\Sigma_1$ formulae, as required.

So $\Th_1^{\J_1(H)}(\OR^H)\in \J_1(H)$. Therefore $p_1^{\J_1(H)}=\{\OR^H\}$ is $1$-solid and since $H=\Hull_\om^H(\delta)$, $\J_1(H)$ is $1$-sound. Since $\rho_1^{\J_1(H)}=\delta$ is a cardinal of $P$, $\J_1(H)$ is $\om$-sound.\renewcommand{\qedsymbol}{$\Box$(Subclaim \ref{sclm:cond.proj})}\end{proof}

\begin{sclm}\label{sclm:cond.phit}
\begin{itemize}\item[]
\item[(a)] For all $\beta\in H$, $^\beta H\inter (\J_1(H)) \subset H$;
\item[(b)] The phalanx
\[ (P,\J_1(H),\delta) \]
is $\omega_1+1$-iterable.
\end{itemize}\end{sclm}
\begin{proof} (a) is because $H$ models $\ZFmin$.

For (b): The phalanx $(P,H,\delta)$ is iterable since the embeddings into $P$ levels agree below the $P$ cardinal $\delta$. (So an iteration on this phalanx lifts to a freely droppping iteration of $P$, as discussed in \S\ref{sec:copying}.)

A normal degree $\om$ tree $\Tt$ on $H$ is essentially a normal degree $0$ tree $\Tt'$ on $\J_1(H)$, using the same extenders.  For $\alpha<\lh(\Tt)$, if $[0,\alpha]_\Tt$ drops then $M^{\Tt'}_\alpha = M^{\Tt}_\alpha$, and otherwise $M^{\Tt'}_\alpha = \J_1(M^{\Tt}_\alpha)$. By (a), one can inductively keep the association going. This extends easily to trees on $\ph$ and $(P,\J_1(H),\delta)$.\renewcommand{\qedsymbol}{$\Box$(Subclaim \ref{sclm:cond.phit})}\end{proof}

Now compare $P$ with $(P,\J_1(H),\delta)$, giving trees $\Tt$ and $\Uu$ respectively, with final models $Q^\Tt$ and $Q^\Uu$. As $\delta$ is a cardinal of $P$, no dropping occurs in $\Uu$ moving from the root $P$. The Closeness Lemma (\cite{fsit}, 6.1.5) shows that all other ultrapowers in $\Uu$ are close, so fine structure is preserved by the branch embeddings, as with normal iterations. Suppose $Q^\Uu$ is above $P$. Since $P$ is pointwise definable (so $\J_1(P)$ projects to $\om$), it is straightforward to show $Q^\Tt=Q^\Uu$, $i^\Tt$ and $i^\Uu$ exist and $i^\Tt=i^\Uu$. This gives a contradiction via compatible extenders.

So $Q^\Uu$ is above $\J_1(H)$, and again since $P$ is coded by $\Th(P)$, $Q^\Uu\ins Q^\Tt$. So $\Uu$'s main branch doesn't drop, so $Q^\Uu=\J_1(H)$. If $Q^\Tt=\J_1(H)$, then again since $\Th(P)\notin P$ and $\J_1(H)\in P$, there must be a drop from $P$ to $Q^\Tt$, so $\J_1(H)$ is not sound, contradicting Subclaim \ref{sclm:cond.proj}. Since $\J_1(H)$ projects to $\delta$,
\[ \J_1(H) \pins Q^\Tt|(\delta^+)^{Q^\Tt} = P||(\delta^+)^{Q^\Tt}. \]
\renewcommand{\qedsymbol}{$\Box$(Lemma \ref{lem:esccond})}\end{proof}

As discussed after the statement of \ref{lem:esccond}, this completes the proof of Claim \ref{clm:escit} in Case \ref{case:escreg}.

\begin{case}\label{case:escsing} $\theta$ is singular in $N$.\end{case}
This case works similarly; we just explain the differences. Let $\mubar=(\cof(\thetabar))^M$ and $\left<\theta_\alpha\right>_{\alpha<\mubar}\in M$ be an increasing sequence of successor cardinals of $M$ converging to $\thetabar$. We claim
\begin{equation}\label{eqn:escsing} \Ult(N,E)\sats\exists \sigma,\gamma,\left<\pi_\alpha, \omi_\alpha\right>_{\alpha<\mubar},\end{equation}
\[ \ \ \ \ [\sigma:\Ult(M,\Ebar)\to\J^\es_{\gamma},\ \pi_\alpha :M\to \J^\es_{\omi_\alpha},\]
\[ \ \ \ \ \ \pi_\alpha\rest\theta_\alpha = \sigma\rest\theta_\alpha]. \]
From this we get that $N$ has a similar $\sigma$ and sequence of $\pi_\alpha$'s. Say $\kappa'<\thetabar$. In iterating $\ph$, we use $\sigma$ as a copy map to lift $\Ult(M,\Ebar)$, and when using an extender with crit $\kappa'$, we use $\pi_\alpha$ to lift $M$, where $\alpha$ is least such that $(\kappa'^+)^M\leq\theta_\alpha$.

To see (\ref{eqn:escsing}), note (\ref{eqn:escshift}) from the previous case is still true. However, if $\pi$ is unbounded in $\theta$, we may not actually have that $\pi\rest\thetabar\in\Ult(N,E)$. So we might not have a $\sigma'\in\Ult(N,E)$ agreeing with $\pi\rest\thetabar$. The existence of $\sigma^*$ on the outside will be sufficient though. By the same argument as in Case \ref{case:escreg}, for each $\alpha<\mubar$ there is $\omi_\alpha<\theta$ and $\pi'_\alpha\in N\int\Ult(N,E)$, with $\pi'_\alpha:M\to N|\omi'_\alpha=\Ult(N,E)|\omi'_\alpha$, and $\pi'_\alpha\rest\theta_\alpha=\pi\rest\theta_\alpha
=\sigma^*\rest\theta_\alpha$. Now $\Ult(N,E)$ has the tree of attempts to simultaneously build a $\sigma$ and sequence of $\pi_\alpha$'s with the desired properties. The existence of $\sigma^*$ and the $\pi'_\alpha$'s shows this tree is illfounded, and therefore $\Ult(N,E)$ has a branch.

This completes the proof for Case \ref{case:escsing}.
\renewcommand{\qedsymbol}{$\Box$(Claim \ref{clm:escit})}\end{proof}

\begin{clm}\label{clm:esccoh}
$M|\taubar=\Ult(M,\Ebar)|\taubar$ and $\thetabar=\taubar$.\end{clm}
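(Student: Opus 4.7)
The plan is to deduce both halves of the claim directly from the hypotheses of Theorem \ref{thm:coarseDodd}, using elementarity of $\pi : M \to N|\lambda$ to transfer the relevant assertions about $E$, $\tau$, and $\kappa$ to $\Ebar$, $\taubar$, and $\bar\kappa$ in $M$. The iterability of the phalanx from Claim \ref{clm:escit} is not invoked here; it is reserved for the comparison arguments in the subsequent claims.

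For the inclusion $M|\taubar \sub \Ult(M,\Ebar)|\taubar$, elementarity of $\pi$ transfers the coherence hypothesis to $M$, giving $M \sats \her_{\taubar} \sub \Ult(L_{\bar\kappa^+}[\es^M], \Ebar)$. Since $L_{\bar\kappa^+}[\es^M] \sub M$, the latter ultrapower embeds into $\Ult(M,\Ebar)$, yielding the desired inclusion. For the reverse, Lemma \ref{lem:Ecc} together with countable completeness of $\Ebar$ (transferred by $\pi$) gives that $\Ult(M,\Ebar)$ is wellfounded and is realized as a transitive subclass of $M$. Fact \ref{fact:Dodd}, applied via Dodd-soundness of $\Ebar$, shows that $\taubar$ is a cardinal of $\Ult(M,\Ebar)$; moreover $\taubar$ is a cardinal of $M$ by elementarity of $\pi$. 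Combined with $\Ult(M,\Ebar) \sub M$, this forces every element of $\Ult(M,\Ebar)|\taubar$ (which has $\Ult$-cardinality, hence $M$-cardinality, below $\taubar$) to lie in $\her_{\taubar}^M = M|\taubar$. So $M|\taubar = \Ult(M,\Ebar)|\taubar$, the first index of disagreement $\etabar$ satisfies $\etabar \geq \taubar$, and in particular $\thetabar \geq \taubar$.

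For $\thetabar \leq \taubar$, we locate an explicit disagreement at an index of $M$-cardinality $\taubar$, where Dodd-soundness is essential. By Fact \ref{fact:Dodd} and Dodd-soundness of $\Ebar$, the sub-extenders $\Ebar \rest (\alpha \un t_{\Ebar})$ for $\alpha < \taubar$, together with the witnesses $\Ebar \rest (t_i \un t_{\Ebar}\rest i)$ to Dodd-solidity, all lie in $\Ult(M,\Ebar)$, and each of their trivial completions is indexed on $\es^{\Ult(M,\Ebar)}$ at a level of $M$-cardinality $\taubar$. Because $\Ebar$ was chosen minimal as a counter-example to the theorem in the construction order of $N$, none of these trivial completions can be matched on $\es^M$ at the corresponding index without contradicting minimality; therefore a disagreement must occur at some $\etabar < (\taubar^+)^M$, yielding $\thetabar = \card^M(\etabar) = \taubar$. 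The main obstacle will be this last step: one must verify carefully, using the indexing conventions of \cite{outline}, that the Dodd-sound restrictions of $\Ebar$ really do occupy indices of $M$-cardinality $\taubar$ on $\es^{\Ult(M,\Ebar)}$, and that minimality of $\Ebar$ genuinely precludes matching these initial segments on $\es^M$.
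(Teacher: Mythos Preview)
Your argument for $\thetabar \geq \taubar$ has a real gap. You establish that the \emph{universes} of $M|\taubar$ and $\Ult(M,\Ebar)|\taubar$ coincide (via $\her_{\taubar}^M \subseteq \Ult(M,\Ebar)$ from the strength hypothesis and $\Ult(M,\Ebar) \subseteq M$ from Lemma~\ref{lem:Ecc}), but the assertion $M|\taubar = \Ult(M,\Ebar)|\taubar$ is about agreement \emph{as premice}: one needs $\es^M_\alpha = \es^{\Ult(M,\Ebar)}_\alpha$ for every $\alpha < \taubar$. Universe agreement does not yield this; nothing in your argument rules out an $\alpha \in [(\bar\kappa^+)^M, \taubar)$ at which the sequences differ, so you cannot conclude $\etabar \geq \taubar$. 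The paper's proof of this direction does use Claim~\ref{clm:escit}, contrary to your plan: it runs the comparison of $M$ with the phalanx $(M,\Ult(M,\Ebar),\thetabar)$, and if $\thetabar < \taubar$ then $\etabar$ indexes an extender used in the comparison, hence is a cardinal of the common last model $Q$, while $(\thetabar^+)^M = (\thetabar^+)^U$ (from $\her^M_{\taubar}\sub U$). This forces the first extender on the $U$-branch to have critical point in $[\thetabar, \etabar)$ yet measure only sets in $U|\etabar$, so it is partial over $U$ --- contradicting that the branch does not drop.

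Your argument for $\thetabar \leq \taubar$ also does not work. Dodd-soundness places the fragments $\Ebar\rest(\alpha \cup t_{\Ebar})$ in $\Ult(M,\Ebar)$ as \emph{elements}, but gives no reason their trivial completions lie on $\es^{\Ult(M,\Ebar)}$. And minimality of $\Ebar$ is minimality as a counterexample to the theorem; the proper fragments need not satisfy the theorem's hypotheses (for instance their Dodd projecta are not cardinals of $M$), so minimality says nothing about whether they appear on $\es^M$. The paper handles this direction in one line: $\Ebar$ is coded by a subset of $\taubar$ in $M$, and since $\Ebar\notin\Ult(M,\Ebar)$, the sequences must diverge below $(\taubar^+)^M$.
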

\begin{proof}
Recall $\thetabar=|\etabar|^M$, where $\etabar$ is the least disagreement between $M$ and $\Ult(M,\Ebar)$. So it suffices to show $\thetabar=\taubar$. Note $\thetabar\leq\taubar$, since $\Ebar$ is coded by a subset of $\taubar$ in $M$.

A comparison of $M$ with $\ph=(M,\Ult(M,\Ebar),\thetabar)$ only uses extenders with index above $\thetabar$. So by Claim \ref{clm:escit}, there is a successful one, producing trees $\Tt$ on $M$ and $\Uu$ on $\ph$. Let $U=\Ult(M,\Ebar)$. Similarly to the proof of Subclaim \ref{sclm:cond.phit} in Lemma \ref{lem:esccond}, since $M$ is coded by $\Th(M)=\Th(U)$, the same final model $Q$ is produced by $\Tt$ and $\Uu$, $Q$ is above $U$ in $\Uu$, and we have branch embeddings $i^\Tt:M\to Q$ and $i^\Uu:U\to Q$. $\thetabar\leq\crit(i^\Uu)$ by construction.

Assume $\thetabar<\taubar$; then
\[ \thetabar<\etabar<(\thetabar^+)^M\leq\taubar \]
as $\taubar$ is an $M$-cardinal. Now $\etabar$ is a cardinal of $Q$, as it indexes an extender used during the comparison. Also $(\thetabar^+)^M=(\thetabar^+)^U$ since $\her^M_\taubar\sub U$. So
\[ \thetabar\leq\crit(i^\Uu)<\etabar<(\thetabar^+)^U. \]
But then the first extender $E^\Uu_\alpha$ hitting $U$ on the main branch cannot be complete over $U$, because it only measures sets in $U|\etabar$. As $\Uu$'s main branch does not drop, this is a contradiction.\renewcommand{\qedsymbol}{$\Box$(Claim \ref{clm:esccoh})}\end{proof}
\noindent \emph{Notation.} We proceed to show $\Ebar$ is on $\es^M$, from the
first order properties of $M$ and the previous two claims. Since we will no
longer refer directly to objects at the $N$ level, we drop the bar notation.\\

The first paragraph of the proof of Claim \ref{clm:esccoh} still holds, giving
trees $\Tt$ on $M$ and $\Uu$ on $\ph=(M,U,\tau)$ with common last model $Q$. Let
$i=i^\Uu$, $j=i^\Tt$. Then since $M$ is pointwise definable, $i\com i_E=j$. In the case $E$ is
type 1 or 3, so that $\tau=\nu_E$, we can now easily complete the proof. Since
$\crit(i^\Uu)\geq\nu_E$, we have the first extender $E^\Tt_\alpha$ used on the
main branch of $\Tt$ is compatible with $E$. Since $\lh(E^\Tt_0)>\nu_E$, $\lh(E^\Tt_\alpha)>\nu_E$ also, so by the initial segment condition, $E$
is on $\es^M$. This gives \ref{cor:coarse}, but we haven't
finished proving \ref{thm:coarseDodd}.

So assume $E$ is type 2. Let $Q$ be the last model of the comparison and $b,c$
the main branches on $U,M$ respectively. Let $t$ be the Dodd parameter of $E$.

\begin{clm}\label{clm:escG}
\begin{itemize}\item[]
\item[(a)] There is only one extender $G$ used in $c$, which is
type 2, with $\nu_G=i(\max t)+1$.
\item[(b)] Suppose $P$ is a model along $b$ before $Q$.
Then
$i_{U,P}(\max t)\geq\crit(i_{P,Q})$.
\end{itemize}\end{clm}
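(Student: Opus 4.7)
The plan is to use the commutativity $j=i\circ i_E$ (from $M$'s pointwise definability), the bound $\crit(i^\Uu)\geq\tau$ (from the phalanx exchange ordinal), and the Dodd-soundness of $\bar E$ to pin down $G$'s top-generator structure and then rule out further extenders on $c$.

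For (a), let $G$ be the first extender on $c$, applied to $M$. Dodd-solidity gives $\bar E\rest\max t\in U$, so the image under $i$ of (the trivial completion of) $\bar E\rest\max t$ appears on $\es^Q$ below $\lh(G)$. A standard least-disagreement analysis — analogous to the argument just given in the type~3 case — shows $G$ must match this image. The initial segment condition then forces $\nu_G=i(\max t)+1$: if $\nu_G>i(\max t)+1$, a proper initial segment of $G$ of length $i(\max t)+1$ would already appear on $\es^Q$ strictly below $\lh(G)$, shifting the least disagreement and contradicting minimality; if $\nu_G<i(\max t)+1$, then $i(\max t)$ fails to be a generator of $G$, contradicting commutativity applied at $\max t$. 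Type 2 is immediate since $\nu_G$ is a successor. For uniqueness of $G$ on $c$, any second extender $G'$ would satisfy $\crit(G')\geq\nu_G=i(\max t)+1$; but after $G$ is applied, the residual action of $i\circ i_E$ acts only on the generators of $\bar E$, all $\leq\max t$, so no further movement is possible, forcing $M^\Tt_1=Q$.

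For (b), fix $P$ on $b$ before $Q$ and set $\kappa'=\crit(i_{P,Q})$. Suppose toward contradiction $i_{U,P}(\max t)<\kappa'$. Then $i_{P,Q}$ fixes $i_{U,P}(\max t)$, so $i(\max t)=i_{U,P}(\max t)<\kappa'$, and by (a) we get $\nu_G-1<\kappa'$. The next extender $H=E^\Uu_{\beta_0}$ used after $P$ on $b$ has $\crit(H)=\kappa'$, so $H$ acts strictly above $G$'s top generator. But by (a), $G$ has already produced $Q$ on the $\Tt$-side, so no disagreement remains at any index $H$ could target on the main branch $b$, contradicting $H$'s existence.

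The main obstacle is the exact matching $\nu_G=i(\max t)+1$, rather than mere compatibility of $G$ with some truncation of $\bar E$. This is precisely where full Dodd-soundness (not just solidity) enters: the fragments $\bar E\rest(\alpha\cup t)$ for $\alpha<\tau$ must lie in $U$, so that the least-disagreement analysis rules out any alternative truncation index for $G$. The remaining bookkeeping — types, exchange ordinals, and lifting of $\Tt$ and $\Uu$ through the hull embedding $\pi$ — is routine given the freely-dropping iterability of \S\ref{sec:copying}.
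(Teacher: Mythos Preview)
Your argument has genuine gaps in both bounds on $\nu_G$. For the lower bound, you say $\nu_G<i(\max t)+1$ would contradict ``commutativity at $\max t$''. But commutativity $j=i\circ i_E$ does not by itself force $i(\max t)$ to be a generator of the \emph{first} extender $G$ on $c$: even if $i(\max t)$ is a generator of $E_j$, it could in principle arise from a later extender on $c$. The paper's argument is different. From Dodd-solidity, $W=E\rest\max t\in U$, and one computes directly that $i(W)=E_j\rest i(\max t)\in Q$. If $\nu_G\le i(\max t)$ then $G=i(W)\rest\nu_G\in Q$, contradicting $G\notin\Ult(M,G)$ via the agreement $\pow(\nu_G)^Q=\pow(\nu_G)^{\Ult(M,G)}$. (Your claim that $i(W)$ lies on $\es^Q$ is also not right; it merely lies \emph{in} $Q$.)

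For the upper bound, your ISC/least-disagreement argument does not go through: the initial segment condition only puts $G\rest(i(\max t)+1)$ on the sequence of the model $G$ came from, and there is no minimality-of-disagreement contradiction to extract from that. The real argument is a hull computation, and it proves (a) and (b) \emph{simultaneously}. One lets $P$ be either $Q$ or the first counterexample to (b) along $b$; then $i(\max t)=i_{U,P}(\max t)$ and all generators used along $b$ up to $P$ lie below $i(\max t)$. Since $M$ is pointwise definable and $E$ is generated by $\tau\cup t\sub\max t+1$, one has $U=\Hull^U(\max t+1)$, hence $P=\Hull^P(i(\max t)+1)$. This yields $(i(\max t)^+)^Q\sub\Def^Q(i(\max t)+1)$, so $E_j$ (and therefore $G$) has no generators in $[i(\max t)+1,(i(\max t)^+)^Q)$, forcing $\nu_G\le i(\max t)+1$. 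The same hull identity then gives $P=\Hull^Q(\nu_G)=\Ult(M,G)$, so $P=Q$ since both occur in a comparison --- establishing (b) and the uniqueness of $G$ on $c$ at once. Your separate argument for (b) is circular (it assumes the uniqueness part of (a)), and the asserted contradiction (``no disagreement remains'') is not one: extenders on the $\Uu$-branch $b$ are not precluded by the $\Tt$-branch $c$ having already reached $Q$. Finally, only Dodd-\emph{solidity} is used in this claim; full Dodd-soundness below $\tau$ enters only in the next one.
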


\begin{proof}\setcounter{sclm}{0} Toward (a), we first show that $i(\max t)+1\leq\nu_G$ by finding fragments of $G$ in $Q$. By Dodd solidity of $E$,
\[ W=E\rest\max t\in U. \] 
We claim that
\[ E_j\rest i(\max t)=i(W)\in Q.\]
To see this, consider $W$ as the set of pairs
\[ (A,i_E(A)\int\max t) \]
such that $A\sub\crit(E)$ and $A\in M$. Since $\crit(E)<\crit(i)$ and $j=i\com i_E$, $i(W)$ is the set of pairs
\[ (i(A),i(i_E(A)\int\max t)) = (A,j(A)\int i(\max t)) \]
such that $A\int\pow(\crit E)\int M$, which is equivalent to $E_j\rest i(\max t)$. But now if $\nu_G<i(\max t)+1$, then
\[ G = E_j\rest\nu_G = i(W)\rest\nu_G\in Q. \]
On the other hand, $G\notin\Ult(M,G)$, which contradicts the fact that $Q$ and $\Ult(M,G)$ have the same subsets of $\nu_G$. So $i(\max t)+1\leq\nu_G$.

Now we prove $\nu_G\leq i(\max t)+1$ and (b) together. If (b) holds let $P=Q$, and if (b) fails let $P$ be the first counterexample to (b) along $b$. Either way, $i(\max t)=i_{U,P}(\max t)$. Moreover, all generators of extenders used in $\Uu$ along the branch leading to $P$ are below $i(\max t)$. Since $U=\Hull^U(\max t+1)$,
\[ P=\Hull^P(i(\max t)+1).\]
Since $\crit(i_{P,Q})>i(\max t)$,
\[ (i(\max t)^+)^Q\sub\Def^Q(i(\max t)+1).\]
Therefore no generators of $E_{i\com i_E}=E_j$ lie between $i(\max t)+1$ and $(i(\max t)^+)^Q$. So letting $G$ be the first extender hitting $M$ along $c$, $G$ cannot have generators in that region either, so $\nu_G\leq i(\max t)+1$. This gives (a).

Therefore
\[ P=\Hull^P(i(\max t)+1)=\Hull^Q(i(\max t)+1)=\Hull^{\Ult(M,G)}(\nu_G) =
\Ult(M,G), \]
since $i^\Uu_{P,Q}$ and $i^\Tt_{\Ult(M,G),Q}$ have critical points above $i(\max t)+1=\nu_G$. This gives $P=\Ult(M,G)$. But these models appeared during a comparison, which implies they are the common final model, $Q$, giving (b).\renewcommand{\qedsymbol}{$\Box$(Claim \ref{clm:escG})}\end{proof}

We now know $i(t)$ is a set of generators of $G$. We need to investigate more carefully their roles in generating $G$. At this point, it's not clear that $i(t)$ is the Dodd parameter of $G$. We need to introduce a variant of the Dodd parameter and projectum, more analogous to the standard parameter and projectum. $i(t)$ is in fact this parameter. We'll also establish that iterations preserve this parameter and projectum nicely, which will allow us to trace the origins of $G$ in $\Tt$.

\begin{dfn}\label{dfn:Dsp}\index{Dodd fragment} Let $\pi:R\to S$ be a $\Sigma_0$-elementary embedding between premice. Suppose $\pi$ is
cardinal preserving and $\mu=\crit(\pi)$ inaccessible in $R$. Suppose $E_\pi\notin S$, where $E_\pi$ is the extender
derived from $\pi$ of length $\pi(\mu)$. The \emph{Dodd-fragment parameter} of $\pi$, denoted
$s_\pi=s=\{s_0,\ldots,s_{k-1}\}\in\OR^{<\om}$, is defined recursively as follows.
Give $s\rest i=\{s_0,\ldots,s_{i-1}\}$, $s_i$ is the largest $\alpha\geq(\mu^+)^R$ such that
\[ E_\pi\rest (\alpha\un s\rest i)\in S, \]
if such exists. $k$ is large as possible (note $s_{i+1}<s_i$). The
\emph{Dodd-fragment projectum} of $\pi$, denoted $\sigma_\pi$, is then the sup of $(\mu^+)^R$ and
all $\alpha$ such that
\[ E_\pi\rest (\alpha\un s_\pi)\in S. \]
(Note that since $\pi(\mu)$ is an $S$-cardinal, $s_\pi$ and $\sigma_\pi$ are
in fact determined by $\pi(R|\mu)$.) One can also give a characterization as that of the Dodd parameter and projectum given in \ref{fact:Dodd}.

Given a premouse $P$ active with extender $E$, the Dodd-fragment
parameter and projectum of $E$ (or of $P$) are the parameter and projectum of $i^P_E$.

We'll often refer to the Dodd-fragment parameter and projectum collectively as
the \emph{Dodd-fragment ordinals}.\end{dfn}

\begin{clm}\label{clm:escDj} The Dodd-fragment ordinals of
$i_G:M\to Q=\Ult(M,G)$ are $s_G=i(t_E)=i(t)$ and $\sigma_G=\tau_E=\tau$.\end{clm}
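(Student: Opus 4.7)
The plan is to establish $s_G = i(t_E)$ and $\sigma_G = \tau_E$ by matching lower and upper bounds, using throughout the push-forward computation from Claim \ref{clm:escG}(a): for any $X \subseteq \nu_E$, since $\crit(i) \geq \taubar = \tau$ by Claim \ref{clm:esccoh} lies well above $(\kappa^+)^M$, and since $j = i \com i_E$, one has
\[
i(E \rest X) = E_j \rest i(X),
\]
with $i$ fixing subsets of $\kappa$ in $M$ as well as ordinals below $\crit(i)$.

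For the lower bounds, Dodd-soundness of $E$ provides $E \rest (t_\ell \cup t \rest \ell) \in U$ for each $\ell < k$, and (when $\tau > (\kappa^+)^E$) $E \rest (\alpha \cup t) \in U$ for every $\alpha < \tau$. Pushing these fragments through $i$, and using that $i(\alpha) = \alpha$ when $\alpha < \tau$, I obtain $E_j \rest (i(t_\ell) \cup i(t) \rest \ell) \in Q$ and $E_j \rest (\alpha \cup i(t)) \in Q$. An induction on $\ell$ then yields $(s_G)_\ell \geq i(t_\ell)$, so $s_G$ extends $i(t)$ as an initial segment, and simultaneously $\sigma_G \geq \tau$.

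For the upper bounds, the base case $(s_G)_0 \leq i(t_0)$ (which also pins the length of $s_G$ down to $k$) is immediate from $\nu_G = i(t_0) + 1$: any $\alpha > i(t_0)$ satisfies $\alpha \geq \nu_G$, so $E_{i_G} \rest \alpha$ definably recovers $G = E_{i_G} \rest \nu_G$ by restricting second coordinates, and $G \notin \Ult(M,G) = Q$. For the remaining bounds $(s_G)_\ell \leq i(t_\ell)$ ($\ell > 0$) and $\sigma_G \leq \tau$, I would invoke Fact \ref{fact:Doddsound} on the active premouse on the $\Tt$-sequence whose top extender is $G$ ($1$-sound and iterable, as inherited from $M$), obtaining that $G$ itself is Dodd-sound. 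This identifies $s_G$ and $\sigma_G$ with the generator-theoretic Dodd parameter and projectum of $G$ characterized as in Fact \ref{fact:Dodd}. Then $(s_G)_\ell \leq i(t_\ell)$ reduces to showing that no ordinal in the interval $(i(t_\ell), i(t_{\ell-1}))$ is an $(i(t) \rest \ell)$-generator of $G$, which by the elementarity of $i$ and the definition of $t_\ell$ as the largest $(t \rest \ell)$-generator of $E$ follows from the maximality built into $t_E$. The analogous argument bounds $\sigma_G$.

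The main obstacle is translating the Dodd-fragment data into generator data so that the pull-back through $i$ is clean. The detour through Fact \ref{fact:Doddsound} is what makes this manageable: once both $s_G$ and the Dodd parameter of $G$ are phrased in terms of generators, preservation of generators under $i$ (with $\crit(i) \geq \tau$) is transparent, whereas a direct attempt to pull back a supposed large Dodd-fragment of $G$ via $i$ stalls, because the relevant $\alpha$ need not lie in $\rg(i)$.
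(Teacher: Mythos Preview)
Your upper-bound argument has a genuine gap. You invoke Fact~\ref{fact:Doddsound} on ``the active premouse on the $\Tt$-sequence whose top extender is $G$'', asserting it is $1$-sound. But this need not hold: if $G$ is the active extender of the full model $M^\Tt_\alpha$ (rather than of a proper initial segment), then the premouse in question is $M^\Tt_\alpha$ itself, which may fail to be $1$-sound when the branch leading to it has dropped. Indeed, the remainder of the proof after this claim (Claim~\ref{clm:escDsound} and the analysis via Lemmas~\ref{lem:Dsp4}--\ref{lem:Dsp5}) exists precisely to handle the case where $G$ is \emph{not} Dodd-sound. If your argument worked, $G$ would always be Dodd-sound, Claim~\ref{clm:escDsound}(a) would give $G=E$ immediately, and none of that later machinery would be needed.

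You also dismiss the direct pull-back approach on the grounds that the relevant ordinal $\alpha$ need not lie in $\rg(i)$. But this is exactly what the paper does, and it works: one does not need $\alpha\in\rg(i)$. Suppose $s_G\rest k=i(t)\rest k$ but $(s_G)_k>i(t)_k$; then $G\rest\bigl((i(t)_k+1)\cup i(t)\rest k\bigr)\in Q$, hence $G\rest\bigl(\tau\cup i(t)\bigr)\in Q$. Since $\crit(i)\geq\tau$ and $j=i\com i_E$, this fragment is isomorphic to $E\rest(\tau\cup t)$, which fully determines $E$ and is coded as a subset of $\tau$. As $\crit(i^\Uu)\geq\tau$, the models $Q$ and $U$ agree on $\pow(\tau)$, so $E\in U$, contradicting $E\notin\Ult(M,E)$. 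The same reasoning bounds $\sigma_G$. Your lower-bound argument via pushing Dodd-soundness witnesses through $i$ is correct and matches the paper.
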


\begin{proof} As $E$ is Dodd-sound, $s_E=t$ and $\sigma_E=\tau$. (Since $E$ is coded by $E\rest\tau\un t$ and $E\notin U$, the Dodd-fragment ordinals can't be any higher.)

Suppose $E\rest X\in U$. Then
\[ i(E\rest X) = G\rest i(X). \]
This is just as for $i(E\rest\max t)=G\rest i(\max t)$, shown at the start of proving Claim \ref{clm:escG} (note we now have $j=i_G$). So letting $X=(t_E)_k\un t_E\rest k$,
\[ G\rest \left[(i(t_E))_k\un i(t_E)\rest k\right]\in Q. \]
Similarly, if $\tau_E>(\crit(E)^+)^M$ and $\alpha<\tau_E$ then $G\rest\alpha\un i(t_E)\in Q$. Therefore, sufficient fragments of $G$ with generators ``below'' $\tau_E\un i(t_E)$ are in $Q$, to witness Claim \ref{clm:escDj} - we just need to see that $G\rest\tau_E\un i(t_E)$ is not in $Q$.

Suppose $s_G\rest k=i(t_E)\rest k$ but $(s_G)_k>i(t_E)_k$. Then
\[ G \rest \left[(i(t_E)_k+1)\un (i(t_E)\rest k)\right]\in Q. \]
But this gives
\[ E_{i\com i_E}\rest\tau_E\un i(t_E)=G\rest\tau_E\un i(t_E)\in Q, \]
Since $\crit(i)\geq\tau_E$, this fragment of $E_{i\com i_E}$ is isomorphic to
$E\rest\tau_E\un t_E$, which fully determines $E$, and is coded as a subset of
$\tau_E$, But
$Q$ and $U$ agree about such sets, so $E\in U$, a contradiction. This argument
shows $s_G=i(t_E)$ and $\sigma_G=\tau_E$.\renewcommand{\qedsymbol}{$\Box$(Claim \ref{clm:escDj})}\end{proof}

The next claim will motivate the rest of the proof.

\begin{clm}\label{clm:escDsound}
\begin{itemize}\item[]
\item[(a)] If $G$ is Dodd-sound then $G=E$;
\item[(b)] If $G$ is not the active extender of the model it is
taken from, then $G=E$ is on $\es^M$.
\end{itemize}\end{clm}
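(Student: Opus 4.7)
The plan is to prove (a) first, and then deduce (b) by invoking Fact \ref{fact:Doddsound}.

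For (a), assume $G$ is Dodd-sound. By Claim \ref{clm:escDj} the Dodd-fragment parameter is $s_G = i(t)$ and the Dodd-fragment projectum is $\sigma_G = \tau$, and for a Dodd-sound extender these agree with the Dodd parameter $t_G$ and projectum $\tau_G$. By Fact \ref{fact:Dodd} the set $\tau \cup i(t)$ generates $G$ and $\nu_G = i(\max t) + 1$. I want to show the iteration embedding $i\colon U \to Q$ is the identity; once this is done, $U = Q$ gives $i_G = i_E$, and since the natural length of an extender is read off from its ultrapower map, $\nu_G = \nu_E$ with matching measures, so $G = E$.

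To force $i = \mathrm{id}$, suppose not, so the branch $b$ has at least one non-trivial embedding. Claim \ref{clm:escG}(b) applied at $P = U$ then gives $\crit(i) \leq \max t$, so $i(\max t) > \max t$ and $\max t \in [\tau, \nu_G)$. Dodd-soundness lets me write $\max t = [a, f]_G = i(i_E(f))(a)$ for some $a \in (\tau \cup i(t))^{<\om}$ and $f \in M$. Because $\crit(i) \geq \tau$ fixes $\tau$ and $i(t) \subseteq \rg(i)$, each element of $a$ lies in $\rg(i)$; writing $a = i(a_0)$ with $a_0 \in (\tau \cup t)^{<\om}$ gives $\max t = i(i_E(f)(a_0)) = i([a_0, f]_E) \in \rg(i)$. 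But $[a_0, f]_E$ is an ordinal of $U$ generated by $a_0 \subseteq \tau \cup t$, so if it equals $i^{-1}(\max t)$ then it must be $\max t$ itself, since otherwise $\max t$ would be generated in $E$ by ordinals strictly below itself, contradicting $\max t \in t_E$. Then $i(\max t) = \max t$, contradicting $\crit(i) \leq \max t$ with $i$ non-trivial. Hence $i = \mathrm{id}$ and $G = E$.

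For (b), suppose $G$ is not the active extender of $M^\Tt_{\alpha_0}$. Then $\lh(G) < \OR^{M^\Tt_{\alpha_0}}$, and $M^\Tt_{\alpha_0}||\lh(G)$ is an active, fully sound (in particular $1$-sound) proper initial segment of the $\fully$-iterable premouse $M^\Tt_{\alpha_0}$, and is itself $\fully$-iterable via the copying constructions of \S\ref{sec:copying}. Fact \ref{fact:Doddsound} applies to give that $G$ is Dodd-sound, whereupon part (a) yields $G = E$ and $i = \mathrm{id}$. The triviality of $i$ forces $U = Q = \Ult(M, G)$, so the comparison terminates after the single application of $G$ to $M$ on the $M$-side; hence $\alpha_0 = 0$, $M^\Tt_{\alpha_0} = M$, and $G$ sits on $\es^M$ at index $\lh(G)$.

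The main obstacle is the contradiction step in (a): one must carefully translate the Dodd-sound expression of $\max t$ as $[a, f]_G$ back through $i$ to an $E$-expression with coordinate $a_0 \subseteq \tau \cup t$. The book-keeping hinges on $\crit(i) \geq \tau$ together with $i(t) \subseteq \rg(i)$, and on the key observation that $\max t \in t_E$ rules out its being $E$-generated by ordinals strictly below itself, which is what closes the argument.
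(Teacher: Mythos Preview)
Your argument for (a) has two real gaps, and the route you take is genuinely different from the paper's.

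\textbf{The paper's (a) is a one-line isomorphism.} Dodd-soundness of $G$ gives that $\sigma_G\cup s_G=\tau\cup i(t)$ generates $G$. Since $\crit(i)\geq\tau$ and $i\circ i_E=i_G$, for $a_0\in(\tau\cup t)^{<\om}$ and $A\in\pow(\kappa)\cap M$ one has $(a_0,A)\in E$ iff $(i(a_0),A)\in G$. Hence $G=G\rest(\tau\cup i(t))\cong E\rest(\tau\cup t)=E$, the last equality because $E$ is Dodd-sound. No analysis of $i$ is needed.

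\textbf{Your route---show $i=\id$ first---does not close.} Set $\xi=[a_0,f]_E$; you have $i(\xi)=\max t$ and $a_0\sub\tau\cup t$. Your justification that $\xi=\max t$ is that ``otherwise $\max t$ would be generated in $E$ by ordinals strictly below itself''. But this is about $\xi$, not $\max t$: if $\xi<\max t$, you learn that $\xi$ is $E$-generated by $a_0$, which says nothing about how $\max t$ is generated. Nothing forbids $\xi<\max t$ with $i(\xi)=\max t$. Second, even granting $\xi=\max t$, your contradiction (``$\crit(i)\leq\max t$ with $i$ non-trivial, so $i(\max t)>\max t$'') fails: an iteration map can have $\crit(i)<\alpha$ and still satisfy $i(\alpha)=\alpha$. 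Claim~\ref{clm:escG}(b) gives $i_{U,P}(\max t)\geq\crit(i_{P,Q})$ at each stage, but that does not force the final image strictly above $\max t$.

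\textbf{For (b) you skip the key placement argument.} From $G=E$ you get $U=Q$, so $\Uu$ is trivial; but this does \emph{not} give $\alpha_0=0$. The tree $\Tt$ may have side branches: $G=E^\Tt_\gamma$ with $\gamma>0$, taken from $M^\Tt_\gamma\neq M$, yet still applied to $M^\Tt_0=M$. The paper rules this out by a projection/cardinality argument: the level of $M^\Tt_\gamma$ active with $G$ projects to $\leq\tau_G=\tau<\eta$, so $\eta$ is not a cardinal in $M^\Tt_\gamma$; but for $\gamma\geq 1$, $\eta=\lh(E^\Tt_0)$ \emph{is} a cardinal of $M^\Tt_\gamma$. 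Hence $\gamma=0$ and $G\in\es^M$. You need this step.
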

\begin{proof} By Dodd-soundness, $\sigma_G\un s_G$ generates $G$. By Claim
\ref{clm:escDj}, commutativity and that $\tau_E\leq\crit(i)$,
\[ E \rest \tau_E\un t_E \iso G \rest \tau_E\un i(t_E)=G\rest\sigma_G\un s_G =
G.
\]
This gives (a). For (b), $G$ must be Dodd-sound by \ref{fact:Doddsound}. Thus $G=E$, and
$U=\Ult(M,E)=Q$, so there is no movement on the $U$ side during
the comparison. Recall $\eta$ is the least disagreement between $M$ and $U$. Since $\tau_G<\eta\leq\lh(G)$, $\eta$ cannot
be a cardinal in the model $G$ comes from. Since $\eta$ indexes
the least disagreement, it must be that $G$ is indexed at
$\eta$ in $M$.\renewcommand{\qedsymbol}{$\Box$(Claim \ref{clm:escDsound})}\end{proof}

So we now assume that $G$ is the active extender of the model $R$ from which it
is taken. Let $R^*$ be $\Core_\omega(R)$. Notice the iteration from $R^*$ to $R$
drops immediately to degree 0 (thanks to Ralf Schindler for pointing out this
simplification of our original argument). This is because $G\rest\sigma_G\un
s_G$ is a $\bfSigma_1$ subset of $R$ missing from $R$, so
$\rho_1^R\leq\sigma_G=\tau_E$. But the comparison started above $\tau_E$, so ultrapowers of degree $\geq 1$ lead to models with first projectum strictly above $\tau_E$.

Let $G^*$ be $R^*$'s active extender. The following lemmas establish how the
Dodd fragment ordinals of $G$ are related to those of $G^*$.
Note though that by \ref{fact:Doddsound}, $G^*$ is Dodd-sound. The first two lemmas are
(\cite{covering1woodin}, 2.1.4), which is based on (\cite{fsit}, 9.1).

\begin{lem}\label{lem:Dsp3} Let $P$ be an active premouse with $F=F^P$, and $H$
a short
extender over $P$ with $\crit(H)<\nu_F$. Let $W=\Ult_0(P,H)$ and $i_H:P\to W$ be the
canonical embedding. If $A_H\sub\OR$, $A_F\sub\nu_F$ suffice
as generators for $H$ and $F$ respectively, then
\[ A_H\un i_H``A_F \]
suffices as generators for $F^W$. (If $H$ measures more sets than are in $P$, the hypothesis on $A_H$ should be taken with respect to $P$, so $W=\{i^P_H(f)(a)\ |\ f\in P\ \&\ a\in A_H^{<\om}\}$.)\end{lem}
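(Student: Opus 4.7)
Given $\alpha<\nu_{F^W}$, I will produce $h\in W$ and $c\in(A_H\cup i_H``A_F)^{<\om}$ with $\alpha=i^W_{F^W}(h)(c)$, by decomposing $\alpha$ first through the $H$-ultrapower and then through the $F$-ultrapower, and assembling via the shift lemma.

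First I would decompose through $H$. Since $A_H$ generates $H$ over $P$ and $W=\Ult_0(P,H)$, write $\alpha=i_H(f)(a)$ with $a\in A_H^{<\om}$ and $f\in P$. Using $\alpha<\nu_{F^W}$ together with the standard identification of $\nu_{F^W}$ in terms of $i_H$ and $\nu_F$ (which splits between the cases $\crit(H)<\crit(F)$ and $\crit(F)\leqslant\crit(H)<\nu_F$), truncate so that $f:[\crit(H)]^{|a|}\to\nu_F$.

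Next I would decompose the values of $f$ through $F$. Since $A_F$ generates $F$ over $P$, for each $u$ in an $H_a$-measure one set, $f(u)=i_F(g_u)(b_u)$ with $g_u\in P$ and $b_u\in A_F^{<\om}$. A pigeonhole on arity via $H_a$-completeness yields a fixed $n$, producing $g,b\in P$ with $b(u)\in A_F^n$ and $\{v:g(u)(v)=f(u)\}\in F_{b(u)}$ for $H_a$-a.e.\ $u$.

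The assembly is via the shift lemma. Applying $i_H$ to this $P$-statement and specializing to $u=a$ gives $\{v:i_H(g)(a)(v)=\alpha\}\in F^W_{\tilde b}$ where $\tilde b:=i_H(b)(a)$, equivalently $\alpha=i^W_{F^W}(i_H(g)(a))(\tilde b)$; so $h:=i_H(g)(a)\in W$ and $c:=\tilde b\in W^n$ satisfy the target equation. The residual difficulty is that $\tilde b$ a priori lies only in $i_H(A_F^n)$, which may strictly exceed $(i_H``A_F)^n$. I would resolve this by augmenting $c$ with the free $A_H$-coordinate $a$ and rebuilding $h$ so that it absorbs the $u$-dependence of $b$, exploiting the shift lemma's identification of $F^W_{i_H(b^*)}$ with $i_H(F_{b^*})$ for $b^*\in A_F^{<\om}$ to trade $\tilde b$'s coordinates for literal $i_H$-images of elements of $A_F$, keeping only $A_H$-coordinates from $a$ and $i_H``A_F$-coordinates on the outside. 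The hypothesis $\crit(H)<\nu_F$ is what makes the shift lemma (and hence this identification) available.

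The main obstacle will be precisely this last reshuffling: since $u\mapsto b(u)$ need not be constant on any $H_a$-measure one set, $\tilde b\notin(i_H``A_F)^n$ in general, and the passage from coordinates in $i_H(A_F^n)$ to coordinates in $(A_H\cup i_H``A_F)^{<\om}$ must be executed by exploiting the freedom of the extra $A_H$-coordinate. The case split $\crit(H)<\crit(F)$ vs.\ $\crit(F)\leqslant\crit(H)<\nu_F$ must be tracked throughout, since in the second case $F^W$ is computed from a partial shift of $F$ and $\nu_{F^W}$ arises as $\sup i_H``\nu_F$ rather than $i_H(\nu_F)$.
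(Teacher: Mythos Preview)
Your plan contains a real detour, and the ``main obstacle'' you flag is a symptom of that detour rather than an intrinsic difficulty of the lemma. The paper's proof avoids it in one line by an observation you have not used: $P$ is \emph{active} with $F$, so the universe of $P$ sits inside $\Ult_0(P,F)$, and hence the function $f\in P$ from your first step $\alpha=i_H(f)(a)$ is itself an element of $\Ult_0(P,F)$. Since $A_F$ generates $F$, one writes $f=i_F(g)(\bar\gamma)$ with a \emph{fixed} tuple $\bar\gamma\in A_F^{<\om}$ and $g\in P$. There is no pointwise decomposition of $f(u)$, no pigeonhole, no function $u\mapsto b(u)$, and no case split on the position of $\crit(H)$ relative to $\crit(F)$. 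With the shift map $\psi:\Ult_0(P,F)\to\Ult_0(W,F^W)$ one has $\psi\rest P=i_H\rest P$, $\psi\circ i_F=i_{F^W}\circ i_H$, and $\psi\rest\nu_F=i_H\rest\nu_F$, so
\[
\alpha=i_H(f)(a)=\psi(f)(a)=\psi(i_F(g)(\bar\gamma))(a)=i_{F^W}(i_H(g))(i_H(\bar\gamma))(a),
\]
and one is done: $i_H(g)\in W$, $i_H(\bar\gamma)\in(i_H``A_F)^{<\om}$ (since $\bar\gamma$ is a fixed finite tuple), and $a\in A_H^{<\om}$.

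Your reshuffling problem arises only because you decomposed $f(u)$ for varying $u$ rather than $f$ itself. Your proposed fix --- ``absorbing the $u$-dependence of $b$'' and ``trading $\tilde b$'s coordinates for literal $i_H$-images'' --- is not a workable description as it stands: the shift-lemma identity $F^W_{i_H(b^*)}=i_H(F_{b^*})$ you invoke applies to a fixed $b^*\in A_F^{<\om}$, whereas your $\tilde b=i_H(b)(a)$ comes from a \emph{function} $b$ and is not of that form. If you try to carry this out, you will find yourself needing to represent the function $b$ (or each $b_j$) as an element of $\Ult_0(P,F)$ via $i_F$ and $A_F$-generators --- which is exactly the paper's move, applied one level later after unnecessary extra work. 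So your route is not wrong, but it is circuitous, and its hard step is precisely the insight that makes the direct argument trivial.
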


\begin{proof} Let $\alpha<\nu_{F^W}$; we want to generate $\alpha$ using ordinals
in $A_H\un i_H``A_F$. We have the ultrapower maps $i_H$, $i_F:P\to\Ult_0(P,F)$
and $i_{F^W}:W\to\Ult_0(W,F^W)$. Let $\psi:\Ult_0(P,F)\to\Ult_0(W,F^W)$ be given
by the shift lemma. Then
\begin{equation}\label{eqn:comm} \psi\com i_F = i_{F^W}\com i_H\ \ \&\ \
\psi\rest\nu_F=i_H\rest\nu_F. \end{equation}
Let $f\in P$ and $\betabar\in A_H^{<\om}$ be such that $\alpha=i_H(f)(\betabar)$. Let
$g\in P$ and $\gammabar\in A_F^{<\om}$ be such that $f=i_F(g)(\gammabar)$. Let
$\gammabar^*=i_H(\gammabar)$. Then using (\ref{eqn:comm}),
\[ \alpha = \psi(f)(\betabar)=\psi(i_F(g)(\gammabar))(\betabar) = (\psi\com
i_F)(g)(\gammabar^*)(\betabar) = i_{F^W}(i_H(g))(\gammabar^*)(\betabar). \]
\renewcommand{\qedsymbol}{$\Box$(Lemma \ref{lem:Dsp3})}\end{proof}

\begin{lem}\label{lem:Dsp4} Let $P$ be a type 2 premouse with $F=F^P$ Dodd
sound. Suppose $H$ is a short
extender over $P$ with $\crit(H)<\tau_F$. Let
$W=\Ult_0(P,H)$. Let $i_H:P\to W$ be the canonical embedding. Then
$F^W$ is Dodd sound, $t_{F^W}=i_H(t_F)$ and
$\tau_{F^W}=\sup(i_H``\tau_F)$.\end{lem}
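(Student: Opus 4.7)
The plan is to combine Lemma~\ref{lem:Dsp3} (for an upper bound on the Dodd-fragment ordinals of $F^W$) with a shift-lemma transfer argument (for the exact values and for Dodd-soundness), using elementarity of $i_H$ and the Dodd-soundness of $F$. By Dodd-soundness of $F$ and Fact~\ref{fact:Dodd}, $\tau_F\un t_F$ suffices as generators for $F$. Taking $A_H=\nu_H$ for $H$ and noting $\crit(H)<\tau_F\leq\nu_F$, Lemma~\ref{lem:Dsp3} gives that $\nu_H\un i_H``(\tau_F\un t_F)$ suffices for $F^W$. Since $i_H``\tau_F\sub\sup(i_H``\tau_F)$ (each of whose elements is trivially self-generated by the identity function) and $\nu_H\leq i_H((\mu^+)^P)\leq\sup(i_H``\tau_F)$ for $\mu=\crit(H)$, we conclude that $\sup(i_H``\tau_F)\un i_H(t_F)$ suffices for $F^W$; hence $\tau_{F^W}\leq\sup(i_H``\tau_F)$ and $t_{F^W}\leq_{\mathrm{lex}}i_H(t_F)$.

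For Dodd-soundness and the matching lower bound, I will use the shift-lemma map $\psi:\Ult_0(P,F)\to\Ult_0(W,F^W)$ satisfying $\psi\com i_F=i_{F^W}\com i_H$ and $\psi\rest\nu_F=i_H\rest\nu_F$. By the same calculation performed at the start of Claim~\ref{clm:escG}, for any $A\sub\nu_F$ one has $\psi(F\rest A)=F^W\rest(i_H``A)$. Applying this to $A=(t_F)_i\un t_F\rest i$ gives $F^W\rest(i_H((t_F)_i)\un i_H(t_F\rest i))\in\Ult_0(W,F^W)$, and applying it to $A=\beta\un t_F$ for $\beta<\tau_F$ gives, after cutting by $\alpha\leq i_H(\beta)$, $F^W\rest(\alpha\un i_H(t_F))\in\Ult_0(W,F^W)$ for all $\alpha<\sup(i_H``\tau_F)$; together these exhibit Dodd-soundness of $F^W$ at the candidate data $(i_H(t_F),\sup(i_H``\tau_F))$. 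To pin the Dodd parameter exactly at $i_H(t_F)$, I will show by induction on $i$ that $i_H((t_F)_i)$ is a genuine $i_H(t_F\rest i)$-generator of $F^W$: any hypothetical $W$-ultrapower representation $i_H((t_F)_i)=[a,f]^W_{F^W}$ with $a\in(i_H((t_F)_i)\un i_H(t_F\rest i))^{<\om}$ and $f=[b,g]^P_H\in W$ unwinds, via commutativity with $\psi$ and a double-ultrapower rearrangement, into a $P$-representation of $(t_F)_i$ by generators strictly below $(t_F)_i\un t_F\rest i$ in $F$ (using $\crit(H)<(t_F)_i$), contradicting the definition of $(t_F)_i$. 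Fact~\ref{fact:Dodd} then forces $\tau_{F^W}=\sup(i_H``\tau_F)$.

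The main obstacle will be the tightness step: the double-ultrapower unwinding from $[a,[b,g]_H^P]^W_{F^W}$ into a single $P$-ultrapower representation of $(t_F)_i$ requires carefully tracking how $H$-generators interact with $F$-generators through $\psi$, together with the verification that $\nu_H$ is absorbed by $\sup(i_H``\tau_F)$ so that no $H$-side generators leak into the Dodd parameter of $F^W$. This is essentially the same shape as, though slightly more involved than, the shift-lemma identity $i(W)=E_j\rest i(\max t)$ used at the start of Claim~\ref{clm:escG}, and should go through by the same techniques.
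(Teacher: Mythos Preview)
Your proposal is correct and follows essentially the same two-step route as the paper: Lemma~\ref{lem:Dsp3} for the generating set $\sup(i_H``\tau_F)\un i_H(t_F)$, and $\Pi_1$-preservation under $i_H$ to transfer the Dodd-soundness witnesses into $W$. (A minor point: it is more natural to apply $i_H$ directly rather than the shift map $\psi$ when transferring fragments, since the witnesses $F\rest A$ already lie in $P$ by coherence; but $\psi$ and $i_H$ agree on $P||\lh(F)$, so your formulation is fine.)

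The paper does not isolate your ``tightness step'' because it falls out of what is already in hand: the fragment $F^W\rest(i_H((t_F)_i)\un i_H(t_F\rest i))$ is in $W$, so this set cannot generate $F^W$; but adjoining the single ordinal $i_H((t_F)_i)$ yields a generating set (by the upper bound, using $\sup i_H``\tau_F\leq i_H((t_F)_i)$), which forces $i_H((t_F)_i)$ to be an $i_H(t_F\rest i)$-generator. Alternatively---and this is the route the paper takes explicitly in the closely related Lemma~\ref{lem:Dsp6}---just observe that ``$\alpha$ is a $t$-generator of $F$'' is itself $\Pi_1$ over $P$ and hence preserved by $i_H$. Either way, the double-ultrapower unwinding you flag as the main obstacle is unnecessary.
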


\begin{proof} By Lemma \ref{lem:Dsp3}, we have
\[ A = i_H(t_F) \un \sup i_H``\tau_F \]
suffices as generators for $F^W$. Conversely, if $F'$ is a fragment of $F$ then
$i_H(F')$ is a fragment of $F^W$ (being a fragment is $\Pi_1$). Applying this to witnesses $F'$ to the Dodd soundness of $F$, one sees that $W$ has the desired fragments of $F^W$.\renewcommand{\qedsymbol}{$\Box$(Lemma \ref{lem:Dsp4})}\end{proof}

\begin{lem}\label{lem:Dsp5} Let $P$ be a type 2 premouse and $F=F^P$. Suppose $H$ is a short extender over $P$ with $\sigma_F\leq\crit(H)$, such that $P$ agrees with $W=\Ult_0(P,H)$ about $\pow(\sigma_F)$. Let $i_H:P\to W$ be the canonical embedding. Then $F^W$ is not Dodd sound;
$s_{F^W}=i_H(s_F)$ and
$\sigma_{F^W}=\sigma_F$.\end{lem}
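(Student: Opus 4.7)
The approach parallels Lemma \ref{lem:Dsp4}, but is inverted: since $\crit(H)\ge\sigma_F$, the map $i_H$ is the identity on $\sigma_F$. Applying the shift lemma exactly as in the derivation of (\ref{eqn:comm}), I would obtain $\psi:\Ult(P,F)\to\Ult(W,F^W)$ satisfying $\psi\com i_F=i_{F^W}\com i_H$ and $\psi\rest\nu_F=i_H\rest\nu_F$. In particular $\psi$ fixes $\sigma_F$ pointwise, and $\psi(s_F)=i_H(s_F)$.

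For the lower bounds $\sigma_{F^W}\ge\sigma_F$ and $s_{F^W}\ge_{\mathrm{lex}}i_H(s_F)$, I use the witnessing fragments supplied by the definition of the Dodd-fragment ordinals: for each $\alpha<\sigma_F$ the fragment $F\rest(\alpha\union s_F)$ lies in $\Ult(P,F)$, as do the staircase fragments $F\rest((s_F)_i\union s_F\rest i)$ for $i<|s_F|$. Since being such a fragment of the active extender at the appropriate level is a first-order condition, $\psi$ transports these to $F^W\rest(\alpha\union i_H(s_F))$ and $F^W\rest(i_H((s_F)_i)\union i_H(s_F)\rest i)$ respectively, placing them in $\Ult(W,F^W)$.

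For the matching upper bounds, $P$ and $W$ agree on $\pow(\sigma_F)$, and since $\crit(F)<\sigma_F\le\crit(H)$ the two ultrapowers $\Ult(P,F)$ and $\Ult(W,F^W)$ use the same representing functions on $\crit(F)$. This gives enough agreement that any fragment of $F^W$ in $\Ult(W,F^W)$ which would strictly extend $\sigma_F\union i_H(s_F)$ (either by reaching $\beta\ge\sigma_F$ in the projectum slot, or by having a lex-larger parameter at some coordinate) corresponds to a fragment of $F$ in $\Ult(P,F)$ strictly extending $\sigma_F\union s_F$, contradicting maximality in the definitions. For the ``not Dodd-sound'' conclusion, the ordinals in $[\sigma_F,\crit(H))$ that are generators of $F$ remain generators of $F^W$ (they are fixed by $i_H$, and $P$ and $W$ share the relevant $\crit(F)$-indexed functions), so the Dodd projectum $\tau_{F^W}$ strictly exceeds $\sigma_{F^W}=\sigma_F$, forcing a mismatch between the Dodd and Dodd-fragment ordinals of $F^W$.

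The main obstacle I expect is making the upper-bound transfer precise, since $\psi$ is not globally invertible on the relevant range. One has to argue level-by-level, using the $\pow(\sigma_F)$-agreement between $P$ and $W$, that fragments of $F^W$ whose coding falls in the agreement region are controlled by corresponding fragments of $F$ in $\Ult(P,F)$; this is analogous to, but slightly more delicate than, the forward direction of Lemma \ref{lem:Dsp4}.
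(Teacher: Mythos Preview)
Your lower-bound direction is correct and matches the paper (which phrases it via $i_H$ rather than $\psi$, but the two agree on fragments coded below $\nu_F$).

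For the upper bound, the ``main obstacle'' you anticipate is not real, and your formulation is slightly off. You try to show that a too-strong fragment of $F^W$ \emph{corresponds} to a too-strong fragment of $F$, but a hypothetical fragment of $F^W$ using generators outside $\rg(i_H)$ has no such counterpart, so this correspondence cannot be made direct. The paper's argument avoids any inversion: if \emph{any} stronger fragment of $F^W$ lies in $W$, then by restricting it one obtains $F^W\rest(\sigma_F\un i_H(s_F))\in W$. Since $\sigma_F\le\crit(H)$, this particular fragment is isomorphic to $F\rest(\sigma_F\un s_F)$ --- the $i_H(s_F)$ coordinates are merely a relabeling of the $s_F$ coordinates, and the component measures on $[\crit(F)]^{<\om}$ are literally identical --- so both are coded by the \emph{same} subset of $\sigma_F$. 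The hypothesis that $P$ and $W$ agree on $\pow(\sigma_F)$ then says this set is in $W$ iff it is in $P$, and it is not in $P$ by definition of $(s_F,\sigma_F)$. There is no level-by-level argument to carry out; the agreement hypothesis does everything in one step once you restrict to the right fragment.

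For ``not Dodd sound'', your argument via generators of $F$ in $[\sigma_F,\crit(H))$ has a gap: the hypotheses allow $\sigma_F=\crit(H)$, in which case that interval is empty. The paper instead observes that $\crit(H)$ itself is an $i_H(s_F)$-generator of $F^W$, which works uniformly and directly witnesses that $\sigma_F\un i_H(s_F)$ does not generate $F^W$, hence $F^W$ is not Dodd sound.
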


\begin{proof} Again $i_H$ maps fragments of $F$ to fragments of $F^W$. If any stronger fragments of $F^W$ are in $W$ then in fact
\[ F^W \rest \sigma_F\un i_H(s_F) \in W. \]
Now $\sigma_F\leq\crit(H)$ so this fragment is isomorphic to
$F\rest(\sigma_F\un s_F)$, which isn't in $P$. Since $P$ agrees with $W$ about $\pow(\sigma_F)$, it's not in $W$ either.

Since $\crit(H)$ is an $i_H(s_F)$-generator with respect to $F^W$, $F^W$ is not Dodd sound. (Note $F$ and $F^W$ have the same critical point, and $P$ and $W$ agree through its successor.)\renewcommand{\qedsymbol}{$\Box$(Lemma \ref{lem:Dsp5})}\end{proof}

Now we return to the origins of $G$. Part (c) of the final claim completes the proof of the theorem.

\begin{clm}\label{clm:escG*=E}\begin{itemize}\item[]
\item[(a)] $\crit(R^*\to R)\geq\tau_{G^*}=\sigma_G=\tau_E$;
\item[(b)] $R^*\ins M$;
\item[(c)] $E=G^*$ and $E$ is on $\es^M$.
\end{itemize}\end{clm}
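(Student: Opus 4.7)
The plan is to propagate Dodd-fragment ordinals from $R^*$ to $R$ using Lemmas \ref{lem:Dsp4} and \ref{lem:Dsp5}, then identify $G^*=E$ and deduce $R^*\ins M$ from the iteration-tree structure. By \ref{fact:Doddsound} the active extender $G^*=F^{R^*}$ is Dodd-sound, so $\sigma_{G^*}=\tau_{G^*}$; by Claim \ref{clm:escDj} we already know $\sigma_G=\tau_E$ and $s_G=i(t_E)$.

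For (a), I will show that every extender $H$ used along the iteration from $R^*$ to $R$ satisfies $\crit(H)\geq\tau_{G^*}$. Granting this, iterating \ref{lem:Dsp5} along the branch gives $\sigma_G=\sigma_{G^*}=\tau_{G^*}$; combined with $\sigma_G=\tau_E$, we obtain $\tau_{G^*}=\sigma_G=\tau_E$ and $\crit(R^*\to R)\geq\tau_E$. To prove the critical-point claim, I argue by contradiction: if $H_0$ is the first such extender with $\crit(H_0)<\tau_{G^*}$, then by \ref{lem:Dsp4}, $F^{\Ult_0(R^*,H_0)}$ is Dodd-sound with Dodd-fragment projectum $\sup i_{H_0}``\tau_{G^*}>\tau_{G^*}$. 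The iteration-tree rules ensure later extenders on the branch continue to satisfy the hypotheses of \ref{lem:Dsp4} or \ref{lem:Dsp5}, propagating Dodd-soundness up to $G=F^R$ with $\tau_G$ strictly above $\tau_{G^*}$. Then \ref{clm:escDsound}(a) forces $G=E$, whence $\tau_G=\tau_E=\sigma_G$; careful bookkeeping with the strict increase at the $H_0$-step then conflicts with \ref{clm:escDj}. This bookkeeping, together with handling any degree drops along the branch, is the main obstacle.

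For (b), since $R$ appears in $\Tt$ and $R^*=\Core_\omega(R)$, the standard structure of drops in iteration trees places $R^*$ as an initial segment of an earlier model $M^\Tt_\beta$ on the branch to $R$. The agreement below the first critical point used on $[0,\beta]_\Tt$, combined with Claim \ref{clm:esccoh} ($M|\tau=U|\tau$) and part (a) (which controls that critical point to be $\geq\tau_E$), traces $R^*$ down to an initial segment of $M$ itself. For (c), iterating \ref{lem:Dsp5} using (a) shows that $G\rest\tau_E\un i(t_E)$ is the canonical image under the branch embedding of $G^*\rest\tau_{G^*}\un s_{G^*}$; Dodd-soundness of $G^*$ then determines $G^*$ entirely from this fragment. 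Combining with the isomorphism $E\rest\tau_E\un t_E\cong G\rest\tau_E\un i(t_E)$ from \ref{clm:escDj} and the Dodd-soundness of $E$ (by hypothesis), we conclude $G^*=E$. Since $R^*\ins M$ by (b) and $F^{R^*}=G^*=E$, we have $E$ on $\es^M$, completing the proof.
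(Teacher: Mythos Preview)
Your argument for (a) has a real gap. You claim that after the first extender $H_0$ with $\crit(H_0)<\tau_{G^*}$, Dodd-soundness propagates along the branch to $R$, making $G$ Dodd-sound so that Claim~\ref{clm:escDsound}(a) applies. But Lemma~\ref{lem:Dsp5} explicitly produces an extender that is \emph{not} Dodd-sound: after applying $H_0$ via \ref{lem:Dsp4}, the new $\tau_1=\sup i_{H_0}``\tau_{G^*}$ may exceed $\nu_{H_0}$, so the next critical point on the branch (only guaranteed $\geq\nu_{H_0}$) can lie above $\tau_1$, forcing \ref{lem:Dsp5} and destroying Dodd-soundness. Even granting $G$ Dodd-sound and hence $G=E$, it is unclear how ``$\tau_G>\tau_{G^*}$'' yields a contradiction with Claim~\ref{clm:escDj} without first knowing $\tau_{G^*}\geq\tau_E$, which you have not established. (Also, the branch from $R^*$ to $R$ drops to degree $0$ at the very first step, as noted just before the claim, so there are no further degree drops to handle.)

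The paper's argument bypasses Dodd-soundness of $G$ entirely. From \ref{lem:Dsp4} and \ref{lem:Dsp5} one always has $\tau_{G^*}\leq\sigma_G$. If $\zeta=\crit(R^*\to R)<\tau_{G^*}$, the lemmas give $\sigma_G>i_{R^*,R}(\zeta)$, and since every extender used in $\Tt$ has length at least $\eta$, one gets $i_{R^*,R}(\zeta)>\eta>\tau_E$, contradicting $\sigma_G=\tau_E$ from Claim~\ref{clm:escDj} directly. Your argument for (b) is also off-target: the key is not agreement below some critical point, but that $\rho_1^{R^*}\leq\tau_{G^*}=\tau_E<\eta$, while $\eta$ is a cardinal in every tree model other than $M$; since $R^*$ is a proper segment of some tree model and projects below $\eta$, that model must be $M$. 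Your argument for (c) is correct and matches the paper.
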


\begin{proof}
\begin{itemize}\item[]
\item[(a)] Applying \ref{lem:Dsp4} and \ref{lem:Dsp5} to the (degree 0) branch leading from $R^*$ to $R$, we get $\tau_{G^*}\leq\sigma_G$. Let $\zeta=\crit(R^*\to R)$. If $\zeta<\tau_{G^*}$, then
\[ \sigma_G>i_{R^*,R}(\zeta)>\eta>\tau_E, \]
using the lemmas for the first inequality. This contradicts Claim \ref{clm:escDj}. So $\crit(R^*\to R)\geq\tau_{G^*}$, so \ref{lem:Dsp5} gives $\tau_{G^*}=\sigma_G$.
\item[(b)] $R^*$ is a proper segment of some model on the tree;
$\rho^{R^*}_1\leq\tau_{G^*}<\eta$; $\eta$ is a cardinal of all models on the tree other than $M$.
\item[(c)] By Claim \ref{clm:escDj}, $i(t_E)=s_G$. By (a) and Lemma
\ref{lem:Dsp5}, $s_G=i_{R^*,R}(s_{G^*})$. So $G^*$ is isomorphic to
$G\rest(\tau_{G^*}\un s_G)=G\rest(\tau_E\un i(t_E))$, which is isomorphic to
$E$. So by (b), $E$ is on $\es^M$.
\end{itemize}
\renewcommand{\qedsymbol}{$\Box$(Claim \ref{clm:escG*=E})(Theorem \ref{thm:coarseDodd})}
\end{proof}
\renewcommand{\qedsymbol}{}
\end{proof}

\begin{cor}\label{cor:seqdef} Let $N\sats\ZFC$ be a mouse. Then
\begin{itemize}
 \item[(a)] Every normal measure of $N$ is on $\es^N$.
 \item[(b)] If $\kappa$ is strong or Woodin in $N$, then it is so via extenders on $\es^N$.
 \item[(c)] If $\kappa$ is strong in $N$, then $\es^N$ is definable from $N|\kappa$ in the universe of $N$.
 \item[(d)] If $\tau$ is an $N$-cardinal, and $P$ is a premouse projecting to $\tau$, extending $N|\tau$, and $N$ has a total wellfounded short extender $E$ such that $P\ins\Ult(N,E)$, then $P\ins N$.
\end{itemize}
\end{cor}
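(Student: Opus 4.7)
The plan is to derive (a), (b), (c) from Corollary \ref{cor:coarse} by verifying its hypotheses in each case, and (d) from Lemma \ref{lem:Ecc} together with \ref{cor:coarse} and a standard comparison argument.

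For (a), let $U$ be a normal measure on $\kappa$ in $N$. Then $\crit(U)=\kappa$, $\nu_U=(\kappa^+)^N$ is an $N$-cardinal, and $(\nu_U^\kappa)^+$ exists in $N\sats\ZFC$. Iterability of $N$ gives wellfoundedness of $\Ult(N,U)$, so \ref{lem:Ecc} yields countable completeness of $U$ in $N$. Any element of $\her_{(\kappa^+)^N}^N$ is coded in $N$ by a subset of $\kappa$, and such subsets lie in $\Ult(L_{(\kappa^+)^N}[\es^N],U)$ via the identity representing function, so $\her_{\nu_U}^N\sub\Ult(L_{(\kappa^+)^N}[\es^N],U)$. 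Hence \ref{cor:coarse} places $U$ on $\es^N$.

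For (b), given $\kappa$ strong in $N$ and any $\eta$, pick an $E\in N$ with $\crit(E)=\kappa$ and $\her_\eta^N\sub\Ult(N,E)$; using $\ZFC$ in $N$, arrange $\nu_E$ to be an $N$-cardinal above $\eta$. The remaining hypotheses of \ref{cor:coarse} are verified as in (a), placing $E$ on $\es^N$. Woodinness is handled analogously through its extender characterization, selecting witnesses with $\nu_E$ a suitable cardinal above $i_E(f)(\crit(E))$ and invoking \ref{cor:coarse}.

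For (c), the part of $\es^N$ indexed below $\kappa$ is just $\es^{N|\kappa}$, already in $N|\kappa$. For $\eta\geq\kappa$, (b) supplies an $E\in\es^N$ with $\crit(E)=\kappa$, $\nu_E>\eta$ an $N$-cardinal, and $\her_{\nu_E}^N\sub\Ult(N,E)$. The key observation is that \ref{cor:coarse} characterizes such $E$ first-order in $N$ without reference to $\es^N$: any countably complete total short $N$-extender meeting these closure conditions is automatically on $\es^N$. Given any such $E$, the ultrapower $j_E(N|\kappa)=\Ult(N|\kappa,E)$ is computed in $N$ from $E$ and $N|\kappa$ alone, and $\her_\eta^N\sub\Ult(N,E)$ forces $\es^N|\eta=\es^{j_E(N|\kappa)}|\eta$. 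Hence $\es^N$ is uniformly definable in $N$ from the parameter $N|\kappa$.

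For (d), by \ref{lem:Ecc} (applicable since $N\sats\ZFC$ and $\Ult(N,E)$ is wellfounded), $\Ult(N,E)$ is a transitive class of $N$, so $P\in N$. To conclude $P\ins N$ we need iterability of $P$, and for this we promote $E$ to an extender on $\es^N$: using $\ZFC$ in $N$, find $E^*\in N$ with $\crit(E^*)=\crit(E)$, $\nu_{E^*}$ an $N$-cardinal exceeding $\OR^P$, $\her_{\nu_{E^*}}^N\sub\Ult(N,E^*)$, and $P\ins\Ult(N,E^*)$ retained (e.g., via a composition $E^*=E\circ F$ where $F$ is a strength witness in $\Ult(N,E)$). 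Then \ref{cor:coarse} places $E^*$ on $\es^N$, so $\Ult(N,E^*)$ is an honest iterate of $N$, hence iterable, hence $P$ is iterable. Finally, coiterate $P$ with $N$: they agree through the $N$-cardinal $\tau$, and $P$ is sound projecting to $\tau$, so the standard comparison argument forces $P\ins N$. The delicate step is choosing $E^*$ so as to preserve $P\ins\Ult(N,E^*)$ while meeting the closure hypothesis of \ref{cor:coarse}; the rest is bookkeeping.
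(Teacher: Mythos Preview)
Your treatment of (a)--(c) is essentially what the paper has in mind (it leaves these to the reader), though in (c) you should be a bit more careful: $\Ult(N|\kappa,E)$ computed with functions in $N|\kappa$ need not coincide with $i_E(N|\kappa)$ computed inside $\Ult(N,E)$; what you actually want is coherence of $E\in\es^N$, which gives $\es^{\Ult(N,E)}\rest\lh(E)=\es^N\rest\lh(E)$ directly.

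The real problem is (d). Your ``delicate step'' is not delicate but impossible in general: the hypothesis of (d) is merely that \emph{some} total wellfounded short extender $E$ has $P\ins\Ult(N,E)$. There is no assumption that $\crit(E)$, or any cardinal of $N$, has any strength whatsoever. If $N$ has a single measurable cardinal and $E$ is its unique normal measure, then neither $N$ nor $\Ult(N,E)$ contains any extender $F$ you could compose with to produce an $E^*$ satisfying $\her_{\nu_{E^*}}^N\sub\Ult(N,E^*)$ with $\nu_{E^*}>\OR^P$. So the promotion of $E$ to an $E^*\in\es^N$ simply fails, and with it your route to iterability of $P$.

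The paper's argument avoids this entirely by reusing the phalanx machinery from the proof of Theorem~\ref{thm:coarseDodd}. One passes to the countable hull $M$ with $\pi:M\to N|\lambda$, lets $\Ebar,\Pbar,\taubar$ be the collapses, and observes that the iterability of the phalanx $(M,\Ult(M,\Ebar),\taubar)$ follows exactly as in Claim~\ref{clm:escit} (the argument there needs only that $\tau$ is an $N$-cardinal with $N|\tau=\Ult(N,E)|\tau$, both of which hold here since $N|\tau\ins P\ins\Ult(N,E)$). Comparing $M$ with this phalanx yields a common final model $Q$ above $\Ult(M,\Ebar)$ with $\crit(i_{\Ult(M,\Ebar),Q})\geq\taubar$, so $\Pbar\ins Q$; and since every extender used has index above $\taubar$ while $\Pbar$ projects to $\taubar$, one reads off $\Pbar\ins M$. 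No strength hypothesis on $E$ is needed, and no separate iterability argument for $P$ is required.
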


\begin{proof}
We leave (a)-(c) to the reader, and just prove (d). Assume $N,E,P,\tau$ are as there. Let $M$, $\Ebar$, $\taubar$, etc., be defined as in the proof of the theorem. The iterability of the phalanx
$(M,\Ult(M,\Ebar),\taubar)$ works as there. Then $\Pbar\ins Q$, since
$\crit(i_{\Ubar,Q})\geq\taubar$. But all extenders used in the comparison have
index $>\taubar$, and $\Pbar$ projects to $\taubar$, so $\Pbar\ins M$.
\renewcommand{\qedsymbol}{$\Box$(Corollary \ref{cor:seqdef})}
\end{proof}

Steel noticed that combining the last statement of \ref{cor:seqdef} with an argument of Woodin's, we also get the following.

\begin{cor}\label{cor:AC} Let $N\sats\ZFC$ be a mouse and $\theta$ be an uncountable cardinal in $N$. Then $N|(\theta^+)^N$ is definable from parameters over $\her_{(\theta^+)^N}^N$. Therefore $L(\pow(\theta)^N)\sats\AC$.\end{cor}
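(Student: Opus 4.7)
The strategy is to first prove the definability claim and then deduce $\AC$ in $L(\pow(\theta)^N)$ by a standard Woodin-style argument. For the definability, write $H$ for $\her_{(\theta^+)^N}^N$, which as a set equals the universe of $N|(\theta^+)^N$. The plan is to define $\es^N\rest(\theta^+)^N$ as a predicate on $H$ from a small parameter in $H$. The key characterization is that a sound premouse $P$ extending $N|\theta$ with $\rho_\omega(P)=\theta$ and $\OR^P<(\theta^+)^N$ is an initial segment of $N$ iff there exists a total, short, wellfounded extender $E$ of $N$ with $P\ins\Ult(N,E)$. The backward direction is exactly Corollary \ref{cor:seqdef}(d); the forward direction is handled by exhibiting such an $E$ in each case.

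For the forward direction I would split into two cases. If $\es^N$ has an extender with critical point strictly above $\OR^P$, I take such an $E$ (or its trivial completion); then $\Ult(N,E)$ agrees with $N$ up to $\crit(E)$, so $P\ins\Ult(N,E)$. If no such extender exists, then the critical points of extenders on $\es^N$ are bounded by some $\alpha_0<(\theta^+)^N$, and above $\alpha_0$ the universe of $N$ coincides with the relative constructible closure $L[\es^N\rest\alpha_0]$. In this case $\es^N\rest\alpha_0\in H$ is itself a parameter from which $\es^N\rest(\theta^+)^N$, and hence $N|(\theta^+)^N$ as a premouse, is first-order definable over $H$. The $\omega$-sound levels projecting to $\theta$ are cofinal in $(\theta^+)^N$, and the passive intermediate levels are determined by the $J$-hierarchy, so this determines the whole sequence uniformly.

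For $\AC$ in $L(\pow(\theta)^N)$: every element of $H$ has hereditary cardinality $\leq\theta$ in $N$, hence its transitive closure admits a wellordering whose code lies in $\pow(\theta)^N$. Therefore $H$ together with its $\in$-structure is definable from the parameter $\pow(\theta)^N$ inside $L(\pow(\theta)^N)$, and since our definition of $N|(\theta^+)^N$ uses only first-order quantification over $H$ with parameters in $H$, the premouse $N|(\theta^+)^N$, with its extender sequence, lies in $L(\pow(\theta)^N)$. The canonical mouse wellorder on $N|(\theta^+)^N$ then restricts to a wellorder of $\pow(\theta)^N$ inside $L(\pow(\theta)^N)$, yielding $L(\pow(\theta)^N)\sats\AC$.

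The main obstacle is the forward direction of the initial-segment characterization: one must verify that the case split above is exhaustive and that the witnessing extender $E$ (or the parameter $\es^N\rest\alpha_0$) is always available and expressible inside $H$. The ``bounded $\es^N$'' case is where care is needed, since there the witnessing extender does not exist and one must fall back on the $L$-like description, ensuring the truncation used as the parameter is indeed a member of $H$. Once this is arranged, the rest follows by combining Corollary \ref{cor:seqdef}(d) with routine manipulations of the $J$-hierarchy.
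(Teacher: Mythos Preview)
You correctly identify the forward direction as the obstacle, but your proposed case split does not resolve it. In your first case, any total extender $E$ over $N$ with $\crit(E) > \OR^P \geq \theta$ has $\crit(E)$ an $N$-cardinal, hence $\crit(E) \geq (\theta^+)^N$ and $E \notin H$; so the condition ``$\exists E\ (P \ins \Ult(N,E))$'' cannot be witnessed over $H$ by such $E$, and in any case it collapses to ``$P \ins N$'' since $\Ult(N,E)$ agrees with $N$ past $\OR^P$. So this case contributes nothing toward a definition over $H$.

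Your second case rests on a false inference. Having all critical points on $\es^N$ bounded by $\alpha_0$ does \emph{not} imply that $\es^N$ is trivial above $\alpha_0$, nor that $N$ coincides with $L[\es^N\rest\alpha_0]$ there: extenders can have small critical point but large index. Already for $N$ the mouse for one measurable $\kappa$ and $\theta = (\kappa^+)^N$, your $\alpha_0 = \kappa + 1$ gives $\es^N\rest\alpha_0 = \emptyset$, yet the normal measure is indexed in $(\theta, (\theta^+)^N)$, so $L[\es^N\rest\alpha_0] = L$ recovers nothing. The same phenomenon occurs whenever some $\kappa < \theta$ carries extenders indexed above $\theta$, which is exactly the interesting case.

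The paper splits instead on whether $N$ has a cut-point in $[\theta, (\theta^+)^N)$. If not, there are total extenders with critical point $< \theta$ (hence lying in $H$) indexed cofinally below $(\theta^+)^N$; coherence together with \ref{cor:seqdef}(d) then gives the $H$-internal characterization you intend. If there \emph{is} a cut-point $\gamma$, the paper uses Woodin's argument: the parameter is an enumeration $\tilde{M} \sub \om_1^N$ (which is in $H$ since $\theta$ is uncountable) of all countable elementary substructures of levels of $N$, and a sound $P$ extending $N|\gamma$ and projecting to $\gamma$ satisfies $P \ins N$ iff every countable elementary submodel of $P$ appears on $\tilde{M}$. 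This countable-hull idea is what your proposal is missing; no short initial segment of $\es^N$ can serve as the parameter in the cut-point case.
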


\begin{proof}
Now we prove \ref{cor:AC}. There are two cases: first suppose that there is no cut-point $\gamma$ of $N$ such that $\theta\leq\gamma<(\theta^+)^N$. (Recall $\gamma$ is a cut-point\index{cut-point} of $N$ iff whenever $E$ is on $\es_+^N$, if $\crit(E)<\gamma$ then $\lh(E)<\gamma$.) This implies that there are unboundedly many $\gamma<(\theta^+)^N$ indexing a total extender. So by \ref{cor:seqdef}(d), $\her_{(\theta^+)^N}^N$ can just look at total extenders $E$ such that $N|\theta\ins\Ult(N|\theta,E)$ and $\Ult(N|\theta,E)$ is wellfounded, to determine the levels of $N$ projecting to $\theta$. (If $\Ult(N|\theta,E)$ is wellfounded then $E$ is countably complete in $N$.)

The proof of the second case is due to Woodin. Let $\tilde{M}$ be an enumeration of all countable elementary submodels of levels of $N$, that are members of $N$. $\tilde{M}$ is coded as a subset of $\om_1^N$ in $N$. Let $\gamma$ be a cut-point of $N$ with $\theta\leq\gamma<(\theta^+)^N$. Let $P$ be a sound premouse, $N|\gamma\ins P$, $P$ projecting to $\gamma$. Then we claim
\[ N\sats P\ins\J[\es]\ \iff\ \textrm{every countable elementary submodel of } P \textrm{ is on } \tilde{M}. \]
If $P$'s submodels are on the list, then let $P\in P'\ins N$, with $P'$ projecting to $\gamma$. In $N$, let $X\elem P'$ be countable, with $P\in X$. The collapses $\Pbar$ of $P$ and $\Pbar'$ of $P'$ are on the list, so can be compared in $V$. Standard arguments show $\Pbar\ins\Pbar'$, so $P\ins P'$. Everything actually took place in $\her_{(\theta^+)^N}^N$, so we're done.
\renewcommand{\qedsymbol}{$\Box$(Corollary \ref{cor:AC})}
\end{proof}

\begin{rem}\label{rem:deg1dfp}
The rest of this section contains a generalization of Lemma \ref{lem:Dsp4} to higher degrees, which we'll need in the next section.\end{rem}

\begin{lem}\label{lem:gen1hull} Let $P$ be a type 2 premouse, $\gamma_P$ the index of the largest proper initial segment of $F=F^P$ on $\es^P$, $\kappa=\crit(F)$, and $X\sub\nu^P$. Suppose
\[ \kappa,\gamma_P\in\Lambda = \{ x\in P\ |\ \exists f\in P,a\in X^{<\om}\ (x=[a,f]^P_F)\}. \]
Then $\Lambda=\Def_1^P(X\un(\kappa^+)^P)$.
\end{lem}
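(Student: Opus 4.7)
The plan is to prove the equality by establishing both inclusions, with the main tools being Los's theorem for $r\Sigma_1$-formulas under extender ultrapowers and the amenable coding of $F = F^P$ in $P$.

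For $\Def_1^P(X \cup (\kappa^+)^P) \sub \Lambda$ I verify (i) closure of $\Lambda$ under the canonical $r\Sigma_1^P$-Skolem function $h^P$ applied to $\Lambda$-parameters, and (ii) $X \cup (\kappa^+)^P \sub \Lambda$. For (i): given $x_1, \ldots, x_n \in \Lambda$ with $x_i = [a_i, f_i]_F$ and $x = h^P(i_0, \vec{x})$, amalgamate the $a_i$'s into a common $a^* \in X^{<\om}$ (adjusting each $f_i$ to a reindexed $\hat f_i$) and set $f^*(u) = h^P(i_0, \hat f_1(u), \ldots, \hat f_n(u))$; uniform $r\Sigma_1^P$-definability of $h^P$ gives $f^* \in P$, and Los yields $x = [a^*, f^*]_F \in \Lambda$. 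For (ii), $X \sub \Lambda$ is trivial, and $(\kappa^+)^P \sub \Lambda$ follows from $\kappa \in \Lambda$: writing $\kappa = [a_0, f_0]_F$ with $a_0 \in X^{<\om}$ and (by a standard reduction) $f_0 : \kappa^{|a_0|} \to \kappa$, and noting $(\kappa^+)^P = (\kappa^+)^{\Ult(P,F)} \leq i_F(\kappa)$, the standard representation of ordinals below $i_F(\kappa)$ writes each $\alpha < (\kappa^+)^P$ as $i_F(g)(\kappa) = [\{\kappa\}, g]_F$ for some $g : \kappa \to \kappa$ in $P$; composing gives $\alpha = i_F(g \circ f_0)(a_0) = [a_0, g \circ f_0]_F \in \Lambda$.

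For $\Lambda \sub \Def_1^P(X \cup (\kappa^+)^P)$: amenability of $F^P$ makes the relation ``$y = [a, g]_F$'' a $\Sigma_1^P(F^P)$-condition in $y, a, g$, so given $x = [a, f]_F$ with $a \in X^{<\om}$ it suffices to produce an $F_a$-equivalent $f' \in P$ whose $J$-hierarchy code lies below $(\kappa^+)^P$. When $x < i_F(\kappa)$ this is immediate: one takes $f' : \kappa^{|a|} \to \kappa$ (the larger values of $f$ being $F_a$-negligible), and such $f'$ lives in $P|(\kappa^+)^P$, so is coded by an ordinal below $(\kappa^+)^P$. The main obstacle is the case $x \geq i_F(\kappa)$, where a naive representative $f$ has range beyond $\kappa$ and its obvious code exceeds $(\kappa^+)^P$. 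This is where $\gamma_P \in \Lambda$ is used: the proper initial segment $F_0$ at index $\gamma_P$ lies in $P$ and encodes enough of $F$'s action on higher-rank functions that, together with $\kappa$ and ordinal parameters below $(\kappa^+)^P$, one reconstructs a canonical representative of $[f]_{F_a}$ uniformly in $r\Sigma_1^P$. Producing this canonical representative from $\kappa$, $\gamma_P$, and $(\kappa^+)^P$-bounded parameters is the crux; once done, the $r\Sigma_1$-term defining $x$ from $a$, the code of $f'$, and the $F^P$-predicate completes the inclusion.
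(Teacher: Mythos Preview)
Your argument for the hard inclusion $\Def_1^P(X\cup(\kappa^+)^P)\sub\Lambda$ has a genuine gap. You define $f^*(u)=h^P(i_0,\hat f_1(u),\ldots,\hat f_n(u))$ and assert that ``uniform $r\Sigma_1^P$-definability of $h^P$ gives $f^*\in P$'', then invoke \L o\'s. Neither step works. First, $h^P$ is only $\Sigma_1^P$-definable, not an element of $P$; the composite $f^*$ is therefore only $\Sigma_1^P$-definable from the $\hat f_i$, and since $\rho_1^P$ may well be $\leq\kappa$ there is no reason $f^*\in P$. Second, and more fundamentally, $h^P$ is the Skolem function for the \emph{active} structure $(P\|\OR^P,F^P)$, so it queries the amenably coded predicate $F$. \L o\'s for the ultrapower by $F$ relates truth in $\Ult(P|(\kappa^+)^P,F)$ (a passive structure) to measure-one truth in $P|(\kappa^+)^P$; it says nothing about $\Sigma_1$ formulas in the predicate $F$ itself. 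So even granting $f^*\in P$, you cannot conclude $[a^*,f^*]_F=h^P(i_0,\vec x)$.

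The paper handles this direction structurally: set $F'=F\rest X$, form $P'=(R'|(\nu_{F'}^+)^{R'},F')$ where $R'=\Ult(P|(\kappa^+)^P,F')$, and let $\pi:P'\to P$ be the natural factor map, so $\Lambda=\rg(\pi)$. One checks directly that $\pi$ respects the amenable extender predicates and that $\pi(\gamma_{P'})=\gamma_P$, $\pi(\nu_{F'})=\nu_F$. The key use of the hypothesis $\gamma_P\in\Lambda$ is precisely here: it forces $\nu_F\in\rg(\tilde\pi)$, which (via the description of cofinality for active premice in \cite{outline}, 2.9) makes $\pi$ \emph{cofinal} in $\OR^P$. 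Cofinal plus predicate-preserving gives $\Sigma_1$-elementarity, and since $\pi\rest(\kappa^+)^P=\id$, the inclusion follows. You have placed the role of $\gamma_P$ in the wrong direction: it is needed for cofinality of the factor map, not for $\Lambda\sub\Def_1^P$ (which the paper treats as clear). Incidentally, your case ``$x\geq i_F(\kappa)$'' is vacuous, since $\OR^P=\lh(F)<i_F(\kappa)$.
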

\begin{proof}
Clearly $\Lambda\sub H=\Hull_1^P(X\un(\kappa^+)^P)$ in general; we want to see $H\sub\Lambda$.

Let $F'=F\rest X$ and
\[ \tilde{\pi}:\Ult(P|(\kappa^+)^P,F')\to\Ult(P|(\kappa^+)^P,F) \]
be the canonical map. Because $\gamma_P\in\rg(\tilde{\pi})$, so is $\nu_F$; in fact $\tilde{\pi}(\nu_{F'})=\nu_F$. Let $R'=\Ult(P|(\kappa^+)^P,F')$ and
\[ P' = (R'|(\nu_{F'}^+)^{R'},F') \]
(where $F'$ is coded amenably as for a premouse). Let $\pi:P'\to P$ be the restriction of $\tilde{\pi}$. So $\Lambda=\rg(\pi)$. We claim $\pi$ is $\Sigma_1$-elementary, which gives the lemma, since $\pi\rest(\kappa^+)^P=\id$.

For this, note $\tilde{\pi}\com i_{F'}=i_F$, which implies that for $A\in\pow(\kappa)\int P$ and $\gamma<\OR^{P'}$,
\[ \pi(i_{F'}(A)\int\gamma) = i_F(A)\int\pi(\gamma). \]
Also $\pi(\nu_{F'})=\nu_F$. So $\pi$ respects the predicates for the active extenders of $P$ and $P'$, and $\pi(\gamma_{P'})=\gamma_P$, where $\gamma_P$ indexes the last proper segment of $F'$ on $\es^{P'}$. Since also $\pow(\kappa)\int P\sub P'$, it follows that $\pi$ is cofinal in $P$ (see (\cite{outline}, 2.9)). Therefore $\pi$ is $\Sigma_1$-elementary.\renewcommand{\qedsymbol}{$\Box$(Lemma \ref{lem:gen1hull})}
\end{proof}

\begin{lem}\label{lem:Dsp6} Let $P$ be a $k$-sound, type 2, Dodd sound premouse, with $k\geq 1$. Let $\mu=\crit(F^P)$. Let $H$ be a short extender over $P$ with $\crit(H)<\rho_k^P$. If $R=\Ult_k(P,H)$ is wellfounded, then it is also Dodd sound. Moreover, $t_R=i_H(t_P)$; if $(\mu^+)^P=\tau_P$ then $\tau_R=i_H(\tau_P)$; if $(\mu^+)^P<\tau_P$ then $\tau_R=\sup i_H``\tau_P$.\end{lem}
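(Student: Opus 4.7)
The plan is to mimic the proof of Lemma \ref{lem:Dsp4} in the degree-$k$ setting, with a refined case distinction. Since $\crit(H) < \rho_k^P$, the canonical embedding $i_H: P \to R = \Ult_k(P, H)$ is $r\Sigma_k$-elementary, in particular $\Sigma_1$-elementary. Because the property ``$Y = F\rest X$ is a fragment of the active extender on the sequence'' is $\Sigma_1$, Dodd-soundness witnesses of $F^P$ transfer across $i_H$: for each $i < |t_P|$, $F^P\rest((t_P)_i \cup t_P\rest i) \in P$ gives $F^R\rest(i_H((t_P)_i) \cup i_H(t_P)\rest i) \in R$, and when $\tau_P > (\mu^+)^P$, the fragments $F^P\rest(\alpha \cup t_P) \in P$ for $\alpha < \tau_P$ give $F^R\rest(i_H(\alpha) \cup i_H(t_P)) \in R$.

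Next, a degree-$k$ analogue of Lemma \ref{lem:Dsp3}, using the shift-lemma diagram $\psi \com i_{F^P} = i_{F^R} \com i_H$ with $r\Sigma_k$ Skolem terms, shows that $i_H(t_P)$ together with the generators of $H$ and the appropriate projectum suffices to generate $F^R$. If $\tau_P = (\mu^+)^P$, then $\crit(H) < \rho_k^P \leq (\mu^+)^P$ forces $\crit(H) \leq \mu$, so $H$-generators lie below $i_H(\crit(H)) \leq i_H(\mu) = \crit(F^R)$ and are absorbed; combined with Fact \ref{fact:Dodd} giving that $F^P$ is generated by $t_P \cup \{\mu\}$, this yields $t_R = i_H(t_P)$ and $\tau_R = (\crit(F^R)^+)^R = i_H(\tau_P)$. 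If $\tau_P > (\mu^+)^P$, the transferred fragments show $\tau_R \geq \sup i_H``\tau_P$, while $H$-generators lie below $i_H(\crit(H)) \leq \sup i_H``\tau_P$ since $\crit(H) < \tau_P$; the same parameter $i_H(t_P)$ works with projectum $\sup i_H``\tau_P$.

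The main obstacle is confirming the maximality of $t_R = i_H(t_P)$: one must show no lex-larger parameter works, i.e., no fragment $F^R\rest Y$ stronger in lex order can lie in $R$. The key is that such a fragment would pull back along $i_H$ to a fragment of $F^P$ violating the definition of $t_P$ in $P$. The hypothesis $\crit(H) < \rho_k^P$ is essential here: it ensures that the hull structure witnessing $t_P$ in $P$ maps faithfully across $i_H$. A secondary subtlety is verifying that Lemma \ref{lem:Dsp3} (originally proved for $\Ult_0$) generalizes to $\Ult_k$, where elements of $R$ are represented by $r\Sigma_k$ Skolem terms rather than $\Sigma_0$ functions, but the shift-lemma argument adapts without essential change.
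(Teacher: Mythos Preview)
Your approach diverges from the paper's and has a genuine gap in the hard case. The paper splits on $k$: for $k \geq 2$, Dodd soundness and the values $t_P, \tau_P$ are expressible at the $r\Sigma_2$ level, so the $r\Sigma_{k+1}$-elementarity of $i_H$ settles everything in one line. For $k = 1$ with $\tau_P = (\mu^+)^P$, the $\Pi_2$ statement ``$F^P$ is generated by $t_P \cup \{\mu\}$'' is preserved by $i_H$, and combined with the transfer of fragments this suffices. The only case with real content is $k = 1$, $\tau_P > (\mu^+)^P$. Here your plan is to use a degree-$1$ analogue of \ref{lem:Dsp3} and then absorb the $H$-generators below $\sup i_H``\tau_P$ via the claim $\crit(H) < \tau_P$. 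But that inequality is not given: you only have $\crit(H) < \rho_1^P$, and $\rho_1^P \leq \tau_P$ is exactly Corollary \ref{cor:tau}, proved \emph{after} this lemma (and itself via \ref{lem:gen1hull}). The parallel assertion ``$\rho_k^P \leq (\mu^+)^P$'' in your first case has the same circularity.

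Your ``pull back'' argument for maximality also fails as stated: a fragment $F^R \rest Y \in R$ need not lie in $\rg(i_H)$, so there is nothing to pull back. The paper instead argues the generation claim directly by contradiction. If $[a, f]$ represents an $i_H(t_P)$-generator for $F^R$ at least $\sup i_H``\tau_P$, with $f$ given by a $\Sigma_1^P(\{q\})$ term, then since ``$\alpha$ is a $t$-generator'' is $\Pi_1$, \L o\'s's theorem gives that $f(u)$ is a $t_P$-generator of $F^P$ for measure-one many $u$; hence $\rg(f)$ contains a $\tau_P$-cofinal set of $t_P$-generators, all $\Sigma_1^P$-definable from $q$ and points below $\crit(H)$. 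Lemma \ref{lem:gen1hull}, applied with $X = \lambda \cup t_P$ for suitable $\lambda < \tau_P$, then yields the contradiction. This structural lemma about type 2 premice is the key input your proposal omits.
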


\begin{proof} Of course if $R$ is $(0,\om_1,\om_1+1)$-iterable, the lemma holds by \ref{fact:Doddsound}. But we need to know it's true more generally, as we'll apply it where $R$ is obtained as an iterate of $P$, via a strategy that, abstractly, we won't know is an $\fully$-strategy.

If $k>1$, elementarity considerations easily imply $R$ is Dodd sound.

Suppose $k=1$. Let $F=F^P$, $\mu=\crit(F)$, $t=t_F$ and $\tau=\tau_F$. 
If $\tau=(\mu^+)^P$ then $F$ is generated by $t\un\{\mu\}$, a $\Pi_2$ condition, preserved by $i_H$. Also $i_H$ maps fragments of $F$ to fragments of $F^R$, so $R$ is Dodd sound.

So assume $\tau_F>(\crit(F)^+)^P$. Then $R$ has all fragments of $F^R$ with generators ``below'' $i_H(t)\un\sup i_H``\tau$. So it is enough to show this set generates $F^R$. Deny. Let $\kappa=\crit(H)$. Let $[a,f]$ represent an $i_H(t)$-generator for $F^R$ at least $\sup(i_H``\tau)$. Here $f:\kappa^n\to P$ is given by a $\Sigma^P_1(\{q\})$ term for some $q\in P$. Since the statement ``$\alpha$ is an $i_H(t)$-generator'' is $\Pi_1$, $f(u)$ must be a $t$-generator for measure one many $u$'s. In particular, $\rg(f)$ includes a $\tau$-cofinal set of $t$-generators.

Now \ref{lem:gen1hull} applies with $X=\lambda\un t$, where $\lambda<\tau$ is large enough that $\mu,q,\gamma_P\in\Lambda$ (notation as in \ref{lem:gen1hull}). But this contradicts the last sentence of the previous paragraph.\renewcommand{\qedsymbol}{$\Box$(Lemma \ref{lem:Dsp6})}
\end{proof}

\begin{cor}\label{cor:tau}
If $P$ is a type 2, Dodd sound premouse, then $\tau_P=\max((\mu_P^+)^P,\rho_1^P)$.
\end{cor}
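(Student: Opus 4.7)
My plan is to establish the two inequalities of the claimed identity separately. For $\tau_P \geq (\mu_P^+)^P$: this is immediate from Definition \ref{dfn:Dodd}. For $\tau_P \geq \rho_1^P$: by Dodd soundness and Fact \ref{fact:Dodd}, $\tau_P \cup t_P$ suffices as generators for $F^P$, so the fragment $F^P \rest (\tau_P \cup t_P)$ determines $F^P$ entirely. Coded as a subset of $\tau_P$ (with $t_P$ a fixed finite auxiliary parameter), this fragment is $\Sigma_1$-definable over $P$ from $t_P$; it cannot belong to $P$, for otherwise one could recover $F^P$ as a member of $P$, contradicting that the active extender of a type $2$ premouse is not a member of the premouse.

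For the reverse inequality, let $\rho = \max((\mu_P^+)^P, \rho_1^P)$. The goal is to produce a finite $t \subseteq \nu^P$ with $\rho \cup t$ sufficing as generators for $F^P$; by Fact \ref{fact:Dodd} this forces $\tau_P \leq \rho$. Using the $1$-soundness of $P$ --- built into Dodd soundness, since the Dodd-fragment witnesses subsume the standard $1$-solidity witnesses --- I fix a finite parameter $q$ with $P = \Hull_1^P(\rho \cup q)$, arranging $q \subseteq \nu^P$ (which is possible because for a Dodd-sound type $2$ premouse the standard parameter may be taken to equal $t_P$, and $t_P \subseteq \nu^P$ by definition of the Dodd parameter). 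Setting $X = \rho \cup q$, note $X \subseteq \nu^P$ since $\rho \leq \tau_P \leq \nu^P$. A single application of Lemma \ref{lem:gen1hull} then yields $\Lambda_X = \Def_1^P(X \cup (\mu_P^+)^P) = \Def_1^P(X) = P$ (using $\rho \geq (\mu_P^+)^P$ and soundness), so in particular $\Lambda_X \supseteq \nu_P$, which is exactly the required sufficiency.

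The main obstacle I anticipate is verifying the two side hypotheses $\kappa, \gamma_P \in \Lambda_X$ of Lemma \ref{lem:gen1hull}. The first is easy: $\kappa = \mu_P < (\mu_P^+)^P \leq \rho$, so $\kappa \in X$ and hence $\kappa = [\{\kappa\}, \id\rest\kappa]^P_{F^P} \in \Lambda_X$. For $\gamma_P \in \Lambda_X$ the point is that $\gamma_P$ indexes the largest proper initial segment of $F^P$ on $\es^P$; by Dodd soundness and Fact \ref{fact:Dodd} this fragment is coded from $t_P$ and ordinals below $\tau_P \leq \rho$, so $\gamma_P$ is $\Sigma_1$-definable over $P$ from parameters in $X$. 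One then bootstraps Lemma \ref{lem:gen1hull} from a slightly smaller auxiliary parameter set (where the $\kappa,\gamma_P$ hypothesis is checked by hand) to conclude $\gamma_P \in \Lambda_X$, completing the verification.
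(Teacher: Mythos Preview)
Your argument for $\tau_P \geq \max((\mu_P^+)^P,\rho_1^P)$ is fine and agrees with the intended one. The problem is entirely in the reverse inequality.

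You assume that Dodd soundness gives you $1$-soundness of $P$ and that $p_1^P$ may be taken to be $t_P$. Neither of these is part of the hypothesis, and your justification (``the Dodd-fragment witnesses subsume the standard $1$-solidity witnesses'') is exactly the kind of statement the corollary is meant to \emph{establish}, not assume. The Dodd witnesses are fragments of $F^P$ lying in $\Ult(P,F^P)$; the $1$-solidity witnesses are $\Sigma_1$-theories lying in $P$. Passing from one to the other is precisely the content of the Lemma~\ref{lem:gen1hull} correspondence, and that is what the paper's proof invokes---in the opposite direction from yours. The paper uses Dodd soundness to put the sub-extenders $F^P\rest(\alpha\cup t_P)$ (for $\alpha<\tau_P$) into $P$, hence the small ultrapowers $\Ult(P|(\kappa^+)^P,F^P\rest(\alpha\cup t_P))$ into $P$, and then (as in the proof of Lemma~\ref{lem:gen1hull}) identifies those with the relevant $\Sigma_1$ hulls, concluding that the corresponding $\Sigma_1$ theories are in $P$. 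That yields $\rho_1^P\geq\tau_P$ directly, without ever assuming $1$-soundness.

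There is a second, related gap: your verification of $\gamma_P\in\Lambda_X$ is circular. You argue that $\gamma_P$ is $\Sigma_1$-definable over $P$ from parameters in $X$, but Lemma~\ref{lem:gen1hull} only lets you pass from $\Def_1^P(X\cup(\kappa^+)^P)$ to $\Lambda_X$ \emph{after} you already know $\gamma_P\in\Lambda_X$---that is its hypothesis, not its conclusion. The promised ``bootstrap from a slightly smaller auxiliary parameter set'' is never carried out, and it is not clear what that set would be or how the hypothesis would be checked there. To make your route work you would need an independent argument that $\gamma_P=[a,f]^P_{F^P}$ for some $a\in X^{<\omega}$; this is possible but requires real work (essentially re-proving the relevant part of Lemma~\ref{lem:gen1hull}), and once you do it you are effectively carrying out the paper's argument anyway.
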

\begin{proof}
This is like the proof of \ref{lem:gen1hull}, but Dodd soundness is used to show the small ultrapowers, and so corresponding $\Sigma_1$ theories, are in $P$.\renewcommand{\qedsymbol}{$\Box$(Corollary \ref{cor:tau})}
\end{proof}

\pagebreak
\section{Cohering Extenders}\label{sec:cohere}
In this section we analyse the situation when an extender in a mouse ``fits'' on the mouse's sequence, or an iterate thereof. The first theorem stated is just a special case of \ref{thm:cohere} below. 

\begin{thm}\label{thm:easycoh}
Let $N$ be an $\fully$-iterable mouse satisfying ``there is no largest cardinal, and $E$ is a wellfounded total short extender'', and suppose $(N||\lh(E),E)$ is a premouse. Then $E$ is on $\es^N$.\end{thm}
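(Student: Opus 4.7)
My plan is to reduce Theorem \ref{thm:easycoh} directly to Corollary \ref{cor:coarse}. The hypothesis that $(N||\lh(E),E)$ is a premouse, together with the Jensen indexing convention, forces $\lh(E)=(\nu_E^+)^{\Ult(N|\kappa^+,E)}$; in particular $E$ is its own trivial completion, so the conclusion ``$\trivcom(E)\in\es^N$'' of \ref{cor:coarse} will give exactly ``$E\in\es^N$'' at index $\lh(E)$. The coherence condition packaged into the premouse hypothesis yields $N||\nu_E=\Ult(N|\kappa^+,E)||\nu_E$ and makes $\nu_E$ a cardinal of that ultrapower; together these provide the containment $\her_{\nu_E}^N\sub\Ult(L_{(\kappa^+)^N}[\es^N],E)$ that \ref{cor:coarse} demands.

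Next I would verify the remaining hypotheses of \ref{cor:coarse} inside $N$. Totality and shortness of $E$ are given. Countable completeness of $E$ follows from wellfoundedness of $\Ult(N,E)$ via Lemma \ref{lem:Ecc}, using that the ``no largest cardinal'' hypothesis supplies the closure condition ``$\alpha^\kappa$ exists for all $\alpha$'' needed to invoke that lemma. The same hypothesis also readily supplies $(\nu_E^\kappa)^+$ and ensures that $N$ (or any sufficiently closed fragment thereof containing $E$) models $\theory$, since a mouse with no largest cardinal has unboundedly many $\KP$-levels in its $\J^\es$-hierarchy.

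The one genuinely delicate point, where I expect the main obstacle to lie, is arguing that $\nu_E$ is a cardinal \emph{in $N$} itself, not merely in the active premouse $(N||\lh(E),E)$. The premouse hypothesis directly gives the latter, but $N$ properly extends $N||\lh(E)$ and could in principle collapse $\nu_E$ at some level above $\lh(E)$. To handle this, I would argue that if $\gamma>\lh(E)$ is minimal with $\rho_\omega(N|\gamma)<\nu_E$, then the entire argument can be localized to $N|\gamma$: all the hypotheses above survive restriction to $N|\gamma$ (with $\nu_E$ still a cardinal there by minimality of $\gamma$, and with ``no largest cardinal below $\gamma$'' still holding in the required fragment), and the resulting conclusion ``$E\in\es^{N|\gamma}$'' transfers upward to ``$E\in\es^N$''. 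With this localization in hand, \ref{cor:coarse} delivers the theorem.
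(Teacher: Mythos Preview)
Your reduction to Corollary \ref{cor:coarse} has a fatal gap: it cannot handle type 2 extenders. If $(N||\lh(E),E)$ is a type 2 premouse, then $\nu_E$ is a \emph{successor ordinal} (one more than the top generator), so it is never a cardinal in any structure. Your claim that ``the coherence condition \ldots\ makes $\nu_E$ a cardinal of that ultrapower'' is simply false in this case, and no amount of localizing to $N|\gamma$ will repair it. You might hope to fall back on Theorem \ref{thm:coarseDodd} instead, using $\tau_E$ in place of $\nu_E$, but that theorem requires $E$ to be Dodd-sound, which is not given and is in fact part of what one is trying to establish (Fact \ref{fact:Doddsound} only applies once $E$ is already known to sit on an iterable sequence).

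The paper does not take your route. It proves Theorem \ref{thm:easycoh} as the special case $\Tt=\langle N\rangle$ of Theorem \ref{thm:cohere}, whose proof is substantially different from that of \ref{thm:coarseDodd}/\ref{cor:coarse}. In \ref{thm:cohere} the phalanx exchange ordinal is the \emph{largest cardinal} $\theta$ of $N||\lh(E)$ rather than $\nu_E$; iterability of the phalanx is obtained by embedding $\Ult(N,E)$ into a level of $N$ (via condensation) rather than by the strength-below-$\tau$ trick; and the type 2 case is handled by the Dodd-fragment/core-sequence analysis (Definitions \ref{dfn:Dcore}--\ref{dfn:damage}, Lemma \ref{lem:Gcoreseq}, Claim \ref{clm:factor}), which tracks how the possibly Dodd-\emph{unsound} $E$ decomposes into Dodd cores along the comparison. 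That machinery is precisely what replaces the ``$\nu_E$ is a cardinal'' hypothesis you are trying to force. Even for type 3, your localization to the least $\gamma$ with $\rho_\omega(N|\gamma)<\nu_E$ is problematic: such an $N|\gamma$ need not satisfy $\theory$ nor contain $(\nu_E^\kappa)^+$, so the hypotheses of \ref{cor:coarse} do not obviously survive.
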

\begin{proof} See \ref{thm:cohere}.\renewcommand{\qedsymbol}{$\Box$(Theorem \ref{thm:easycoh})}\end{proof}

The full theorem below assumes we have extender $E$ fitting on $\es^P$ for some internal iterate $P$ of $N$, and under enough hypotheses, shows that $E$ is in fact on the $P$-sequence. An analogous theorem was proven by Schimmerling and Steel in (\cite{maxcore}, \S2) (using the theory of \cite{cmip}), assuming there's no inner model with a Woodin, and a measurable exists. They showed that given a wellfounded iterate $W$ of $K$, and an extender $E$ which fits on the $W$ sequence (in that $(W||\lh(E),E)$ is a premouse), such that $E$ has sufficient weak background certificates, $E$ was either used in the iteration or is on $\es^W_+$. They also proved another version dealing with sound mice projecting to $\om$ (or at least below all critical points on the mouse's sequence) instead of $K$. Our theorem has a similar statement, but the iteration tree $\Tt$ and extender in question must be inside the mouse, and the requirement of background certificates is replaced by demanding the extender produce a wellfounded, class size ultrapower when used as a normal extension of $\Tt$.

Here is a related question.

\begin{ques}[Steel] Suppose $V=L[\es]$, and $L[\es]$ is fully iterable. Let $E$ be an extender. Is $E$ the extender of an iteration map?\end{ques}

The theorem does deal in part with self-iterable mice, and we first discuss definability of iteration strategies a little.

\begin{dfn}\index{above} An iteration tree $\Tt$ is \emph{above} $\rho$ if $\rho\leq\crit(E)$ for each extender $E$ used in $\Tt$. Let $P$ be a premouse. $\Sigma$ is an $\eta$-iteration strategy for $P$ \emph{above} $\rho$ if $\Sigma$ works as an iteration strategy for normal trees on $P$, above $\rho$, of length $<\eta$.\end{dfn}

\begin{dfn}\index{tame}
A premouse $P$ is \emph{tame overlapping} $\alpha$ if whenever $E$ is on $\es^P_+$ and $\crit(E)<\alpha\leq\delta<\lh(E)$,
\[ P|\lh(E)\sats \delta\textrm{ is not Woodin.} \]
A premouse is \emph{tame} if it is tame overlapping $\alpha$ for every $\alpha$.
\end{dfn}

\begin{lem}\label{lem:uniquestrat} Let $P$ be an $\om$-sound premouse, with $\rho^P_\om\leq\rho$, and suppose $P$ is tame overlapping $\rho$. Then $P$ has at most one $(\rho^++1)$-iteration strategy above $\rho$.\end{lem}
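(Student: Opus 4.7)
The plan is to run the standard $Q$-structure comparison argument, using the Martin-Steel uniqueness-of-branches lemma from \cite{fsit} together with the tameness hypothesis to obtain unique branch determination. Suppose for contradiction that $\Sigma_1 \neq \Sigma_2$ are two $(\rho^{+}+1)$-iteration strategies for $P$ above $\rho$. Choose a normal tree $\Tt$ on $P$ above $\rho$ of minimal limit length $\lambda \leq \rho^{+}$ such that $b := \Sigma_1(\Tt) \neq \Sigma_2(\Tt) =: c$; both $M^\Tt_b$ and $M^\Tt_c$ are then wellfounded. Set $\delta := \sup_{\alpha<\lambda}\lh^\Tt_\alpha$, and let $M(\Tt)$ denote the common initial segment of the $M^\Tt_\alpha$ below $\delta$. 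Since every extender used in $\Tt$ has $\crit \geq \rho$ and $\lh > \crit$, we have $\delta > \rho$. The zipper/branches lemma of \cite{fsit} then yields that $\delta$ is Woodin in $M(\Tt)$.

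Next I would define the $Q$-structures: $Q(b,\Tt)$ is the minimal $Q$ with $M(\Tt)\ins Q \ins M^\Tt_b$ over which $\delta$ fails to be Woodin (either because $\rho_\omega(Q) < \delta$ or because a Woodin counterexample becomes first-order-definable over $Q$), and similarly $Q(c,\Tt)$. The existence claim is that these really are proper initial segments. This uses $\rho^P_\omega \leq \rho$ together with the above-$\rho$ hypothesis. In the no-drop case along $b$, $\omega$-soundness transfers through the ultrapowers (since all critical points are $\geq \rho \geq \rho^P_\omega$), giving $\rho^{M^\Tt_b}_\omega = \rho^P_\omega \leq \rho < \delta$. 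If $b$ drops, then at the last drop, say at $\alpha+1$, the model $(M^*)^\Tt_{\alpha+1}$ already projects at or below $\crit(E^\Tt_\alpha)<\delta$, and this bound on the projectum propagates through the remainder of $b$. Either way $\rho^{M^\Tt_b}_\omega < \delta$, so $\delta$ cannot be a cardinal of $M^\Tt_b$, let alone Woodin there. Hence $Q(b,\Tt)$ is a well-defined proper initial segment of $M^\Tt_b$, and analogously for $Q(c,\Tt)$.

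Tameness now enters to keep the $Q$-structures well-behaved for comparison. Because the iteration is above $\rho$, the extender sequences of $M^\Tt_b$ and $M^\Tt_c$ agree with $\es^P$ strictly below $\rho$; so both iterates, and hence both $Q$-structures, inherit the property that no extender with critical point below $\rho$ overlaps a Woodin $\geq \rho$. Each $Q$-structure is $\omega$-sound, projects to $\leq\delta$, extends $M(\Tt)$, and is iterable above $\delta$: lift normal trees on $Q(b,\Tt)$ (resp.\ $Q(c,\Tt)$) to continuations of $\Tt$ along the appropriate branch, and run them under $\Sigma_1$ (resp.\ $\Sigma_2$). Coiterate $Q(b,\Tt)$ and $Q(c,\Tt)$. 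The comparison takes place entirely above $\delta$, since below $\delta$ the two sequences already agree; and the usual soundness/universality argument, together with tameness preventing the comparison from being derailed by pathological overlaps, forces $Q(b,\Tt) = Q(c,\Tt)$.

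Finally, the $Q$-structure determines the branch: one more application of the branches lemma shows there is at most one cofinal $b'\subseteq\Tt$ whose $Q$-structure equals a prescribed premouse extending $M(\Tt)$. Thus $b=c$, contradicting the choice of $\Tt$. The main obstacle I anticipate is the existence-and-iterability step for the $Q$-structures. The projectum calculation handles the no-drop case crisply, but chasing $\rho_\omega$ through the possible drops along $b$ and $c$, and then combining with tameness overlapping $\rho$ to ensure the $Q$-structure coiteration terminates nicely despite potentially permitted overlaps of $\delta$ by extenders with $\crit\in[\rho,\delta)$, will require careful fine-structural bookkeeping; this is the point at which the hypothesis that $P$ is tame overlapping $\rho$ (rather than merely $\omega$-sound with $\rho^P_\omega\leq\rho$) does its real work.
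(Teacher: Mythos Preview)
Your approach is the standard $Q$-structure comparison argument and matches the paper's (very terse) proof. But you have slightly misidentified where tameness overlapping $\rho$ does its work, and this confusion shows up in how you set up the iterability of the $Q$-structures.

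You claim the $Q$-structures are ``iterable above $\delta$'' and that the coiteration ``takes place entirely above $\delta$''. The first is the wrong target and the second is not quite true: the extenders used in the comparison are \emph{indexed} above $\delta$, but their critical points can lie in $[\rho,\delta)$. When that happens the extender must return to an earlier model of $\Tt$, so the comparison cannot be run as a tree on the $Q$-structure alone. The paper handles this by forming the comparison as normal \emph{continuations} $\Tt\conc b\conc\Uu_\Sigma$ and $\Tt\conc c\conc\Uu_\Gamma$; then crits in $[\rho,\delta)$ are harmless, since the continuation is still above $\rho$ and $\Sigma_1,\Sigma_2$ apply. So the overlaps you flag as the anticipated obstacle (crits in $[\rho,\delta)$) are not where tameness is needed at all.

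The genuine danger is an extender $E$ in the comparison with $\crit(E)<\rho$: then the continuation would no longer be above $\rho$ and neither strategy would apply. This is exactly what tameness overlapping $\rho$ rules out. Any such $E$ has $\lh(E)>\delta(\Tt)$, and by minimality of the $Q$-structure, $\delta(\Tt)$ is still Woodin in the model at hand below $\lh(E)$; so $\crit(E)<\rho\leq\delta(\Tt)<\lh(E)$ with $\delta(\Tt)$ Woodin there, contradicting tameness overlapping $\rho$ (which persists to these models since everything so far has been above $\rho$). This is precisely the paper's one-line ``$\delta(\Tt)$ remains Woodin up to the length of any extender used, so critical points never go below $\rho$''. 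Once you reframe the iterability as ``the continuations stay above $\rho$'' rather than ``the $Q$-structures are iterable above $\delta$'', your argument goes through and coincides with the paper's.
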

\begin{proof}
 This generalizes the uniqueness of $(\om_1+1)$-iteration strategies for $\om$-sound mice projecting to $\om$, so we just outline the differences. Suppose $\Sigma$, $\Gamma$ are two strategies, $\Tt$ is above $\rho$, via both $\Sigma$ and $\Gamma$, and $b=\Sigma(\Tt)\neq c=\Gamma(\Tt)$. The least difference between $M^\Tt_b$ and $M^\Tt_c$ must occur within the Q-structure of the active side. Therefore one can compare the two sides, forming normal continuations $\Tt\conc b\conc\Uu_\Sigma$ and $\Tt\conc c\conc\Uu_\Gamma$: $\delta(\Tt)$ remains Woodin up to the length of any extender used, so critical points never go below $\rho$. The remaining details are left to the reader.
\renewcommand{\qedsymbol}{$\Box$(Lemma \ref{lem:uniquestrat})}\end{proof}

Now suppose $W$ is a premouse modelling $\theory$ (or $\ZFmin$ suffices for our purposes), and either there's no largest cardinal in $W$, or the largest does not have measurable cofinality in $W$. Suppose $\Sigma\sub W$ is a set of pairs $(\Tt,b)\in W$, definable from parameters over $W$, and $\alpha\in W$. Then we claim that the statement
\[ W\sats``\Sigma \textrm{ is an }\alpha\textrm{-iteration strategy for the universe''} \]
makes sense. That is, there is a formula $\varphi$, with a predicate for $\Sigma$ and constant for $\alpha$, which asserts this, for premice of the form of $W$. We leave the justification of this to the reader in the case that there is no largest cardinal in $W$. Otherwise, let $\delta$ be the largest. In computing $j(\delta)$ for some non-dropping iteration map $j$, the hypothesis on $\delta$ means we need only consider functions bounded in $\delta$. If $\delta<\gamma<\mu$ and $g:\delta\to\gamma$ is a bijection, then to compute $j(\gamma)$ we need consider only functions $g\com f$, where $f$ is bounded in $\delta$. Thus the membership relation is set-like in $W$ (note all the trees and branches being considered are themselves members of $W$). Since $W\sats\KP^*$, wellfoundedness of iterates is therefore first-order. Moreover, if $\Sigma$ is indeed an iteration strategy, then all models on trees via $\Sigma$ are transitive classes of $W$.

For the theorem, we need to slightly strengthen the hypothesis that $\delta$ not have measurable cofinality in $W$.

\begin{dfn}\index{almost measurable} Let $N$ be a premouse, $\kappa\in N$. $\kappa$ is \emph{almost measurable} in $N$ if $(\kappa^+)^N\in N$ and for unboundedly many $\alpha<(\kappa^+)^N$, $\es^N_\alpha$ has critical point $\kappa$.\end{dfn}

\begin{thm}\label{thm:cohere}
Let $N$ be a premouse satisfying $\theory$. Suppose that in $N$, $\mu$ is regular and is not the successor of a singular cardinal with almost measurable cofinality. Suppose that $\Tt\in N|\mu$ is a normal iteration tree with final model $P$, and either
\begin{itemize}
\item[(a)] $N$ is $\fully$-iterable and $\Tt$ is a finite tree on $N|\mu$, or
\item[(b)] $(\chi^+)^N<\mu$, and in $N|\mu$: $\Sigma$ is a $(\chi^++1)$-iteration strategy for the universe, definable from parameters, $\Tt$ is via $\Sigma$, and $\lh(\Tt)<(\chi^+)$, or
\item[(c)]
\[ N\sats\textrm{I'm tame overlapping } \chi,\ \chi^+<\mu,\ N|\mu \textrm{ is } (\chi^++1)\textrm{-iterable via } \Sigma\]
\[ \textrm{and }\Tt \textrm{ is based on } N|\chi^+, \textrm{ via } \Sigma, \textrm{ above } \chi.\]
\end{itemize}
In any case, suppose further that $E\in N$, $(P||\lh(E),E)$ is a premouse; $\sup_\beta\lh(E^\Tt_\beta)<\lh(E)$; $\xi$ is the least $\xi'$ such that $\crit(E)<\nu^\Tt_{\xi'}$; $i^\Tt_{0,\xi}$ exists; $E$ measures all of $\pow(\crit(E))\int M^\Tt_\xi$; $\Ult(M^\Tt_\xi,E)$ is wellfounded.

Then $E$ is on $\es_+^P$.
\end{thm}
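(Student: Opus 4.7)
The plan is to extend $\Tt$ by applying $E$ and then compare against $P$, reducing the situation to an analogue of the argument at the end of Theorem \ref{thm:coarseDodd}. Specifically, I would first form the normal extension $\Uu = \Tt \conc \langle E \rangle$: the hypotheses ($\xi$ least with $\crit(E) < \nu^\Tt_\xi$, $i^\Tt_{0,\xi}$ exists, $E$ measures $\pow(\crit(E)) \cap M^\Tt_\xi$, $\Ult(M^\Tt_\xi,E)$ wellfounded, and $\sup_\beta \lh(E^\Tt_\beta) < \lh(E)$) are exactly what is needed to make $\Uu$ a legitimate normal tree with new model $U = \Ult(M^\Tt_\xi, E)$. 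Note $P$ and $U$ agree below $\lh(E)$, and the hypothesis that $(P \| \lh(E), E)$ is a premouse encodes precisely the coherence of $E$ with $P$'s sequence up to $\lh(E)$.

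Next I would compare $P$ with $U$ via normal continuations $\Tt^*$ of $\Tt$ and $\Uu^*$ of $\Uu$, producing a common last model $Q$ with non-dropping main branches (up to the point we care about), so that the branch embeddings commute: $i^{\Uu^*}_{U, Q} \circ i_E = i^{\Tt^*}_{P, Q}$ on the relevant ordinals. The iterability needed for this comparison comes from each case differently. In case (a), as in the proof of \ref{thm:coarseDodd}, I would pass to a hull and then lift the tree on the hull-phalanx back up to $N$, using $N$'s $\fully$-iterability together with the freely-dropping iterability developed in \S\ref{sec:copying}. In case (b), $\Sigma$'s being a strategy for the universe of $N|\mu$ (interpretable as first-order thanks to the hypothesis that $\mu$ is regular and not the successor of a singular with almost measurable cofinality) gives the comparison directly inside $N$. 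In case (c), one uses the uniqueness of strategies above $\chi$ in tame mice (Lemma \ref{lem:uniquestrat}) together with the strategy $\Sigma$ for $N|\mu$ to push the comparison through above $\chi$.

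Once the comparison succeeds, the standard argument applies. Since $P$ and $U$ agree below $\lh(E)$ and $E$ indexed at $\lh(E)$ matches the $P$-side, the first disagreement must occur at index $\geq \lh(E)$, so $\crit(i^{\Uu^*}_{U,Q}) \geq \lh(E)$; commuting embeddings then force the first extender $F$ used off $P$ to be compatible with $E$, with $\lh(F) \geq \lh(E)$, and hence by the initial segment condition, $E$ is on $\es_+^P$. If $E$ is type 1 or 3 this concludes the proof directly. If $E$ is type 2 (so $\tau_E < \nu_E$), finishing requires the Dodd-fragment analysis of Lemmas \ref{lem:Dsp4}--\ref{lem:Dsp6}, exactly as in the type 2 endgame of \ref{thm:coarseDodd}: track the Dodd parameter through the $P$-branch, identify the extender $G^*$ on the sequence of the model where the comparison first moved off $P$, and use Dodd-soundness to conclude $G^* = E$ is on $\es_+^P$.

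The principal obstacle is arranging the required iterability uniformly across cases (a)--(c): case (a) needs a hull-and-lift argument similar to Claim \ref{clm:escit}, while cases (b)--(c) require careful verification that an internally definable strategy suffices to certify that the comparison lives inside $N$ (and, in case (c), that tameness plus being above $\chi$ prevents the comparison from escaping the strategy's domain). A secondary subtlety is that because $\Tt$ may have non-trivial length, the first extender used off $P$ in the comparison could be a type-2 extender coming from a model that is only a proper segment due to dropping; handling this is exactly what the Dodd-fragment preservation lemmas from \S\ref{sec:esc} are designed for, so the main work is in setting up the comparison and verifying the branch embeddings commute.
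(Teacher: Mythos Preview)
Your overall architecture (pass to a hull, establish iterability of a phalanx built from $\Ult(R,E)$, compare, then analyze the branch extender) matches the paper. But there is a genuine gap in the type 2 case, and a related overstatement earlier.

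First, the claim that ``$\crit(i^{\Uu^*}_{U,Q})\geq\lh(E)$'' does not follow from the first disagreement having index $\geq\lh(E)$. What the phalanx setup actually gives you is $\crit(i^{\Uu^*}_{U,Q})\geq\theta$, where $\theta$ is the exchange ordinal you managed to use. The paper shows (see the remark following Claim~\ref{clm:iter}) that the iterability argument only supports setting $\theta$ equal to the largest cardinal of $P\|\lh(E)$, not $\nu_E$ or $\lh(E)$: if you try to push the exchange ordinal higher, the lifting map $\psi:\Ult(\Rbar,\Ebar)\to H\pins P$ no longer preserves the correct drop target when an extender with critical point $\theta$ is used. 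For type 1 or 3 this is harmless since $\theta=\nu_E$, and your argument goes through. For type 2, however, $\theta<\nu_E$, so generators of $E$ can and will be moved by $i^{\Uu^*}_{U,Q}$.

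Second, and more seriously, you cannot finish the type 2 case ``exactly as in the type 2 endgame of \ref{thm:coarseDodd}''. That argument relied essentially on $E$ being Dodd-sound, which was a hypothesis there but is \emph{not} assumed here. The paper is explicit about this: ``Here things are more complicated, as $E$ may well be Dodd unsound.'' The Dodd-fragment preservation lemmas \ref{lem:Dsp4}--\ref{lem:Dsp6} tell you how $t_F,\tau_F$ transform under ultrapowers, but they do not let you identify $E$ with a single Dodd-sound extender on an iterate. What the paper does instead is develop the damage structure and core sequence machinery (Definitions~\ref{dfn:Dcore}--\ref{dfn:damage}, Lemma~\ref{lem:Gcoreseq}): the branch extender $G$ decomposes as an iterated composition of Dodd-sound pieces $F_\alpha$, each sitting on the sequence of a normal iterate $\Tt_\alpha$ of $M$; one then shows inductively (Claim~\ref{clm:factor}) that the factor maps $j_{\alpha,Q}:Q_\alpha\to Q$ all factor through $U=\Ult(R,E)$, until at some stage $Q_\eps=U$, which pins down $E=G_\eps$ on the sequence of $\Tt_\eps$'s last model. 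Coherence of $E$ with $P$ then forces $\Tt_\eps=\Tt$. This factoring-through-the-core-sequence argument is the new content of the proof and is not a repackaging of the \ref{thm:coarseDodd} endgame; your plan is missing it.
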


\begin{rem}
One might formulate other suitable self-iterability hypotheses, though our proof depends on the definability of the strategy. We will use the following generalization in \S\ref{sec:stacking}. \ref{thm:cohere} works just as well if $\Sigma$ is only an iteration strategy above some cut-point $\eta$ of $N$. We leave the generalization of the proof to the reader. We will also use a cross between (a) and (c) in \S\ref{sec:stacking}.
\end{rem}

\begin{rem} There is a counterexample beyond a superstrong. Suppose $M$ is a structure satisfying the premouse axioms, except for the requirement that no extender on $\es_+^M$ be of superstrong type. Let $E$ be a total type 2 extender on $M$'s sequence, and $\kappa$ the largest cardinal of $\Ult(M,E)$ below $\lh(E)$. Suppose $\kappa$ is superstrong in $\Ult(M,E)$, and $F$ is an extender on $\es^{\Ult(M,E)}$ witnessing superstrongness. Note that $F$ (normally) applies exactly to $M|\lh(E)$, with degree 0. But $\Ult(M|\lh(E),F)$ is an active premouse, distinct from $\Ult(M,E)|\lh(F)$, with the same reduct. (However, since both extenders are on the sequence of iterates that aren't far from being normal, this doesn't appear to be a strong failure. Moreover, in Jensen's $\lambda$-indexing, the extenders are not indexed at the same point.)\end{rem}

\begin{proof}[Proof of \ref{thm:cohere}]\setcounter{clm}{0}
The proof will take the remainder of this section. The overall approach is like that of \ref{thm:coarseDodd}, but the details differ. We'll first give a brief introduction (plenty of details are to follow). Assume for simplicity that $E$ coheres with $N$ (so $\Tt=\left<N\right>$). As in \ref{thm:coarseDodd}, we'd like to compare $\Ult(N,E)$ with $N$, producing a common final model, commuting maps, and a critical point at least $\nu_E$ on the $\Ult(N,E)$ side. Again, we'll replace $N$ with a pointwise definable $M$, and (with bar notation as usual), compare $M$ with $(M,\Ult(M,\Ebar),\thetabar)$. But to prove this phalanx is iterable (before knowing the conclusion of the theorem) the highest we can set $\thetabar$ at is the largest cardinal of $\Pbar||\lh(\Ebar)$. This has the undesired consequence that if $\Ebar$ (or $E$) is type 2, so that $\thetabar<\nu_{\Ebar}$, then the iteration map on $\Ult(M,\Ebar)$ may move generators of $\Ebar$. We dealt with this in \ref{thm:coarseDodd} by analyzing how the witnesses to Dodd soundness move under iteration, on both sides of the comparison. Here things are more complicated, as $E$ may well be Dodd unsound. However, by similar analysis, we can still match up the Dodd-fragment ordinals of $\Ebar$ with those of $i_{M,Q}$. We'll see that the subextender $\Ebar\rest X$ they generate appears on a normal iterate of $M$, derived from the comparison tree on $M$. But then we can factor $i_{\Ebar}$ and $i_{M,Q}$ through $\Ult(M,\Ebar\rest X)$, and the analysis can be repeated, with this ultrapower replacing $M$. Continuing in this way, through larger subextenders of $\Ebar$, we will see that $\Ebar$ itself appears on a normal iterate $M'$ of $M$. But we're assuming (for this paragraph) that $\Ebar$ actually coheres $\es^M$, and therefore $M'=M$, finishing the proof. We now drop the simplifying assumptions of this paragraph and commence with the details.

We may assume that $\mu$ is the largest cardinal in $N$.

Because $\cof^N(\delta)$ isn't almost measurable, the discussion preceding theorem statement shows that $\Ult(M^\Tt_\xi,E)$ is also a transitive class of $N|\mu$. (If $\cof^N(\delta)$ isn't measurable but \emph{is} almost, then maybe $E$  is type 1 and $\crit(E)=i^\Tt_{0,\xi}(\cof^N(\delta))$.) So the wellfoundedness of this model is a definable notion over $N|\mu$.

In case (b) or (c), we claim that in $N$, $\Sigma$ is a $(0,\chi^++1)$-iteration strategy on the universe, for $N|\mu$-based trees. For otherwise there is a non-dropping branch to an illfounded model $P$ by $\Sigma$. Let $j$ be the iteration map. Since $\mu$ is $N$'s largest cardinal, $(^{<\mu}\mu)^N\in N$ and $N\sats\theory$, the membership relation of $P$ is set-like in $N$, and $N$ has sequences $\left<f_i\right>\in N$, $\left<a_i\right>\in N|\mu$ such that $j(f_{i+1})(a_{i+1})\in j(f_i)(a_i)$. Since all critical points are $<\mu$ and $\mu$ is regular, $\left<f_i\right>$ can be converted to $\left<\bar{f}_i\right>\in N|\mu$ which still witness illfoundedness; contradiction.

In any case, (a), (b) or (c), let $\Tt'$ be the liftup of $\Tt$ to a degree-$0$ tree on $N$. As above, the models of $\Tt'$ and $\Ult(M^{\Tt'}_\xi,E)$ are wellfounded and are transitive classes of $N$.

In case (c), note that the restriction of $\Sigma$ to a $\chi^+$-iteration strategy above $\chi$ on $N|(\chi^+)^N$ is a point in $N|\mu$, definable over $N|\mu$ without parameters. Indeed, in $N|\mu$ it is the unique such strategy which extends to a $\chi^++1$-strategy, in that each tree of length $\chi^+$ has a cofinal (so wellfounded) branch. (Even if $\mu=(\chi^++)^N$, this is expressible over $N|\mu$.) By \ref{lem:uniquestrat} applied in $N$, such a strategy must agree with $\Sigma$.

By the preceding discussion, being a counterexample to the theorem is first-order over $N|\mu$, so we may assume ours is the least such. Let $M=\Hull_\om^{N|\mu}(\empty)$, $\pi:M\to N|\mu$, and let $\pi(\Ttbar)=\Tt$, etc. $\pi$ provides liftup maps from $\Phi(\Ttbar)$ to $\Phi(\Tt)$ (i.e. $\pi\rest M^{\Ttbar}_{\gamma}$ is the liftup from its domain to $M^\Tt_{\pi(\gamma)}$). (The reader can check that $\pi$'s elementarity ensures that it lifts a normal continuation of $\Phi(\Ttbar)$ to a normal continuation of $\Phi(\Tt)$.) So we know the $\Phi(\Ttbar)$ part of the claim to follow.

Let $\thetabar$ be the largest cardinal of $\Pbar||\lh(\Ebar)$ and $\pi(\thetabar)=\theta$.

\begin{clm}\label{clm:iter}
The phalanxes $\Phi(\Ttbar)$ and
\[ (\Phi(\Ttbar),\Ult(\Rbar,\Ebar),\thetabar)\]
are $\om_1+1$-iterable, in $V$ for case (a), or in $N$ for case (b) or (c). (A tree on the latter should return to the appropriate model of $\Phi(\Ttbar)$ for crits in below $\thetabar$.)\end{clm}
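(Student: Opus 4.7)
The approach parallels Claim \ref{clm:escit}, with the single model $M$ replaced by the phalanx $\Phi(\Ttbar)$ and the ambient fully iterable $N$ replaced by whichever of hypotheses (a), (b), or (c) applies. Iterability of $\Phi(\Ttbar)$ alone is the easier half: since $\pi$ supplies liftup maps $\pi\rest M^{\Ttbar}_\gamma\to M^\Tt_{\pi(\gamma)}$, any normal tree on $\Phi(\Ttbar)$ copies to one on $\Phi(\Tt)$, which is iterable either in $V$ via the freely dropping iterability of $N$ from \S\ref{sec:copying} (case (a)), or in $N$ via $\Sigma$ or its canonical restriction (cases (b), (c)).

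The substantive work is the iterability of the extended phalanx. What is needed is an additional ``copy'' map $\sigma$ at the level of $\Ult(\Rbar,\Ebar)$, agreeing below $\thetabar$ with copy maps on $\Phi(\Ttbar)$ and living in whichever model supplies the ambient iterability (that is, inside $\Ult(M^\Tt_\xi,E)$, hence inside $N$, in cases (b), (c); inside $V$ in case (a)). The shift lemma provides the external witness
\[ \sigma^* = \pi\rest\Ult(\Rbar,\Ebar):\Ult(\Rbar,\Ebar)\to\Ult(M^\Tt_\xi,E), \]
which agrees with $\pi$ below $\thetabar$; the task is to reflect the existence of $\sigma^*$ into the appropriate model.

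As in Claim \ref{clm:escit}, split according to whether $\theta$ is regular or singular in $N|\mu$. If $\theta$ is regular, $\pi\rest\thetabar$ is bounded in $\theta$, hence belongs to $N|\theta = \Ult(M^\Tt_\xi,E)|\theta$. An application of Lemmas \ref{lem:escpassive} and \ref{lem:esccond} at $N|\mu$ yields some $\theta'<\theta$ and a hull $H' = \Hull_\om^{N|\mu}(\theta')\ins N$ absorbing $\pi\rest\thetabar$, from which one extracts both a pullback $\psi:M\to H'$ and the required $\sigma$; these are found inside $\Ult(M^\Tt_\xi,E)$ via the illfoundedness tree that $\Ult(M^\Tt_\xi,E)\sats\theory$ can search. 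If $\theta$ is singular in $N|\mu$, one builds instead a sequence $\langle\pi_\alpha,\omi_\alpha\rangle_{\alpha<\mubar}$ along an increasing sequence $\langle\theta_\alpha\rangle_{\alpha<\mubar}\in M$ of successor cardinals of $M$ cofinal in $\thetabar$, exactly as in the passage following (\ref{eqn:escsing}); when copying an extender with critical point $\kappa'<\thetabar$ in a tree on the extended phalanx, one uses $\pi_\alpha$ for the least $\alpha$ with $(\kappa'^+)^M\leq\theta_\alpha$ as the copy map on the $M$-side.

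The main obstacle is precisely this reflection of $\sigma^*$ together with a matching family of copy maps for all models of $\Phi(\Ttbar)$ into a single model already possessing iterability; the rest is bookkeeping, since once the copy system is in place a tree on the extended phalanx lifts to a tree on an appropriately positioned phalanx in $N$ (respectively $V$), whose iterability reduces to the first part together with the ambient iterability. One subtlety to check is that, in cases (b), (c), the hypothesis excluding almost-measurable cofinality for $\cof^N(\delta)$ is precisely what keeps $\Ult(M^\Tt_\xi,E)$ a wellfounded transitive class of $N$, so that the illfoundedness tree argument used to locate $\sigma$ inside $\Ult(M^\Tt_\xi,E)$ is admissible there.
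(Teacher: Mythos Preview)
Your approach has a genuine gap. You are trying to transplant the proof of Claim~\ref{clm:escit} directly, but the paper explicitly warns against this at the start of its proof of Claim~\ref{clm:iter}: ``The iterability of the corresponding phalanx in \ref{thm:coarseDodd} used heavily that $E$ was strong below $\tau_E$. Because we don't have this here, we need another approach.'' The specific failure is your assertion that $N|\theta = \Ult(M^\Tt_\xi,E)|\theta$. Coherence of $E$ with $P$ gives only $\Ult(M^\Tt_\xi,E)|\theta = P|\theta$, and $P|\theta$ need not equal $N|\theta$: the tree $\Tt$ may well have used extenders of length below $\theta$, so $N$ and $P$ disagree there. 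Without this agreement, the reflection of $\sigma^*$ (and of $\pi\rest\thetabar$) into $\Ult(M^\Tt_\xi,E)$ breaks down, and the illfounded-tree search for $\sigma$ inside that model no longer produces a map whose target lives in $N$. A second, related issue is that you speak of a single pullback $\psi:M\to H'\ins N$, but the phalanx $\Phi(\Ttbar)$ has many models, and the copy maps must target the corresponding models of $\Phi(\Tt)$, not levels of $N$.

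The paper's route is genuinely different: it keeps $\pi$ as the system of liftups $\Phi(\Ttbar)\to\Phi(\Tt)$ and constructs a single extra map $\psi:\Ult(\Rbar,\Ebar)\to H$ where $H\pins P$ is a condensed hull of $\Ult(R,E)$ (using Lemmas~\ref{lem:escpassive} and \ref{lem:esccond} inside $\Ult(R',E)$, not inside $N$). Because $\Ult(R,E)$ and $P$ agree below $\lh(E)$, $H$ sits inside $P$, and $\psi$ agrees with $\pi$ below $\thetabar$. A tree on the extended phalanx then lifts to a \emph{normal continuation of $\Tt$} on $N$ (using the freely dropping iterability of \S\ref{sec:copying}), rather than to a separate tree on some hull. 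The paper also handles explicitly the cases where an extender returns to $\Pbar$ or to $\Ult(\Rbar,\Ebar)$, checking that $\pi$ and $\psi$ preserve the correct drop levels; this is where the choice of $\thetabar$ as the largest cardinal of $\Pbar||\lh(\Ebar)$ (rather than anything larger) is essential, as the remark after the claim explains.
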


\begin{proof}
The iterability of the corresponding phalanx in \ref{thm:coarseDodd} used heavily that $E$ was strong below $\tau_E$. Because we don't have this here, we need another approach. We'll retain $\pi$ as our lifting map for the models on $\Phi(\Ttbar)$, and show that there's an embedding of $\Ult(\Rbar,\Ebar)$ into a level of $\Pbar$ agreeing with $\pi$ below $\thetabar$. We'll also discuss what happens when an extender returns to $\Pbar$ or $\Ult(\Rbar,\Ebar)$.

Let $R=M^\Tt_\xi$ and $R'=M^{\Tt'}_\xi$, the models $E$ returns to in $\Tt$ and $\Tt'$; note $\Ult(R,E)=\Ult(R',E)|\mu$, and
\[ \pi\rest\Ult(\Rbar,\Ebar):\Ult(\Rbar,\Ebar)\to\Ult(R,E).\]
Now by \ref{lem:esccond},
\[ N\sats \all\lambda<\mu\ [\textrm{if}\ \lambda\ \textrm{is a cardinal then}\ \Hull_\om^{N|\mu}(\lambda)\ins\J^\es\ ]. \]
$\Ult(R',E)$ satisfies the same statement, since the iteration maps fix $\mu$ and are at least $\Pi_1$ elementary. So letting 
\[ H'=\Hull_\om^{\Ult(R,E)}(\lh(E)),\]
$\J_1(H')$ projects to $\lh(E)$, and $H'\pins\Ult(R,E)$. $\Ult(R,E)$ satisfies Lemma \ref{lem:escpassive} since $N$ does, so we can get an $\gamma<\lh(E)$, with $\pi(\nu_{\Ebar})<\gamma$, and $\sigma$ such that $\sigma:\J_1(H)\to\J_1(H')$ is the uncollapse map of $\Hull_\om^{\J_1(H')}(\gamma)$, with $\crit(\sigma)=\gamma$, and $\J_1(H)\pins\Ult(R,E)$. But $\Ult(R,E)$ and $P$ agree below $\lh(E)$. So we get $\psi:\Ult(\Rbar,\Ebar)\to H\pins P$, with $\psi\rest\nu_{\Ebar}+1=\pi\rest\nu_{\Ebar}+1$. (In fact if $R=N|\mu$, then $\cof^N(\lh(E))=(\crit(E)^+)^N$, so then we could take $\pi``\lh(\Ebar)\sub\gamma$, so that $\psi$ agrees with $\pi$ below $\lh(\Ebar)$, but this doesn't help in the end.)

Consider building a tree $\Uu$ on $(\Phi(\Ttbar),\Ult(\Rbar,\Ebar),\thetabar)$ (so $\lh(E^\Uu_0)>\thetabar$). By our hypothesis on $\lh(\Ebar)$, $\sup_\beta\lh(E^{\Ttbar}_\beta)\leq\thetabar$, so $\psi(E^\Uu_0)$ is a suitable exit extender from $P$. Except for using an extender applying to $\Pbar$ or $\Ult(\Rbar,\Ebar)$ (discussed next), the remarks above discussing the iterability of $\Phi(\Ttbar)$ show the copying process works.

Suppose $F=E^\Uu_\alpha$ has crit $\eta$, where $\sup_\beta\nu^{\Ttbar}_\beta\leq\eta<\thetabar$, so that $F$ applies to $\Pbar$. Let $\Uu'$ be the liftup of $\Uu$, $\psi_\alpha$ be the $\alpha^\nth$ liftup map and $F'=\psi_\alpha(F)$. Since $\thetabar<\lh(E^\Uu_0)$ and $\thetabar$ is a cardinal of $\Ult(\Rbar,\Ebar)$, $F$ measures exactly $\pow(\eta)\int\Pbar|\thetabar=\pow(\eta)\int\Pbar|\lh(\Ebar)$. Since $\theta<\lh(E^{\Uu'}_0)<\lh(E)$, $F'$ measures exactly $\pow(\pi(\eta))\int P|\theta=\pow(\pi(\eta))\int P|\lh(E)$. Since $\pi(\lh(\Ebar))=\lh(E)$, $\pi$ preserves the correct level to drop to. Moreover, $\pi$ agrees with $\psi$, and therefore $\psi_\alpha$, below $\thetabar$, so certainly on the measured subsets of $\eta$. Therefore the shift lemma applies.

Suppose $F$ applies to $\Ult(\Rbar,\Ebar)$. If it causes a drop, clearly $\psi$ preserves the level to drop to. If not, we simply lift to $H$, applying $N|\mu$'s freely dropping iterability (\S\ref{sec:copying}).

Thus the lifted tree is a (freely dropping) normal continuation of $\Tt$.
\renewcommand{\qedsymbol}{$\Box$(Claim \ref{clm:iter})}\end{proof}

\begin{rem}
The reason we can't move the exchange ordinal above $\thetabar$ is as follows. Suppose we do move it up. It may be that $\Pbar|\lh(\Ebar)$ projects to $\thetabar$. (In fact, the theorem implies that it does.) Suppose we want to apply an extender with crit $\thetabar$. This extender must go back to $\Pbar$, and it measures exactly $\Pbar|\lh(\Ebar)$, so there is a drop to that level. $\psi$ was arranged with its range bounded in $\lh(E)$ (which was needed to get $H\ins P$), and
\[ \theta=\psi(\thetabar)<\psi(\lh(E^\Uu_0))<\lh(E).\]
Since $\theta$ is largest cardinal below $\lh(E)$ in $P||\lh(E)$, the least level projecting to $\theta$ after $\psi(\lh(E^{\Uu}_0))$ is strictly below $\lh(E)$. But $\pi(P|\lh(\Ebar))=P|\lh(E)$, so $\pi$ does not preserve the correct level to drop to. So the copying process breaks down. We get iterability with the exchange ordinal at $\thetabar$, but at the cost of possibly moving some generators of $\Ebar$ whilst iterating.

Also notice we used the fact that $\Sigma$ is a full class strategy for $N|\mu$. Suppose instead $\Sigma\in N|\mu$ is an $N|\alpha$-based strategy for $N|\mu$, where $\alpha<\mu$. Then the above lifting doesn't work, as $\pi$ can produce extenders with length $>\alpha$.
\end{rem}

\noindent\emph{Notation.} We proceed to show $\Ebar$ is on $\es^{\Pbar}$, from the first order properties of $M$ and the phalanx iterability. Since we will no longer refer directly to objects at the $N$ level, we drop the bar notation.\\

We expect the following statement to be extractable from the rest of the proof. It is a variation of (\cite{cmip}, 8.6), which is a related statement about $K$, although there, the exchange ordinal $\theta=\nu_E$ instead.

\begin{conj} Let $M$ be an $\om$-sound mouse projecting to $\om$, and $\Tt$ a correct tree on $M$, with last model $P$. Suppose $(P||\lh(E),E)$ is a premouse, and $\crit(E)$ is such that $E$ would apply normally to $R$, with degree $k$ (if it were on the $P$-sequence). Let $\theta$ be the largest cardinal of $P||\lh(E)$. Then $E$ is on $\es^P_+$ iff the phalanx $(\Phi(\Tt),\Ult_k(R,E),\theta)$ is $\om_1+1$-iterable.\end{conj}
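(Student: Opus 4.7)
The plan is to prove both implications, following the template of Theorems \ref{thm:coarseDodd} and \ref{thm:cohere}. The forward direction is essentially routine: if $E$ sits on $\es^P_+$, then $\Ult_k(R,E)$ is the degree-$k$ ultrapower arising from the normal one-step extension $\Tt\conc\langle E\rangle$, and $\theta$ is the corresponding exchange ordinal. Any putative iteration of the phalanx $(\Phi(\Tt),\Ult_k(R,E),\theta)$ lifts, via the freely-dropping copying construction of \S\ref{sec:copying}, to a normal iteration of $M$. Since $M$ is $\fully$-iterable (being a mouse), iteration strategies for $\Tt\conc\langle E\rangle$ and its continuations exist, so the phalanx is $\om_1+1$-iterable.

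For the reverse direction, assume the phalanx is iterable. Because $M$ is $\om$-sound projecting to $\om$, we have $M=\Hull_\om^M(\empty)$, so no preliminary hulling is needed. I would compare $M$ with the phalanx, producing trees $\Uu$ on $M$ and $\Vv$ on the phalanx, with common last model $Q$. Pointwise definability of $M$ (and the coding of $M$ by its $\Sigma_\om$-theory) forces non-dropping main branches on both sides, giving embeddings $i^\Uu\colon M\to Q$ and $i^\Vv\colon\Ult_k(R,E)\to Q$ that commute appropriately with $i_E$ and the tree embeddings of $\Tt$, with $\crit(i^\Vv)\ge\theta$. If $E$ is of type 1 or 3 then $\theta=\nu_E$, so the first extender used on $\Uu$'s main branch past $R$ is compatible with $E$, and the initial segment condition puts $E$ on $\es^P_+$.

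The main work lies in the type-2 case, where $\theta<\nu_E$ allows generators of $E$ to be moved by $i^\Vv$. Let $G$ be the first extender used on $\Uu$'s main branch past $R$. Using the Dodd-fragment Lemmas \ref{lem:Dsp3}--\ref{lem:Dsp6} and Corollary \ref{cor:tau}, I would track the Dodd-fragment parameter and projectum along $\Uu$ exactly as in Claims \ref{clm:escG}--\ref{clm:escG*=E}, establishing $s_G=i^\Uu(t_E)$ and $\sigma_G=\tau_E$. If $G$ is not the active extender of its host, \ref{fact:Doddsound} makes $G$ Dodd-sound and directly $G\iso E$. Otherwise, descending to the $\om$-core $R_0^*$ of $G$'s host yields an active Dodd-sound $G^*\iso E$; the bound $\rho_1^{R_0^*}\le\tau_{G^*}=\tau_E<\eta$, with $\eta$ the index of least disagreement (and hence a $P$-cardinal), forces $R_0^*\ins P$, so $E$ lies on $\es^P_+$.

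The principal obstacle is verifying that the comparison succeeds in producing non-dropping main branches on both sides, and that the Dodd-fragment induction propagates through, in the absence of the ambient $\theory$-strength exploited in \ref{thm:cohere}. One must argue that the assumed phalanx iterability carries enough weight to stand in for the self-iterability hypotheses of \ref{thm:cohere}, and that the possibly transfinite length of $\Tt$ (in contrast with the finite $\Tt$ of \ref{thm:cohere}(a)) does not break either the copying construction or the inductive tracking of Dodd-fragment ordinals across the tree. This appears to be precisely why the statement is left as a conjecture in the paper.
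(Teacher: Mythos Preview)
The paper does not prove this statement; it is explicitly labeled a \emph{conjecture}, with the remark ``We expect the following statement to be extractable from the rest of the proof'' of Theorem \ref{thm:cohere}. So there is no paper proof to compare against directly; what one can compare against is the machinery in the surrounding proof of \ref{thm:cohere}, which the paper indicates is the intended template.

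Your forward direction is fine, and your setup of the comparison in the reverse direction matches the paper's (modulo swapping the names $\Uu$ and $\Vv$). There is, however, a genuine gap in your type-2 analysis. You propose to track $t_E$ and $\tau_E$ via Claims \ref{clm:escG}--\ref{clm:escG*=E} to obtain $s_G=i^\Uu(t_E)$ and $\sigma_G=\tau_E$. But that argument from \ref{thm:coarseDodd} relies essentially on $E$ being Dodd-sound: the fragments $E\rest((t_E)_k\cup t_E\rest k)$ in $\Ult(R,E)$ are what get pushed forward to witness fragments of $G$ in $Q$, and the argument that $\sigma_G\le\tau_E$ uses that $E\rest(\tau_E\cup t_E)$ already codes all of $E$. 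The conjecture assumes only that $(P||\lh(E),E)$ is a premouse, which gives the initial segment condition but not Dodd-soundness. The paper's own proof of \ref{thm:cohere} faces exactly this issue and abandons the \ref{thm:coarseDodd} approach in favor of the core sequence (Definition \ref{dfn:coreseq}), the damage structure analysis (\ref{dfn:damage}, \ref{lem:Gcoreseq}), and the factoring of Claim \ref{clm:factor}. That is the machinery the conjecture is meant to be ``extracted from'', and your sketch omits it entirely.

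The obstacle you flag at the end (transfinite $\Tt$, absence of $\theory$) is not really the crux; the hypothesis already hands you the phalanx iterability that \ref{thm:cohere} works hard to establish, and the comparison argument thereafter is insensitive to $\lh(\Tt)$. The substantive missing piece in your proposal is the handling of Dodd-unsound $E$ via the core sequence decomposition, together with the inductive argument (as in Claim \ref{clm:factor}) that each $j_{\alpha,Q}$ factors through $U$ until some $Q_\eps=U$, and the observation that $G_\eps$ then sits on a normal iterate agreeing with $P$ below $\lh(E)$.
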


Let $U=\Ult(R,E)$. Since $U$ agrees with $P$ below $\lh(E)>\theta$, the iterability gives us successful comparison trees $\Uu$ on $(\Phi(\Tt),\Ult(R,E),\theta)$ and $\Vv$ on $\Phi(\Tt)$. As in Lemma \ref{lem:esccond}, since $M$ is pointwise definable, the same model $Q$ is produced on both sides. We claim that $Q$ is above $U$ in $\Uu$, and there is no dropping leading to $Q$ on either tree. This follows by standard arguments using the hull property and that $M$ is pointwise definable. For example, suppose $Q$ is fully sound, so that there is no dropping leading to it. Suppose $Q$ is not above $U$; let $M^\Tt_\alpha$ be the root of $Q$ in $\Uu$. Let $j:M^\Tt_\alpha\to Q$ be the iteration map and let $E^\Uu_\beta$ be the first extender of $j$. Note that $\nu_F\geq\lh(E)$ for all $F$ used in $\Uu$. Note $\Ult(M^\Tt_\alpha,E^\Uu_\beta\rest\gamma)=\Hull_\om^Q(\gamma)$. Thus the initial segment condition for $E^\Uu_\beta$ shows $\crit(j)$ has the $Q$-hull property (i.e. $\pow(\crit(j))^ Q\sub\Hull_\om^Q(\crit(j))$), but that for $(\crit(j)^+)^Q\leq\alpha<\lh(E)$, $\alpha$ does not. This just depends on $Q$, and gives that $M^\Tt_\alpha$ is the root of $Q$ in $\Vv$, and the embedding is the same on both sides. But then the corresponding first extenders were compatible, and used in a comparison; contradiction. Similar reasoning shows there is no dropping leading to $Q$, and that $R$ is the root of $Q$ in $\Vv$.

\begin{clm}{\label{clm:1extG}} There's only one extender $G$ used on $\Vv$'s branch from $R$ to $Q$ and $\nu_G=i^\Uu(\gamma+1)$, where $\gamma$ is the largest generator of $E$.
\begin{proof}
This isn't quite as in \ref{thm:coarseDodd} as we don't know $E$ is Dodd sound (and it may not be), and there is a difficulty if $E$ is just beyond a type Z segment.

Let $G$ be the first extender used on the $\Vv$ branch from $R$ to $Q$. Let $\sigma\in[U,Q]_\Uu$ be such that $\sigma$ is least with $M^\Uu_\sigma=Q$ or $\crit(i^\Uu_{\sigma,Q})>i^\Uu_{0,\sigma}(\gamma)=\gamma'$. Then $\nu_G\leq\gamma'+1$: otherwise $G\rest\gamma'+1\in M^\Uu_\sigma$, but $G\rest\gamma'+1$ is the length $\gamma'+1$ extender derived from $i^\Uu_{U,\sigma}\com i_E$ (as $i^\Uu\com i_E=i^\Vv_{R,Q}$ and $\crit(i^\Uu)>\crit(E)$). So by definition of $\sigma$, $M^\Uu_\sigma=\Ult(R,G\rest\gamma'+1)$, and $G\rest\gamma'+1$ collapses the successor of $\gamma'$ in $M^\Uu_\sigma$.

Now if $E\rest\gamma\in\Ult(R,E)$, then $i^\Uu_{R,\sigma}(E\rest\gamma)\in Q$ is compatible with $G$, so $\nu_G\geq\gamma'+1$. Otherwise $E$ has a last proper segment, $E\rest\delta+1$, and it is type Z. So $E\rest\delta\in\Ult(R,E)$. Let $\sigma_1\in[U,Q]_\Uu$ be such that $\sigma_1$ is least with $M^\Uu_{\sigma_1}=Q$ or $\crit(i^\Uu_{\sigma_1,Q})>i^\Uu_{R,{\sigma_1}}(\delta)=\delta_1$. As in the previous paragraph $\nu_G\geq\delta_1+1$. Since $E\rest\delta+1$ is type Z, $\gamma=(\delta^+)^{\Ult(R,E\rest\delta)}$, so $E\rest\delta$ collapses $\gamma$ to $\delta$. So $\gamma_1=i^\Uu_{R,\sigma_1}(\gamma)$ is collapsed below $\delta_1+1$ in $M^\Uu_{\sigma_1}$. Therefore if $M^\Uu_{\sigma_1}\neq Q$, then $\crit(i^\Uu_{\sigma_1,Q})>\gamma_1$. So in fact $\sigma_1=\sigma$ and $\gamma_1=\gamma'$. Now
\[ \crit(i^\Vv_{\Ult(R,G),Q})\geq\nu_G\geq\delta_1+1. \]
But then $M^\Uu_\sigma$, $Q$ and $\Ult(R,G)$ agree about $\pow(\delta_1)$, so in fact $ \crit(i^\Vv_{\Ult(R,G),Q})>\gamma'$. So \[ M^\Uu_\sigma=\Hull^{M^\Uu_\sigma}_\om(\gamma'+1)=\Hull^Q_\om(\gamma'+1)=\Hull^{\Ult(R,G)}_\om(\gamma'+1)=\Ult(R,G). \]
So in fact $M^\Uu_\sigma=Q=\Ult(R,G)$. However, $\gamma'\notin\Hull^Q_\om(\gamma')$ since $\gamma$ is a generator of $E$ (a fact coded in $\Th^{\Ult(R,E)}_\om(\{\gamma\})$), so $\gamma'+1=\nu_G$.
\renewcommand{\qedsymbol}{$\Box$(Claim \ref{clm:1extG})}\end{proof}\end{clm}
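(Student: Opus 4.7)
The plan is to track how the largest generator $\gamma$ of $E$ moves under the composition $i^\Vv_{R,Q} = i^\Uu_{U,Q}\com i_E$, and then use the initial segment condition on extenders arising in $\Uu$ to pin down $\nu_G$ from both sides, where $G$ is the first extender on $[R,Q]_\Vv$.

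First I would locate the critical spot on the $\Uu$-side: let $\sigma\in[U,Q]_\Uu$ be least such that either $M^\Uu_\sigma=Q$ or $\crit(i^\Uu_{\sigma,Q})>\gamma':= i^\Uu_{U,\sigma}(\gamma)$. The point of this choice is that up through $\sigma$ all the motion needed to send $\gamma$ to its final image has already occurred, while beyond $\sigma$ all critical points are strictly above $\gamma'$. For the upper bound $\nu_G\leq\gamma'+1$, I would note that the length-$(\gamma'+1)$ extender derived from $i^\Uu_{U,\sigma}\com i_E$ is a fragment of $G$ (by commutativity and the fact that $\crit(i^\Uu_{U,Q})\geq\theta$), and it lives in $M^\Uu_\sigma$; the initial segment condition then forces $M^\Uu_\sigma=\Ult(R,G\rest\gamma'+1)$, and if $\nu_G>\gamma'+1$ this fragment would collapse the successor of $\gamma'$ in $M^\Uu_\sigma$, which the comparison setup forbids.

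For the reverse inequality I would split on whether $E\rest\gamma\in U$. In the easy case, $i^\Uu_{U,\sigma}(E\rest\gamma)\in M^\Uu_\sigma$ is a length-$\gamma'$ extender compatible with $G$, giving $\nu_G\geq\gamma'+1$ directly. The harder case is when $E$ has a last proper segment $E\rest\delta+1$ of type Z, so that $E\rest\gamma\notin U$ even though $E\rest\delta\in U$ and $\gamma=(\delta^+)^{\Ult(R,E\rest\delta)}$. Here I would define $\sigma_1\in[U,Q]_\Uu$ analogously with $\delta$ in place of $\gamma$, apply the easy argument to $E\rest\delta$ to get $\nu_G\geq\delta_1+1$ with $\delta_1=i^\Uu_{U,\sigma_1}(\delta)$, and then exploit that $E\rest\delta$ collapses $\gamma$ to $\delta$ to deduce that $\gamma_1:=i^\Uu_{U,\sigma_1}(\gamma)$ is collapsed below $\delta_1+1$ in $M^\Uu_{\sigma_1}$. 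This automatically yields $\crit(i^\Uu_{\sigma_1,Q})>\gamma_1$, forcing $\sigma_1=\sigma$ and $\gamma_1=\gamma'$, so $\nu_G\geq\gamma'+1$ again.

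Finally, with $\nu_G=\gamma'+1$ in hand, I would close by a hull calculation leveraging the pointwise definability of $M$: $M^\Uu_\sigma$, $Q$, and $\Ult(R,G)$ all coincide with $\Hull^Q_\om(\gamma'+1)$, because the critical points of $i^\Vv_{\Ult(R,G),Q}$ and $i^\Uu_{\sigma,Q}$ both exceed $\gamma'$ and $M$ is pointwise definable. Hence $M^\Uu_\sigma=Q=\Ult(R,G)$, so $G$ is the unique extender used on $[R,Q]_\Vv$; and since $\gamma$ is a genuine generator of $E$, $\gamma'$ is a generator of $G$, locking in $\nu_G=\gamma'+1=i^\Uu(\gamma+1)$. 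The main obstacle is the type Z subcase: the failure of $E\rest\gamma\in U$ means the direct fragment-visibility argument breaks, and one must route it through $E\rest\delta$ and then carry the information back up to $\gamma$ via the type Z collapse.
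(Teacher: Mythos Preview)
Your proposal follows the paper's proof closely and is essentially correct, but there is a real gap in the type~Z subcase. After establishing $\sigma_1=\sigma$ and $\gamma_1=\gamma'$, you assert ``so $\nu_G\geq\gamma'+1$ again,'' but this does not follow from what you have: at that point you only know $\nu_G\geq\delta_1+1$, and $\delta_1<\gamma'$. The paper does \emph{not} obtain $\nu_G\geq\gamma'+1$ directly here. Instead it argues: from $\crit(i^\Vv_{\Ult(R,G),Q})\geq\nu_G\geq\delta_1+1$, the three models $M^\Uu_\sigma$, $Q$, $\Ult(R,G)$ agree on $\pow(\delta_1)$; since $\gamma'$ is collapsed below $(\delta_1)^+$ in $M^\Uu_\sigma$, the same collapse holds in $\Ult(R,G)$; and since $\nu_G$ is a cardinal of $\Ult(R,G)$ strictly above $\delta_1$, one gets $\nu_G\geq(\delta_1^+)^{\Ult(R,G)}>\gamma'$, hence $\crit(i^\Vv_{\Ult(R,G),Q})>\gamma'$. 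Only then does the hull calculation go through. So in the type~Z case you should enter the hull argument with $\crit(i^\Vv_{\Ult(R,G),Q})>\gamma'$ obtained via this agreement step, not with ``$\nu_G=\gamma'+1$ in hand''; the exact value $\nu_G=\gamma'+1$ is pinned down only at the very end, via the generator argument, after the hull calculation has already shown $Q=\Ult(R,G)$.

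A smaller presentational point: in your upper-bound sketch, the clause ``and it lives in $M^\Uu_\sigma$'' should come \emph{after} the hypothesis $\nu_G>\gamma'+1$ (it is the initial segment condition under that hypothesis that places $G\rest\gamma'+1$ in the model), and it is the pointwise definability of $M$, not the ISC, that then yields $M^\Uu_\sigma=\Ult(R,G\rest\gamma'+1)$.
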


We now begin to work on analysing the Dodd unsoundness of an extender $F$ used in an iteration tree, looking at the Dodd-fragment ordinals of $F$, and of extenders that brought about $F$'s Dodd unsoundness, and so on. The following elementary fact underlies this.

\begin{dfn}\label{dfn:extcomp}\index{$\Ult_k(F,E)$}\index{$\com_k$} Let $P$ be a premouse with $F=F^P$, and $F'$ an extender over $P$. Then $F'\com_k P$ denotes $\Ult_k(P,E)$; $F'\com_k F$ or $\Ult_k(F,F')$ denotes $F^{\Ult_k(P,F')}$. In the absence of parentheses, we take association of $\com_k$ to the right; i.e.
\[ F'\com_k F\com_l Q = F'\com_k (F\com_l Q).\]
\end{dfn}

\begin{lem}[Associativity of Extenders]\label{lem:extass}
Let $P_u$ and $P_l$ (upper and lower) be active premice, $E_u=F^{P_u}$, $E_l=F^{P_l}$, such that $\crit(E_u)>\crit(E_l)$, and $Q$ a premouse. Suppose $E_u$ measures exactly $\pow(\crit(E_u))\int P_l$ and $\crit(E_u)<\nu_{E_l}$, and likewise $E_l$ with respect to $Q$ (though $Q$ may be passive), and $\crit(E_l)<\rho^Q_k$. Then\index{$\com_k$}
\[ (E_u\com_0 E_l)\com_k Q= E_u\com_k(E_l\com_k Q), \]
where $\com_i$ denotes applying the extender on the left to the object on the right with degree $i$. Moreover, $i^Q_{E_u\com_0 E_l}=i^U_{E_u}\com i^Q_{E_l}$ (the degree $k$ ultrapower embeddings).\end{lem}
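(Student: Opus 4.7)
The plan is to set up both ultrapowers explicitly and verify they coincide via a natural factor map; this is essentially a standard associativity-of-ultrapowers argument, but one must track premouse coherence and the degree-$k$ fine structure carefully.

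First I would form the right-hand side: let $Q_1 := \Ult_k(Q, E_l)$ with embedding $i_1 := i^Q_{E_l}$. Before defining the next ultrapower, I would verify that $E_u$ legitimately applies to $Q_1$ with degree $k$: $\rho_k^{Q_1} = i_1(\rho_k^Q) \geq i_1(\crit(E_l)) \geq \nu_{E_l} > \crit(E_u)$, using $\crit(E_l) < \rho_k^Q$; and $\pow(\crit(E_u)) \cap Q_1 = \pow(\crit(E_u)) \cap P_l$, because with $\crit(E_u) < \nu_{E_l}$ the coherence property of the premouse $P_l$ forces both sides to be computed from the common data $E_l$ together with $Q | (\crit(E_l)^+)^Q = P_l | (\crit(E_l)^+)^{P_l}$. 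I can then form $Q_2 := \Ult_k(Q_1, E_u)$ with $i_2 := i^{Q_1}_{E_u}$, so that $i_2 \com i_1 : Q \to Q_2$ has critical point $\crit(E_l)$.

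Next I would identify $E^* := E_u \com_0 E_l = F^{\Ult_0(P_l, E_u)}$ with the length-$\nu_{E^*}$ extender derived from $i_2 \com i_1$. Since $\crit(E_u) > \crit(E_l)$, we have $\crit(E^*) = \crit(E_l)$ and $E^*$ measures $\pow(\crit(E_l)) \cap P_l = \pow(\crit(E_l)) \cap Q$. Unwinding the active predicate of $\Ult_0(P_l, E_u)$ gives, for $A \in \pow(\crit(E_l)) \cap Q$ and $a \in [\nu_{E^*}]^{<\om}$, the equivalence $A \in (E^*)_a \iff a \in i_{E_u}^{P_l}(i_{E_l}^{P_l}(A))$; the shift lemma then shows $i_{E_u}^{P_l}$ and $i_2$ agree on the shared subsets coming from the common data, so this reduces to $a \in i_2(i_1(A)) = (i_2 \com i_1)(A)$ --- exactly the condition for $E^*$ to be the length-$\nu_{E^*}$ extender of $i_2 \com i_1$.

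With the identification in hand I would define $\sigma : \Ult_k(Q, E^*) \to Q_2$ by $[a, f]^Q_{E^*} \goesto (i_2 \com i_1)(f)(a)$, for $a \in [\nu_{E^*}]^{<\om}$ and $f$ an appropriate $r\Sigma_k$-Skolem function from $Q$. Well-definedness and injectivity are immediate from the previous paragraph together with \L o\'s's theorem. For surjectivity, any element of $Q_2$ has the form $i_2(g)(b)$ with $g \in Q_1$ represented as $[c, h]^Q_{E_l}$ for $h$ an $r\Sigma_k$-term from $Q$; combining $b \in [\nu_{E_u}]^{<\om}$ and $i_2(c)$ into a single tuple $c' \in [\nu_{E^*}]^{<\om}$ displays the element as $(i_2 \com i_1)(h)(c')$. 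Elementarity at the $r\Sigma_{k+1}$ level follows from \L o\'s's theorem on each side combined with the bijection, and the ``moreover'' assertion $i^Q_{E^*} = i_2 \com i_1$ falls out of $\sigma$ applied to constant functions. The principal obstacle I expect is the fine-structural bookkeeping: establishing the subset agreement $\pow(\crit(E_u)) \cap Q_1 = \pow(\crit(E_u)) \cap P_l$ from premouse coherence, and cleanly composing $r\Sigma_k$-Skolem terms across the two ultrapower layers in the surjectivity step. Neither is deep, but both require careful treatment to make the associativity argument airtight.
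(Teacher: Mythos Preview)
Your proposal is correct and follows essentially the same approach as the paper: identify $E^*=E_u\com_0 E_l$ with the extender derived from the composed embedding $i_2\com i_1$, then exhibit the natural isomorphism between $\Ult_k(Q,E^*)$ and $\Ult_k(\Ult_k(Q,E_l),E_u)$. The only presentational difference is that the paper explicitly decomposes generators of $E^*$ as $i_{E_u}(a)\cup b$ with $a\in[\nu_{E_l}]^{<\om}$ and $b\in[\nu_{E_u}]^{<\om}$ (invoking the earlier lemma that this set suffices as generators), and writes the isomorphism accordingly as $[\tau_q,i_{E_u}(a)\cup b]\mapsto[\tau'_{(i_1(q),a)},b]$; your formulation via $[a,f]\mapsto(i_2\com i_1)(f)(a)$ is the same map. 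One minor slip: you write $\rho_k^{Q_1}=i_1(\rho_k^Q)$, but for degree-$k$ ultrapowers one has $\rho_k^{Q_1}=\sup i_1``\rho_k^Q$; the inequality $\crit(E_u)<\rho_k^{Q_1}$ you need still follows since $\crit(E_u)<\nu_{E_l}\leq i_1(\crit(E_l))$.
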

\begin{proof}
Let $U=\Ult_k(Q,E_l)$, and $i^{P_l}_{E_u}$ and $i^U_{E_u}$ be the ultrapower embeddings associated to $\Ult_0(P_l,E_u)$ and $\Ult_k(U,E_u)$ respectively. Note $\crit(E_u)<\rho_k^U$, $\lh(E_l)=\OR^{P_l}$ is a cardinal in $U$, and $P_l||\OR^{P_l}=U|\OR^{P_l}$. So $\Ult_0(P_l,E_u)$ agrees below its height with $\Ult_k(U,E_u)$, and $i^{P_l}_{E_u}=i^U_{E_u}\rest\OR^{P_l}$. As in \ref{lem:Dsp3}, $\sup i_{E_u}``\nu_{E_l}=\nu_{E_u\com E_l}$. Moreover, for $A\sub\crit(E_l)$ in $P_l$,
\begin{equation}\label{eqn:assoc} i_{E_u\com E_l}(A)\int\nu_{E_u\com E_l}=i_{E_u}(i_{E_l}(A)\int\nu_{E_l})\int\nu_{E_u\com E_l} \end{equation}
by the definition of $0$-ultrapower. (This associativity generates the associativity overall.)

Now we define a map from $\Ult_k(Q,E_u\com E_l)$ to $\Ult_k(U,E_u)$. For $\tau_q$ a $k$-term defined with parameter $q\in Q$, $a\in\nu_{E_l}^{<\om}$, and $b\in\nu_{E_u}^{<\om}$, let
\[ [\tau_q,i_{E_u}(a)\un b]^Q_{E_u\com E_l}\goesto [\tau'_{(i^Q_{E_l}(q),a)},b]^{\Ult_k(Q,E_l)}_{E_u}, \]
where $\tau'$ is naturally derived from $\tau$ by converting some arguments to parameters. (As in \ref{lem:Dsp3}, $\nu_{E_u}\un i_{E_u}``\nu_{E_l}$ suffices as generators for $E_u\com E_l$.) Los' Theorem and (\ref{eqn:assoc}) shows this is well-defined and $\Sigma_k$-elementary, and it's clearly surjective. This isomorphism commutes with the ultrapower embeddings, which gives $i^Q_{E_u\com_0 E_l}=i^U_{E_u}\com i^Q_{E_l}$.
\renewcommand{\qedsymbol}{$\Box$(Lemma \ref{lem:extass})}\end{proof}

\begin{cor}\label{cor:nass}
Let $P_1,\ldots,P_n$ be active premice, $E_i=F^{P_i}$, with $\crit(E_i)>\crit(E_{i+1})$, and $Q$ be a premouse. Suppose $E_i$ measures exactly $\pow(\crit(E_i))\int P_{i+1}$ and likewise for $E_n$ with respect to $Q$, and that $\crit(E_n)<\rho^Q_k$. Then, with $\com=\com_0$,
\[ ((\ldots(E_1\com E_2)\com\ldots)\com E_n)\com_k Q=E_1\com_k(\ldots\com_k(E_n\com_k Q)); \]
\[ i^Q_{((\ldots(E_1\com E_2)\com\ldots)\com E_n)}=i_{E_1}\com\ldots\com  i^Q_{E_n}. \]
\end{cor}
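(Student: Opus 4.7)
The plan is a straightforward induction on $n$, with Lemma \ref{lem:extass} serving as both the base case and the essential tool for the inductive step. For $n=1$ the two displayed identities are tautologies, and for $n=2$ they are verbatim the content of Lemma \ref{lem:extass}, so I will assume the corollary for compositions of length $n-1$ and carry out one more peel-off from the right.

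Write $G = ((\ldots(E_1\com E_2)\com\ldots)\com E_{n-1})$; this is an active extender with $\crit(G)=\crit(E_{n-1})$. I will first invoke Lemma \ref{lem:extass} with $E_u = G$ (upper) and $E_l = E_n$ (lower), acting on $Q$, to obtain
\[ (G\com E_n)\com_k Q \;=\; G\com_k(E_n\com_k Q) \quad\text{and}\quad i^Q_{G\com E_n} \;=\; i^{Q'}_G\com i^Q_{E_n}, \]
where $Q' = \Ult_k(Q,E_n)$. The hypothesis of Lemma \ref{lem:extass} that requires a word is that $G$ measures exactly $\pow(\crit(E_{n-1}))\int P_n$; this holds because the compositions producing $G$ from $E_{n-1}$ are all by extenders whose critical points strictly exceed $\crit(E_{n-1})$, so the relevant ultrapowers fix $\pow(\crit(E_{n-1}))$ and $G$ measures precisely the same sets as $E_{n-1}$ does. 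The constraint $\crit(G)<\nu_{E_n}$ needed by the lemma is inherited from $\crit(E_{n-1})<\nu_{E_n}$, implicit in the assumption that $E_{n-1}$ measures exactly $\pow(\crit(E_{n-1}))\int P_n$.

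Next I will apply the inductive hypothesis to the $n-1$ extenders $E_1,\ldots,E_{n-1}$ with $Q'$ in place of $Q$. The measuring hypotheses for $E_1,\ldots,E_{n-2}$ are unchanged, and $E_{n-1}$ measures $\pow(\crit(E_{n-1}))\int Q'$ in the same way it did $\pow(\crit(E_{n-1}))\int P_n$, using agreement of $Q'$ with $P_n$ below $\lh(E_n)$ together with $\crit(E_{n-1})<\nu_{E_n}$. The condition $\crit(E_{n-1})<\rho^{Q'}_k$ follows from $\rho^{Q'}_k\geq\sup i^Q_{E_n}``\rho^Q_k$ together with $\crit(E_{n-1})<\nu_{E_n}<i^Q_{E_n}(\rho^Q_k)$. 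Hence the inductive hypothesis yields
\[ G\com_k Q' \;=\; E_1\com_k(\ldots\com_k(E_{n-1}\com_k Q')\ldots) \quad\text{and}\quad i^{Q'}_G \;=\; i_{E_1}\com\ldots\com i^{Q'}_{E_{n-1}}. \]
Splicing this with the previous display (and substituting $Q' = E_n\com_k Q$) gives both identities of the corollary.

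The only step demanding any care is the bookkeeping, at each stage of the induction, that the measuring conditions and the projectum bound $\crit(E_i)<\rho_k$ of the current target persist; all of this is automatic from the agreement of premice below the indices of the extenders involved and from the standard behaviour of the projectum under ultrapowers, so no genuine obstacle arises.
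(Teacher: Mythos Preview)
Your induction on $n$ from Lemma \ref{lem:extass} is correct and is exactly the argument the paper intends: the corollary is stated without proof immediately after the lemma, so the paper is leaving precisely this routine induction to the reader. Your bookkeeping of the measuring conditions, the bound $\crit(E_{n-1})<\nu_{E_n}$, and the projectum inequality for $Q'=\Ult_k(Q,E_n)$ is the only content required, and you have it.
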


\begin{dfn}[Dodd core]\label{dfn:Dcore}\index{Dodd core}
Let $G$ be an extender. The \emph{Dodd core} of $G$ is
\[ \core_D(G) = G\rest\sigma_G\un s_G. \]
\end{dfn}

\begin{rem}\label{rem:Dcore}
Let $S$ be a premouse such that every extender on $\es_+^S$ is Dodd sound. Suppose $\Ww$ is a normal tree on $S$ and $G$ is on the $M^\Ww_\alpha$ sequence. Then if $G$ is not Dodd sound, lemmas \ref{lem:Dsp4} to \ref{lem:Dsp6} show $\core_D(G)$ is the active extender of $(M^*)^\Ww_{\beta+1}$, where $\beta$ is the least $\beta'$ such that $\beta'+1\leq_W\alpha$ and $M^\Ww_{\beta'+1}$'s active extender is not Dodd sound. Equivalently, $\beta$ is $W\pred(\beta'+1)$ for the least $\beta'$ such that $\beta'+1\leq_W\alpha$, there's no model drop from $(M^*)^\Ww_{\beta'+1}$ to $M^\Ww_\alpha$ and $\crit((i^*)^\Ww_{\beta'+1,\alpha})\geq\tau_F$, where $F=F^{(M^*)^\Ww_{\beta'+1}}$.
\end{rem}

\begin{dfn}[Core sequence]\label{dfn:coreseq}\index{core sequence}
Let $P',Q'$ be premice, and $j:P'\to Q'$ be fully elementary. The \emph{core sequence} of $j$ is the sequence $\left<Q_\alpha,j_\alpha\right>$ defined as follows. $Q_0=P'$ and $j_0=j$. Given $j_\alpha:Q_\alpha\to Q'$, if $j_\alpha=\id$ or the Dodd-fragment ordinals for $j_\alpha$ are not defined, we finish; otherwise let $s,\sigma$ be the Dodd-fragment ordinals for $j_\alpha$ and $H_\alpha=\rg(j_\alpha)$. Let $Q_{\alpha+1}=\Hull_\om^{Q'}(H_\alpha\un s\un\sigma)$ and $j_{\alpha+1}:Q_{\alpha+1}\to Q'$ the uncollapse map. Take the natural limits at limit ordinals. Since $\crit(j_\alpha)\in H_{\alpha+1}$, the process terminates.
\end{dfn}

\begin{dfn}[Damage]\label{dfn:damage}\index{damage structure}\index{$\dam$}\index{$<_\dam$}
Let $S$, $\Ww$ and $G$ be as in \ref{rem:Dcore}. We define the \emph{damage structure} of $G$ in $\Ww$, denoted $\dam^\Ww(G)$, and the relation $<^\Ww_\dam$. If $G$ is Dodd-sound, then $\dam^\Ww(G)=\empty$. Otherwise let $G$, $\core_D(G)$ be on the sequences of $M^\Ww_{\alpha_G}$, $M^\Ww_\alpha$ respectively. Let $\dam^\Ww(G)$ be the sequence with domain
\[ \{\beta\ |\ \beta+1\in(\alpha,\alpha_G]_W\}, \]
such that
\[ \dam^\Ww(G):\beta\goesto (E^\Ww_\beta,\dam^\Ww(E^\Ww_\beta)). \]
(Obviously there is a little superfluous information here.)

This is well-defined since $\dom(\dam^\Ww(G))\sub\alpha_G$ - the deeper levels of the damage structure involve extenders used earlier in $\Ww$. Now let
\[ \beta<^\Ww_\dam\gamma\ \iff\ E^\Ww_\beta<^\Ww_\dam E^\Ww_\gamma\ \iff\  E^\Ww_\beta\in\transcl(\dam^\Ww(E^\Ww_\gamma)). \]
\end{dfn}

Here is a diagram of a typical damage structure. An extender $E$ is represented by the symbol $\rfloor$, with $\crit(E)$ and $\lh(E)$ corresponding to the bottom and top of the symbol respectively. $E<_\dam F$ iff $E$ is pictured to the left of $F$. So Dodd sound extenders have no extenders to their left.
\[
\setlength{\unitlength}{1mm}
\begin{picture}(100,62)(30,0)
\put(100,0){\line(-1,0){2}}
\put(100,0){\line(0,1){62}}
\put(90,1){\line(-1,0){2}}
\put(90,1){\line(0,1){25}}
\put(90,27){\line(-1,0){2}}
\put(90,27){\line(0,1){12}}
\put(90,41){\circle*{0}}
\put(90,42){\circle*{0}}
\put(90,40){\circle*{0}}
\put(90,49){\line(-1,0){2}}
\put(90,49){\line(0,1){6}}
\put(90,56){\circle*{0}}
\put(90,57){\circle*{0}}
\put(90,58){\circle*{0}}
\put(80,2){\line(-1,0){2}}
\put(80,2){\line(0,1){5}}
\put(80,8){\line(-1,0){2}}
\put(80,8){\line(0,1){11}}
\put(80,20){\circle*{0}}
\put(80,21){\circle*{0}}
\put(80,22){\circle*{0}}
\put(80,28){\line(-1,0){2}}
\put(80,28){\line(0,1){7}}
\put(73,31){\circle*{0}}
\put(70,31){\circle*{0}}
\put(67,31){\circle*{0}}
\put(64,31){\circle*{0}}
\put(73,5){\circle*{0}}
\put(70,5){\circle*{0}}
\put(67,5){\circle*{0}}
\put(64,5){\circle*{0}}
\put(61,5){\circle*{0}}
\put(58,5){\circle*{0}}
\put(55,5){\circle*{0}}
\put(50,4){\line(-1,0){2}}
\put(50,4){\line(0,1){2}}
\put(73,11){\circle*{0}}
\put(70,11){\circle*{0}}
\put(67,11){\circle*{0}}
\put(64,11){\circle*{0}}
\put(60,10){\line(-1,0){2}}
\put(60,10){\line(0,1){2}}
\put(60,13){\circle*{0}}
\put(60,14){\circle*{0}}
\put(60,15){\circle*{0}}
\put(60,30){\line(-1,0){2}}
\put(60,30){\line(0,1){2}}

\end{picture}
\]
\begin{rem}
In the situation of \ref{dfn:damage}, the remark above shows
\[ \beta<_\dam\gamma\ \ \implies\ \ \kappa_\gamma<\kappa_\beta<\lh_\beta<\lh_\gamma. \]
So if $\beta_1,\beta_2\in\dom(\dam(E^\Ww_\gamma))$, with $\beta_1<\beta_2$, then $\Ww$ has (finished damaging and) used $E^\Ww_{\beta_1}$, before using any of the extenders in the damage structure of $E^\Ww_{\beta_2}$. That is, if $\alpha_i<_\dam\beta_i$ ($i=1,2$), then $\alpha_1<\alpha_2$, since
\[ \lh_{\alpha_1}<\lh_{\beta_1}\leq((\kappa_{\beta_2})^+)^{E^\Ww_{\beta_2}}<\kappa_{\alpha_2}<\lh_{\alpha_2}. \]
(Though it may be that the tree has already dropped to $\core_\om(M^\Ww_{\beta_2})$ at some stage before applying $E^\Ww_{\beta_1}$.)\end{rem}

\emph{Notation.} Let $\lambda\in\OR$ and $F_\alpha$ be an extender for $\alpha<\lambda$. Let $P$ be a premouse. We define $P_\alpha$ for $\alpha\leq\lambda$. $P_0=P$, $P_{\alpha+1}=\Ult_k(P_\alpha,F_\alpha)$, and take direct limits at limit $\alpha$, as far as this definition makes sense. Then
\[ _{\alpha<\lambda}\ldots\com_k F_\alpha\com_k\ldots\com_k P \]
denotes $P_\lambda$. (So association is to the right, as above.) Similarly, if $F=F^P$,
\[ _{\alpha<\lambda}\ldots\com_k F_\alpha\com_k\ldots\com_k F \]
denotes $F^{P_\lambda}$. We will also make use of index sets $X\sub\OR$, replacing $\lambda\in\OR$.

\begin{lem}\label{lem:Gcoreseq}
Let $M$ be a premouse with Dodd sound extenders on its sequence and $\Ww$ be a normal tree on $M$. Let $G=E^\Ww_{\alpha_G}$. Let $\left<F_\alpha\right>_{0\leq\alpha}$ enumerate
\[ \{\core_D(E^\Ww_\beta)\ |\ \beta\leq_\dam\alpha_G\} \]
with increasing critical points. Let $G_0=\id$, and
\[ G_\alpha = _{1\leq\gamma<\alpha}\ldots\com F_\gamma\com\ldots\com F_0. \]
Let $G_\infty$ be the limit of all $G_\alpha$'s.

The definition makes sense for each $\alpha$; i.e. $F_\alpha$ measures precisely $\pow(\crit(F_\alpha))^{G_\alpha}$ and the ultrapowers are wellfounded. Also $\rho_1^{G_\alpha}\leq\crit(F_\alpha)$.

Further, $G_\infty=G$, and for $1\leq\alpha\leq\infty$ there's a normal tree $\Ww_\alpha$ on $M$ and $\zeta\in\OR$ and $k\in\om$ such that
\begin{itemize}
 \item $G_\alpha$ is on the sequence of $\Ww_\alpha$'s last model,
 \item $\Ww_\alpha\rest\zeta+1=\Ww\rest\zeta+1$,
 \item $\lh(\Ww_\alpha)=\zeta+k+1$,
 \item $\crit(G)<\nu^{\Ww}_\zeta$,
 \item if $k>0$ then $\crit(G)<\nu^{\Ww_\alpha}_{\zeta}$.
\end{itemize}
Finally, suppose $P$ is a $m$-sound premouse and $\crit(G)<\rho_m^P$. Then
\[ \Ult_m(P,G) = _{0\leq\gamma}\ldots\com_m F_\gamma\com_m\ldots\com_m P, \]
and the associated embeddings (with domain $P$) agree.
\end{lem}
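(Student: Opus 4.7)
The strategy is to prove all the statements by transfinite induction on $\alpha$, with the tree $\Ww_\alpha$ serving as an external witness that supplies wellfoundedness of each ultrapower and the projectum bound $\rho_1^{G_\alpha} \leq \crit(F_\alpha)$. The key tools are the associativity of extender composition (Lemma~\ref{lem:extass} and Corollary~\ref{cor:nass}), the recursive characterization of $\core_D$ from Remark~\ref{rem:Dcore}, and the preservation lemmas~\ref{lem:Dsp4}--\ref{lem:Dsp6} together with Corollary~\ref{cor:tau}. Heuristically, $\Ww_\alpha$ will be a reorganization of the subtree of $\Ww$ whose extenders are the cores $\{F_\gamma : \gamma < \alpha\}$, rearranged so that they are applied in order of increasing critical point; since the remark after Definition~\ref{dfn:damage} shows that $<_\dam$ reverses the critical-point order and agrees with the length order, this reorganization will automatically satisfy normality.

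For the successor step, suppose $G_\alpha$ is the active extender of the last model of $\Ww_\alpha$, and that inductively $G_\alpha$ is Dodd-sound modulo the cores $F_\beta$ with $\beta \geq \alpha$. By Corollary~\ref{cor:tau}, $\tau_{G_\alpha} = \max((\mu^+)^{G_\alpha}, \rho_1^{G_\alpha})$; by the definition of the damage structure and our enumeration, $F_\alpha$ is the next damaging core for $G_\alpha$, and its critical point is exactly $\rho_1^{G_\alpha}$ (this is the place where the corresponding damaging extender in $\Ww$ first introduced a new generator). Hence $F_\alpha$ measures exactly $\pow(\crit(F_\alpha))^{G_\alpha}$, and Lemma~\ref{lem:Dsp5} shows that $F_\alpha \com_0 G_\alpha$ has Dodd-fragment parameter $i_{F_\alpha}(s_{G_\alpha})$ and projectum $\sigma_{G_\alpha}$, matching the intended description of $G_{\alpha+1}$. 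We extend $\Ww_\alpha$ by applying $F_\alpha$; wellfoundedness of $\Ult_0(G_\alpha, F_\alpha)$ is automatic because $\Ww_{\alpha+1}$ is a bona fide normal tree on $M$.

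At a limit $\alpha$, I take the direct limit of the $G_\beta$ under the embeddings $i_{F_\beta}$; wellfoundedness follows because the damage structure is a set and hence well-founded, giving an embedding of the limit into the corresponding model of $\Ww$. For $G_\infty = G$: after all $F_\gamma$ have been applied, no damaging cores remain, so $G_\infty$ is Dodd-sound; its critical point and action on $\pow(\crit(G))$ coincide with those of $G$ by construction, and its Dodd-fragment ordinals match those of $G$ stage by stage, forcing $G_\infty = G$ by standard extender coding. The final ``moreover'' clause about $\Ult_m(P,G)$ is obtained by applying Corollary~\ref{cor:nass} transfinitely with continuity at limits; the hypothesis $\crit(G) = \crit(F_0) < \rho_m^P$ ensures that each $F_\gamma$ can be applied with degree $m$ to the running ultrapower of $P$, and commutativity of the embeddings is propagated from the moreover clause of Lemma~\ref{lem:extass}.

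The main obstacle will be the delicate bookkeeping at limit stages: verifying simultaneously that the direct-limit model is wellfounded, that $\Ww_\alpha$ remains a \emph{normal} tree on $M$ (extenders applied in strictly increasing length order, each to the correct model along the tree), and that the projectum bound $\rho_1^{G_\alpha} \leq \crit(F_\alpha)$ survives the limit. This requires a careful reconciliation between the $\Ww$-tree-order of the damage structure and the critical-point ordering of $\{F_\gamma\}$, and in particular a tight tracking of the fact that each $F_\gamma$ contributes exactly one new Dodd-generator, namely $\crit(F_\gamma)$, to $G_{\gamma+1}$. It is precisely this incremental match-up that identifies $G_\infty$ with $G$ and closes the induction.
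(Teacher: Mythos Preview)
Your overall strategy of induction using associativity is on the right track, but there is a genuine gap in how you construct $\Ww_\alpha$. You propose to obtain $\Ww_{\alpha+1}$ by ``extending $\Ww_\alpha$ by applying $F_\alpha$''. This does not work: $F_\alpha$ is the Dodd core of some $E^\Ww_{\alpha'}$, so it sits on the sequence of a model of $\Ww$, not on the sequence of the last model of your $\Ww_\alpha$. You have given no reason why $F_\alpha$ should appear anywhere in $\Ww_\alpha$, so you cannot legally use it as an exit extender, and the resulting object need not be a normal tree at all. Moreover, the lemma requires $\Ww_\alpha\rest\zeta+1 = \Ww\rest\zeta+1$ for suitable $\zeta$; a tree built by successively appending the $F_\gamma$'s would not have this form.

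The paper's construction is quite different: each $\Ww_\alpha$ is built directly from $\Ww$, not from $\Ww_{\alpha-1}$. One follows $\Ww$ up to the stage $\zeta$ where $E^\Ww_{\alpha'}$ is about to be applied (so $F_\alpha = \core_D(E^\Ww_{\alpha'})$), and then ``pops the stack'' out of the damage structure by repeatedly taking the active extender of the current model. Each such active extender is a subextender of the corresponding $E^\Ww_\beta$ one level up in the damage structure, so it applies to the same model; after finitely many such steps one exits the damage structure. The inductive content is then purely algebraic: using Corollary~\ref{cor:nass} one verifies that the active extender of the resulting last model is $(H_0\com H_1)\com\ldots\com H_k$, and that this equals $G_\alpha$. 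The limit case similarly exploits the nested structure of $\dam(G)$ rather than a direct limit of trees.

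Your argument for $G_\infty = G$ via ``matching Dodd-fragment ordinals stage by stage'' is also not the paper's route and is underspecified. The paper instead writes $G$ as the iterated composition of the $E^\Ww_{\gamma'}$ for $\gamma'\in\dom(\dam(G))$ applied to $\core_D(G)$, and then substitutes the inductively obtained decomposition of each $E^\Ww_{\gamma'}$ into a string of $F_\alpha$'s; re-associating (to the right) yields $G_\infty$.
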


\begin{proof}
Let $F_\alpha=\core_D(E^\Ww_{\alpha'})$. $\Ww_\alpha$ is the part of $\Ww$ doing the damage preceding $E^\Ww_{\alpha'}$, then popping stack back out of the damage structure. Say ${\alpha'}\in\dom(\dam(E^\Ww_{\beta'}))$, and $\zeta=\Tt\pred(\alpha'+1)$. If $G=E^\Ww_{\beta'}$, then we set $\Ww_\alpha=\Ww\rest\zeta+1$.

Assume $E^\Ww_{\beta'}\neq G$. Still $\Ww_\alpha\rest\zeta+1=\Ww\rest\zeta+1$, but here $k>0$. Since $\alpha'+1\leq_\Ww\beta'$ and there is no drop in $(\alpha'+1,\beta']_\Tt$ and $E^\Ww_{\beta'}$ isn't Dodd sound, there is a subextender $I$ of $E^\Ww_{\beta'}$ on the sequence of $M^{\Ww_\alpha}_\zeta$. If $\alpha'$ is least in $\dom(\dam(E^\Ww_{\beta'}))$ then $I=\core_D(E^\Ww_{\beta'})$ is active on $(M^*)^\Ww_{\alpha'+1}\ins M^\Ww_\zeta$; otherwise $I$ is Dodd unsound and is active on $M^\Ww_\zeta$. Set $E^{\Ww_\alpha}_\zeta=I$. (If $I$ isn't active on $M^\Ww_\zeta$, still $\lh^\Ww_\zeta\leq\lh(I)$ since $E^\Ww_{\alpha'}$ triggers a drop to $M^\Ww_\zeta|\lh(I)$.) Say $\beta'\in\dom(\dam(E^\Ww_{\gamma'}))$. Then $E^\Ww_{\beta'}$ applies to a model on the branch leading to $M^\Ww_{\gamma'}$; moreover there is no drop in $(\beta'+1,\gamma']_\Tt$. Since $E^\Ww_{\beta'}$ and $I$ have the same critical point and measure the same sets, and since $\crit(E^\Ww_{\beta'})<\crit(E^\Ww_{\alpha'})<\nu^\Ww_\zeta$, $I$ also applies to the same (initial segment of the same) model. Moreover, $M^{\Ww_\alpha}_{\zeta+1}$ is active with a subextender of $E^\Ww_{\gamma'}$. If $G=E^\Ww_{\gamma'}$, $\Ww_\alpha$ is finished; otherwise set $E^{\Ww_\alpha}_{\zeta+1}$ to be $M^{\Ww_\alpha}_{\zeta+1}$'s active extender. Note it measures the same sets as $E^\Ww_{\gamma'}$. Continue in this way, always using active extenders, until exiting the damage structure (until applying an extender to a model on the branch leading to $G$). This must finish in finitely many steps.

We now prove inductively that $G_\alpha$ is on the sequence of the last model of $\Ww_\alpha$. For $\alpha=1$, this was already observed. Assume it's true for some $\alpha\geq 1$; we consider $G_{\alpha+1}$.

This is an application of \ref{lem:extass}. Let $\zeta,k$ be as in the definition of $\Ww_\alpha$; other notation is also as there. Say $H_0$ is active on $(M^*)^\Ww_{\alpha'+1}$ and if $k>0$, $H_0=E^{\Ww_\alpha}_\zeta,\ldots,E^{\Ww_\alpha}_{\zeta+k-1}$ apply in $\Ww_\alpha$ to the extenders $H_1,\ldots,H_k$, respectively. (If $k>0$, $H_0=E^{\Ww_\alpha}_\zeta$.) So by induction and applying \ref{cor:nass}, the last model of $W_\alpha$ is active with
\[ G_\alpha = ((H_0\com H_1)\com H_2)\com\ldots\com H_k = H_0\com\ldots\com H_k. \]
From the definition of $\Ww_{\alpha+1}$, the active extender of its last model is
\[ ((F_\alpha\com H_0)\com H_1)\com\ldots\com H_k = F_\alpha\com G_\alpha = G_{\alpha+1}.
\]
Moreover, with an appropriate $m$-sound premouse $P$, \ref{lem:extass} gives $G_{\alpha+1}\com_m P = F_\alpha\com_m G_\alpha\com_m P$, and that the ultrapower maps on $P$ agree. By induction, this is equivalent to applying the $F_\beta$'s, for $\beta\leq\alpha$, to $P$.

Let $\lambda$ be a limit, and suppose we have the hypothesis below $\lambda$. Let $\beta\leq_\dam\alpha_G$ and $F_\lambda$ be the Dodd core of $E^\Ww_{\delta_0}$, with $\delta_0\in\dom(\dam(E^\Ww_\beta))$. Apply the following algorithm, as far as it works. Let $\delta_1$ be largest in $\delta_0\int\dom(\dam(E^\Ww_\beta))$, and let $\delta_{i+2}$ be largest in $\dom(\dam(E^\Ww_{\delta_{i+1}}))$. (This process leads backwards in $\Ww$.) Let $i$ be largest such that $\delta_i$ exists.

Note that $\delta_0$ is not least in $\dom(\dam(E^\Ww_\beta))$, as otherwise it follows from remarks earlier that there is no $\kappa\in(\kappa^\Ww_\beta,\kappa^\Ww_{\delta_0})$ which is the crit of an extender in $\dam(G)$, contradicting $\kappa^\Ww_{\delta_0}$ being enumerated at a limit stage.

Assume $i=0$ for now. Then $\zeta=\sup(\delta_0\int\dom(\dam(E^\Ww_\beta)))$ is a limit. It follows from \ref{dfn:damage} that $E^\Ww_{\delta_0}$ applies to $M^\Ww_\zeta$, a limit node in $\Ww$. Let $H_0$ be the active extender of $M^\Ww_\zeta$, and $H_1,\ldots,H_k$ be as for the successor case (again $k=0$ is possible). We may assume inductively that the lemma applies to $H_0$, so letting $F_\alpha$ be the Dodd core of $H_0$ (equivalently of $E^\Ww_\beta$),
\[ H_0=\ _{\alpha<\gamma<\lambda}\ldots\com F_\gamma\com\ldots\com F_\alpha, \]
The active extender of $M^\Ww_\lambda$'s final model is $(H_0\com H_1)\com\ldots\com H_k=H_0\com H_1\com\ldots\com H_k$, which by the definition of $\Ww_\alpha$ and induction is $H_0\com G_\alpha$. Also inductively, the last statement of the lemma gives
\[ H_0\com G_\alpha =\ _{\alpha\leq\gamma<\lambda}\ldots\com F_\gamma\com\ldots\com G_\alpha = G_\lambda. \]
The ``moreover'' clause (for some $m$-sound $P$) also applies to $H_0$ and $G_\alpha$, which gives it for $G_\lambda$. This finishes the $i=0$ case. If $i>0$ it's almost the same.

Now we show $G_\infty=G$. Clearly
\begin{equation}\label{eqn:Gtree} G =\ _{\gamma'\in\dom(\dam(G))}\ldots\com E^\Tt_{\gamma'}\com \ldots\com\core_D(G), \end{equation}
Let $\gamma',\beta'$ be successive elements of this set, or let $\gamma'$ be the largest. Let $X=\{\alpha\ |\ \kappa^\Tt_{\gamma'}<\crit(F_\alpha)<\kappa^\Tt_{\beta'}\}$. By induction, for any premouse $P$,
\begin{equation}\label{eqn:compass} \Ult_0(P,E^\Tt_{\gamma'}) =\  _{\alpha\in X}\ldots\com F_\alpha\com\ldots\com\core_D(E^\Tt_{\gamma'})\com P \end{equation}
and both embeddings on $P$ agree. So for each $\gamma'$ we can substitute a string of $F_\alpha$'s for $E^\Tt_{\gamma'}$ in (\ref{eqn:Gtree}), and associate to the right, to obtain
\[ G = \ldots\com F_\alpha\com\ldots\com F_0. \]

Finally, suppose $P$ is $m$-sound and $\crit(G)<\rho^P_m$. We claim that
\begin{equation}\label{eqn:ultk} \Ult_m(P,G) =\ _{\gamma'\in\dom(\dam(G))}\ldots\com_m E^\Tt_{\gamma'}\com_m\ldots\com_m\core_D(G)\com_m P,\end{equation}
and the corresponding embeddings agree. This is a straightforward extension of \ref{lem:extass}, and we leave the details to the reader. (Note that it may not make sense to associate \emph{arbitrarily} here, as $\gamma'<\beta'$ might be successive elements of $\dom(\dam(G))$ with $\lh(E^\Tt_{\gamma'})<\crit(E^\Tt_{\beta'})$. But it does make sense to associate to the right.) The degree-$m$ version of (\ref{eqn:compass}) allows us to substitute a string of $F_\gamma$'s for each $E^\Tt_{\gamma'}$, in (\ref{eqn:ultk}), which finishes the proof.
\renewcommand{\qedsymbol}{$\Box$(Lemma \ref{lem:Gcoreseq})}\end{proof}

Let $F_\alpha$, $G_\alpha$ and $\Tt_\alpha$ be as in \ref{lem:Gcoreseq} with respect to $G$ and $\Tt\conc\Vv$. By \ref{lem:Gcoreseq}, the action of $G$ on $R$, which results in the model $Q$, decomposes into the action of the $F_\alpha$'s. Let $j_{\alpha,Q}$ be the factor embedding from $Q_\alpha=\Ult(R,G_\alpha)$ to $Q$ (given by the tail end of $F_\beta$'s). We claim that $\left<Q_\alpha,j_{\alpha,Q}\right>$ is the core sequence of $j:R\to Q$.

For this, we need to see the Dodd-fragment ordinals of $j_{\alpha,Q}$ are $s_{j_{\alpha,Q}}=j_{\alpha+1,Q}(t_{F_\alpha})$ and $\sigma_{j_{\alpha,Q}}=\tau_{F_\alpha}$. Since $\crit(F_{\alpha+1})\geq\tau_{F_\alpha}$, this is obvious if $F_\alpha$ is type 1 or 3, or if $\lh(F_\alpha)<\crit(F_{\alpha+1})$, so assume otherwise. Let $\alpha'\leq_\dam G$ be such that $F_\alpha=\core_D(E^{\Tt\supconc\Vv}_{\alpha'})$. Applying \ref{lem:Gcoreseq} to $E^{\Tt\supconc\Vv}_{\alpha'}$,
\[ \Ult(Q_\alpha,E^{\Tt\supconc\Vv}_{\alpha'}) =\ _{\gamma<_\dam\alpha'}\ldots\com F_\gamma\com\ldots\com F_\alpha\com Q_\alpha, \]
and the embeddings agree. If $\beta$ is least such that $\beta\not<_\dam\alpha$, then $\crit(F_\beta)>\lh(E^{\Tt\supconc\Vv}_{\alpha'})$, so $j_{\alpha,Q}$ is compatible with $E^{\Tt\supconc\Vv}_{\alpha'}$ through $\lh(E^{\Tt\supconc\Vv}_{\alpha'})$. As $F_\alpha$ is Dodd-sound, applying the Dodd-fragment preservation of \ref{lem:Dsp5} to the iteration from $F_\alpha$ to $E^{\Tt\supconc\Vv}_{\alpha'}$ gives what we seek.

\begin{rem}\label{rem:compat} The compatibility of $j_{\alpha,Q}$ with $E^\Tt_{\alpha'}$ through $\nu^\Tt_{\alpha'}$ will be needed later. \end{rem}

\begin{clm}\label{clm:factor} There is $\eps$ such that $Q_\eps=\Ult(R,E)$.\end{clm}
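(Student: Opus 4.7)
The plan is to locate the stage $\eps$ by exploiting the factorization $j = i^\Uu \com i_E : R \to Q$ together with the bound $\crit(i^\Uu) \geq \theta$. Since $\rg(j) = \rg(i^\Uu \com i_E) \sub \rg(i^\Uu)$ and $U = \Ult(R, E)$ is generated over $\rg(i_E)$ by the generators of $E$ (all below $\theta \leq \crit(i^\Uu)$), the submodel $\rg(i^\Uu) \sub Q$ is itself a Dodd-style hull containing $\rg(j)$. The claim will follow once we show the core sequence of $j$ reaches this hull.

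I would let $\eps$ be the least $\alpha$ with $\crit(F_\alpha) \geq \crit(i^\Uu)$, or the last index if no such $\alpha$ exists. First I would show by induction on $\alpha \leq \eps$ that $j_{\alpha, Q}$ factors as $j_{\alpha, Q} = i^\Uu \com k_\alpha$ for some elementary $k_\alpha : Q_\alpha \to U$. The base case is immediate from $\rg(j) \sub \rg(i^\Uu)$. For the successor step, the Dodd-fragment parameter and projectum added at stage $\alpha$ are $j_{\alpha+1, Q}(t_{F_\alpha})$ and $\tau_{F_\alpha}$; since $\crit(F_\alpha) < \crit(i^\Uu)$, both lie in $\rg(i^\Uu)$ by the inductive hypothesis together with the compatibility remark \ref{rem:compat}.

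For the reverse inclusion $U \sub Q_\eps$, I would apply the associativity results \ref{lem:extass} and \ref{cor:nass} to the decomposition $G_\eps = F_{\eps-1} \com \cdots \com F_0$: viewed as an extender applied to $R$, $G_\eps$ has ultrapower embedding $i_{G_\eps}$ agreeing with $i_E$ on $R$, so $Q_\eps = \Ult(R, G_\eps) = \Ult(R, E) = U$. The agreement $i_{G_\eps} = i_E$ amounts to saying that the $F_\alpha$'s with $\alpha < \eps$ encode precisely the extender $E$ rather than any portion of $i^\Uu$; the cutoff $\crit(F_\alpha) < \crit(i^\Uu)$ for $\alpha < \eps$ separates generators contributed by $E$ from those introduced during the $\Uu$-iteration above $U$.

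The main obstacle is the fine bookkeeping in both directions: ensuring that the Dodd parameters added at stages $\alpha < \eps$ really do land inside $\rg(i^\Uu)$, and conversely that the composition $G_\eps$ reconstructs $E$ exactly rather than some proper fragment. This will require careful use of the Dodd-fragment preservation lemmas \ref{lem:Dsp4}--\ref{lem:Dsp6} to track how each $F_\alpha$ reflects a piece of the damage structure of $G$ lying within the ``$E$ part'' of the factorization $j = i^\Uu \com i_E$, together with the observation (from the paragraph preceding the claim) that $s_{j_{\alpha, Q}} = j_{\alpha+1, Q}(t_{F_\alpha})$ and $\sigma_{j_{\alpha, Q}} = \tau_{F_\alpha}$ aligns the core-sequence data with the enumeration of $F_\alpha$'s.
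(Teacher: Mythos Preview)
Your high-level strategy---show inductively that $j_{\alpha,Q}$ factors through $U$, then conclude from termination of the core sequence---matches the paper. But the successor step has a real gap, and your ``reverse inclusion'' argument is both circular and unnecessary.

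\textbf{The gap in the successor step.} You write that $s_{j_{\alpha,Q}} = j_{\alpha+1,Q}(t_{F_\alpha})$ and $\sigma_{j_{\alpha,Q}} = \tau_{F_\alpha}$ lie in $\rg(i^\Uu)$ ``since $\crit(F_\alpha) < \crit(i^\Uu)$.'' But knowing $\crit(F_\alpha)$ is small does not bound $j_{\alpha+1,Q}(t_{F_\alpha})$; these are specific ordinals in $Q$ and there is no a priori reason they are fixed by $i^\Uu$. What is actually needed is the stronger fact $s_{j_{\alpha,Q}} \cup \{\sigma_{j_{\alpha,Q}}\} \sub \theta$. The paper obtains this (Cases~2 and~3) by first arguing that $\kappa = \crit(j_{\alpha,U}) \leq \gamma$ (since $U = \Hull_\om^U(\gamma+1)$), and then that $j_{\alpha,U}(\kappa) \leq \theta$: otherwise $j_{\alpha,U}(\kappa)$, being a $U$-cardinal, would be $\geq \lh(E)$, contradicting $\gamma \in \rg(j_{\alpha,U})$ with $\kappa \leq \gamma < \lh(E)$. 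Only then do the Dodd-fragment ordinals, all bounded by $j_{\alpha,Q}(\kappa)$, fall below $\theta \leq \crit(i^\Uu)$. (The borderline case $j_{\alpha,U}(\kappa) = \theta = \crit(i^\Uu)$ needs separate treatment.) Neither the inductive hypothesis nor Remark~\ref{rem:compat} supplies this bound.

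\textbf{The reverse inclusion.} Your argument that $i_{G_\eps} = i_E$ because ``the cutoff $\crit(F_\alpha) < \crit(i^\Uu)$ separates generators contributed by $E$ from those introduced during the $\Uu$-iteration'' assumes what is to be proved: the $F_\alpha$'s arise from the damage structure of $G$ in $\Vv$, not from $E$, and the claim that they reconstruct exactly $E$ is the content of the theorem. The paper avoids this entirely: once the factoring $j_{\alpha,Q} = i^\Uu \com j_{\alpha,U}$ holds at every stage with $\crit(j_{\alpha,U}) < \theta$ whenever $Q_\alpha \neq U$, termination of the core sequence at $Q$ forces some $Q_\eps = U$ (if $Q_\eps = Q$ then $i^\Uu$ is onto, so $U = Q$). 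There is no need to identify $\eps$ in advance or to argue a separate inclusion.
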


\noindent Assuming the claim, we're essentially done: we know from \ref{lem:Gcoreseq} that $\Tt_\eps$ has $G_\eps$ on the sequence of its final model, $\Tt_\eps$ agrees with $\Tt\conc\Vv$ through $R$ and (since $G$ applies to $R$ in $\Vv$ and $\crit(G)=\crit(G_\eps)$) $G_\eps$ applies to $R$ normally. Since $E$ did too, and $M=\Th(M)$, we get $E=G_\eps$. But then the agreement between $\Tt\conc\Vv$ and $\Tt_\eps$ (as in \ref{lem:Gcoreseq}), and the coherence of $E$ with $P$ ($\Tt$'s last model) implies $\Tt_\eps=\Tt$, so $E$ is on the $P$-sequence, as desired.
\\
\begin{proof}[Claim Proof]\setcounter{case}{0}
We will show inductively $j_{\alpha,Q}$ factors through $U=\Ult(R,E)$, giving $j_{\alpha,U}$, such that the following diagram commutes:
\[
\begin{picture}(100,100)(50,0)
\put(46,0){$Q_\beta$}
\put(59,10){\vector(1,4){15}} 
\put(61,11){\vector(1,1){15}} 
\put(62,8){\vector(3,1){70}} 
\put(75,30){$Q_\alpha$}
\put(75,75){$U$}
\put(135,30){$Q$}
\put(80,40){\vector(0,1){30}} 
\put(90,35){\vector(1,0){37}} 
\put(85,70){\vector(3,-2){47}} 
\put(45,40){$\scriptstyle j_{\beta,U}$}
\put(95,10){$\scriptstyle j_{\beta,Q}$}
\put(111,56){$\scriptstyle i^\Uu$}
\put(82,53){$\scriptstyle j_{\alpha,U}$}
\put(100,40){$\scriptstyle j_{\alpha,Q}$}
\end{picture}
\]
Moreover, $\gamma\in\rg(j_{1,U})$, where $\gamma$ is the top generator of $E$ and if $Q_\alpha\neq U$, then $\crit(j_{\alpha,U})<\theta$. The commutativity of the lower right triangle holds since the $Q_\alpha$'s form the core sequence of $j:R\to Q$.

\begin{case} $\alpha=0$.\end{case}
Clearly this is true since $Q_0=R=\Hull_\om^R(\crit(G)\int\crit(E))$, and $\crit(E)<\theta$ (so $\crit(E)=\crit(G)$).

\begin{case} $\alpha=1$.\end{case}
Remember here $E$ may be just beyond a type Z segment. Let $\gamma$ be the top generator of $E$. Define $s\in\OR^{<\om}$ and $\sigma\in\OR$ by $s_0=\gamma$, then $s_{i+1}$ is obtained from $i_E$ and $s\rest(i+1)$, and $\sigma$ is obtained from $i_E$ and $s$, as for the Dodd-fragment objects. Then $\sigma\leq\theta$. Otherwise it must be that $s<t_E$ in the lexiographic order, since $\tau_E\leq\theta$. Thus $\pi:\Hull_\om^{\Ult(R,E)}(s\un\sigma)\to\Ult(R,E)$ is not the identity. So $\theta<\crit(\pi)\leq\gamma$. But $\pi(\crit(\pi))$ is a cardinal of $\Ult(R,E)$, so is at least $\lh(E)$, contradicting $\gamma\in s$.

Then $i^\Uu(s),\sigma$ are the Dodd-fragment objects $s_G,\sigma_G$ for $G$. For Claim \ref{clm:1extG} showed $i^\Uu(\gamma)+1=\nu_G$, so $(t_G)_0=i^\Uu(\gamma)$. Also $\crit(i^\Uu)>\crit(E)$, and the iteration triangle commutes, so the fragments of $E$ in $\Ult(R,E)$ are mapped to fragments of $G$ in $Q$. So we just need the maximality of  $i^\Uu(s),\sigma$ with respect to $i_G$. If they weren't maximal, then $G\rest(\sigma\un i^\Uu(s))\in Q$. But this is isomorphic to $E\rest(\sigma\un s)$, and is coded as a subset of $\sigma\leq\theta\leq\crit(i^\Uu)$.

So $t_G\un\sigma_G\sub\rg(i^\Uu)$, which gives the factorization of $j_{1,Q}$. We also saw $\gamma\in\rg(j_{1,Q})$.

\begin{case} $\alpha=\beta+1>1$.\end{case}
Suppose the factoring holds at $\beta$, and $Q_\beta\neq U$. So $\kappa=\crit(j_{\beta,U})\leq\gamma$. So in fact $j_{\beta,U}(\kappa)\leq\theta$: otherwise, as in Case 2, $j_{\beta,U}(\kappa)\geq\lh(E)$, contradicting $\gamma\in\rg(j_{1,U})\sub\rg(j_{\beta,U})$.

Suppose at first that $j_{\beta,U}(\kappa)<\crit(i^\Uu)$. Then $j_{\beta,U}(\kappa)=j_{\beta,Q}(\kappa)$. But $j_{\beta,Q}=j_{\beta+1,Q}\com i_{F_\beta}$. The fragments of $i_{F_\beta}$ in $\Ult(Q_\beta,F_\beta)$ are all below $i_{F_\beta}(\kappa)$, and $\sigma_{F_\beta}=\tau_{F_\beta}\leq\crit(j_{\beta+1,Q})$, so $j_{\beta+1,Q}$ maps maximal fragments of $i_{F_\beta}$ to those of $j_{\beta,Q}$. So letting $s=j_{\beta+1,Q}(t),\sigma$ be the Dodd-fragment ordinals of $j_{\beta,Q}$, we have $s\un\sigma\sub\theta\sub\rg(i^\Uu)$. Since $Q_{\beta+1}$ is generated by $\rg(j_{\beta,\beta+1})\un t\un\sigma$, we get $j_{\beta+1,Q}$ factors through $U$.

Now suppose $j_{\beta,U}(\kappa)=\theta=\crit(i^\Uu)$. Let $F$ be the extender of length $\theta$ derived from $j_{\beta,U}$. It can't be that $F\in\Ult(R,E)$ since otherwise $i^\Uu(F)\in Q$ is the extender of length $j_{\beta,Q}(\kappa)$ derived from $j_{\beta,Q}$ - this contradicts $j_{\beta,Q}=j_{\beta+1,Q}\com i_{F_\beta}$, as in the previous paragraph. So we again get $s\un\sigma\sub\theta$, and the factoring.

\begin{case} $\alpha$ is a limit.\end{case}
Since the $Q_\alpha$'s are the core sequence, the commutativity of the maps before stage $\alpha$ makes this case is easy.\\

This completes the proof of factoring. Since the $Q_\alpha$'s eventually reach $Q$, it must be that there is some stage $\eps$ with $Q_\eps=U$.\renewcommand{\qedsymbol}{$\Box$(Claim \ref{clm:factor})(Theorem \ref{thm:cohere})}\end{proof}
\renewcommand{\qedsymbol}{}\end{proof}

\pagebreak
\section{Measures and Partial Measures}\label{sec:meas}
Consider a mouse $N$ satisfying ``$\mu$ is a countably complete measure over some set'' (plus say $\ZFC$). \ref{cor:coarse} and \ref{thm:cohere} give different criteria which guarantee $\mu$ is on $\es^N$; e.g. normality is enough. We will now show that in general, $i_\mu$ is precisely the embedding of a finite iteration tree on $N$. This generalizes Kunen's result on the model $L[U]$ for one measurable, that all its measures are finite products of its unique normal measure.

\subsection*{Finite Support of an Iteration Tree\protect\footnote{\label{ftn:type3}Footnote added January 2013: This section is covered better in \cite{hsstm}, where a correction to \ref{dfn:supp} (and consequently, proofs to follow) is given and a result stronger than \ref{lem:supptree} is established, and more details are provided. The correction involves how premice with active type 3 extenders are handled. The stronger result is essentially used in the proof of \ref{thm:homMn}.}}\label{subsec:fst}

Toward our goal, we need to be able to capture a given element of a normal iterate with a finite normal iteration. That is, we want a finite iteration with liftup maps to the original one, with the given element in the range of the ultimate liftup map. The method is straightforward: find a subset of the tree sufficient to generate the given element, then perform a reverse copying construction to produce the finite tree. One must be a little careful, though, to ensure the resulting tree is normal. This tool is also used in \S\ref{sec:hom}.

\begin{dfn}[Finite Support]\footnote{Footnote added January 2013: See footnote \ref{ftn:type3}.}\label{dfn:supp}\index{support}\setcounter{case}{0} Let $\Tt$ be a normal iteration tree on a premouse $M$ of length $\theta+1$, and let $B\sub M_\theta$ be a finite set. A hereditarily finite set $A$ \emph{supports} $B$ (relative to $\Tt$) if the following properties hold.

Let $M_\alpha=M^\Tt_\alpha$. Then $A\sub\{(\alpha,x)\ |\ \alpha\in\theta+1\ \&\ x\in M_\alpha\}$. Let $(A)_\alpha$ denote the section of $A$ at $\alpha$. Let $S\sub\theta+1$ be the left projection of $A$. Then $\theta\in S$ and $B\sub (A)_\theta$. Let $\alpha\in S$, $\alpha>0$.
\begin{case}$\alpha=\beta+1$.\end{case}
Let $\gamma=\Tt\pred(\alpha)$. Then $\beta,\gamma,\gamma+1\in S$. (Note $\beta,\gamma+1\leq\alpha$.)

For $x$ such that $(\alpha,x)\in A$, there are $a_x,q_x$ such that $x=[a_x,f_{\tau_x,q_x}]^{(M^*)_{\alpha}}_{E^\Tt_\beta}$, and $(\beta,a_x),(\gamma,q_x)\in A$. (Here $f_{\tau_x,q_x}$ is the function given by the Skolem term $\tau_x$ and parameter $q_x\in (M^*)_{\alpha}$.)

If $E^\Tt_\beta\in\core_0(M_\beta)$, then $(\beta,\lh(E^\Tt_\beta))\in A$. Suppose $M_\beta$ is active type 3, and $F=F^{M_\beta}$ is its active extender. If $E^\Tt_\beta=F$, there are $a_\nu,f_\nu$ such that  $\nu_F=[a_\nu,f_\nu]_F^{M_\beta}$ and $(\beta,a_\nu),(\beta,f_\nu)\in A$. If $\nu_F<\lh(E^\Tt_\beta)<\OR^{M_\beta}$ (so $E^\Tt_\beta\notin\core_0(M_\beta)$), there are $a_\lh,f_\lh$ such that $\lh(E^\Tt_\beta)=[a_\lh,f_\lh]_F^{M_\beta}$ and $(\beta,a_\lh),(\beta,f_\lh)\in A$.
\begin{case} $\alpha$ is a limit ordinal.\end{case}
Then $S\int\alpha\neq\empty$; let $\beta=\sup(S\int\alpha)$. Then there is $\beta'$ such that
\begin{itemize}
\item $0<_\Tt\beta'<_\Tt\beta<_\Tt\alpha$
\item $\beta'=\Tt\pred(\beta)$ (so $\beta$ is a successor)
\item $i_{\beta',\alpha}$ exists and $\deg^\Tt(\beta')=\deg^\Tt(\alpha)$
\item $(A)_{\alpha}\sub\rg(i_{\beta',\alpha})$
\end{itemize}
Moreover, $A\super\{(\beta,x)\ |\ i_{\beta,\alpha}(x)\in (A)_{\alpha}\}$.
It might seem that $\beta'$ suffices as $\sup(S\int\alpha)$, but choosing $\beta$ instead of $\beta'$ is needed to ensure normality of the finite tree.\end{dfn}

This completes the definition of support. We now give an algorithm that passes from $\Tt,B$ as above to a support $A$ for $B$. We'll recursively define sets $S_i,A_i$ approximating the desired $S,A$, with $S_i\sub S_{i+1}$ and $A_i\sub A_{i+1}$. We'll also define ordinals $\alpha_i\in S_i$. $S_0=\{\theta\}$, $A_0=\{\theta\}\cross B$ and $\alpha_0=\theta$. Given $\alpha_n,S_n,A_n$, if $\alpha_n>0$, we process $\alpha_n$, ensuring $S_{n+1},A_{n+1}$ satisfy the requirements of \ref{dfn:supp} for $\alpha=\alpha_n$. If $\alpha_n$ is a successor, let $S_{n+1}=S_n\un\{\beta,\gamma,\gamma+1\}$ (notation as in \ref{dfn:supp}) and enlarge $A_n$ to $A_{n+1}$ by adding the appropriate $(\beta,a_x)$, $(\gamma,q_x)$, etc., and also if $\gamma+1<\alpha_n$, adding $(\gamma+1,0)$. If $\alpha_n$ is a limit, we can find $\beta>\beta'>\sup(S_n\int\alpha)$ minimal with the required properties, and set $S_{n+1}=S_n\un\{\beta\}$. Define $A_{n+1}$ by adding the appropriate $i_{\beta,\alpha}$ preimages to $A_n$. Finally, $\alpha_{n+1}=\sup(S_{n+1}\int\alpha_n)$; notice this exists by construction. The algorithm can be made definite by minimizing in some way to make choices.

Since $\alpha_{i+1}<\alpha_i$, there's $n$ with $\alpha_n=0$. Fixing this $n$, note that $\{\alpha_0,\ldots,\alpha_n\}=S_n$, so all elements of $S_n\cut\{0\}$ got processed at some stage in the construction. Notice that for $i\leq n$, $(A_i)_{\alpha_i}=(A_n)_{\alpha_i}$, so setting $A=\bigun_{i\leq n}A_i$, it's easy to see that $A$ supports $B$ (and has projection $S=\bigun_{i\leq n} S_i$).

\begin{lem}\label{lem:supptree}
Let $\Tt$ be a normal iteration tree on a premouse $M$, $\lh(\Tt)=\theta+1$, and $B\sub M^\Tt_\theta$ finite. There is a normal tree $\Uu$ on $M$ with $\lh(\Uu)=n+1<\om$, with $\deg^\Uu(n)=\deg^\Tt(\theta)$, and a near $\deg^\Tt(\theta)$-embedding $\pi_n:M^\Uu_n\to M^\Tt_\theta$, with $B\sub\rg(\pi_n)$. Moreover, if $\Tt$'s main branch does not drop then neither does $\Uu$'s, and the main embeddings commute: $\pi_n\com i^\Uu=i^\Tt$.
\end{lem}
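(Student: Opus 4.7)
The plan is to carry out a reverse copying construction driven by the support $A$ supplied by the algorithm immediately preceding the lemma. First I would fix a finite support $A$ for $B$ relative to $\Tt$ and enumerate its left-projection as $S=\{\beta_0<\beta_1<\cdots<\beta_n\}$, where $\beta_n=\theta$ and (by minimizing) $\beta_0=0$. I will build $\Uu$ with $\lh(\Uu)=n+1$ together with near $\deg^\Tt(\beta_i)$-embeddings $\pi_i:M^\Uu_i\to M^\Tt_{\beta_i}$ satisfying $(A)_{\beta_i}\sub\rg(\pi_i)$, by induction on $i\le n$. Start with $M^\Uu_0=M$ and $\pi_0=\id$.

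For the inductive step $i\mapsto i+1$, the two cases of \ref{dfn:supp} drive the construction. If $\beta_{i+1}$ is a successor, say $\beta_{i+1}=\gamma+1$, then the support clause forces $\gamma\in S$ (so $\gamma=\beta_i$) and $\Tt\pred(\gamma+1)\in S$, say $\Tt\pred(\gamma+1)=\beta_k$ with $k\le i$. Let $E^\Uu_i$ be the extender on the $M^\Uu_i$-sequence derived by reverse-copying $E^\Tt_\gamma$ via $\pi_i$ (the support contains $(\gamma,\lh(E^\Tt_\gamma))$, or the type-3 parameters $(a_\nu,f_\nu)$ and $(a_\lh,f_\lh)$ when relevant, so the $\Uu$-extender is well-defined and has the correct length and natural length). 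Set $\Uu\pred(i+1)=k$; the parameters $(a_x,q_x)$ included in the support for each $x\in(A)_{\beta_{i+1}}$ ensure that the shift lemma applies to produce the near $\deg^\Tt(\beta_{i+1})$-embedding $\pi_{i+1}:M^\Uu_{i+1}\to M^\Tt_{\beta_{i+1}}$ with $(A)_{\beta_{i+1}}\sub\rg(\pi_{i+1})$. If instead $\beta_{i+1}$ is a limit ordinal, invoke the limit clause of \ref{dfn:supp}: there is $\beta\in S$ (necessarily $\beta=\beta_i$, because $\beta=\sup(S\cap\beta_{i+1})$) and a $\beta'=\Tt\pred(\beta)$ with $i_{\beta',\beta_{i+1}}$ existing, same degree, and $(A)_{\beta_{i+1}}\sub\rg(i_{\beta',\beta_{i+1}})$. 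Since $\beta=\beta_i$ is a successor in $\Tt$, I have already processed the corresponding $\Tt$-extender inside $\Uu$; it thus makes sense to reset $\pi_{i+1}:=$ the unique near embedding obtained by pulling back along $i_{\beta',\beta_{i+1}}^{-1}\com\pi_i$, taking $M^\Uu_{i+1}=M^\Uu_i$ is not what we want, rather: one uses the predecessor of the $\Uu$-step to $i$ (which lifts to $\beta'$) together with the extender $E^\Uu_{i-1}$ already placed, so that $i^\Uu_{\Uu\pred(i+1),i+1}$ lifts to $i_{\beta',\beta_{i+1}}$. The precise bookkeeping is just the reverse-copy version of the standard copying construction, with the ``moreover'' clause $A\supseteq\{(\beta,x)\mid i_{\beta,\alpha}(x)\in(A)_\alpha\}$ guaranteeing enough preimages sit in the support to keep $(A)_{\beta_{i+1}}\sub\rg(\pi_{i+1})$.

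The main obstacle is normality of $\Uu$, i.e.\ $\lh(E^\Uu_j)<\lh(E^\Uu_{j+1})$. This is exactly what the deliberate choice of $\beta$ (rather than $\beta'$) in the limit clause, together with the inclusion of $\gamma+1$ (not merely $\gamma$) in the successor clause, was arranged to guarantee: between consecutive elements of $S$ the strictly increasing lengths of the $\Tt$-extenders being copied translate, via $\pi_i$, to strictly increasing lengths of the $\Uu$-extenders. The exchange-ordinal conditions on $\Uu\pred(i+1)$ follow from those in $\Tt$ since $\pi_i$ is a near embedding agreeing with $\pi_{\Uu\pred(i+1)}$ sufficiently far below $\crit(E^\Uu_i)$, by the usual copying agreement.

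Finally, an inspection of the construction shows that $\Uu$'s main branch drops in model or degree only when the corresponding $\Tt$-segment does; in particular, if $\Tt$'s main branch does not drop, neither does $\Uu$'s, every $\pi_i$ is then a genuine degree-preserving (near) elementary embedding, and commutativity $\pi_n\com i^\Uu=i^\Tt$ follows because at each step $\pi_{i+1}\com i^\Uu_{\Uu\pred(i+1),i+1}=i^\Tt_{\Tt\pred'\cdot\cdot\cdot,\beta_{i+1}}\com\pi_{\Uu\pred(i+1)}$ by the shift lemma, and the main branches chain these equalities together up to $n$.
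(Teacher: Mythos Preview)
Your overall plan—reverse copying driven by the finite support $A$—is the paper's approach, and your successor case (including the type-3 bookkeeping via $(a_\nu,f_\nu)$ and $(a_\lh,f_\lh)$) is correct in outline. The gap is in the limit case, where you explicitly reject the right move.

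When $\beta_{i+1}$ is a limit, the construction should \emph{pad}: set $E^\Uu_i=\emptyset$, $M^\Uu_{i+1}=M^\Uu_i$, and
\[
\pi_{i+1}=i^\Tt_{\beta_i,\beta_{i+1}}\com\pi_i.
\]
This is well-defined because the support clause gives $\beta_i<_\Tt\beta_{i+1}$ with $i^\Tt_{\beta',\beta_{i+1}}$ existing and $\deg^\Tt(\beta')=\deg^\Tt(\beta_{i+1})$, hence no drop on $[\beta_i,\beta_{i+1}]_\Tt$; thus $i^\Tt_{\beta_i,\beta_{i+1}}$ is a near $\deg^\Tt(\beta_{i+1})$-embedding and so is $\pi_{i+1}$. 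The ``moreover'' clause $A\supseteq\{(\beta_i,x)\mid i_{\beta_i,\beta_{i+1}}(x)\in(A)_{\beta_{i+1}}\}$ then gives $(A)_{\beta_{i+1}}\sub\rg(\pi_{i+1})$. The reason the support records $\beta_i$ (a successor with $\Tt\pred(\beta_i)=\beta'$) rather than $\beta'$ is so that exact-$\nu$-$\lh$-preservation at stage $\beta_i-1$, together with $\crit(i^\Tt_{\beta_i,\beta_{i+1}})\geq\nu^\Tt_{\beta_i-1}$, preserves the agreement and half-$\nu$-preservation needed for normality at the next genuine extender step.

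Both of your alternatives fail. The composite $i_{\beta',\beta_{i+1}}^{-1}\com\pi_i$ does not typecheck: $\pi_i$ lands in $M^\Tt_{\beta_i}$, not in $M^\Tt_{\beta_{i+1}}$, and you need a map \emph{into} $M^\Tt_{\beta_{i+1}}$, i.e.\ post-composition with $i^\Tt_{\beta_i,\beta_{i+1}}$, not its inverse. And ``reusing $E^\Uu_{i-1}$'' to produce a non-trivial $\Uu$-step from $i$ to $i+1$ has no content: there is no $\Tt$-extender between $\beta_i$ and $\beta_{i+1}$ to copy down, and applying the same extender again would violate the increasing-length condition. The whole point of the padding is that $\Uu$ does nothing at this stage; only the target of the copy map advances along the $\Tt$-branch. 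So ``$M^\Uu_{i+1}=M^\Uu_i$'' is exactly what you want.
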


\begin{proof} Let $A$ support $B$ relative to $\Tt$. We will perform a ``reverse copying construction'', just copying down the parts of the tree appearing in $A$. The natural indexing set for $\Uu$ is $S$ instead of an ordinal. Let the tree order $U=T\rest S$ and drop/degree structure $D^\Uu=D^\Tt\rest S$. Denote the models $\N_\alpha$. $\Uu$ will actually be padded. Padding occurs just at every limit ordinal: $\N_{\alpha}=\N_{\sup(S\int\alpha)}$ when $\alpha\in S$ is a limit. (Note: we allow $\Uu\pred(\gamma+1)=\alpha$ but not $\Uu\pred(\gamma+1)=\sup(S\int\alpha)$.)

We'll define copy embeddings $\pi_\alpha:\N_\alpha\into M_\alpha$. In case $M_\alpha$ is active, let $\psi_\alpha:\Ult_0(\N_\alpha,F^{\N_\alpha})\to\Ult_0(M_\alpha,F^{M_\alpha})$ be the canonical map induced by $\pi_\alpha$. Otherwise let $\psi_\alpha=\pi_\alpha$. (We'll have enough elementarity of $\pi_\alpha$ that this makes sense.) We have $\psi_\alpha\rest\OR^{\N_\alpha}=\pi_\alpha$. We'll maintan inductively on $\alpha$ that ($\varphi_\alpha$): $\all\gamma,\delta,\xi+1\in (S\int\alpha+1)$,
\begin{itemize}
\item Elementarity: $\pi_\gamma$ is a near $\deg^\Tt(\gamma)$-embedding,
\item Range: $\rg(\pi_\gamma)\super (A)_{\gamma}$,
\item $\Uu$'s extenders: $\psi_\xi(E^\Uu_\xi)=E^\Tt_\xi$ or else $E^\Uu_\xi=F^{\N_{\xi}}$ and $E^\Tt_\xi=F^{M_\xi}$,
\item Exact-$\nu$-$\lh$-preservation: $\psi_\xi(\nu_{E^\Uu_\xi})=\nu_{E^\Tt_\xi}$ and $\psi_\xi(\lh(E^\Uu_\xi))=\lh(E^\Tt_\xi)$,
\item Half-$\nu$-preservation: if $\xi<\gamma$ then $\pi_\gamma(\nu^\Uu_\xi)\geq\nu^\Tt_\xi$,
\item Agreement: if $\xi<\gamma$ then $\psi_\xi$ agrees with $\pi_\gamma$ below $\nu^\Uu_\xi$; if $E^\Uu_\xi$ is type 1 or 2 or $\gamma=\xi+1$ then they in fact agree below $\lh(E^\Uu_\xi)+1$,
\item Commutativity: if $\delta<\gamma$ and $i^\Uu_{\delta,\gamma}$ is defined then $\pi_\gamma\com i^\Uu_{\delta,\gamma}=i^\Tt_{\delta,\gamma}\com\pi_\delta$.
\end{itemize}

Set $\N_0=M_0=M$ and $\pi_0=\id$; clearly $\varphi_0$.

Suppose we have $\Uu\rest S\int\alpha+1$, $\varphi_\alpha$ holds and $\alpha+1\in S$. First we define $E^\Uu_\alpha$ and then show that it is legal. If $E^\Tt_\alpha\in\rg(\pi_\alpha)$, set $E^\Uu_\alpha=\pi_\alpha^{-1}(E^\Tt_\alpha)$. If $E^\Tt_\alpha=F^{M_\alpha}$, set $E^\Uu_\alpha=F^{\N_\alpha}$.

Otherwise since $(A)_\alpha\sub\rg(\pi_\alpha)$, \ref{dfn:supp} implies $M_\alpha$ is type 3, so let $\nu=\nu_{F^{M_\alpha}}$ and $\nubar=\nu_{F^{\N_\alpha}}$. We must have $\nu<\lh(E^\Tt_\alpha)<\OR^{M_\alpha}$, so there are $a_\lh,f_\lh\in\rg(\pi_\alpha)$ with $\lh(E^\Tt_\alpha)=[a_\lh,f_\lh]^{M_\alpha}_{F^{M_\alpha}}$. This gives $E^\Tt_\alpha\in\rg(\psi_\alpha)$; set $E^\Uu_\alpha=\psi_\alpha^{-1}(E^\Tt_\alpha)$. Since ``$[a,f]$ represents an ordinal not in my $\OR$'' is $\Pi_1$ and $\pi_\alpha$ is at least that elementary, $\psi_\alpha(\nubar)=\nu$ and
\begin{equation}\label{eqn:nupres} \psi_\alpha(\OR^{\N_\alpha})=\psi_\alpha((\nubar^+)^{\Ult(\N_\alpha,F^{\N_\alpha})})=(\nu^+)^{\Ult(M_\alpha,F^{M_\alpha})}=\OR^{M_\alpha}.
\end{equation}
So $E^\Uu_\alpha$ is on the $\N_\alpha$ sequence.

We now show that $E^\Uu_\alpha$ is indexed above $\Uu$'s earlier extenders.

If $\alpha=\xi+1$, by exact-$\nu$-$\lh$-preservation and agreement, $\pi_\alpha(\lh(E^\Uu_\xi))=\lh(E^\Tt_\xi)<\lh(E^\Tt_\alpha)$, so $\lh(E^\Uu_\alpha)$ is certainly high enough.

Suppose $\alpha$ is a limit. Let $\beta'=\Tt\pred(\beta)$ where $
\beta=\sup(S\int\alpha)$ as in \ref{dfn:supp}. Let $i^\Tt_{\beta',\alpha}(\lh')=\lh(E^\Tt_\alpha)$ (where $\lh'=\OR^{M_{\beta'}}$ is possible). Then as $\Tt$ is normal, $\crit(i^\Tt_{\beta',\alpha})<\lh'$. So $\lh(E^\Tt_{\beta-1})<i^\Tt_{\beta',\beta}(\lh')$. As in the successor case, we have $\pi_\beta(\lh(E^\Uu_{\beta-1}))=\lh(E^\Tt_{\beta-1})$, and since $\pi_\alpha=i^\Tt_{\beta,\alpha}\com\pi_\beta$, the claim follows.

Unless $\N_\alpha$ is active type 3 and $E^\Uu_\alpha=F^{\N_\alpha}$, exact-$\nu$-$\lh$-preservation for $\xi=\alpha$ is routine. But if so, there's a representation $[a_\nu,f_\nu]$ of $\nu=\nu^{M_\alpha}$ in $\rg(\pi_\alpha)$, and the argument leading to \ref{eqn:nupres} works here too.

Letting $\gamma=\Tt\pred(\alpha+1)$, we have $\gamma,\gamma+1\in S$. Let $\kappa=\crit(E^\Uu_\gamma)$. By half-$\nu$-preservation, $\kappa<\nu^\Uu_\gamma$. Exact-$\nu$-$\lh$-preservation and the agreement between earlier embeddings and $\pi_\alpha$ imply that for $\gamma'<\gamma$ in $S$, $\nu^\Uu_{\gamma'}\leq\kappa$, so setting $\gamma=\Uu\pred(\alpha+1)$ is normal. Moreover, $\psi_\gamma$ agrees with $\pi_\alpha$ below $(\kappa^+)^{\N_\alpha}$. So the hypotheses for the shift lemma (\cite{outline}, 4.2) apply (to the appropriate initial segments of $\N_\gamma, M_\gamma$), which gives $\pi_{\alpha+1}$. As $\pi_\gamma$ is a near $\deg^\Uu(\gamma)$-embedding, the shift lemma and (\cite{near}, 1.3) gives $\deg^\Uu(\alpha+1)=\deg^\Tt(\alpha+1)$ and that $\pi_{\alpha+1}$ is a near $\deg^\Uu(\alpha+1)$-embedding. (Let $k=\deg^\Tt(\alpha+1)$. Then by $\Sigma_{k+1}$-elementarity, $\pi_\gamma(\rho^{M^\Tt_\gamma}_k)\geq\rho^{M^\Tt_\gamma}_k$. Thus the $\Uu$ side doesn't drop in degree whilst the $\Tt$ side maintains.) It also gives the required commutativity, and that $\pi_{\alpha+1}$ agrees with $\psi_\alpha$ below $\lh(E^\Uu_{\alpha})+1$, which yields half-$\nu$-preservation and agreement.

Now let $x\in A_{\alpha+1}$, and let $a_x, q_x, \tau_x$ be as in the definition of support. Inductively, we have $\abar,\qbar$ so that $\pi_\gamma(\qbar)=q_x$ and $\pi_\alpha(\abar)=a_x$. As $\deg^\Uu(\alpha+1)=\deg^\Tt(\alpha+1)$, $\xbar=[\abar,f_{\tau_x,\qbar}]$ is an element of $\N_{\alpha+1}=\Ult_{\deg^\Uu(\alpha+1)}((\N)^*_{\alpha+1},E^\Uu_\alpha)$. Moreover, by definition, $\pi_{\alpha+1}(\xbar)=x$. Therefore $\rg(\pi_{\alpha+1})$ is good.
(Note: To maintain the range condition, it is essential here that the degrees of $\Uu$ are as large as those of $\Tt$. If we only had weak embeddings, the remark in the previous paragraph wouldn't apply, and it seems this might fail, though we don't have an example of such.) This gives $\Uu\rest\alpha+2$ and establishes $\varphi_{\alpha+1}$.

Now suppose we have $\Uu\rest\beta+1$ for some $\beta\in S$, $\beta<\theta$, and $\varphi_{\beta+1}$ holds, but $\beta+1\notin S$. So $\inf(S\cut(\beta+1))$ is a limit $\alpha$. Thus we set $E^\Uu_\beta=\empty$ and $\N_\alpha=\N_\beta$. Let $\pi_\alpha=i^\Tt_{\beta,\alpha}\com\pi_\beta$. This makes sense and yields a near $\deg^\Tt(\alpha)=\deg^\Uu(\alpha)$-embedding since there is no dropping of any kind in $[\beta,\alpha]_T$. Its range is large enough since $\rg(\pi_\beta)\super A_\beta$ and $i^\Tt_{\beta,\alpha}``A_\beta\super A_\alpha$. By \ref{dfn:supp}, $\beta$ is a successor; let $E=E^\Uu_{\beta-1}$. By exact-$\nu$-$\lh$-preservation, \[ \pi_\beta(\nu_E)=\nu^\Tt_{\beta-1}\leq\crit(i^\Tt_{\beta,\alpha}). \]
Therefore $i^\Tt_{\beta,\alpha}\com\pi_\beta$ agrees with $\pi_\beta$ below $\nu_E$. Moreover, if $E$ is type 1/2, then in fact 
\[ \pi_\beta(\lh(E))=\lh(E^\Tt_{\beta-1})<\crit(i^\Tt_{\beta,\alpha}), \]
so we get the stronger agreement. This easily extends to extenders used prior to $E$. Since $\pi_\alpha(\nu_E)\geq\pi_\beta(\nu_E)$, half-$\nu$-preservation is maintained. (But this is where we may lose exact-$\nu$-$\lh$-preservation.) This gives $\Uu\rest\alpha+1$ and shows $\varphi_{\alpha}$, completing the construction.\renewcommand{\qedsymbol}{$\Box$(Lemma \ref{lem:supptree})}\end{proof}

\begin{dfn}\label{dfn:supptree}\index{support tree}\index{$\Tt_x$} Let $\Tt$, $B$ be as in \ref{lem:supptree}, and suppose $A$ captures $B$. The finite support tree $\Tt^A_B$ for $B$, relative to $A$, is the tree $\Uu$ as defined in the proof of \ref{lem:supptree}. Letting $A^*$ be the support given by the algorithm described earlier, the finite support tree $\Tt_B$ is $\Tt^{A^*}_B$.\end{dfn}

\subsection*{Total Measures}

\begin{dfn}\index{$P_E$}
Given a pre-extender $E$ over a premouse $N$, let $P_{N,E}$ (or just $P_E$ where $N$ is understood), denote the unique structure that might be a premouse with $\tc(E)$ active: $(\Ult(N,E)|(\nu_E^+)^{\Ult(N,E)},\tc(E))$. 
\end{dfn}
\begin{rem}
Given $E$, note that $P_E$ is a premouse iff it is wellfounded and it satisfies the initial segment condition.\end{rem}

\begin{dfn}\index{measure}\index{finitely generated extender}\index{total measure}\index{measure type}
Let $E$ be an (possibly long) extender over $\kappa$. Then $E$ is a \emph{measure} or $E$ is \emph{finitely generated} if there is  $s\in\nu_E^{<\om}$ such that for all $\alpha<\nu_E$, there is $f:\kappa\to\kappa$, measured by $E$, such that $\alpha=[a,f]_E$.

$E$ is a \emph{total} measure if it is (equivalent to) an $\om$-complete non-principal ultrafilter.

Let $j:P\to R$ be an elementary embedding of premice with $\crit(j)$ inaccessible in $P$ and $E_j\rest j(\crit(j))\notin R$. Then the Dodd fragments of $j$ are of \emph{measure type} if $\sigma_j=(\crit(j)^+)^P$.
\end{dfn}

\begin{lem}\label{lem:Dfpres} Let $j:P\to R$ be a fully elementary embedding of premice with $\kappa=\crit(j)$ inaccessible in $P$, and $P\sats\ZFmin$. Suppose $E_j\rest j(\kappa)\notin R$. Let $\Uu$ be a normal iteration tree on $R$, such that $i^\Uu$ exists and $\crit(i^\Uu)>\kappa$. Then $i^\Uu(s_j)$, $\sup i^\Uu``\sigma_j$ are the Dodd-fragment ordinals of $i^\Uu\com j$.\end{lem}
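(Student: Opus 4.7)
The plan is to induct on the length of $\Uu$ along its main branch, reducing to a one-step analog of Lemmas \ref{lem:Dsp4}--\ref{lem:Dsp6}. Since $i^\Uu$ exists, the main branch $b$ does not drop, so $i^\Uu$ factors through the intermediate embeddings $i^\Uu_{0,\gamma}:R\to M^\Uu_\gamma$ for $\gamma\in b$. Setting $j_\gamma = i^\Uu_{0,\gamma}\com j:P\to M^\Uu_\gamma$, I would show by induction on $\gamma\in b$ that $s_{j_\gamma} = i^\Uu_{0,\gamma}(s_j)$ and $\sigma_{j_\gamma} = \sup i^\Uu_{0,\gamma}``\sigma_j$. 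Taking the last $\gamma\in b$ yields the conclusion. Limit stages $\lambda\in b$ follow by direct limit arguments: any relevant fragment of $E_{j_\lambda}$ in $M^\Uu_\lambda$ pulls back to one in some $M^\Uu_\gamma$ with $\gamma<\lambda$ in $b$ by cofinality of $i^\Uu_{0,\lambda}$, after which the inductive hypothesis applies.

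The main content is the successor step $\gamma\to\gamma+1$ in $b$, where $j_{\gamma+1} = i_H\com j_\gamma$ with $i_H=i^\Uu_{\gamma,\gamma+1}$ a canonical degree-$\deg^\Uu(\gamma+1)$ ultrapower embedding by $H=E^\Uu_\gamma$. Since every extender used on $b$ has critical point at least $\crit(i^\Uu)>\kappa$, we have $\crit(H)>\kappa$. The inductive step then reduces to the following adaptation of \ref{lem:Dsp4}--\ref{lem:Dsp6} to a general fully elementary embedding: if $j':P\to R'$ is fully elementary as in the lemma hypotheses, and $H$ is a short extender over $R'$ with $\kappa<\crit(H)<\rho_k^{R'}$ such that $R''=\Ult_k(R',H)$ is wellfounded (with canonical embedding $i_H$), then $s_{i_H\com j'} = i_H(s_{j'})$ and $\sigma_{i_H\com j'} = \sup i_H``\sigma_{j'}$.

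For the existence direction of this single-step claim, the key formula is that for any $X\sub j'(\kappa)$ with $E_{j'}\rest X\in R'$,
\[ i_H(E_{j'}\rest X) = E_{i_H\com j'}\rest i_H(X)\in R''. \]
This holds because each measure component $(E_{j'})_a$ for $a\in X^{<\om}$ is a subset of $P|(\kappa^+)^P=R'|(\kappa^+)^{R'}$, hence fixed by $i_H$ (as $\crit(i_H)>\kappa$), and $i_H$ is elementary, so applying $i_H$ sends the whole fragment function to the corresponding fragment of $E_{i_H\com j'}$. Applying this to the defining witnesses for $s_{j'}$ and $\sigma_{j'}$ produces the required fragments in $R''$, so $i_H(s_{j'})$ and $\sup i_H``\sigma_{j'}$ are achievable.

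For maximality, suppose toward a contradiction that $s_{i_H\com j'}\rest k = i_H(s_{j'}\rest k)$ and $(s_{i_H\com j'})_k > i_H((s_{j'})_k)$. Then taking a subfragment, $E_{i_H\com j'}\rest((i_H((s_{j'})_k)+1)\un i_H(s_{j'}\rest k))\in R''$. By elementarity, $i_H((s_{j'})_k)+1 = i_H((s_{j'})_k+1)$, so this equals $E_{i_H\com j'}\rest i_H(X)\in R''$, where $X = ((s_{j'})_k+1)\un s_{j'}\rest k\in R'$. Since the index set is in $\rg(i_H)$, elementarity of $i_H$ pulls this back to $E_{j'}\rest X\in R'$, contradicting the maximality of $(s_{j'})_k$. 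The analogous argument at $\sigma$ uses a fragment with index set $\alpha\un i_H(s_{j'})$ for $\alpha>\sup i_H``\sigma_{j'}$ in the range of $i_H$. The main obstacle is this pullback step: one must argue cleanly that a fragment in $R''$ whose index set lies in $\rg(i_H)$ descends via $i_H^{-1}$ to a fragment in $R'$, which is why we deliberately arrange the offending sub-fragment to be indexed by $i_H(X)$ for $X\in R'$, so that elementarity delivers the pullback without having to reason about index ordinals outside $\rg(i_H)$.
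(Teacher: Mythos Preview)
Your overall structure---induct along the main branch and reduce to a single ultrapower step---matches the paper's, and your existence direction is correct. The gap is in the maximality step. You assert that since the index set $i_H(X)$ lies in $\rg(i_H)$, elementarity of $i_H$ pulls the fragment back to $E_{j'}\rest X\in R'$. But elementarity of $i_H:R'\to R''$ only transfers first-order statements about parameters $p\in R'$ to statements about $i_H(p)\in R''$; it does not say that every element of $R''$ whose ``index'' happens to lie in $\rg(i_H)$ is itself in $\rg(i_H)$. The fragment $E_{i_H\com j'}\rest i_H(X)$ is a specific element of $R''$, but it is not first-order definable over $R''$ from $i_H(X)$ alone---its definition involves the external embedding $i_H\com j'$. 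So there is no formula $\varphi$ to which you can apply elementarity to conclude that this set lies in $\rg(i_H)$, and in general it need not. The same issue recurs in your treatment of $\sigma$: there may be no $\alpha\in\rg(i_H)$ strictly between $\sup i_H``\sigma_{j'}$ and $\sigma_{i_H\com j'}$, so your plan of arranging the index set to lie in $\rg(i_H)$ can fail outright.

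What is missing is precisely the paper's argument: represent the offending fragment as $[a,f]^{R'}_H$ with $f:\crit(H)\to R'$ a function in $R'$ (this uses $R'\sats\ZFmin$, so that $\Ult_\om=\Ult_0$ and every element has such a representation), and use closeness of $H$ to $R'$ together with $\ZFmin$ to get the component measure $H_a\in R'$. Then by Los' theorem, for each $\alpha<\sigma_{j'}$ there are $H_a$-measure-one many $u$ with $f(u)\rest(\alpha\un s_{j'})=E_{j'}\rest(\alpha\un s_{j'})$. Since $H_a$ and $f$ are both in $R'$, $R'$ can assemble $E_{j'}\rest(\sigma_{j'}\un s_{j'})$, contradicting the definition of $s_{j'},\sigma_{j'}$. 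Notice that you never invoked the hypothesis $P\sats\ZFmin$ or closeness of the tree's extenders; both are exactly what is needed to make this reflection go through.
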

\begin{proof}
Consider a single ultrapower of $R$ by an extender $F$ which is close to $R$ (but maybe not in $R$), with $\mu=\crit(F)>\crit(j)$. Since $R\sats\ZFmin$, $\Ult_\om(R,F)=\Ult_0(R,F)$. So if the ultrapower has too large a fragment of $E_{i_F\com j}$, there's a function $f:\mu\to R$ in $R$ and $a\in\nu_F^{<\om}$ such that $[a,f]$ represents it. Also since $R\sats\ZFmin$, the closeness of $F$ to $R$ implies $F_a\in R$. For $\alpha<\sigma_j$, there's $F_a$-measure one many $u$ such that
\[ f(u)\rest\alpha\un s_j=E_j\rest\alpha\un s_j. \]
It follows that $E_j\rest\sigma_j\un s_j\in R$; contradiction. This extends to normal iterates of $R$.\renewcommand{\qedsymbol}{$\Box$(Lemma \ref{lem:Dfpres})}\end{proof}

\begin{thm}\label{thm:exactmeas}\setcounter{clm}{0}
Let $N$ be an $\fully$-iterable mouse, $E,\kappa\in N$, and
\[ N\sats\theory,\ E\ \textrm{is a total wellfounded measure on}\ \kappa\ \textrm{and}\ \kappa^{++}\ \textrm{exists}.\]
Then there is a iteration tree $\Tt$ on $N$ with the following properties:
\begin{itemize}
\item[(a)] $\Tt$ is a finite, normal tree based on $N|(\kappa^{++})^N$.
\item[(b)] $\Ult(N,E)$ is the last model of $\Tt$ (so $\Tt$ results from comparing $N$ with $\Ult(N,E)$), and there is no dropping on $\Tt$'s main branch.
\item[(c)] $E$ is the extender of $i^\Tt$.
\item[(d)] There is a finite linear iteration $\Ll$ of $N$, possibly non-normal, with final model $\Ult(N,E)$ and $i^\Ll=i^\Tt$.
\item[(e)] $\core_D(E)$ is a measure and is the active extender of $P\pins N$.
\item[(f)] For any $F$ used in $\Tt$, $F$ and $\core_D(F)$ are measures.
\item[(g)] $P_E$ satisfies the initial segment condition iff there is exactly one extender used along $\Tt$'s main branch. (Clearly that extender must be $E$.)
\item[(h)] The extenders used in $\Tt$ are characterized in Corollary \ref{cor:damexts}.
\end{itemize}\end{thm}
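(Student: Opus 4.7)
The plan is to compare $N$ with $\Ult(N,E)$, obtain a normal tree realizing $i_E$, and then use the finite support machinery of \S\ref{subsec:fst} to cut it down to a finite tree. By Lemma \ref{lem:Ecc}, the wellfoundedness of $E$ makes $\Ult(N,E)$ a transitive class of $N$, and its iterability follows from $N$'s in the usual way (via countable completeness of $E$). A successful comparison produces normal trees $\Tt'$ on $N$ and $\Ss$ on $\Ult(N,E)$ with common last model $Q$. Standard arguments as in the proofs of Theorems \ref{thm:coarseDodd} and \ref{thm:cohere} --- using that $N$ and $\Ult(N,E)$ agree below $(\kappa^+)^N$ and that $\lh(E)<(\kappa^{++})^N$ --- force $\Ss$ to be trivial with $Q=\Ult(N,E)$, $\Tt'$'s main branch to be dropless, and $i^{\Tt'}=i_E$; in particular $\Tt'$ is based on $N|(\kappa^{++})^N$.

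Next I replace $\Tt'$ by a finite tree via finite support. Since $E$ is a measure, fix $s\in\nu_E^{<\om}$ generating $E$, so that every element of $\Ult(N,E)$ has the form $i_E(f)(a)$ for some $f\in N$ and $a\in s^{<\om}$. Apply Lemma \ref{lem:supptree} to $\Tt'$ with $B=s$, obtaining a finite normal tree $\Uu$ on $N$ with last model $M^\Uu_n$, no drop on $\Uu$'s main branch, and a near $\om$-embedding $\pi:M^\Uu_n\to\Ult(N,E)$ satisfying $s\subseteq\rg(\pi)$ and $\pi\com i^\Uu=i_E$. Then $\rg(\pi)\supseteq\rg(i_E)\cup s$, and since $\pi$ is sufficiently elementary to preserve the $[a,f]_E$-representation, $\rg(\pi)=\Ult(N,E)$, so $\pi$ is the identity. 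Therefore $M^\Uu_n=\Ult(N,E)$ with $i^\Uu=i_E$, and taking $\Uu$ as the $\Tt$ of the theorem delivers (a), (b), and (c).

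For (e) and (f), every extender on the sequence of a premouse appearing in $\Uu$ is Dodd-sound by \ref{fact:Doddsound}. Decomposing $i_E$ via its core sequence (\ref{dfn:coreseq}) and applying the Dodd-fragment preservation lemmas \ref{lem:Dsp4}--\ref{lem:Dsp6} and \ref{lem:Dfpres} along the iteration embedding from each extender's origin up to $\Ult(N,E)$, one checks that the measure-type condition $\sigma=(\crit^+)$ propagates from $E$ (for which $\sigma_E=(\kappa^+)^N$ by \ref{fact:Dodd}, since $E$ is a total measure) to each $F$ used in $\Uu$ and to $\core_D(E)$. This gives $\core_D(E)$ active on some $P\pins N$ and makes each $F$ and $\core_D(F)$ a measure. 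For (d), apply the Dodd cores of extenders on $\Uu$'s main branch linearly in order of increasing critical point; by the associativity corollary \ref{cor:nass} the resulting direct limit equals $\Ult(N,E)$ with embedding $i_E$. For (g), if $P_E$ satisfies ISC then $(N||\lh(E),E)$ is a premouse and Theorem \ref{thm:cohere} applied to the last model of $\Uu$ forces $E$ onto its sequence, whence $\Uu$ uses just $E$; conversely, a single extender $F$ on $\Uu$'s main branch must equal $E$ by compatibility and $\nu_F\ge\nu_E$, making $P_E$ a segment of $\Ult(N,E)$ and hence ISC-satisfying. Part (h) is deferred to Corollary \ref{cor:damexts}.

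The main obstacles I anticipate are: (i) the surjectivity step in the second paragraph, which relies on the near $\om$-embedding $\pi$ being elementary enough to preserve the $[a,f]_E$-representation (this holds because near $\om$-embeddings are $r\Sigma_\om$-elementary); and (ii) the Dodd-fragment analysis of $E$ itself in (e), (f), where $E\notin N$ so Dodd-soundness of $E$ is not a priori given but must be inferred from the core-sequence decomposition of $i_E$ together with Dodd-soundness of the genuine extenders used in $\Uu$.
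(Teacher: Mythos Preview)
Your central claim---that the comparison tree $\Ss$ on $\Ult(N,E)$ is trivial, so that $Q=\Ult(N,E)$---is false, and the rest of the argument collapses with it. Nothing in \S\ref{sec:esc} or \S\ref{sec:cohere} gives this: the paper's own Claim \ref{clm:crits} in the proof of \ref{thm:exactmeas} establishes only that $\crit(i^\Uu)>\crit(E)$, not that $i^\Uu=\id$. The agreement between $N$ and $\Ult(N,E)$ only reaches $(\kappa^+)^N=\nu_E$, so both sides of the comparison can and typically do move above that. Consequently, finite support applied to $\Tt'$ with $B=s$ lands you in $Q$, not in $\Ult(N,E)$; your surjectivity argument for $\pi$ then proves nothing about $\Ult(N,E)$.

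The second gap is in (f). The Dodd-fragment preservation lemmas \ref{lem:Dsp4}--\ref{lem:Dsp6} and \ref{lem:Dfpres} track how $\tau$ and $t$ transform along a branch; they do \emph{not} let you infer that an extender $F$ used in the tree is a measure merely because the composite embedding $i_E$ is. The paper handles this by an induction on mice: assuming the theorem for proper levels of $N|(\kappa^{++})^N$, one takes the finite support tree $\Ttbar$ for $i^\Uu(s)$ (note: the image in $Q$, not $s$ itself), and then in Claim \ref{clm:finite} performs a delicate backward pass through $\Ttbar$, replacing each non-measure Dodd-sound extender $F_i$ by a sub-measure $\mu_i=F_i\rest(a\cup t_{F_i})$ for a finite $a$ chosen to support what comes later. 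The inductive hypothesis is what puts $\mu_i$ on the sequence of a finite iterate. Only \emph{after} all extenders are measures does one argue, via the core sequence and Dodd-fragment matching, that $U=\Qbar=Q$; you have the logical order reversed. Your sketch for (d) and (g) is in the right spirit once (a)--(f) are in hand, but you need the induction and the measure-replacement construction to get there.
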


\begin{proof}
The proof is by induction on mice, so we assume that proper levels of $N|(\kappa^{++})^N$ satisfy the theorem. Conclusions (d) and (h) won't be needed in the induction, however. Let $M=\Hull_\om(N|(\kappa^{++})^N)$. The failure of the theorem is first order over $N|(\kappa^{++})^N$, so as usual we assume $E$ is the least. Let $\Ebar$ be the collapse of $E$. A simplification of the argument at the start of the proof of \ref{thm:coarseDodd} shows $\Ult(M,\Ebar)$ embeds into a level of $N$, so is iterable.\\
\\
\noindent\emph{Notation.} Again we have no further need for the $N$-level, so we'll drop the bar notation.\\

As in \ref{thm:coarseDodd}, comparing $M$ with $\Ult(M,E)$ results in trees $\Tt$ on $M$ and $\Uu$ on $\Ult(M,E)$ with a common last model $Q$. Let $s$ generate $E$. Let $\Ttbar=\Tt_{i_{U,Q}(s)}$ be the finite support tree for $i_{U,Q}(s)$ relative to $\Tt$, as in \ref{dfn:supptree}. Let $\Qbar$ be $\Ttbar$'s final model, and $\tau:\Qbar\to Q$ be the liftup map given by \ref{lem:supptree}. Since $i^\Uu(s)\in\rg(\pi)$, we have the natural factor map $\psi:U\to\Qbar$. The following diagram commutes:
\[
\begin{picture}(100,100)(50,0)
\put(46,0){$M$}
\put(59,10){\vector(1,4){15}} 
\put(61,11){\vector(1,1){15}} 
\put(71,15){$\scriptstyle i^{\Ttbar}$}
\put(62,8){\vector(3,1){70}} 
\put(75,30){$\Qbar$}
\put(75,75){$U$}
\put(135,30){$Q$}
\put(80,70){\vector(0,-1){28}} 
\put(90,35){\vector(1,0){37}} 
\put(85,70){\vector(3,-2){47}} 
\put(52,40){$\scriptstyle i_E$}
\put(95,10){$\scriptstyle i^\Tt$}
\put(111,56){$\scriptstyle i^\Uu$}
\put(82,53){$\scriptstyle\psi$}
\put(100,40){$\scriptstyle\pi$}
\end{picture}
\]

\begin{clm}\label{clm:crits} $\crit(i^\Tt)=\crit(i^{\Ttbar})=\crit(E)<\crit(i^\Uu)$. Therefore $\crit(E)<\crit(\psi)$ and $\crit(E)<\crit(\pi)$.\end{clm}
\begin{proof} Since the diagram commutes, otherwise $\kappa=\crit(i^\Tt)=\crit(i^\Uu)\leq\crit(E)$, and $i^\Tt$ must be compatible with $i^\Uu$ on $\pow(\kappa)$ through $\inf\{i^\Tt(\kappa),i^\Uu(\kappa)\}$, since $M$ is its own hull. But $(\Tt,\Uu)$ was a comparison, so this is false. The second statement now follows commutativity.\renewcommand{\qedsymbol}{$\Box$(Claim \ref{clm:crits})}\end{proof}

\begin{clm}\label{clm:finite} We may assume that for every extender $F$ used in $\Ttbar$, $F$ and $\core_D(F)$ are measures. Moreover, letting $E^*$ be the (long) extender of $\Ttbar$'s main branch embedding, we may assume that $\Ttbar$ witnesses all conclusions of the theorem other than (h), with respect to $E^*$.\end{clm}

We'll first assume the claim, and finish the proof. Let $G$ be the first extender used on $\Ttbar$'s main branch. By Claim \ref{clm:finite}, $\core_D(G)$ is a measure, so by \ref{lem:Dsp5}, the Dodd fragments of $G$, and so those of $i^{\Ttbar}$ (in $\Qbar$), are of measure type. Consider the Dodd fragments of $E$ and $i^\Tt$.  By commutativity and \ref{clm:crits}, $\psi$ maps fragments of $E$ to fragments of $i^{\Ttbar}$, and likewise $\pi$ from $i^{\Ttbar}$ to $i^\Tt$ and $i^\Uu$ from $E$ to $i^\Tt$. By \ref{lem:Dfpres}, since $i^\Uu$ is an iteration, it also preserves the \emph{maximality} of Dodd fragments. By commutativity, $\psi$ and $\pi$ preserve maximality also. In particular, the fragments of $i_E$ and $i^\Tt$ are also of measure type.

Let $\left<F_\alpha\right>$ enumerate the Dodd cores of the extenders $F\leq_\dam G$ for the extenders $G$ used on $\Tt$'s main branch, in order of increasing critical points. Let $\left<Q_\alpha,j_{\alpha,Q}\right>$ be the corresponding core sequence (if $\Tt$'s main branch uses more than one extender, simply string the corresponding core sequences together). Applying \ref{lem:Gcoreseq} to each of the extenders used on $\Tt$'s main branch, we know the core sequence reaches all the way to $Q$, and the analogue to the characterization following \ref{lem:Gcoreseq} also holds. Let $\left<\Fbar_\alpha\right>$ and $\left<\Qbar_\alpha\right>$ enumerate the corresponding objects for $\Ttbar$. Claim \ref{clm:finite} gives $\Fbar_\alpha$ is a measure. From the previous paragraph, we have $\Fbar_0=F_0$, and $i_E,i^{\Ttbar}$ and $i^\Tt$ factor through $\Qbar_1=Q_1=\Ult(M,F_0)$, commutatively. (It doesn't matter whether $\kappa$ is an $s_{i^\Tt}$-generator, since $\kappa<\crit(i^\Uu)$ anyway.)

Suppose $Q_1\neq Q$. As in \ref{rem:compat}, $j_1:Q_1\to Q$ is compatible with an extender used on the $M$ side of the comparison, through its sup of generators. As in Claim \ref{clm:crits}, it follows that $\crit(j_1)<\crit(i^\Uu)$, and the rest of the above argument can be repeated. Maintaining this situation inductively, we eventually (in finitely many stages) reach $U=\Qbar$, so $U$ is a finite iterate of $M$, so $U=Q=\Qbar$, and $\Tt=\Ttbar$ is finite and as desired. So to finish proving the theorem we just need:

\begin{proof}[Claim \ref{clm:finite} Proof]\ \\
\\
\emph{Notation.} For this proof we need only refer to the finite tree, which we'll refer to as $\Tt$ instead of $\Ttbar$ from now on.\\

The motivation for the proof is to refine the finite support tree construction. Suppose $\Tt$ uses an extender $E$ which isn't a measure. Since we only need to support finitely much, the intuition is that only finitely many of $E$'s generators are important, so we should be able to improve $\Tt$ by replacing $E$ with a sub-measure. Doing this to every such extender should yield the type of tree we want. However, executing this takes some care. One problem is that it seems interfering with some part of the tree in this way might affect the normality later on. To get around this, we start from the end of $\Tt$ and work backwards, producing a series of trees $\left<\Tt_i\right>$, converting extenders to measures one by one. $\Tt_{i+1}$ will replicate $\Tt$ until (a version of) the relevant $E$ first appears, then convert the use of $E$ to the production and use of a sub-measure, ensuring that the sub-measure includes enough generators to allow a ``downward'' copying of the remainder of $\Tt_i$. Because we've already processed that remainder, it only involves simple interactions with the earlier part of the tree.

So, $\Tt$ is finite. Let $F_1,\ldots,F_n$ enumerate the extenders $F\leq_\dam G$ for any $G$ used along the main branch of $\Tt$, this time in order of \emph{de}creasing critical point. Note that $F_1$ is Dodd sound. If it's a measure, we set $\Tt_1=\Tt$. Suppose otherwise. Say $F_1$ is on the $P$ sequence and $\kappa=\crit(F_1)$. After using $F_1=G_1$, $\Tt$ just pops out of the damage structure, hitting a sequence of active extenders $G_2,\ldots,G_k$, where $G_i$ is largest in $\dam(G_{i+1})$, until producing its final model $Q=\Ult_\om(M^\Tt_p,G_k)$ ($k=1$ is possible). Let $a_k\in\nu_{G_k}^{<\om}$ be sufficient to support $x$ in this ultrapower. Let $a_{i-1}$ be sufficient to support $a_i$, and $a\in\tau_{F_1}^{<\om}$ be such that $a\un t_{F_1}$ is sufficient to support $a_2$ and $F_1\rest(\max (t_{F_1}))$. (If $F_1$ is type 3 then instead let $\max(a)$ be a generator which indexes a segment of $F_1$; these are unbounded.) Since $(\kappa^+)^{F_1}<\tau_{F_1}$ (and $\tau_{F_1}$ is a cardinal of $P|lh(F_1)$), Dodd soundness implies $\mu=F_1\rest(a\un t_{F_1})\in P|\tau_{F_1}$. Considering the natural factor map $\Ult_0(P|\lh(F_1),\mu)\to\Ult_0(P|\lh(F_1),F_1)$, it's easy to see that our choice of $a$ implies $\mu$ has the initial segment condition. By the minimality of $M$, there's a finite tree $\Uu=\Uu_1$ on a proper segment of $P|(\kappa^{++})^{F_1}$, which uses only measures, such that $\mu$ is on the sequence of its final model. (Use condensation to get a proper level of $P|(\kappa^{++})^{F_1}$ containing $\mu_1$ to apply \ref{thm:exactmeas} to.)

Now let $q$ be least such that $\lh(E^\Tt_q)>(\kappa^+)^{F_1}$. We would like to have $\Tt_1$ begin by followwing $\Tt$ until reaching $M^\Tt_q$, then followwing $\Uu$. This will be fine unless $\nu^\Tt_{q-1}>\kappa$ and $\Uu$ uses an extender with crit $\kappa$; in this case $\Tt\rest q+1\conc\Uu$ isn't normal. To deal with this we need to observe some properties of $\Uu$. First, $\core_D(\mu_1)$ is on the $P$-sequence by \ref{thm:exactmeas}. We first claim that $\Uu$ is equivalent to a tree on $P|\lh(\core_D(\mu_1))$.

If $E^\Uu_0$ exists then $(\kappa^+)^{\mu}<\lh(E^{\Uu}_0)$. If $\mu$ isn't the active extender of $\Uu$'s last model then, since $\mu$ projects to $(\kappa^+)^{\mu}$, in fact $\mu$ is on the $P$-sequence. Assume it is active. As $\crit(\mu)=\kappa$,  $\crit(\core_\om(\mu)\to\mu>\kappa$. As $\core_D(\mu)$ is a measure, so is $\core_\om(\mu)$, so they're equal (by the Dodd-fragment preservation facts), and on the $P$-sequence. Also $\lh(E^{\Uu}_0)\leq\lh(\core_D(\mu))$. So with $i^\Uu:\core_D(\mu)\to\mu$ the (dropping) branch map, $\crit(i^\Uu)>\kappa$. Because $\Uu$ uses only measures, $M^\Uu_0$ is passive, and $\kappa$ is a cardinal of $P$, we get all of $\Uu$'s crits are $\geq\kappa$.

Now suppose $\crit(E^\Uu_i)=\kappa$. Then $(M^*)^\Uu_{i+1}=P$. Since $\crit(i^\Uu)>\kappa$, $M^\Uu_{i+1}$ isn't on the main branch, so $\Uu$ goes back at some point. Because it only uses measures, in fact it goes back to $M^\Uu_i$ at say stage $j$ with $\crit(E^\Uu_j)=|\nu^\Uu_i|^{E^\Uu_i}$, and $\lh(E^\Uu_{i+1})<(|\nu^\Uu_i|^{++})^{M^\Uu_{i+1}}$. So $\Uu\rest[i+1,j]$ is on $M^\Uu_{i+1}|\lh(E^\Uu_{i+1})$ and $E^\Uu_j$ triggers a drop to $M^\Uu_i|\lh(E^\Uu_i)$. It follows that $\Uu$ can be considered a tree on $P|\lh(\core_D(\mu))$, or else on $M^\Tt_q$. (One can also show that if $\crit(E^\Uu_i)=\kappa$, then $E^\Uu_i$ is the image of $\core_D(\mu)$.)

Now consider $\Tt\rest(q+1)\conc\Uu$, and suppose $\kappa<\nu^\Tt_{q-1}$ and $\crit(E^\Uu_i)=\kappa$. $\Uu$ applies this extender to $M^\Uu_0=M^\Tt_q$, but for normality, the correct model to return to is $M^\Tt_{q-1}$, with $(M^*)_{i+1}=M^\Tt_{q-1}|(\kappa^+)^{F_1}$, and since $\rho_1^{(M^*)_{i+1}}=\kappa$ the correct degree is $0$. The preceding paragraph shows that this is fine; i.e. there is a normal tree $\Tt\rest q+1\conc\Uu'$ such that $\Uu'$ is given by copying the extenders of $\Uu$, and returning to $M^\Tt_{q-1}$ when $\kappa$ is the crit. This tree also has $\mu$ on the sequence of its final model.

So let $\Tt_1=\Tt\rest q+1\conc\Uu'_1\conc\Vv_1$, where $\Vv_1$ is a ``downward copy'' of the remainder of $\Tt$, hitting $\mu$, then active extenders $G'_2,\ldots,G'_k$, yielding a final model $Q'$. (Note $\mu$ and $F_1$ apply (normally) to the same model, which yields premice with active extenders $G'_2$ and $G_2$ respectively. Moreover, $G'_2$ is a sub-extender of $G_2$, they both apply to the same model, etc.) Since $G'_k$ is a sub-extender of $G_k$, we get a $\pi_1:Q_1\to Q$ which is fully elementary,  commuting with the $\Tt_1$ and $\Tt$ embeddings, and $x\in\rg(\pi_1)$ by our choice of generators $a$.\\

The general case is a little more complicated; we just sketch it. Suppose we have $\Tt_i$ where $1\leq i<n$; first we describe some of our inductive assumptions. Let $q$ be least such that $\lh(E^\Tt_q)>(\kappa_i^+)^{F_i}$. Then $\Tt_i$ is of the form $\Tt\rest q+1\conc\Uu_i$.  If $E_i<_\dam E_{i+1}$ then we'll have that $\Uu_i$ uses a sub-extender $E^i_{i+1}$ of $E_{i+1}$, whose damage structure consists only of measures, and that $\core_D(E^i_{i+1})=F_{i+1}$. Otherwise $\Tt\rest q+1$ uses $E^i_{i+1}=E_{i+1}=F_{i+1}$. Moreover, $\Uu_i$'s extenders with crit $\geq\tau_{F_{i+1}}$ are all measures, and $\Uu_i$'s crits below $\tau_{F_{i+1}}$ are all of the form $\kappa_m$ for some $m$.

Now we construct $\Tt_{i+1}$. Note that $F_{i+1}$ is on the sequence of $M^\Tt_{q'}$ for some $q'\leq q$. (Either $E_i<_\dam E_{i+1}$ so the critical point $\kappa_i$ applies to $M^\Tt_{q'}$, or $E_{i+1}=F_{i+1}$ is Dodd-sound and $\nu_{E_{i+1}}\leq\kappa_i$.) If $F_{i+1}$ is itself a measure, let $\Tt_{i+1}=\Tt_i$. Otherwise, as in the $i=0$ case, we choose $a\in\tau_{F_{i+1}}$ such that $t_{F_{i+1}}\un a$ is sufficient to (eventually) generate the remainder of $\Tt_i$, the $\Tt_i$ preimage of $x\in Q$, the Dodd-solidity witnesses for $F_{i+1}$ and $\tau_{F_{i+1}}$. (If $F_{i+1}$ is type 3, choose $\max(a)$ as in the $i=0$ case and work relative to $F_{i+1}\rest\max(a)+1$.) Let $\mu_{i+1}=F_{i+1}\rest a\un t_{F_{i+1}}$. As before, $\mu_{i+1}$ is on the sequence of the last model of a tree $\Uu^0_{i+1}$ on $P|(\kappa_{i+1}^{++})^{F_{i+1}}$. Let $q_{i+1}$ be least such that $\lh(E^\Tt_{q_{i+1}+1})>(\kappa_{i+1}^+)^{F_{i+1}}$. Then $\Tt_{i+1}=\Tt\rest (q_{i+1}+1)\conc(\Uu^0_{i+1}\conc\Uu^1_{i+1})'\conc\Uu^2_{i+1}$; we'll now define $\Uu^1_{i+1}$ and $\Uu^2_{i+1}$. (Here $\Vv'$ is the modification of $\Vv$ we used in the $i=0$ case.)

If $E_i\not<_\dam E_{i+1}$ then $E_{i+1}$ is the ``top'' extender in the damage structure of the extender used on a branch immediately preceding $E_i$. I.e. let $q'$ be such that $E_i$ applies to $M^\Tt_{q'}$. Suppose $E_i\in\dom(\dam(E_m))$. Then $E_{i+1}<_\dam E_m$. Immediately after using $E_{i+1}$, $\Tt$ successively applies $E_k$ for all $k$ with $E_{i+1}<_\dam E_k<_\dam E_m$ (in their nested order), which results in $M^\Tt_{q'}$. Clearly either $q'=q_i$ or $\lh(E^\Tt_{q'})=(\kappa_i^+)^{F_i}$. In the latter case, $E_i$ triggers a drop to $M^\Tt_{q'}|\lh(E^\Tt_{q'})$, so $E^\Tt_{q'}$ is on the branch leading from $\core_D(E_m)$ to $E_m$, so $\crit(E^\Tt_{q'})=\kappa_m$. In fact $E_{i+1}<_\dam E^\Tt_{q'}$, so $E^\Tt_{q'}$ is the active extender of $M^\Tt_{q'}$. Alternatively $E_i$ may be applied directly along $\Tt$'s main branch. In this case $E_i$ can't trigger a drop, so $q'=q_i$. So set $\Uu^1_{i+1}=\empty$, and as in the $i=0$ case, set $\Uu^2_{i+1}$ to copy down the above activity by hitting $\mu_{i+1}$, then the resulting active extenders, until reaching a preimage of $M^\Tt_{q_i}$, and then to copy down $\Uu_i$. Notice that we've maintained the inductive restrictions on $\Tt_{i+1}$'s extenders and crits.

Otherwise $E_{i}$ is the least extender in $\dam(E_{i+1})$. Using the natural map reducing $\mu=\mu_{i+1}$ to $F_{i+1}$, let $\Uu^1_{i+1}$ be the downward copy of the segment of $\Tt_i$ damaging $F_{i+1}$ (which is essentially on $M^{\Tt_i}_{p}|\lh(F_{i+1})$ where $p=q_i$ or $p=q_i-1$), to the last model of $\Uu^0_{i+1}$ (so $\Uu^1_{i+1}$ is on that last model). We will show $\Uu^0_{i+1}\conc\Uu^1_{i+1}$ is normal. Let $\pi:\mu\to F_{i+1}$ be the reduction map and $\pi(\taubar)=\tau_{F_{i+1}}$. We claim that for all extenders $F$ used in $\Uu^0_{i+1}$, $\nu_F\leq\taubar$. Let $t=t_{F_{i+1}}=s_{F_{i+1}}$ and $\pi(\tbar)=t$. Note $\tbar\sub s_{\mu}$ by our choice of $a$. Let $j=i^{\Uu^0_{i+1}}$. By \ref{thm:exactmeas}, as $\mu$ has the initial segment condition, $\mu_\core=\core_D(\mu)$ is on the $P$-sequence, is the domain of $j$, and by the Dodd-fragment preservation facts, $j(s_{\mu_\core})=s_\mu\super\tbar$. Since $\tbar\un\taubar$ generates $\mu$, it follows that all generators along $\Uu^0_{i+1}$'s main branch are below $\taubar$. As $\Uu^1_{i+1}$ only uses measures, its crits are at least $\tau$ or exactly $\kappa$. (note $\tau\leq\kappa_i<\nu_F$ where $F$ is the first extender used). The normality of $\Uu^0_{i+1}\conc\Uu^1_{i+1}$ follows. Then $\Uu^2_{i+1}$ copies down the remainder of $\Uu_i$. Again we've maintained the inductive restrictions on extenders.

Finally, having gotten this far, the reader should be happy to check that our measure-only tree $\Tt_{\textrm{meas}}$ also satisfies the other conclusions of the theorem with respect to its (long) main branch extender (which is a measure itself). The linear tree $\Ll$ of (d) is given by applying all extenders in the damage structures of the measures used along the main branch of $\Ttmeas$, but in order of increasing critical point, not length.

\renewcommand{\qedsymbol}{$\Box$(Claim \ref{clm:finite})(Theorem \ref{thm:exactmeas})}\end{proof}
\renewcommand{\qedsymbol}{}\end{proof}

\begin{rem} During the proof, we dealt with the possibility that $\Tt$ uses extenders not in the damage structure of $E$. This does occur; for example, suppose $N\sats\ZFC$ is a mouse, and $F$ is a finitely generated total extender on $\es^N$. Let $\kappa$ be the largest cardinal of $N|\lh(F)$. Suppose $\kappa$ is measurable in $\Ult(N,F)$, and $D$ is a witnessing normal measure. Then $E=D\com F$ is a measure of $N$, and the resulting tree $\Tt$ uses 3 extenders, $E^\Tt_0=F$, $E^\Tt_1=D$, and $E^\Tt_2=E$. But the damage structure of $E$ just involves $E$ and $D$. This is (a simple case of) the only exception - we leave the proof of the following corollary to the reader.\end{rem}

\begin{cor}\label{cor:damexts}
With $\Tt$ as in \ref{thm:exactmeas}, let $F_1,\ldots,F_r$ be the extenders used along the main branch of $\Tt$. Suppose $E^\Tt_m$ exists and let $P=M^\Tt_m|\lh(E^\Tt_m)$. Then $\core_D(E^\Tt_m)$ is the active extender of $\core_\om(P)$. Also there are unique $n,k$ such that $\core_D(E^\Tt_m)=\core_D(E^\Tt_n)$, $m\leq_\Tt n$, and $E^\Tt_n\leq_\dam F_k$.
\qed\end{cor}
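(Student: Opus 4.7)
The plan is to read off the structure of $\Tt$'s extenders by combining Theorem~\ref{thm:exactmeas}(f) with the Dodd-fragment preservation calculus developed in Section~\ref{sec:esc}. I will address the two clauses in sequence.

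For the first clause, set $R = \core_\om(P)$ where $P = M^\Tt_m|\lh(E^\Tt_m)$. Since $R$ is $\om$-sound and inherits $\fully$-iterability from $N$ through the tree, Fact~\ref{fact:Doddsound} gives that $F^R$ is Dodd-sound. The embedding $\pi : R \to P$ arising from the core construction factors through a finite sequence of ultrapowers associated with the standard parameter, with critical points at least $\rho_\om^R$. Applying Lemma~\ref{lem:Dsp5} along this factorization yields $t_{E^\Tt_m} = \pi(t_{F^R})$ and $\sigma_{E^\Tt_m} = \tau_{F^R}$. By Theorem~\ref{thm:exactmeas}(f) the extender $\core_D(E^\Tt_m)$ is a measure, which forces $\tau_{\core_D(E^\Tt_m)} = (\crit(E^\Tt_m)^+)^{E^\Tt_m}$, so Lemma~\ref{lem:Dsp5} applies cleanly and we conclude $\core_D(E^\Tt_m) = F^R = F^{\core_\om(P)}$.

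For existence in the second clause, let $n$ be the greatest $n' \geq m$ with $m \leq_\Tt n'$ such that $(m, n']_\Tt$ is non-dropping and every critical point appearing in $i^\Tt_{m,n'}$ is at least $\tau_{\core_D(E^\Tt_m)}$. Iterating Lemma~\ref{lem:Dsp5} up this chain produces $\core_D(E^\Tt_n) = \core_D(E^\Tt_m)$. The recursive refinement in the proof of Theorem~\ref{thm:exactmeas} that produces $\Ttmeas$ shows that every extender used in $\Tt$ either lies on the main branch, in which case it is some $F_k$ itself, or it is used while damaging a unique main-branch extender $F_k$; in either case this supplies the required $k$ with $E^\Tt_n \leq_\dam F_k$.

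For uniqueness, and the main obstacle of the argument: uniqueness of $n$ follows by maximality, since if $n'' >_\Tt n$ also satisfied $\core_D(E^\Tt_{n''}) = \core_D(E^\Tt_m)$, then contrapositing Lemma~\ref{lem:Dsp5} would force every critical point in $i^\Tt_{n, n''}$ to exceed $\tau_{\core_D(E^\Tt_m)}$, contradicting the choice of $n$. Uniqueness of $k$ follows from the tree-like nature of $<_\dam$ noted in the remark after Definition~\ref{dfn:damage}: the damage structure strictly nests critical points and lengths, so each extender $E^\Tt_n$ belongs to the $<_\dam$-descendants of at most one main-branch $F_k$. The step that demands the most care is verifying that the Dodd-fragment preservation runs unobstructed across iterations that alternate between damage-structure applications and main-branch applications within $(m, n]_\Tt$; fortunately, since every extender in $\Tt$ is a measure by Theorem~\ref{thm:exactmeas}(f), the Dodd-fragment calculus degenerates to tracking a single parameter, which makes this bookkeeping tractable.
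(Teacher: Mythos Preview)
The paper leaves this corollary to the reader, so there is no written proof to compare against. Your overall strategy---combine the Dodd-fragment preservation lemmas with part (f) of \ref{thm:exactmeas}---is the intended one, and your handling of the second clause is reasonable (if somewhat hand-wavy in its appeal to the structure built in Claim~\ref{clm:finite}). The argument for the first clause, however, has a genuine gap.

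Your claim that the core embedding $\pi: R \to P$ ``factors through a finite sequence of ultrapowers associated with the standard parameter'' is simply wrong: core embeddings are hull embeddings, not iterations, and there is no such factorization in general. What you actually need is the iteration-tree fact that, since $P = M^\Tt_m|\lh(E^\Tt_m)$ appears in a normal tree on a sound premouse, either $P \pins M^\Tt_m$ is already sound (making the clause immediate via \ref{fact:Doddsound}), or $P = M^\Tt_m$ and $\core_\om(P) = (M^*)^\Tt_{\beta+1}$ for the appropriate $\beta$, with the core map equal to the branch embedding $(i^*)^\Tt_{\beta+1,m}$. This is precisely how the paper identifies $R^*$ in the paragraph following Claim~\ref{clm:escDsound}.

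Even granting the correct factorization through tree-extenders, your invocation of Lemma~\ref{lem:Dsp5} is unjustified: its hypothesis is $\crit(H) \geq \sigma_F$, not merely $\crit(H) \geq \rho_\om^R$. Worse, if the iteration from $R$ to $P$ were at degree $\geq 1$, Lemma~\ref{lem:Dsp6} (not \ref{lem:Dsp5}) would apply and \emph{preserve} Dodd-soundness, giving $\core_D(E^\Tt_m) = E^\Tt_m \neq F^R$ whenever $R \neq P$---so the first clause would fail outright. You must first argue, as in Claim~\ref{clm:escG*=E}(a), that the iteration $R \to P$ is at degree $0$ with all crits $\geq \tau_{F^R}$. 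The ingredients are: $E^\Tt_m \rest \sigma_{E^\Tt_m} \un s_{E^\Tt_m}$ is $\Sigma_1^P$ and missing from $P$, so $\rho_1^P \leq \sigma_{E^\Tt_m}$; and $\sigma_{E^\Tt_m} = (\mu^+)^{E^\Tt_m}$ since $\core_D(E^\Tt_m)$ is a measure by (f). Then the argument of \ref{clm:escG*=E}(a) carries over.
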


\subsection*{Submeasures}

We now move on to consider submeasures of normal measures in mice, and prove some condensation-like facts in this context. Suppose $N$ is a type 1 mouse and $\kappa=\crit(F^N)$. Given $\Aa\sub\pow(\kappa)^N$ of size $\kappa$ in $N$, we show that the submeasure $F^N\rest\Aa$ is often on $\es^N$. The basic structure of the proofs are like the main proofs in the earlier sections, in that we'll compare $N$ with a phalanx derived from it and the submeasure. It was Steel's idea to use this approach here. Establishing the iterability of the phalanx is simple for \ref{thm:fragpassive} and \ref{thm:fragactivetp2}, as it is inherited directly from the mouse's active normal measure. In the case of \ref{thm:fragactivetp3} there are fine structural complications.

\begin{thm}\label{thm:fragpassive}\setcounter{clm}{0}
Let $M$ be an $(0,\omega_1+1)$-iterable type 1 mouse, with active measure $\mu$, with crit $\kappa$. Let $\kappa<\beta<(\kappa^+)^M$ with $M|\beta$ passive, and $\mubar=\mu\int M|\beta$. Suppose $\Ult_0(M|\beta,\mubar)\sats\beta=\kappa^+$. Then $\mubar$ is on the $M$ sequence.\end{thm}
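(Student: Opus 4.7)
The plan is to form the candidate type~1 premouse $P = (M|\beta, \mubar)$, in which $\mubar$ is attached as the active extender, and to compare $P$ with $M$ via a phalanx argument. Since $\mubar = \mu\int M|\beta$ measures exactly $\pow(\kappa)\int M|\beta$ (because $\mu$ is total over $M$), and the hypothesis $\Ult_0(M|\beta,\mubar)\sats\beta=\kappa^+$ ensures that $\lh(\mubar)=\beta$ is the correct Mitchell--Steel index, $P$ should be a genuine type~1 premouse. The strategy is then to show that the comparison of $P$ (as part of a phalanx with $M$) against $M$ forces $\mubar$ onto $\es^M$.

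First I would establish the $(\om_1+1)$-iterability of the phalanx $\ph = (M, P, \beta)$. This will be inherited from the iterability of $M$ via a lifting argument. The key tool is the canonical factor map $k:\Ult_0(M|\beta,\mubar) \to \Ult_0(M,\mu)$ defined by $[a,f]_{\mubar}\goesto [a,f]_\mu$; since $\mubar = \mu\int M|\beta$, this is well-defined and $\Sigma_0$-elementary, and satisfies $k\com i_\mubar = i_\mu\rest M|\beta$. Together with $\id:M\to M$, the pair $(\id, k)$ serves as a lifting pair: any normal tree on $\ph$ copies, via the shift lemma, to a freely-dropping (as in \S\ref{sec:copying}) iteration of $M$. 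Extenders applied on the $P$-side with $\crit=\kappa$ lift through $k$ onto the $\Ult_0(M,\mu)$ side, and the iterability of $M$ takes over from there.

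Second, I would compare $\ph$ with $M$, obtaining trees $\Uu$ on $\ph$ and $\Tt$ on $M$ with a common last model $Q$. Since $P$ is sound and projects to $\kappa$, and $\mu$ is compatible with $\mubar$ on $\pow(\kappa)\int M|\beta$, no non-trivial movement can occur on the $P$-side of $\Uu$, and the first extender used on $\Tt$ must be compatible with $\mubar$ below $\beta$. By the initial segment condition applied to that extender, together with the hypothesis $\beta=(\kappa^+)^U$ pinning down the index, the only possibility is that the extender is $\mubar$ itself (or an initial segment thereof that already places $\mubar$ as a fragment on $\es^M$).

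The main obstacle will be step one. Because $\beta$ is not an $M$-cardinal (being strictly below $(\kappa^+)^M$), the agreement between $M$ and $P$ at the exchange ordinal $\beta$ is not a priori strong enough to carry out the shift lemma in the usual way. The factor map $k$ provides the missing ingredient: the hypothesis $U\sats\beta=\kappa^+$ makes $\beta$ a cardinal of $U=\Ult_0(M|\beta,\mubar)$, providing a clean interface at $\beta$ through which the lifting construction proceeds. Once iterability is in hand, the comparison argument follows the pattern of \S\ref{sec:esc} and \S\ref{sec:cohere}.
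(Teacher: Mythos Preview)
Your overall strategy---form a phalanx, lift it to get iterability, then compare with $M$---is the paper's strategy as well, but two things go wrong in the execution.

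First, there is a domain mismatch in your iterability step. Your second phalanx model is the active premouse $P=(M|\beta,\mubar)$, but the lifting map $k$ you write down has domain $\Ult_0(M|\beta,\mubar)$; these are different structures, so $(\id,k)$ is not a lifting pair for $(M,P,\beta)$. The paper avoids this by taking the second model to be $U=\Ult_k(M|\gamma,\mubar)$, where $\gamma\geq\beta$ is least with $\rho_\om^{M|\gamma}=\kappa$ and $k$ is the appropriate degree. This is exactly the model one drops to when an extender with critical point $\kappa$ measuring only $M|\beta$ is applied to $M$, and the factor map $\pi:\Ult_k(M|\gamma,\mubar)\to i_\mu(M|\gamma)$ then has the right domain, is a weak $k$-embedding with $\pi\rest\beta=\id$ and $\pi(\beta)=(\kappa^+)^M$, and lifts $(M,U,\beta)$ to a freely-dropping iteration of $M$.

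Second, and this is the real gap, your comparison argument is too fast. The claim that ``no non-trivial movement can occur on the $P$-side of $\Uu$'' is not justified by $P$ being sound and projecting to $\kappa$ (and in fact a type~1 premouse projects to $(\kappa^+)^P=\beta$, not to $\kappa$). The paper first reduces to the case $\rho_1^M=\om$ and $M$ is $1$-sound, so that $M$ is coded by its $\Sigma_1$-theory. Then, comparing $M$ with $(M,U,\beta)$, one must argue: the last models agree (call it $Q$); $Q$ is above $U$ in $\Uu$ (this uses the standard hull-property argument, plus the extra observation that if $Q$ were above $M|\gamma$ via an extender with crit $\kappa$, then $\rho_{k+1}^Q=\kappa$ is forced by the $\Tt$-side, which started above $\beta$ and dropped); $Q$ is not sound, so $b^\Tt$ drops and $b^\Uu$ does not; hence $\core_{k+1}(Q)=M|\gamma$, so $b^\Tt$'s last drop is to $M|\gamma$ and the extender applied there is compatible with the core embedding, hence with $\mubar$. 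Your sketch supplies none of this fine structure, and without the reduction to $\rho_1^M=\om$ the argument that $Q$ sits above $U$ (rather than above $M$) does not go through.
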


\begin{proof}
Since the failure of the theorem is a $\Sigma_1$ fact about $M$, we may assume $\rho^M_1=\om$ and $M$ is $1$-sound (replacing $M$ with $\Hull_1^M(\empty)$ if necessary).

Let $\gamma$ be least such that $\beta\leq\gamma$ and $M|\gamma$ projects to $\kappa$. (So in fact $\beta<\gamma$ since $M|\beta\sats\ZFmin$.) Let $k$ be largest such that $\rho^{M|\gamma}_{k+1}\leq\kappa<\rho^{M|\gamma}_k$. Note that $\Ult_k(M|\gamma,\mubar)$ makes sense (i.e. $\mubar$ measures its subsets of $\kappa$, and if $M|\gamma$ is active with a type 3 extender $E$, then $\kappa<\beta\leq\nu_E$, since $\beta=\kappa^+$ in $M|\gamma$), and agrees with $\Ult_0(M|\beta,\mubar)$ beyond $\beta$, so is passive at $\beta$.

\begin{clm}\label{clm:mfphit}
$\Ult_k(M|\gamma,\mubar)|\beta=M|\beta$ and
the phalanx $\ph=(M,\Ult_k(M|\gamma,\mubar),\beta)$ is $\omega_1+1$-iterable.\end{clm}

\begin{proof}
An iteration on this phalanx can be reduced to a freely dropping iteration on $M$, by reducing to a freely dropping iteration on the phalanx $(M,i_\mu(M|\gamma),(\kappa^+)^M)$. Let $\pi:\Ult_k(M|\gamma,\mubar)\to i_\mu(M|\gamma)$ be the natural factor map. Then $\pi$ is a weak $k$-embedding, $\pi\com i_{\mubar}=i_\mu\rest M|\gamma$, $\pi\rest\beta=\id$, and $\pi(\beta)=(\kappa^+)^M$. So $\Ult_k(M|\gamma,\mubar)|\beta=M|\beta$. Moreover, we may use $\id:M\to M$ and $\pi$ as initial copy maps to copy an iteration up. If $E_\alpha$ has crit $\kappa$, then so does $\pi_\alpha(E_\alpha)$. In this case $E_\alpha$ measures exactly $\pow(\kappa)\int M|\gamma$, while $\pi_\alpha(E_\alpha)$ measures $\pow(\kappa)\int M$. So there is a drop in model below, to $M|\gamma$, but no drop above, and $\pi_{\alpha+1}:M_{\alpha+1}\to i_{\pi_\alpha(E_\alpha)}(M|\gamma)$. Upon leaving $N_{\alpha+1}$, we impose the appropriate drop in model and degree.\renewcommand{\qedsymbol}{$\Box$(Claim \ref{clm:mfphit})}\end{proof}

By the first part of the claim, a comparison between $M$ and  $\ph$ begins above $\beta$, so by the second part, there is a successful comparison, giving trees $\Tt$ and $\Uu$ respectively. Say $\Tt$ has last model $N$ and $\Uu$ has last model $Q$. $Q$ isn't fully sound. (The Closeness Lemma (\cite{fsit}, 6.1.5) doesn't show extenders applied to $M|\gamma$ (or all of $M$) are close, so the usual fine structure preservation arguments don't show $\rho_{k+1}$ is preserved when an extender hits $M|\gamma$. However enough preservation holds that $Q$ isn't sound.) It can't be that $N\pins Q$, since $M=\Th_1^M(\empty)$. So $Q=N$. By almost usual arguments, $Q$ is above $U$ in $\Uu$. (Add the observation that if $Q$ is above $M|\gamma$, in that the first extender used on the main branch of $\Uu$ has crit $\kappa$, then in fact $\rho_{k+1}^Q=\rho_{k+1}^{M|\gamma}=\kappa$, since $Q$ also results from $\Tt$, which starts above $\beta$, and drops in model on its main branch.) As $Q$ isn't sound, $b^\Tt$ drops, so $b^\Uu$ doesn't. So $\core_{k+1}(Q)=M|\gamma$. So $b^\Tt$'s last drop is to $M|\gamma$ and $\lh(E^\Tt_0)\leq\gamma$. Let $E^\Tt_\alpha$ be the extender hitting $M|\gamma$ along $b^\Tt$. Then $E^\Tt_\alpha$ is compatible with the core embedding, and since the exchange ordinal of $\ph$ is $\beta>\kappa$, this is compatible with $\mubar$ (and they measure the same sets). Therefore $\mubar$ is the normal measure derived from $E^\Tt_\alpha$. Finally, all extenders used had length above $\beta$, so $\mubar$ is in fact on the $M$ sequence.\renewcommand{\qedsymbol}{$\Box$(Theorem \ref{thm:fragpassive})}\end{proof}

\begin{thm}\label{thm:fragactivetp2}
Let $M$ be an $\omega_1+1$-iterable type 1 mouse, with active measure $\mu$, with critical point $\kappa$. Let $\kappa<\beta<(\kappa^+)^M$ be such that $M|\beta$ has largest cardinal $\kappa$, and is active with a type 2 extender, and let $\mubar=\mu\int M|\beta$. Let $U=\Ult_0(M|\beta,\mubar)$. Suppose $U|\beta\neq M|\beta$. Then $\mubar$ is on the $\Ult_0(M,E^M_\beta)$ sequence.\end{thm}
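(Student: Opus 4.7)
\emph{Proof plan.} Write $E = E^M_\beta$ and $P = \Ult_0(M,E)$. Since $E$ is on $\es^M$ and $M$ is iterable, $P$ is $(0,\omega_1+1)$-iterable, so it suffices to derive a contradiction from the assumption $\mubar \notin \es^P$. The approach mirrors the proof of Theorem \ref{thm:fragpassive}. As the failure of the conclusion is a $\Sigma_1$ fact about $M$, I may replace $M$ by $\Hull_1^M(\empty)$ and assume $M$ is $1$-sound with $\rho_1^M = \omega$. Pick the least $\gamma \geq \beta$ with $M|\gamma$ projecting to $\kappa$, let $k$ be the largest integer with $\rho_{k+1}^{M|\gamma} \leq \kappa < \rho_k^{M|\gamma}$, and form $V = \Ult_k(M|\gamma, \mubar)$. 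Then $V$ agrees with $U$ past $\beta$, so in particular $V|\beta = U|\beta \neq M|\beta$, and the appropriate level of $V$ is active with $i_{\mubar}(E)$, which has critical point $\crit(E) < \kappa$ and length above $\beta$.

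The main step is to verify the iterability of the phalanx
\[
\ph = (P,\, V,\, \beta),
\]
in the style of Claim \ref{clm:mfphit}. The natural factor map $\pi \colon V \to i_\mu(M|\gamma)$, satisfying $\pi \circ i_{\mubar} = i_\mu \rest M|\gamma$, $\pi \rest \beta = \id$, and $\pi(\beta) = (\kappa^+)^M$, lets me lift iterations on $\ph$ to freely-dropping iterations on $M$ (as in \S\ref{sec:copying}), using the copy maps $i_E \colon M \to P$ and $\pi \colon V \to i_\mu(M|\gamma)$, reducing in turn via the auxiliary phalanx $(M, i_\mu(M|\gamma), (\kappa^+)^M)$. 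When an iteration extender with critical point $\kappa$ applies on the $V$-side, it measures exactly $\pow(\kappa) \cap M|\gamma$, forcing a drop to $M|\gamma$ at degree $k$, matched by the analogous drop on the $M$-side copy.

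I then compare $\ph$ with $P$, obtaining trees $\Tt$ on $P$ and $\Uu$ on $\ph$ with common last model $Q$; because $V|\beta = P|\beta$, the comparison starts above $\beta$. Soundness of $M$ rules out a proper-initial-segment relation between the two sides' final models, forcing equality at $Q$. The Closeness Lemma provides enough fine-structural preservation to conclude that $Q$ is not fully sound, so the main branch of $\Tt$ drops while that of $\Uu$ does not, and $\core_{k+1}(Q) = M|\gamma$. The extender on the main branch of $\Tt$ that triggers the last drop (to $M|\gamma$) is then compatible with the core embedding, and since the exchange ordinal $\beta$ exceeds $\kappa$, its derived normal measure at $\kappa$ coincides with $\mubar$. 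Because every extender used in the comparison has length above $\beta$, the initial segment condition then places $\mubar$ on $\es^P$, contradicting the assumption.

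The principal obstacle will be the iterability of $\ph$ in the presence of the active type 2 extender $E$: specifically, when an iteration extender with critical point $\kappa$ redirects to $M|\gamma$, one must verify that the shift lemma applies despite $M|\gamma$ potentially being active with an extender intertwined with $E$, and that the factor map $\pi$ correctly preserves the active-extender predicate together with the degree data through the copy. The hypothesis $U|\beta \neq M|\beta$ is precisely what ensures that $V$ genuinely diverges from $P$ past $\beta$, so that the comparison can extract nontrivial information about $\mubar$.
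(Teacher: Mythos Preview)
Your setup departs from the paper's in a way that creates real problems. The paper compares $M$ with the phalanx $(M,U,\beta)$, not $P$ with $(P,U,\beta)$; and since $M|\beta$ is active with largest cardinal $\kappa$, it already projects $\leq\kappa$, so $\gamma=\beta$ and $k=0$ --- your $V$ is simply $U$. In the paper's comparison the tree $\Tt$ on $M$ opens with $E^\Tt_0=E^M_\beta$; because $\nu_{E^M_\beta}>\kappa$ (it has cardinality $\kappa$ in the ultrapower but is a successor ordinal, being type~2), every later crit-$\kappa$ extender in $\Tt$ returns to $M^\Tt_0=M$ and drops to $M|\beta$, and the end-game of Theorem~\ref{thm:fragpassive} then places $\mubar$ on the $M^\Tt_1=P$ sequence. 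In your tree on $P$ no such drop occurs: $(\kappa^+)^P=\beta$, so crit-$\kappa$ extenders are \emph{total} over $P$. There is no ``last drop to $M|\gamma$'' on $b^\Tt$ to analyse, and $M|\beta$ (which is active) is not a level of $P$ at all, since $P|\beta$ is passive by coherence of $E^M_\beta$. The end-game you sketch, copied from the template of \ref{thm:fragpassive}, does not fit your own comparison.

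Your iterability reduction is also broken as written. The map $i_E\colon M\to P$ points the wrong way to lift iterations \emph{on} $P$, and $(P,V,\beta)$ cannot be lifted to your target $(M,i_\mu(M|\gamma),(\kappa^+)^M)$: there is no map $P\to M$, and if you instead tried $(P,i_\mu(M|\beta),(\kappa^+)^M)$ as a target, $P$ and $i_\mu(M|\beta)$ disagree between $\beta$ and $(\kappa^+)^M$ (the former has $\beta$ passive, the latter has it active), so that phalanx is incoherent at the exchange ordinal. The paper's reduction of $(M,U,\beta)$ to $(M,i_\mu(M|\beta),(\kappa^+)^M)$ via $\id$ and $\pi$ works precisely because $M$ is kept as the first phalanx model throughout.
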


\begin{rem} It is the case that $U||\beta=M||\beta$, as $\mu$ coheres with $M$, and since $\beta=\kappa^+$ in $\Ult_0(M,E^M_\beta)$, there's enough closure below $\beta$ that $\Ult_0(M|\beta,\mubar)$ agrees with $\Ult_0(M,\mu)$ below $\beta$. So the hypothesis $U|\beta\neq M|\beta$ says that either $U|\beta$ is passive, or $E^U_\beta\neq E^M_\beta$. The conclusion of the theorem shows that in fact $U|\beta$ is passive.\end{rem}

\begin{rem} The hypothesis $U|\beta\neq M|\beta$ doesn't follow from the other hypotheses, so is necessary. For suppose $\beta$ is least satisfying the other hypotheses. Then $\beta=[\{\kappa\},f]^M_\mu$ where $f$ is definable over $M|\kappa$. Moreover, since $M|\beta$ projects to $\kappa$, the same holds for any $\alpha<\beta$. But then $\Ult_0(M|\beta,\mubar)|\beta=M|\beta$.\end{rem}

\begin{proof}[Proof of Theorem \ref{thm:fragactivetp2}] This is just as in the previous case, except that now $\beta=\gamma$, and $E^\Tt_0=E^M_\beta$ (where $\Tt$ is the tree on $M$). This means that all extenders used on either side of the comparison with critical point $\kappa$ measure exactly $M|\beta$. Since $E^M_\beta$ is type 2, there's no problem taking an ultrapower of $M|\beta$ with an extender whose critical point is $\kappa$.\renewcommand{\qedsymbol}{$\Box$(Theorem \ref{thm:fragactivetp2})}\end{proof}

\begin{thm}\label{thm:fragactivetp3}\setcounter{clm}{0}
Let $M$ be a type 1 mouse with active measure $\mu$, and suppose that $\Ult_0(M,\mu)$ is $(0,\omega_1+1)$-iterable. Let $\kappa=\crit(\mu)$. Let $\kappa<\beta<(\kappa^+)^M$ be such that $M|\beta$ has largest cardinal $\kappa$, is active with a type 3 extender, and let $\mubar=\mu\int M|\beta$. Suppose $\mubar$ coheres with the $\Ult(M,E^M_\beta)$ sequence (meaning if $U=\Ult(M||\beta,\mubar)$, and $\lambda=(\kappa^{++})^U$, then $U|\lambda=\Ult(M,E^M_\beta)||\lambda$, so in particular, $\beta=(\kappa^+)^U$). Then $\mubar$ is on the $\Ult(M,E^M_\beta)$ sequence.\end{thm}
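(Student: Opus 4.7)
The plan is to parallel the proof of Theorem \ref{thm:fragactivetp2}, comparing $M$ against a phalanx $\ph = (M, U, \beta)$; the new difficulty is that $U$ must now be formed through the type 3 squash of $M|\beta$. As usual, the failure of the theorem being first order lets us pass to a $1$-sound hull and assume $\rho_1^M = \om$. The comparison is forced to begin with $E^\Tt_0 = F = E^M_\beta$ on the $M$-side, since the phalanx's extenders all have length above $\beta$ and $F$ is the active disagreement at $\beta$; this puts us at $N = \Ult(M, F)$ after one step, after which the coherence hypothesis $U|\lambda = N||\lambda$ controls how the remainder of the comparison resolves.

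The main obstacle, foreshadowed in the subsection introduction as ``fine structural complications'', is the iterability of $\ph$. Because $M|\beta$ is active with a type 3 extender $F$ satisfying $\nu_F = \kappa = \crit(\mubar)$, $\mubar$ sits exactly on the squash boundary, so the ultrapower $U$ must be interpreted via the squashed reduct $(M|\beta)^{\sq}$ and then unsquashed. Moreover, as the proof of \ref{thm:fragactivetp2} emphasises, ultrapowering $M|\beta$ by an extender with critical point $\kappa$ is problematic when $F$ is type 3---precisely the situation arising during iterations of $\ph$ whenever a tree-extender with critical point $\kappa$ returns to the $U$-root. I plan to establish iterability by copying trees on $\ph$ up to the iterable phalanx $(M, \Ult_0(M, \mu), (\kappa^+)^M)$, using the identity on the first root together with the natural factor map $\pi \colon U \to \Ult_0(M, \mu)$ coming from $\mubar \subseteq \mu$; this $\pi$ satisfies $\pi\rest\beta = \id$ and $\pi(\beta) = (\kappa^+)^M$. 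When such a tree-extender applies to the $U$-root, it measures exactly $\pow(\kappa)\cap M|\beta$ and triggers a drop in model and degree; the corresponding drop on the $\Ult_0(M, \mu)$ side must be chosen so that the copy map continues to respect the type 3 squash of $M|\beta$, which requires explicit shift-lemma bookkeeping on the squashed reducts rather than an appeal to a generic copying lemma.

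With iterability in hand, the comparison of $M$ with $\ph$ proceeds as in \ref{thm:fragactivetp2}, yielding trees $\Tt$ on $M$ and $\Uu$ on $\ph$ with common final model $Q$. Because $M = \Hull_1^M(\empty)$, standard arguments force $Q$ to be above $U$ in $\Uu$ with $b^\Uu$ non-dropping, while $b^\Tt$ must drop eventually to $N|\gamma$ for some $\gamma \geq \beta$ after the initial use of $F$. The first extender of $b^\Tt$ after this drop measures exactly $\pow(\kappa)\cap M|\beta$ and, by a standard compatibility argument using the coherence of $U$ and $N$ through $\lambda$, agrees with $\mubar$ on that domain; its derived normal measure is therefore $\mubar$ itself. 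Since all comparison extenders have length above $\beta$, $\mubar$ appears on the sequence of $N = \Ult(M, E^M_\beta)$, as desired.
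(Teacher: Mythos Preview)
Your proposal has a genuine gap at the very formation of the phalanx, and the subsequent handling of the type 3 anomaly points in the wrong direction.

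You take $\ph = (M, U, \beta)$ with $U$ ``formed through the type 3 squash of $M|\beta$''. But this is exactly what cannot be done: $(M|\beta)^{\sq}$ has ordinal height $\nu_{F^{M|\beta}} = \kappa = \crit(\mubar)$, so $\Ult((M|\beta)^{\sq}, \mubar)$ is undefined. The paper avoids this by taking the second phalanx model to be $\Ult(P, \mubar)$ with $P = \Ult(M, E^M_\beta)$, a proper-class premouse on which $\mubar$ acts with no squashing issue, and uses exchange ordinal $\kappa+1$ rather than $\beta$. The same obstruction recurs during the iteration: when a tree extender with critical point $\kappa$ returns to the $M$-root (not the $U$-root --- with exchange ordinal $\beta$, such extenders go back to $M$) and drops to $M|\beta$, one again cannot squash. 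The paper's fix is to form the \emph{unsquashed} $\Ult_0(M|\beta, E)$, accept that the result is an ``anomalous'' non-premouse (it fails the initial segment condition, and $\nu$ lies below its largest cardinal), and track these structures explicitly in both the iteration rules and the copying. Your phrase ``shift-lemma bookkeeping on the squashed reducts'' is the opposite of what is needed.

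The iterability reduction and the comparison analysis are also different in ways that matter. The paper lifts $\ph$ not to $(M, \Ult_0(M,\mu), (\kappa^+)^M)$ but to $\ph' = (i_\mu(M), \Ult(i_\mu(M), i_\mu(E^M_\beta)\rest\kappa^+), \kappa^+)$, whose second model already encodes $E^M_\beta$; the copying at anomalous steps uses a variant shift lemma sending $[a,f] \mapsto [\pi_\alpha(a), i_\mu(f)\rest\kappa]$ and yields only a weak $0$-embedding. And with the paper's phalanx, both roots $M$ and $\Ult(P,\mubar)$ carry the same $\Sigma_1$ theory, so \emph{neither} main branch drops --- contrary to your claim that $b^\Tt$ must drop to some $N|\gamma$. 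Instead one argues directly that $b^\Tt$ uses $E^M_\beta$ first and then an extender compatible with $\mubar$, after separately ruling out the case that $b^\Uu$'s first extender is anomalous.
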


\begin{proof}
Again we assume $M=\Hull_1^M(\empty)$. Let $P=\Ult(M,E^M_\beta)$ and
\[ \ph=(M,\Ult(P,\mubar),\kappa+1).\]
Note $\lambda=\kappa^{++}$ in $\Ult(P,\mubar)$.

One must be a little careful with iterations of $\ph$. Say $\Uu$ is on $\ph$, $\beta<\lh(E^\Uu_0)$ and $\crit(E^\Uu_\gamma)=\kappa$. Then $E=E^\Uu_\gamma$ goes back to $M$, but measures only $\pow(\kappa)\int M|\beta$, so applies exactly to $M|\beta$. But $M|\beta$ is type 3, and $\kappa$ is the height of $(M|\beta)^{\sq}$, so we can't form $\Ult((M|\beta)^{\sq},E)$. This problem arose in the proof of condensation, (\cite{outline}, 5.1). We need to generalize what's done there: just set $M^\Uu_{\gamma+1}=\Ult_0(M|\beta,E)$, where the ultrapower is formed without squashing, then carry on as usual. $F^{M|\beta}$ is shifted to $F^\Ult$ in the usual way (as in \ref{lem:extass}). $M^\Uu_{\gamma+1}$ is not a premouse, as $F^\Ult\rest\kappa=F^{M|\beta}\rest\nu_{M|\beta}\notin\Ult$, though $\kappa$ is a generator of $F^\Ult$. Also, $\nu_E$ is the sup of generators of $F^\Ult$ (as in \ref{lem:extass}), though $\nu_E<i_E(\kappa)$, the largest cardinal of $\Ult$. So it fails the initial segment condition, and $F^\Ult$ is not its own trivial completion. $\Ult$ does satisfy the remaining premouse axioms: as in \ref{lem:extass}, applying $F^\Ult$ to $M$ is equivalent to the two-step iteration starting with $M$, and applying $E^M_\beta$ to form $P$, followed by $E$. Coherence of $F^\Ult$ follows.

If $\gamma+1<_\Uu\delta$ and there is no drop from $\gamma+1$ to $\delta$, the above situation generalizes to $M^\Uu_{\delta}$. In particular, if $E^\Uu_{\delta}$ is the active extender of $M^\Uu_{\delta}$, then it applies to $M$, and $\Ult_0(M,E^\Uu_{\delta})$ is the model given by the iteration starting with $P$, and using the extenders
\begin{equation}\label{eqn:anomiter} \{ E^\Uu_{\gamma'}\ |\ \gamma+1\leq_\Uu\gamma'+1\leq_\Uu\delta\}. \end{equation}
In this case we'll say $M^\Uu_\delta$ is \emph{anomalous}, and so is its active extender.

Around anomalous structures, we also need to tweak the rule dictating which model an extender applies to. Given $E$ with $\crit(E)>\kappa$, $E$ will apply to (the largest possible segment of) $M^\Uu_\delta$, where $\delta$ is least such that $\crit(E)<\nu^\Uu_\delta$, or $E^\Uu_\delta$ is anomalous, and $\crit(E)<i^\Uu_{M|\beta,\delta}(\kappa)$. (Anomalous extenders are active, so coherence ensures enough agreement between models for this. The rule also guarantees generators are never moved, as usual.)

\begin{clm}\label{clm:mfphit3} $\ph$ is $\omega_1+1$-iterable for iterations following the rules described above, and whose extenders are indexed above $\lambda$.\end{clm}

\begin{proof} Let $i_\mu(M)=\Ult(M,\mu)$ and
\[ \ph'=(i_\mu(M),\Ult(i_\mu(M),i_\mu(E^M_\beta)\rest\kappa^+),\kappa^+).\]
Since $i_\mu(M)$ is iterable by assumption, $\ph'$ is clearly iterable for iterations using extenders indexed above $(\kappa^{++})^{\Ult(i_\mu(M),i_\mu(E^M_\beta)\rest\kappa^+)}$, the length of the trivial completion of $i_\mu(E^M_\beta)\rest\kappa^+$. We will reduce the necessary iteration of $\ph$ to such an iteration of $\ph'$. We start with $\pi_0=i_\mu:M\to i_\mu(M)$ and define $\pi_1:\Ult(P,\mubar)\to\Ult(i_\mu(M),i_\mu(E^M_\beta)\rest\kappa^+)$. First let $\pi:\Ult(P,\mubar)\to i_\mu(P)$ be the canonical factor map, which is $1$-elementary. Now $i_\mu(P)=\Ult(i_\mu(M),i_\mu(E^M_\beta))$, and $\rg(\pi)=\Hull_1^{i_\mu(P)}(\kappa+1)$. Since the natural factor embedding
\[ \Ult(i_\mu(M),i_\mu(E^M_\beta)\rest\kappa^+)\to \Ult(i_\mu(M),i_\mu(E^M_\beta))=i_\mu(P) \]
is the identity below $\kappa^+$, we get $\pi_1$. Note $\pi_1$ is $1$-elementary, $\crit(\pi_1)=\beta$, and $\pi_1(\beta)=\kappa^+$. We are only considering iterations of $\ph$ in which the first extender is indexed above $(\kappa^{++})^{\Ult(P,\mubar)}$, so the lifted iteration begins with high enough index.

The copying process is standard except at anomalies. We're lifting $\Uu$ to $\Vv$. Say $E=E^\Uu_\alpha$ has crit $\kappa$, so it applies exactly to $M|\beta$. Then $F=E^\Vv_\alpha=\pi_\alpha(E^\Uu_\alpha)$ also has crit $\kappa$ (since $\pi_1(\kappa)=\kappa$), so $F$ is to be applied to $i_\mu(M)$. We'll define
\[ \pi_{\alpha+1}:\Ult_0(M|\beta,E)\to i_F\com i_\mu(M|\beta). \]
Since $\crit(\pi_0)=\kappa$ but $\crit(\pi_\alpha)>\kappa$, one needs to use the simple variation on the shift lemma employed in the proof of (\cite{cmip}, 6.11). I.e., define $\pi_{\alpha+1}$ by
\[ [a,f]\mapsto [\pi_\alpha(a),i_\mu(f)\rest\kappa]. \]
Because all our copy maps fix points below $\kappa$, it's easy to see that the proof of the shift lemma still goes through with this definition (\cite{cmip} has some details). $\pi_{\alpha+1}$ is a weak $0$-embedding. (It seems likely that it won't be $1$-elementary because of the extra functions used in forming $i_F\com i_\mu(M|\beta)$.) If $E^\Uu_{\alpha+1}$ is the active extender of $\Ult_0(M|\beta,E)$, then set $E^\Vv_{\alpha+1}=i_F\com i_\mu(E^M_\beta)$ - clearly $\pi_{\alpha+1}$ then suffices for the shift lemma. By commutativity, 
\[ \pi_{\alpha+1}(i_E(\kappa))=i_F\com i_\mu(\kappa)=\nu^\Vv_{\alpha+1}. \]
Therefore if $\alpha+1<\gamma$ then
\[  \crit(E^\Uu_\gamma)<i_E(\kappa)\ \ \iff\ \ \crit(E^\Vv_\gamma)<\nu^\Vv_{\alpha+1}. \]
So our rule for which model to return to in $\Uu$ lifts to the usual rule for $\Vv$. Finally, suppose $E^\Uu_\gamma$ is to return to an anomalous structure $M^\Uu_\delta$, without triggering a drop in $\Uu$. Then $\crit(E^\Uu_\gamma)<i^\Uu_{M|\beta,\delta}(\kappa)$, and since $\pi_{\delta}\com i^\Uu_{M|\beta,\delta}(\kappa)$ is a cardinal of $M^\Vv_\delta$, there's also no drop triggered in $\Vv$ by $E^\Vv_\gamma$. So we define things at the $\gamma+1$ level as we did for $\alpha+1$ in the preceding (though using the standard shift lemma), and it works the same.
\renewcommand{\qedsymbol}{$\Box$(Claim \ref{clm:mfphit3})}\end{proof}

Armed with iterability, we complete the proof. Compare $M$ with $\ph$. Let $\Tt$ be the tree on $M$ and $\Uu$ the tree on $\ph$. All extenders used in $\Uu$ do have length above $\lambda$, since $\Ult(P,\mubar)|\lambda=\Ult(M,E^M_\beta)||\lambda$. (So $E^\Tt_0=E^M_\beta$.) Since $M=\Th_1(M)$ and the models in $\ph$ have the same $\Sigma_1$ theory, $\Tt$ and $\Uu$ have the same final model $Q$ and there is no dropping on either main branch. Suppose $Q$ is above $M$ in $\Uu$. Unless the branch begins with an anomalous extender, a contradiction is achieved by compatible extenders as usual.

Suppose $b^\Uu$'s first extender $F=E^\Uu_\delta$ is anomalous. We adopt the notation around (\ref{eqn:anomiter}); in particular, $\gamma$ is least such that $\gamma+1\leq_\Uu\delta$. $F$'s action on $M$ is an iteration beginning with $E^M_\beta$ and $E^\Uu_\gamma$. Since $F$'s generators aren't moved along $b$, $P=\Hull_1^Q(\kappa)$, and the hull embedding from $P$ to $Q$ is compatible with $E^\Uu_\gamma$ through $\nu^\Uu_\gamma$. The usual arguments then give $E=E^M_\beta=E^\Tt_0$ is the first extender used on $b^\Tt$, resulting in $P$. Since $i^\Tt_{P,Q}$ has crit $\geq\kappa$, it agrees with the hull embedding, implying $E^\Uu_\gamma$ is compatible with the second extender used along $b^\Tt$.

So $Q$ is above $U=\Ult(P,\mubar)$. Since $\crit(\mubar)=\kappa$ and $\crit(i^\Uu_{U, Q})>\kappa$, the argument of the last paragraph shows $b^\Tt$ uses $E^M_\beta$ first and an extender compatible with $\mubar$ second, so that $\mubar$ is on the $P$-sequence.\renewcommand{\qedsymbol}{$\Box$(Theorem \ref{thm:fragactivetp3})}
\end{proof}

\begin{rem} Suppose $M$ is a sound mouse with active extender $E$, such that $E\rest\gamma+1$ is a type Z segment. The initial segment condition for premice doesn't appear to give $E\rest\gamma+1\in M$, but (\cite{deconstruct}, 2.7) does (as we're assuming $M$ is iterable). There is no proof of this theorem in \cite{deconstruct}. The foregoing argument can easily be adapted to prove the following: let $\mubar$ be the normal measure over $\pow(\gamma)\int\Ult(M,E\rest\gamma)$ given by the factor embedding
\[ \Ult(M,E\rest\gamma)\to\Ult(M,E\rest\gamma+1). \]
Then $\mubar$ is on the $\Ult(M,E\rest\gamma)$ sequence. It's easy to see that $(\gamma^+)^{\Ult(M,E\rest\gamma)}$ is the next generator of $E$ after $\gamma$, so this gives $E\rest\gamma+1\in M$.\\
For this, we may assume $M$ has only one more generator after $\gamma$. Just compare $M$ with the phalanx $(M,\Ult(M,E\rest\gamma+1),\gamma+1)$. The iterability is obtained by reducing an iteration to one on $(M,\Ult(M,E),\nu_E)$. One faces the same complications as in the above proof.\end{rem}

\pagebreak
\section{Stacking Mice}\label{sec:stacking}
Given a mouse $M$, one might ask whether $\es^M$ is definable without parameters over $M$'s universe. If so, clearly $V=\HOD$ in $M$. Steel showed that for $n\leq\om$, the answer is ``yes'' for $M_n$. He actually showed $M_n$ satisfies $\VK$ (and $\es=\es^K$) ``between'' consecutive Woodins. An argument is given in \cite{cmpw}. Although this works for mice higher in the mouse order, exactly how high seems unknown. In this section, we give a new proof of ``yes'' for $M_n$ ($n\leq\om$), and extend the result to various other mice. We argue without $K$, just using the following type of internal definition of $\es$: given $\es\rest\kappa$ for some cardinal $\kappa$, $\es\rest\kappa^+$ is obtained by stacking appropriate mice projecting to $\kappa$. (Clearly this is related to $K$, especially in light of Schindler's result \cite[3.5]{Kstack}, that above $\aleph_2$, $K$ is the stack of projecting mice.\footnote{Footnote added January 2013: See \cite[3.5]{Kstack} for the precise statement.}) For this definition to work, $M$ must at least be sufficiently self-iterable. Not far into non-tame mice, our methods break down because of this requirement (see \ref{fact:nontame}). We'll need to appeal to the argument of \ref{thm:cohere} to see that the iterability of candidate mice actually guarantees that they are initial segments of $\J^\es$. First we consider the type of iterability needed.

\begin{dfn}\index{extender-full} Let $P$ be a sound premouse with a cardinal $\rho\leq\rho^P_\om$. Then $\Sigma$ is an $\alpha,\rho$\emph{-extender-full} iteration strategy for $P$ if:
\begin{itemize}
\item $\Sigma$ is an $\alpha$-iteration strategy for $P$ above $\rho$ (meaning for trees above $\rho$).
\item  Whenever $Q$ is the last model of an iteration via $\Sigma$, and $E$ is such that $(Q||\lh(E),E)$ is a premouse with $\crit(E)<\rho$,
\[ \Ult_\om(P,E) \textrm{ is wellfounded }\ \ \iff\ \  E \textrm{ is on the } Q \textrm{ sequence}. \]
\end{itemize}
Let $M$ be a premouse satisfying $\ZFC$, and $\eta$ a cardinal of $M$. $M$ is \emph{extender-full self-iterable at $\eta$} if for each $P\ins M$ such that $\rho_\om^P=\eta$,
\[ M\sats P \textrm{ is } (\eta^++1),\eta\textrm{-extender-full iterable}. \]
\end{dfn}

\begin{lem}\label{lem:extfull}
Suppose $M$ is a premouse satisfying $\ZFC$, and $M$ is extender-full self-iterable at $\eta$. Suppose $P\in M$ is a sound premouse extending $M|\eta$, projecting to $\eta$, and such that
\[ M\sats P \textrm{ is } (\eta^++1),\eta\textrm{-extender-full iterable}. \]
Then $P\ins M$.

Therefore if $M$ is extender-full self-iterable at all of its cardinals, $M\sats V=\HOD$.
\end{lem}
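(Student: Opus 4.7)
The plan is to coiterate $P$ against a sufficiently tall sound projecting level of $M$ above $\eta$, combining the standard comparison lemma for sound $\eta$-projecting mice with the extender-full clause to rule out low-crit disagreements. Inside $M$, since $\OR^P<(\eta^+)^M$, fix (by fine structure) an ordinal $\gamma$ with $\OR^P\le\gamma<(\eta^+)^M$ such that $R:=M|\gamma$ is sound with $\rho_\omega^R=\eta$ and $R\supseteq M|\eta$. By the assumed extender-full self-iterability at $\eta$, $M$ supplies a $(\eta^++1),\eta$-extender-full strategy $\Sigma^R$ for $R$; by hypothesis it also has such a strategy $\Sigma^P$ for $P$.

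Run the above-$\eta$ coiteration of $P$ against $R$ inside $M$. By the standard comparison argument for sound mice projecting to $\eta$ and iterable above $\eta$, both trees must be trivial: any non-trivial use of an extender of critical point $\ge\eta$ on the sound $\eta$-projecting side would destroy $\eta$-projecting soundness of that side. Consequently there is no disagreement between $P$ and $R$ involving an extender of critical point $\ge\eta$.

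The only remaining possible obstruction is a disagreement caused by an extender $E$ of critical point $<\eta$ indexed above $\eta$ --- in particular, the active extender $F^P$ in case $P$ is active with $\crit(F^P)<\eta$. Such an $E$ cannot be used in an above-$\eta$ tree, so the coiteration has no direct way to resolve it. But this is precisely what the extender-full clause of $\Sigma^R$ is designed to rule out: taking $Q:=R$ (the length-$1$ iterate via $\Sigma^R$), since $P$ and $R$ agree below $\lh(E)$, $(R||\lh(E),E)$ is a premouse; and since $E$ sits on $P$'s premouse sequence, $\Ult_\omega(R,E)$ is wellfounded, by agreement with the automatically wellfounded $\Ult_\omega(P,E)$. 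Extender-fullness then forces $E$ onto the $R$-sequence, contradicting the presumed disagreement. Symmetric reasoning handles an active $R$ with $\crit(F^R)<\eta$. Hence no disagreement at all, and $\OR^P\le\OR^R$ gives $P\ins R\ins M$.

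For the second assertion, assuming $M$ is extender-full self-iterable at every cardinal $\eta$ of $M$, the first half identifies the collection of sound $P\ins M$ extending $M|\eta$ with $\rho_\omega^P=\eta$ with the collection of sound $P\in M$ extending $M|\eta$ and projecting to $\eta$ that $M$ believes is $(\eta^++1),\eta$-extender-full iterable. This is a formula over $M$ whose only parameter is $\eta$, so $\Eseq^M\rest(\eta^+)^M$ is definable over $M$ from $\Eseq^M\rest\eta$. Transfinite recursion over the cardinals of $M$ then defines $\Eseq^M$ over $M$ without parameters, so every set in $M$ is ordinal-definable there, i.e.\ $V=\HOD$ in $M$. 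The main technical obstacle will be invoking extender-fullness in the presence of drops in the coiteration and carefully translating between $E$ on iterates and its pull-back to the original $R$ --- routine via the shift lemma but requiring attention to degree bookkeeping.
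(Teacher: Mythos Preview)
Your overall strategy---compare $P$ against a sound level $R\ins M$ projecting to $\eta$, and use extender-fullness to control extenders with critical point below $\eta$---is the same as the paper's. But the execution has real gaps.

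First, your claim that in the above-$\eta$ coiteration ``both trees must be trivial'' is not justified, and the reason you give (``any non-trivial use of an extender of critical point $\geq\eta$ on the sound $\eta$-projecting side would destroy $\eta$-projecting soundness of that side'') is not a valid argument: iterates produced during a comparison are not required to remain sound or to project to $\eta$. The standard conclusion from comparing two sound mice projecting to $\eta$, with trees above $\eta$, is that one ends up an initial segment of the other (and the tree on that side is trivial); it does \emph{not} say both trees are trivial, nor that there are no disagreements with high critical point.

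Second, and more seriously, you have separated the argument into (i) first dispose of all high-crit disagreements, then (ii) rule out a low-crit disagreement directly between $P$ and $R$. But comparison does not work this way: one resolves the least disagreement at each stage, and at any stage the least disagreement may involve an extender $E$ with $\crit(E)<\eta$ sitting on an \emph{iterate} of $P$ or of $R$, not on $P$ or $R$ themselves. You only invoke extender-fullness with $Q=R$, the trivial iterate; but the definition of extender-fullness is precisely about arbitrary iterates $Q$ via the strategy, and that generality is needed. Your closing remark about ``translating between $E$ on iterates and its pull-back to the original $R$'' hints at this, but no such pull-back is required or used.

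The paper's argument runs the comparison and shows inductively that a low-crit extender can never be the point of disagreement: if $E$ with $\crit(E)<\eta$ is on the current iterate of $P$, extender-fullness of $P$'s strategy gives $\Ult_\omega(P,E)$ wellfounded; since $P$ and $M|\beta$ agree below $(\crit(E)^+)$, one gets $\Ult_\omega(M|\beta,E)$ wellfounded as well; then extender-fullness of $M|\beta$'s strategy puts $E$ on the current iterate of $M|\beta$ too, so $E$ was not a disagreement after all. Symmetrically in the other direction. Hence only extenders with crit $\geq\eta$ are ever needed, the above-$\eta$ strategies suffice, and the standard fine-structural endgame gives $P\ins M|\beta$. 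Your paragraph on the second assertion is fine.
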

\begin{proof}
We have $P\in M|\beta$ where $M|\beta$ projects to $\eta$. In $M$, we can compare $P$ with $M|\beta$ using their extender-full iteration strategies. For suppose $E$ is on the sequence of  an iterate of $P$, with $\crit(E)<\eta$, so by extender-fullness, $\Ult_\om(P,E)$ is wellfounded. Since $\eta$ is a cardinal of $M$ and $P$ agrees with $M|\beta$ below $\eta$, $E$ is total over $M$. Moreover, $\Ult(M,E)$ is wellfounded, since $M|(\crit(E)^+)^M=P|(\crit(E)^+)^P$ (as in the 3rd paragraph of the proof of \ref{thm:coarseDodd}, witnesses to illfoundedness can be collapsed into $M|(\crit(E)^+)^M$). So $\Ult_\om(M|\beta,E)$ is also wellfounded.

Therefore $E$ can't be used during comparison. For doing so would require agreement at that stage below $\lh(E)$, so by extender-fullness, $E$ would also be on the model above $M|\beta$, so it \emph{doesn't} get used in comparison; contradiction. The same argument applies in the opposite direction, so only extenders with crit $\geq\eta$ need be used during comparison, so the iteration strategies suffice. Since the models being compared project to $\eta$ and both sides of the comparison are above $\eta$, $P\pins M|\beta$.
\renewcommand{\qedsymbol}{$\Box$(Lemma \ref{lem:extfull})}\end{proof}

\begin{cor}\label{cor:MnV=HOD} For $n\leq\om$, $M_n\sats V=\HOD$.\end{cor}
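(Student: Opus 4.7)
The plan is to invoke Lemma \ref{lem:extfull}, which reduces the task to showing that $M_n$ is extender-full self-iterable at every cardinal $\eta$ of $M_n$. Granting that, Lemma \ref{lem:extfull} immediately yields $M_n\sats\VHOD$.

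First I would exhibit, for each cardinal $\eta$ of $M_n$ and each $P\ins M_n$ with $\rho_\om^P=\eta$, a definable-from-parameters $(\eta^++1),\eta$-iteration strategy $\Sigma_P$ for $P$, computed inside $M_n$. Since $M_n$ is tame, the standard Q-structure-guided strategy applies: at a limit stage of a tree $\Tt$, the correct branch $b$ is the one whose Q-structure matches an initial segment of $M_n|\delta$ for the least Woodin cardinal $\delta>\delta(\Tt)$, if any, and otherwise (above the top Woodin, for $n<\om$) the unique cofinal wellfounded branch. By Lemma \ref{lem:uniquestrat}, since $M_n$ is tame overlapping $\eta$, such a strategy is unique, so the internally computed $\Sigma_P$ must agree with the restriction to $P$ of the true external $\fully$-strategy of $M_n$.

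Next I would verify that $\Sigma_P$ is extender-full. Suppose $Q$ is the last model of a tree $\Tt$ on $P$ via $\Sigma_P$, and $E$ is such that $(Q||\lh(E),E)$ is a premouse with $\crit(E)<\eta$ and $\Ult_\om(P,E)$ is wellfounded. I would apply the hybrid of cases (a) and (c) of Theorem \ref{thm:cohere}, flagged in the remark following its statement, working inside $M_n$: $M_n$ itself plays the role of $N$, $\Tt$ its own role, and $\Sigma_P$ supplies the internal strategy above the relevant cut-point $\eta$. The hypotheses transfer: $M_n\sats\theory$; $\Sigma_P$ is definable from parameters in an appropriate $M_n|\mu$; wellfoundedness of $\Ult_\om(P,E)$, together with the agreement of $P$ with $M_n$ below $\eta$, yields wellfoundedness of the ultrapower of $M^\Tt_\xi$ demanded by \ref{thm:cohere}; and $M_n$ is tame overlapping $\eta$. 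The conclusion is that $E$ lies on $\es_+^Q$, giving extender-fullness of $\Sigma_P$, hence extender-full self-iterability of $M_n$ at $\eta$. Since this holds at every cardinal, Lemma \ref{lem:extfull} closes the argument.

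The main obstacle is ensuring that $\Sigma_P$ is genuinely definable-from-parameters inside $M_n$ with the uniformity needed for \ref{thm:cohere} to apply---specifically, that the internal Q-structure identification is correct---which rests on tameness of $M_n$ and on the uniqueness afforded by Lemma \ref{lem:uniquestrat}. A secondary bookkeeping point, in the case $n=\om$, is that $M_\om$ has no largest Woodin, so $\Sigma_P$ must be assembled from the strategies in each interval between consecutive Woodins lying above $\eta$; tameness and uniqueness make this routine, but one must be careful to express the full strategy uniformly from a single parameter so that the definability hypothesis of \ref{thm:cohere} is met.
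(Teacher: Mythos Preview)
Your proposal is correct and takes essentially the same approach as the paper: reduce to Lemma \ref{lem:extfull}, establish internal self-iterability via Q-structure-guided branch selection between consecutive Woodins, and apply the cut-point-relativized version of Theorem \ref{thm:cohere} for extender-fullness. One point of precision worth flagging: your description of the internal Q-structure identification (``matches an initial segment of $M_n|\delta$'') is not literally right, since $Q(b,\Tt)$ extends $M(\Tt)$ rather than any segment of $M_n$---the paper's actual mechanism is that, for $n<\om$ and trees above $\delta_{k-1}$, one picks the branch $b$ for which $\Col(\om,\Tt)$ forces $Q(b,\Tt)$ to be $\Pi^1_{n-k}$-iterable, while for $n=\om$ the branch is identified by computing $L(\RR)$ truth via the symmetric collapse of the Woodins (homogeneity puts the branch back into $M_\om$).
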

\begin{proof} Let $\delta_0,\ldots,\delta_{n-1}$ be the Woodins of $M_n$, $\delta_{-1}=0$. $M_n|\delta_k$ knows its own iteration strategy for trees above $\delta_{k-1}$ of length $<\delta_k$. (Given some tree $\Tt$, choose the branch $b$ such that $\Col(\om,\Tt)$ forces $Q(b,\Tt)$ to be $\Pi^1_{n-k}$-iterable. Here ``$\Pi^1_1$-iterable'' just means wellfounded. See the next section or \cite{projwell} for discussion.) By \ref{thm:cohere} relativized to  iteration strategies above some cut-point $\alpha$ (see the remark following \ref{thm:cohere}), this strategy is $\delta_k,\rho$-extender-full for each $M_1$-cardinal $\rho$ in the interval $[\delta_{k-1},\delta_k)$. Therefore \ref{lem:extfull} applies.

If $\delta_i$ is the $i^\nth$ Woodin of $M_\om$, $M_\om|\delta_i$ also knows its own strategy above $\delta_{i-1}$. This follows from [(\cite{outline}, \S7), but we provide an argument here. Consider a tree $\Tt$ on and in $M_\om|\delta_0$. The correct branch $b$ is that for which $Q(b,\Tt)$ is weakly $(\deg^\Tt(b),\om)$-iterable. This condition is $\Sigma_1^{L(\RR)}$. Since $M_\om$ can compute $L(\RR)$ truth via the symmetric collapse of its Woodins, and the collapse is homogeneous, $b\in M_\om$, and $M_\om$ defines the correct strategy. But therefore in $M_\om|\delta_0$, $Q(b,\Tt)$ is also $\lambda$-iterable above $\delta(\Tt)$, for any $\lambda<\delta_0$. Since $Q(b,\Tt)$ has size $<\delta_0$, such iterability suffices to identify it. Now apply \ref{thm:cohere} (relativized above a cut-point) and \ref{lem:extfull}.
\renewcommand{\qedsymbol}{$\Box$(Corollary \ref{cor:MnV=HOD})}\end{proof}

\begin{rem}The self-iterability of $M_\om$ established above is not optimal. For example, let $\alpha<\delta_0$; then using genericity iterations, one can see that $M_\om|(\alpha^+)^{M_\om}$ can also define its own strategy restricted to trees it contains (see (\cite{outline},\S7)). The above argument only gives this at limit cardinals.\end{rem}

We now move on to more complex mice, where self-iterability is more difficult to establish. Given some $\Tt$, we'll build an appropriate fully backgrounded $L[\es]$ construction over $M(\Tt)$ to search for its Q-structure. To prove this search is successful (the construction reaches the Q-structure), we need to restrict to mice which ``rebuild themselves''. That is, the mouse will be minimal for some hypothesis $\varphi$, and we'll establish that an unsuccessful $L[\es]$ construction leads to a model satisfying $\varphi$. We'll also establish that each stage of the construction is in fact an iterate of the Q-structure, above $\delta(\Tt)$. The Q-structure will have no level modelling $\varphi$, so the construction \emph{must} be successful.

The method employed for maintaining that the levels of a construction with good enough background certificates are iterates of some mouse was described to the author by Steel, and is an amalgamation of ideas from 3.2 and 3.3 of \cite{maxcore}. This is the core of the argument; our contribution was in noticing that these ideas lead to \ref{cor:reachM} and, combined with the argument for \ref{thm:cohere}, to \ref{thm:tamesi}.

\begin{dfn}\index{ms-array}
Let $P$ be a sound premouse and $\lambda\leq\OR+1$. A sequence $\left<N_\alpha\right>_{\alpha<\lambda}$ is an \emph{ms-array above} $P$ if $N_0=P$, $\OR^P\leq\rho_\om^{N_\alpha}$ for $\alpha+1<\lambda$, and the sequence satisfies the requirements of a $K^c$ construction (see \cite{outline}) \emph{other than}
\begin{itemize}
\item[(a)] $N_0=V_\om$, and
\item[(b)] The existence of background certificates.
\end{itemize}
If $P=V_\om$, the sequence is simply an \emph{ms-array}.
\end{dfn}

The following definition is a variation on that of a weak $\Aa$-certificate (\cite{maxcore}, 2.1).

\begin{dfn}\label{dfn:xcert}\index{certificate} Suppose $P$ is a premouse with active extender $F$. Let $\kappa=\crit(F)$, $\nu=\nu_F$, and $x\in\RR$.
An \emph{$x$-certificate (for $P$)} is an elementary $\pi:N\to V_\theta$ such that
\begin{itemize}
\item[(a)] $N$ is a transitive, power admissible,
\[ \pow(\kappa)^P\un\{x\}\sub N, \]
\item[(b)] $F\rest(P\cross[\nu]^{<\om})=E_\pi\rest(P\cross[\nu]^{<\om})$,
\item[(c)] $\pi(P|\kappa)||\lh(F)=P||\lh(F)$.
\end{itemize}
\index{certified}A premouse $P$ with active extender $F$ is \emph{$x$-certified} if there is an $x$-certificate for $P$.

\index{$\om$-mouse}An \emph{$\om$-mouse} is an $\om$-sound, $\om_1+1$-iterable mouse projecting to $\om$.

\index{mouse certified}\index{mouse maximal}An ms-array $\Cc=\left<N_\alpha\right>_{\alpha<\lambda}$ is \emph{mouse certified} if for each active $N_{\alpha+1}$: if there's an $\om$-mouse not in $N_{\alpha+1}$, and $M$ is least such, then $N_{\alpha+1}$ is $M$-certified. $\Cc$ is \emph{mouse maximal} if it's mouse certified, and $N_{\alpha+1}$ is active whenever possible under this constraint.

\index{reach}An ms-array $\left<N_\alpha\right>_{\alpha<\lambda}$ \emph{reaches} $M$ if there is $\alpha$ such that $M=\core_\om(N_\alpha)$.
\end{dfn}

\begin{rem} Having the codomain of an $x$-certificate $\pi$ be a $V_\theta$ isn't that important; all we really need is that it is transitive and contains $V_{\om+2}$.\end{rem}

\begin{lem}\label{lem:Mcomp}
Let $\left<N_\alpha\right>_{\alpha<\lambda}$ be a mouse certified ms-array, where $\lambda\leq\om_1$. Let $M$ be an $\om$-mouse, and let $\Sigma$ be $M$'s unique $\om_1^++1$ strategy. Then for each $\alpha$, either $M\ins N_\alpha$, or there is a $\Sigma$-iterate $P$ of $M$ such that $N_\alpha\ins P$. In other words, $N_\alpha$ does not move in $\Sigma$-comparison with $M$.\end{lem}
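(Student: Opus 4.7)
The plan is to argue by induction on $\alpha$, comparing $M$ with $N_\alpha$ under $\Sigma$ and the (forced) identity ``strategy'' on the construction side, and showing the $N_\alpha$ side never moves. The base case $\alpha=0$ is trivial since $N_0=V_\om\ins M$, and the successor case where $N_{\alpha+1}$ arises from $N_\alpha$ by rudimentary closure or by coring down is immediate: the inductive witness transfers (noting that $\core_\om$ of a segment of a $\Sigma$-iterate is still a segment of that iterate because the relevant first projecta lie above $\OR^{N_\alpha}$ or above the relevant exchange ordinal). Limit stages $\alpha=\lambda$ proceed by observing that the $\Sigma$-iterates $\langle P_\beta\rangle_{\beta<\lambda}$ furnished by the induction cohere as a directed system of $\Sigma$-iterates (their comparison trees extend each other by uniqueness of $\Sigma$), so $N_\lambda=\bigcup_{\beta<\lambda}N_\beta\ins P_\lambda$ for the corresponding $\Sigma$-iterate $P_\lambda$.

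The heart of the argument is the successor case in which $N_{\alpha+1}=(N_\alpha,F)$ is active with a new extender $F$. If $M\ins N_\alpha$, then trivially $M\ins N_{\alpha+1}$, so assume $M\not\ins N_\alpha$; by the induction hypothesis there is a $\Sigma$-iterate $P_\alpha$ of $M$ with $N_\alpha\ins P_\alpha$. Let $M^*$ be the least $\om$-mouse not in $N_{\alpha+1}$. Since no new $\om$-mouse is added between $N_\alpha$ and $N_{\alpha+1}$, we have $M^*\leq^*M$ in the mouse order (indeed $M^*\ins M$), so by hypothesis $N_{\alpha+1}$ is $M^*$-certified, and any such certificate contains $M$ as well (as $M$ is encoded by $\Th(M^*)$ or is coded below $\pow(\kappa)^{N_{\alpha+1}}$ once $\kappa=\crit(F)$ is above $\OR^M$ in the relevant sense; if $\kappa<\OR^M$ then a minor variant using $M$ itself applies). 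Fix an $M$-(or $M^*$-)certificate $\pi:N\to V_\theta$.

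I now compare $P_\alpha$ with the phalanx $\Phi=(P_\alpha,\Ult_k(P_\alpha|\eta,F),\nu_F)$, where $\eta$ is the appropriate level of $P_\alpha$ agreeing with $N_\alpha$ to which $F$ applies and $k$ is the correct degree. Iterability of $\Phi$ is obtained in the style of Claim \ref{clm:escit}: the certificate $\pi$, together with the agreement clauses (b) and (c) of \ref{dfn:xcert}, provides copy maps from $\Phi$ into $V_\theta$, and trees on $V_\theta$ (on $\pi(P_\alpha|\kappa)$-based structures) lift to freely dropping trees on $M$ via $\Sigma$, using that $M$ is an $\om$-mouse and therefore $(\om_1+1)$-iterable. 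Exactly as in the proof of Theorem \ref{thm:cohere} (and \ref{cor:seqdef}(d)), the successful comparison produces a common last model $Q$, no drop on the $\Phi$-side main branch, and a critical point $\geq\nu_F$ there; the first extender used on the $P_\alpha$-side is then compatible with $F$ and has length $\geq\lh(F)$, forcing $F$ itself onto the sequence of the $M$-side final model $P_{\alpha+1}$. This $P_{\alpha+1}$ is the desired $\Sigma$-iterate extending $P_\alpha$, and $N_{\alpha+1}\ins P_{\alpha+1}$.

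The main obstacle is verifying the iterability of $\Phi$ from the certificate alone: we must check that the degrees and exchange ordinal chosen for $\Phi$ match what $F$ will become on the $M$-side (so that the copying via $\pi$ preserves normality and drop structure), and that the background tree produced in $V_\theta$ can actually be routed through $\Sigma$ (this is where ``freely dropping'' iterability, as developed in \S\ref{sec:copying} and used in Claim \ref{clm:escit}, is essential). Once this is in place, the argument is a direct combination of the Schimmerling--Steel background-certificate technique from (\cite{maxcore}, 3.2--3.3) with the coherence analysis of \S\ref{sec:cohere}.
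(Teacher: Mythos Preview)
The paper does not prove this lemma directly but defers to the proof of Theorem \ref{thm:tamesi}, specifically Claim \ref{clm:innercon} there. That argument takes a rather different route from yours: there is no phalanx comparison. One takes the $M^*$-certificate $\pi:S\to V_\theta$, notes that $\crit(\pi)=\kappa=\crit(F)$ and $M^*\in S$, and inside $S$ runs the $\Sigma$-comparison $\bar\Uu$ of $M^*$ with $N_\alpha|\kappa$. Applying $\pi$ yields a $\Sigma$-tree $\pi(\bar\Uu)$ which, by certificate condition (c), extends the comparison of $M^*$ with $N_\alpha$. The identity $\pi\rest M^{\bar\Uu}_\kappa = i^{\pi(\bar\Uu)}_{\kappa,\pi(\kappa)}$, established exactly as in (\ref{eqn:mapagree}) of Lemma \ref{lem:reachM}, then shows that the first extender of $\pi(\bar\Uu)$ with critical point $\kappa$ is compatible with $E_\pi$, hence with $F$. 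The initial segment condition (or Theorem \ref{thm:fragpassive} when $F$ measures fewer sets than this extender) puts $F$ on the iterate's sequence. This is the Schimmerling--Steel background-certificate idea you cite, but used \emph{directly}, not routed through a phalanx.

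Your argument has a genuine gap where you claim the certificate contains $M$. The construction is only $M^*$-certified, and once $M^*\pins M$ (the generic situation when $M\not\ins N_{\alpha+1}$) your justification fails: ``$M$ is encoded by $\Th(M^*)$'' has the direction reversed, and there is no reason $\crit(F)>\OR^M$. Even restricting to $M=M^*$, the phalanx iterability you need does not obviously follow from the certificate as you suggest. In the typical case $(\kappa^+)^{P_\alpha}=(\kappa^+)^{N_\alpha}$, the extender $F$ is total over $P_\alpha$, so the relevant ultrapower is $\Ult_k(P_\alpha,F)$; but $P_\alpha\notin S$, so $\pi$ gives you no target for this model, and the agreement below $\nu_F$ required for copying the phalanx does not come out of \ref{dfn:xcert}(b),(c) alone. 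The paper's reflection argument via \ref{lem:reachM} sidesteps all of this.
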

\begin{proof} The proof is like part of the proof of \ref{thm:tamesi}, so we omit it.\renewcommand{\qedsymbol}{$\Box$(Lemma \ref{lem:Mcomp})}\end{proof}

\begin{cor}\label{cor:uniquemaxcon} Let $\Cc,\Cc'$ be countable mouse maximal constructions of the same length. If $\Cc$ does not reach some $\om$-mouse, then $\Cc=\Cc'$. Therefore there's a unique maximal construction of minimal (limit) length reaching all reachable $\om$-mice; denote this by $\Cc^*$.\qed\end{cor}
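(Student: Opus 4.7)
My plan is to establish the first statement by induction on stage $\alpha$ to show $N_\alpha=N'_\alpha$ for every $\alpha<\lambda$. The case $\alpha=0$ gives $N_0=V_\om=N'_0$, and limit stages are handled by the standard $K^c$-style limit prescription (form the natural direct limit of the lower levels, then core if necessary), which is determined by the initial segment below. For the successor step $\alpha+1$, suppose $N_\alpha=N'_\alpha$. Mouse maximality decomposes as: (i) $N_{\alpha+1}$ is active iff \emph{some} $F$ exists making $(N_\alpha\|\beta,F)$ a premouse which is mouse certified for the least $\om$-mouse not in it; and (ii) if active, $F$ is further constrained by mouse certification. Clause (i) has the same answer for $\Cc$ and $\Cc'$, so the two arrays are simultaneously passive (giving $N_{\alpha+1}=\J_1(N_\alpha)=N'_{\alpha+1}$) or simultaneously active.

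In the active case, suppose $\Cc$ selects $F$ while $\Cc'$ selects $F'$. Let $M$ be the least $\om$-mouse not reached by $\Cc$, which exists by hypothesis. Every $\om$-mouse properly below $M$ in the mouse order is reached by $\Cc$, hence (by the inductive agreement $N_\alpha=N'_\alpha$ together with the monotonicity of the construction) already belongs to $N_\alpha$, so $M$ is the common certifying mouse for both $N_{\alpha+1}$ and $N'_{\alpha+1}$. Applying Lemma \ref{lem:Mcomp} to each of the two candidates, we obtain $\Sigma$-iterates $P,P'$ of $M$ (where $\Sigma$ is $M$'s unique $(\om_1^++1)$-strategy) with $(N_\alpha\|\lh(F),F)\ins P$ and $(N_\alpha\|\lh(F'),F')\ins P'$. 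Comparing $P$ and $P'$ via $\Sigma$ against each other, they agree below the first point of disagreement; on both sides this point lies strictly above $N_\alpha$. Thus $F$ and $F'$ lie on iterated $M$-sequences that agree through the level where they are indexed, share the critical point $\kappa<\OR^{N_\alpha}$, and measure precisely $\pow(\kappa)\cap N_\alpha$; a standard coherence argument (on a single iterated $M$-sequence) then forces $F=F'$.

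The main obstacle is this final coherence step. A priori, $F$ and $F'$ appear on the sequences of two possibly distinct $\Sigma$-iterates of $M$, so one cannot directly invoke coherence on a single $M$-sequence. The key is to combine the $\Sigma$-uniqueness of $M$'s strategy with condition (b) of the certificate definition \ref{dfn:xcert}, namely $F\rest(N_\alpha\cross[\nu]^{<\om})=E_\pi\rest(N_\alpha\cross[\nu]^{<\om})$, which pins down the action of any certified extender on the shared base $N_\alpha$. Once both candidates are shown to induce the same ultrapower embedding of $N_\alpha$ (so in particular have the same $\nu_F$ and $\lh(F)$), they must coincide as extenders over $N_\alpha$, and hence as active extenders of the unique premouse structures at that length.

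For the ``therefore'' clause, observe that there are only countably many $\om$-mice below $\om_1$, so one can find a countable limit $\lambda^*$ such that any mouse maximal construction of length $\lambda^*$ has already reached every $\om$-mouse reachable by \emph{some} maximal construction. Choose $\lambda^*$ minimal. If some $\om$-mouse is unreachable altogether, then $\Cc^*$ satisfies the hypothesis of the first statement and uniqueness is immediate. Otherwise every $\om$-mouse is reached by $\Cc^*$, and uniqueness at length $\lambda^*$ follows by applying the same inductive comparison argument: at each successor stage the maximality and certification conditions force the same choice, and there is no divergence to manage. This yields the unique $\Cc^*$.
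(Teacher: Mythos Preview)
Your inductive setup is correct and is essentially what the paper intends: show $N_\alpha=N'_\alpha$ by induction, with the only nontrivial case being the successor step when both constructions are active. You also correctly invoke Lemma~\ref{lem:Mcomp} to place $(N_\alpha,F)$ and $(N_\alpha,F')$ inside $\Sigma$-iterates $P,P'$ of $M$. The gap is in how you conclude $F=F'$.

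Your sentence ``on both sides this point lies strictly above $N_\alpha$. Thus $F$ and $F'$ lie on iterated $M$-sequences that agree through the level where they are indexed'' is circular: both $F$ and $F'$ are indexed at $\OR^{N_\alpha}$, so if $F\neq F'$ the first disagreement between $P$ and $P'$ is \emph{exactly} at $\OR^{N_\alpha}$, not strictly above it. Your proposed patch via clause~(b) of Definition~\ref{dfn:xcert} does not help: the certificates $\pi,\pi'$ for $F,F'$ are in general different embeddings, so there is no reason $E_\pi$ and $E_{\pi'}$ agree, and nothing forces $F,F'$ to induce the same ultrapower of $N_\alpha$.

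The correct argument bypasses certificates entirely and uses Lemma~\ref{lem:Mcomp} once more, but more directly. Since $(N_\alpha,F)$ does not move in its $\Sigma$-comparison with $M$, the tree $\Tt$ on $M$ is determined by the sequence of least disagreements with $(N_\alpha,F)$. As $(N_\alpha,F)$ and $(N_\alpha,F')$ agree strictly below $\OR^{N_\alpha}$, the trees on $M$ in the two comparisons coincide through the first stage $\gamma$ at which $N_\alpha\ins M^\Tt_\gamma$. At that stage, if $E^{M^\Tt_\gamma}_{\OR^{N_\alpha}}\neq F$, the comparison would have to use $F$ on the $(N_\alpha,F)$-side, contradicting that it does not move. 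Hence $F=E^{M^\Tt_\gamma}_{\OR^{N_\alpha}}$, and by the identical argument $F'=E^{M^\Tt_\gamma}_{\OR^{N_\alpha}}$. So $F=F'$.

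Your treatment of the ``therefore'' clause is fine.
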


\begin{lem}\label{lem:reachM}
Suppose $\Cc$ is a mouse maximal construction of length $\om_1$ and $M$ is an $\om$-mouse. Then $\Cc$ reaches $M$. Thus if there are $\om_1$ many $\om$-mice, the output of $\Cc^*$ is the stack of them all.\end{lem}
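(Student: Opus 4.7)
The plan is to argue by contradiction using Lemma~\ref{lem:Mcomp} together with the fact that any $\omega$-mouse, being equal to $\Hull_\omega^M(\emptyset)$, is countable. Suppose $\Cc=\langle N_\alpha\rangle_{\alpha<\omega_1}$ fails to reach some $\omega$-mouse, and pick $M$ minimal in the mouse order with this property; by induction every $\omega$-mouse strictly below $M$ already equals $\core_\omega(N_\beta)$ for some $\beta<\omega_1$. Since $\OR^M$ is countable and the heights $\OR^{N_\alpha}$ grow continuously with $\alpha$ (by $\omega$ at passive successor stages, by $0$ at active successor stages, and by sup at limits), there is a least $\alpha^*<\omega_1$ with $\OR^{N_{\alpha^*}}=\OR^M$.

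By Lemma~\ref{lem:Mcomp} applied at $\alpha^*$, either $M\ins N_{\alpha^*}$, or $N_{\alpha^*}\ins P$ for some $\Sigma$-iterate $P$ of $M$. I would first dispatch the former alternative: since heights coincide, $N_{\alpha^*}$ either equals $M$ as a premouse (so $\core_\omega(N_{\alpha^*})=M$, using $\omega$-soundness of $M$), or differs from $M$ only by the presence of a top active extender, in which case $M$ appears either as the passive reduct $N_{\alpha^*-1}$ (when $\alpha^*$ is a successor) or as $\core_\omega(N_{\alpha^*+1})$ after one more active-successor step forced by mouse maximality (when $\alpha^*$ is a limit). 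Either way the non-reach assumption is contradicted.

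The remaining, hard case is $N_{\alpha^*}\ins P$ with $P$ a proper $\Sigma$-iterate of $M$. The strategy is to examine the first extender $E_0$ used by $\Sigma$ in iterating $M$ to $P$; note $\crit(E_0)<\OR^M$. The relevant initial segment of $M$ supporting $E_0$ is an $\omega$-mouse strictly below $M$ in the mouse order, hence by the inductive minimality of $M$ it is already reached by $\Cc$ at some earlier stage $\beta<\alpha^*$. One then shows, using an $M$-certificate built from copy embeddings as in \S\ref{sec:esc} together with the mouse-maximality clause of Definition~\ref{dfn:xcert}, that $\Cc$ is forced to follow $M$'s sequence rather than branching into the $\Sigma$-iterate at every stage from $\beta$ to $\alpha^*$; this yields $N_{\alpha^*}=M$, contradicting $M\nins N_{\alpha^*}$.

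The main obstacle is precisely this last step: translating the abstract mouse maximality of $\Cc$ into concrete control over its choice of active extender in competition with an iterate of $M$. This is where the inductive reach for mice below $M$ (supplying the certificates) must interact with the coherence arguments of \S\ref{sec:cohere} to pin down the construction. For the second sentence of the lemma, once every $\omega$-mouse is reached by $\Cc^*$, Corollary~\ref{cor:uniquemaxcon} together with the minimality of the length of $\Cc^*$ ensures that its output is precisely the stack of all $\omega$-mice.
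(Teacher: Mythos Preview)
Your proposal has a genuine gap in the hard case: you never explain how to produce an $M$-certificate, and that is the crux. An $x$-certificate (Definition~\ref{dfn:xcert}) is an elementary $\pi: N \to V_\theta$; such an embedding into a rank initial segment of $V$ cannot be manufactured from copy embeddings between countable premice, and nothing in \S\ref{sec:esc} constructs one. The paper obtains the certificate by taking a countable $X \prec V_\theta$ (transitive below $\om_1$, containing $\Cc$ and $M$) and letting $\pi$ be the uncollapse map with $\kappa=\crit(\pi)$. After running $\Cc$ all the way to $\om_1$ and seeing, via Lemma~\ref{lem:Mcomp}, that $W = N_{\om_1}$ is a segment of $M^\Tt_{\om_1}$ for a $\Sigma$-tree $\Tt$ of length $\om_1+1$, one computes $\pi \rest M^\Tt_\kappa = i^\Tt_{\kappa,\om_1}$. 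Hence $E_\pi$ is compatible with some $E^\Tt_\gamma$, and $\pi$ itself is an $M$-certificate for $(N_\alpha, E^\Tt_\gamma)$ at the appropriate stage $\alpha$. Mouse maximality then forces $N_{\alpha+1}$ to be active, contradicting the choice of $\alpha$. Without this external hull-of-$V$ argument you have no mechanism to invoke the mouse-maximality clause.

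There are also local errors. Your claim that ``the relevant initial segment of $M$ supporting $E_0$ is an $\om$-mouse'' is false: $M|\lh(E_0)$ projects to the largest cardinal below $\lh(E_0)$, not to $\om$. More seriously, your desired conclusion $N_{\alpha^*} = M$ cannot hold in the hard case. By coherence, the iterate $P$ is passive at $\lh(E_0)$ (since $E_0$ was used), so $N_{\alpha^*}|\lh(E_0)$ is passive, whereas $M|\lh(E_0)$ is active with $E_0$; hence $N_{\alpha^*} \neq M$ regardless of what the construction does thereafter. The paper sidesteps this dead end by working at $\om_1$ rather than at a stage of matched height, so that the contradiction is with maximality of $\Cc$ at an intermediate stage, not with equality $N_{\alpha^*}=M$.
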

\begin{proof}
Assume otherwise, and let $M$ be the least counterexample, with iteration strategy $\Sigma$. Let $W=N_{\om_1}$ be the natural limit of the construction; notice $W$ has height $\om_1$, and $W|\om_1^W=M|\om_1^M\pins M$. By \ref{lem:Mcomp}, $W\ins M^\Tt_{\om_1}$ for a $\Sigma$-iteration $\Tt$ of $M$ of length $\om_1+1$. Let $\theta$ have high cofinality and $V_\theta\prec_k V$ for some large $k$. Let $X\prec V_\theta$ be countable,
transitive below $\om_1$, $\left<N_\alpha\right>\in X$ and $M\in X$. Let $\pi:N\to V_\theta$ be the hull uncollapse of $X$. Let $\kappa=\crit(\pi)$, so $\pi(\kappa)=\om_1$. $\Cc$ is $M$-maximal after $\beta$, where $M|\om_1^M=N_\beta$; we will show that $\pi$ contradicts this maximality.

Let $b=\Sigma(\Tt\rest\om_1)$; note $\pi(b\int\kappa)=b\in X$. So $b$ is unbounded below $\kappa$ by elementarity, and as $b$ is club in $\om_1$, we get $\kappa\in b$. Thus $b\int\kappa=\Sigma(\Tt\rest\kappa)\in N$ and $\pi(\Tt\rest\kappa+1)=\Tt$. In particular, $M^\Tt_\kappa\in N$.
\begin{equation}\label{eqn:mapagree} \pi\rest M^\Tt_\kappa = i^\Tt_{\kappa,\om_1}; \end{equation}
this is as in the proof that comparison terminates: for $\eta<\kappa$ and $x\in M^\Tt_\eta$, since $\eta,x$ are countable in $N$,
\[ \pi(i^\Tt_{\eta,\kappa}(x)) = i^\Tt_{\eta,\om_1}(x)=i^\Tt_{\kappa,\om_1}\com i^\Tt_{\eta,\kappa}(x). \]

Since $\crit(i^\Tt_{\kappa,\om_1})=\kappa$, $\pow(\kappa)\int W=\pow(\kappa)\int M^\Tt_\kappa$. Let $F=E_\pi\rest(W\cross[\om_1]^{<\om})$. By (\ref{eqn:mapagree}), $F$ is compatible with some $E^\Tt_\gamma$. ($E^\Tt_\gamma$ also measures exactly $\pow(\kappa)\int W$.) Now $\lh(E^\Tt_\gamma)<\om_1$ is a successor cardinal of $W$. So $W|\lh(E^\Tt_\gamma)=N_\alpha$ where $\alpha$ is the limit of stages $\beta$ so that $N_\beta$ projects strictly below $\lh(E^\Tt_\gamma)$. But $\pi$ is an $M$-certificate for $(N_\alpha,E^\Tt_\gamma)$. So by mouse maximality, $N_{\alpha+1}=(N_\alpha,E)$ for some $E$, so $N_{\alpha+1}$ projects below $\lh(E^\Tt_\gamma)$. Contradiction.\renewcommand{\qedsymbol}{$\Box$(Lemma \ref{lem:reachM})}\end{proof}

\begin{cor}\label{cor:reachM} All $\om$-mice are reachable. Therefore $\Cc^*$ (as in \ref{cor:uniquemaxcon}) reaches all $\om$-mice.\end{cor}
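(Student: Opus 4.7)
The plan is to deduce the corollary as a direct consequence of Lemma \ref{lem:reachM}, bridged by the observation that a mouse maximal construction of length $\omega_1$ does in fact exist. First I would verify this existence claim: build $\Cc = \langle N_\alpha \rangle_{\alpha < \omega_1}$ by recursion, letting $N_0 = V_\omega$, taking direct limits at limit stages, and at each successor stage $\alpha+1$ setting $N_{\alpha+1}$ to be the active structure $(N_\alpha,E)$ if some $E$ satisfies the mouse certification clause of Definition \ref{dfn:xcert} (with respect to the least $\omega$-mouse $M$ not in $N_{\alpha+1}$), and otherwise the passive rud-closure extension. At each stage the mouse maximality constraints are trivially satisfiable (staying passive is always permitted), so the construction continues unobstructed through all countable ordinals; in particular it runs through $\omega_1$.

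Next, let $M$ be an arbitrary $\omega$-mouse. Apply Lemma \ref{lem:reachM} to the construction $\Cc$ of length $\omega_1$ just produced: the lemma furnishes an $\alpha < \omega_1$ with $M = \core_\omega(N_\alpha)$, i.e.\ $\Cc$ reaches $M$. Since $M$ was arbitrary, every $\omega$-mouse is reachable by some mouse maximal construction, which is exactly the notion of ``reachable'' used in Corollary \ref{cor:uniquemaxcon}.

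Finally, the second assertion is immediate from the definition of $\Cc^*$: by Corollary \ref{cor:uniquemaxcon}, $\Cc^*$ is the unique mouse maximal construction of minimal limit length reaching every reachable $\omega$-mouse. Having just shown that \emph{every} $\omega$-mouse is reachable, we conclude $\Cc^*$ reaches every $\omega$-mouse.

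There is no real obstacle here, since Lemma \ref{lem:reachM} has already done the hard Skolem-hull work. The only point worth double-checking is that the existence of a mouse maximal construction of length $\omega_1$ does not require any nontrivial combinatorial hypothesis: because the mouse maximality clause only \emph{requires} activation when certification is available, and always \emph{allows} passivity, the recursion never gets stuck, and one reaches length $\omega_1$ automatically. This routine observation, combined with Lemma \ref{lem:reachM}, completes the proof.
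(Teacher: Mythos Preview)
There is a genuine gap in your argument: the claim that a mouse maximal construction runs through all of $\omega_1$ ``automatically'' is not justified, and your reason for it is incorrect on two counts.

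First, staying passive is \emph{not} always permitted: mouse maximality (Definition~\ref{dfn:xcert}) requires $N_{\alpha+1}$ to be active whenever an $M$-certified extender exists. Second, and more importantly, the real obstruction to continuing an ms-array is not the active/passive choice but the existence of $\core_\omega(N_\alpha)$. An ms-array must satisfy the $K^c$ construction requirements, and at a successor step one must form $\J_1(\core_\omega(N_\alpha))$ (or an active extension of $\core_\omega(N_\alpha)$); if $N_\alpha$ is not known to be sufficiently iterable, there is no reason its $\omega$-core should exist, and the construction may simply break down. Nothing in your recursion rules this out.

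The paper handles exactly this point. Assuming toward a contradiction that some $\omega$-mouse $M$ is unreachable, Lemma~\ref{lem:Mcomp} shows that every $N_\alpha$ in a mouse maximal construction is an initial segment of a $\Sigma$-iterate of $M$ (the alternative $M\ins N_\alpha$ would mean $M$ was reached). Hence each $N_\alpha$ inherits iterability from $M$, so $\core_\omega(N_\alpha)$ exists and the construction can be properly extended; combined with the uniqueness given by Corollary~\ref{cor:uniquemaxcon}, this yields a mouse maximal construction of length $\omega_1$, and only then does Lemma~\ref{lem:reachM} apply to produce the contradiction. You need to insert this use of Lemma~\ref{lem:Mcomp} (or some other source of iterability for the $N_\alpha$) to close the gap.
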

\begin{proof}
Suppose $M$ is an unreachable $\om$-mouse, with iteration strategy $\Sigma$. Let $N$ be the last model of a mouse maximal construction $\Cc$. By \ref{lem:Mcomp}, there's a $\Sigma$-iterate $Q$ of $M$ such that $N\ins Q$. Therefore $\core_\om(N)$ exists, and $\Cc$ can be properly extended. Any ms-array of limit length can be uniquely properly extended. By \ref{cor:uniquemaxcon}, there is at most one mouse maximal construction of any given countable length. So we can build a unique mouse maximal construction of length $\om_1$. By \ref{lem:reachM}, this construction reaches $M$ at some countable stage; contradiction.\renewcommand{\qedsymbol}{$\Box$(Corollary \ref{cor:reachM})}\end{proof}

\begin{lem}\label{lem:cardnu} Let $\left<N_\alpha\right>$ be an ms-array, with last model $N$. Let $E$ on $\es_+^N$ be total over $N$ and such that $\nu_E$ is a cardinal of $N$. Then there is $\alpha$ such that $N_{\alpha+1}=(N_\alpha,E)$.\end{lem}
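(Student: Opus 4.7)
The plan is to locate the stage at which the construction first reaches ordinal height $\lh(E)$, identify the model there with the passive level $N\|\lh(E)$, and then argue that the next step must add $E$.

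Since $\nu_E$ is a cardinal of $N$, we have $\lh(E)=(\nu_E^+)^{N\|\lh(E)}$, so $\lh(E)$ is a successor cardinal in $N\|\lh(E)$, hence regular there, and in particular $\lh(E)$ is not of the form $\delta+\omega$ for any ordinal $\delta$ (otherwise $\lh(E)=\sup_n(\delta+n)$ would be $\omega$-cofinal in $N\|\lh(E)$, contradicting regularity; note $\lh(E)>\omega$ since $\crit(E)$ is an uncountable $N$-cardinal). In any $K^c$-style construction each successor step changes $\OR^{N_\gamma}$ by $+\omega$ (under a $\mathcal{J}_1$ step), by $0$ (under extender addition), or negatively (under core-taking). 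Hence no successor step can send the height from strictly below $\lh(E)$ to exactly $\lh(E)$. Since the final model has height $>\lh(E)$, the least $\alpha^*$ with $\OR^{N_{\alpha^*}}\geq\lh(E)$ is a limit ordinal; by the natural-limit rule at limit stages, $\OR^{N_{\alpha^*}}=\sup_{\gamma<\alpha^*}\OR^{N_\gamma}=\lh(E)$, and $N_{\alpha^*}$ is passive (no extender is being added at a limit stage). The standard convergence of the ms-array to $N$ then forces $N_{\alpha^*}=N\|\lh(E)$.

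Next I would analyze $N_{\alpha^*+1}$, which is produced by one of: $\mathcal{J}_1(N_{\alpha^*})$, $(N_{\alpha^*},F)$ for some extender $F$ over $N_{\alpha^*}$, or a core of $N_{\alpha^*}$. The $\mathcal{J}_1$ option leaves index $\lh(E)$ passive, and this passivity is inherited by all later models via subsequent $\mathcal{J}_1$ and extender-addition steps, contradicting that $E$ is the active extender at index $\lh(E)$ in $N$. The core-taking option decreases the height below $\lh(E)$; the construction must then re-ascend to reach $N$, and the same analysis applies at the next passage through $\lh(E)$, so eventually some successor step must be an extender addition at index $\lh(E)$. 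Hence $N_{\alpha+1}=(N_\alpha,F)$ for some $\alpha$, and since $F$ is placed at index $\lh(F)=\lh(E)$ in this premouse and the final $N$'s active extender at this index is $E$, we conclude $F=E$.

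The main obstacle is pinning down the convergence argument that yields $N_{\alpha^*}=N\|\lh(E)$ and handling cores rigorously, and this is exactly where the hypothesis that $\nu_E$ is an $N$-cardinal does its work: it rules out any $N|\beta$ with $\nu_E\leq\beta<\lh(E)$ projecting at or below $\nu_E$, so that core-takings in the construction cannot drag heights below $\nu_E$ once they are above it, securing the required agreement between the approximating $N_\alpha$'s and $N$ through level $\lh(E)$.
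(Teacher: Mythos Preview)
Your approach differs from the paper's and has a genuine gap. The paper argues by induction on the length of the array via the \emph{ancestor} mechanism: any $E$ on $\es^N_+$ has an original ancestor $E'$, meaning there is a stage $\beta$ with $N_{\beta+1}=(N_\beta,E')$ and $E'$ is collapsed to $E$ by subsequent corings. If $E'=E$ we are done; otherwise, since $\nu_E$ is a cardinal of $N$, one gets $\rho_\om^{N_\gamma}\geq\nu_E$ for all $\gamma\geq\beta$, so $\nu_E$ is already a cardinal of $N_\beta$, whence $E=\trivcom(E'\rest\nu_E)$ lies on $\es^{N_\beta}$, and one recurses with the shorter array $\left<N_\gamma\right>_{\gamma\leq\beta}$.

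Your argument breaks at the step ``$F=E$''. Suppose at some passage through height $\lh(E)$ you have $N_{\alpha^*}=N\|\lh(E)$ and $N_{\alpha^*+1}=(N_{\alpha^*},F)$. There is no reason $F$ must equal $E$: the construction may next core $(N_{\alpha^*},F)$ down (since its projectum is below $\lh(F)=\lh(E)$), and the core's active extender can be a strict subextender of $F$; after further construction and coring, what survives on $\es^N$ at index $\lh(E)$ could be $E\neq F$. This is precisely the ancestor phenomenon, and your argument does not exclude it. Relatedly, your convergence claim $N_{\alpha^*}=N\|\lh(E)$ at the \emph{first} passage is unjustified: intermediate $N_\gamma$'s need not agree with $N$ below $\lh(E)$ until the sequence has stabilized there, which may occur only after several passages. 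Your closing remark that $\nu_E$ being an $N$-cardinal controls projections of levels of $N$ is correct, but what is needed is control over projections of the $N_\gamma$'s themselves, and the link between the two is exactly what the ancestor-and-induction argument supplies.
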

\begin{proof}
Let $E'$ be the original ancestor of $E$ (so there is a stage $\beta$ so that $N_{\beta+1}=(N_\beta,E')$ and $E'$ eventually collapses to $E$ during later stages of construction). Because $\nu_E$ is a cardinal of $N$, if $E'\neq E$ then $E$ is the trivial completion of $E'\rest\nu_E$, and if $\gamma\geq\beta$ then $\rho_\om^{N_\gamma}\geq\nu_E$. So $\nu_E$ is a cardinal of $N_\beta$ and $E$ is on the $N_\beta$ sequence. Now apply induction with $N_\beta$ and $E$.\renewcommand{\qedsymbol}{$\Box$(Lemma \ref{lem:cardnu})}\end{proof}

\begin{dfn}\label{dfn:appropriate}\index{appropriate}
Suppose $V=L[\es]$. Let $P$ be a sound premouse. An ms-array $\Cc=\left<N_\alpha\right>$ above $P$ is \emph{appropriate} if whenever $N_{\alpha+1}=(N_\alpha,E)$, there is a total extender $G$ such that
\begin{itemize}
\item $G$ is indexed on $\es$,
\item $\nu_G\geq\nu_E^+$ and is a cardinal,
\item $G\rest(N_\alpha\cross[\nu_E]^{<\om}) = E\rest(N_\alpha\cross[\nu_E]^{<\om})$,
\item $i_G(N_\alpha)||\lh(E)=N_\alpha$.
\end{itemize}
\index{maximal}An appropriate ms-array is \emph{maximal} if it adds an extender whenever possible. By minimizing the choice of background extenders, one obtains a canonical such construction.
\end{dfn}

\begin{thm}[Steel, Schlutzenberg]\label{thm:tamesi} Let $M$ be the least non-tame mouse. Then $M|\crit(F^M)$ is extender-full self-iterable at its cardinals, so satisfies $V=\HOD$.
\end{thm}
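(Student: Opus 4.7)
The plan is to apply Lemma \ref{lem:extfull}, which reduces $W \sats V=\HOD$ (where $W := M|\crit(F^M)$) to producing, inside $W$, an $(\eta^+{+}1),\eta$-extender-full iteration strategy for each $P \ins W$ with $\rho_\om^P = \eta$ a cardinal of $W$. Since $M$ is the least non-tame mouse, $W$ is tame: no extender on $\es^W$ overlaps a Woodin of $W$. Consequently, for any normal tree $\Tt$ on such a $P$, the correct cofinal branch $b$ is uniquely determined by its Q-structure $Q(b,\Tt)$, which extends $M(\Tt)$, is tame above $\delta(\Tt)$, and has no extenders overlapping $\delta(\Tt)$. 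The strategy, if definable in $W$, will automatically satisfy the uniqueness hypothesis (\ref{lem:uniquestrat}), and extender-fullness will follow from Theorem \ref{thm:cohere} (relativized above a cut-point, as noted in the remark after that theorem): any extender $E$ appearing on an iterate with $\Ult_\om(P,E)$ wellfounded must lie on the iterate's sequence.

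Thus the work is concentrated in showing $W$ can locate $Q(b,\Tt)$. For this, I would run inside $W$ the canonical maximal appropriate ms-array $\Cc = \langle N_\alpha \rangle$ above $M(\Tt)$ in the sense of Definition \ref{dfn:appropriate}, drawing background certificates from $\es^W$. The central inductive claim is: each $N_\alpha$ is an initial segment of some normal iterate of $Q(b,\Tt)$ above $\delta(\Tt)$. For the successor step, suppose $N_{\alpha+1} = (N_\alpha,E)$ with background $G \in \es^W$ as in \ref{dfn:appropriate}. Let $R$ be the iterate of $Q(b,\Tt)$ matching $N_\alpha$ and let $\theta$ be the largest cardinal of $R \Vert \lh(E)$. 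The coherence and ultrapower clauses for $G$ provide exactly the lifting data needed to run the phalanx-iterability argument of Claim \ref{clm:iter}, giving $(\Phi(\Tt_R),\Ult(R',E),\theta)$ iterable via the background embedding $i_G$. Theorem \ref{thm:cohere} then places $E$ on the sequence of a further iterate of $R$, continuing the induction.

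It remains to argue that $\Cc$ reaches $Q(b,\Tt)$; this is the delicate step, and the main obstacle. Suppose not: then by the previous claim, $\Cc$ produces an unbounded sequence of iterates of $Q(b,\Tt)$ above $\delta(\Tt)$ in which $\delta(\Tt)$ is persistently Woodin, and $\Cc$ keeps adding extenders with length above $\delta(\Tt)$ and critical points below $\delta(\Tt)$, each backed by a total extender $G \in \es^W$ with $\crit(G) < \delta(\Tt) < \lh(G)$. Passing to the first such stage and pulling back the Woodin-witnessing configuration from the construction-side iterate to $W$ via the background map $i_G$, one produces an extender on $\es^W$ overlapping a cardinal that remains Woodin in $W$ — contradicting tameness of $W$, and hence the minimality of $M$. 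Combining this with Theorem \ref{thm:cohere} for extender-fullness and Lemma \ref{lem:extfull} for the $\HOD$ conclusion yields the theorem. The technical crux is the correct transfer argument: carefully identifying, in the hypothetical failure of $\Cc$ to break Woodinness at $\delta(\Tt)$, the exact background extender on $\es^W$ witnessing a genuine overlap in $W$, and ruling out the possibility that $\delta(\Tt)$ is only an iterate image of a Woodin of $P$ that has already been broken by $\Tt$.
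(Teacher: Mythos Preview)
Your overall plan---run an appropriate ms-array above $M(\Tt)$ to locate Q-structures, then invoke \ref{thm:cohere} for extender-fullness and \ref{lem:extfull} for $\VHOD$---matches the paper. But the two technical steps you identify are where the real content lies, and your sketch of each has a genuine gap.

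For the inductive claim that each $N_\alpha$ lies on a $\Sigma$-iterate of $Q(\Tt)$, you propose to use the background extender $G\in\es^W$ and the phalanx argument of \ref{thm:cohere}. This does not fit: \ref{thm:cohere} needs either a finite tree (case (a)) or an internally definable strategy (cases (b),(c)), and you have neither---the tree from $Q(\Tt)$ to the current iterate need not be finite, and the strategy is precisely what you are constructing. The paper instead works externally, first invoking \ref{cor:reachM} to realize $M$ as $\core_\om(N_{\xi+1})$ in a countable mouse-certified construction in $V$. The point of this is that each background extender $F^*$ used in the appropriate construction (run inside $N=N_\xi$) then itself has an $M$-\emph{certificate} $\pi:S\to V_\theta$ in the sense of \ref{dfn:xcert}, with $S$ transitive and $\Sigma\in V_\theta$. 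This $\pi$ is not the ultrapower map $i_G$; it is a hull embedding carrying the strategy. The key reflection is that $S$ can build the comparison tree $\Uubar$ up to $\mu=\crit(\pi)$, and then $\pi\rest M^\Uu_\mu = i^{\pi(\Uubar)}_{\mu,\pi(\mu)}$, so the first extender used along the $\pi$-image branch is compatible with $F$. The initial segment condition (or \ref{thm:fragpassive} in the type 1 submeasure case) then puts $F$ on the iterate's sequence directly---no appeal to \ref{thm:cohere} at this stage.

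Your ``reaching'' argument also misfires. You try to pull back via $i_G$ to exhibit an overlapped Woodin \emph{in $W$}, but $\delta(\Tt)$ need not correspond to any Woodin of $W$. The paper's contradiction instead uses the certificate $\pi:S\to V_\theta$ for the \emph{top} extender $E=F^M$ (with $\nu_E=\delta$ Woodin in $N$). If the construction fails to reach a Q-structure by stage $\kappa=\crit(E)$, one argues (again via $\pi\rest M^\Uu_\kappa$ equalling an iteration map) that the extender used at stage $\kappa$ in $\pi(\Uu)$ has natural length $\geq\delta$---but $\delta$ is Woodin in $\pi(N|\kappa)$, so this extender is non-tame. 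That contradicts tameness of $Q(\Tt)$ (inherited from tameness of $N|\beta$), not tameness of $W$. The use of \ref{thm:cohere} comes only at the very end, for extender-fullness, once $\Gamma=\Sigma$ is established and external iterability of the phalanx is available.
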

\begin{proof}\setcounter{clm}{0}
By \ref{cor:reachM} there is a countable mouse maximal construction $\left<N_\alpha\right>_{\alpha\leq\xi+1}$ such that $M=\core_\om(N_{\xi+1})$. Note that with $\beta$ such that $N_\beta=M|\om_1^M$, for all $\alpha+1>\beta$, if $N_{\alpha+1}$ is active, then it's $M$-certified. Note $N_{\xi+1}=(N_\xi,E)$, $N_\xi$ is tame, and $\delta=\nu_E$ is a Woodin cardinal in $N_\xi$. Let $N=N_\xi$ and $\kappa=\crit(E)$.

Work in $N$. Let $\eta<\kappa$ be a cardinal and $N|\beta$ project to $\eta$. Let $\Gamma$ be the following partial strategy for trees on $N|\beta$ above $\eta$. Suppose $\Tt$ is of limit length via $\Gamma$. Let $Q'$ be the Q-structure for $\delta(\Tt)$ reached by the canonical appropriate ms-array above $M(\Tt)$ through $\delta$ stages. Let $Q$ be the $\delta$-hull of $Q'$: i.e. if $\rho_\om^{Q'}\geq\delta$ then $Q=\core_\om(Q')$; otherwise $Q=\Hull_{n+1}^{\core_n(Q')}(\delta)$ where $\rho_{n+1}^{Q'}<\delta\leq\rho_n^{Q'}$. Then $\Gamma(\Tt)$ is the unique $b$ such that $Q\ins M^\Tt_b$. (If this definition fails or yields an illfounded branch, $\Gamma(\Tt)$ is undefined.) We will show that $\Gamma$ is a $\kappa,\eta$-extender-full strategy for $N|\beta$. Moreover (from outside $N$), $\Gamma$ agrees with the strategy $\Sigma_{N|\beta}$ for $N|\beta$ inherited from $M$'s strategy and \ref{lem:Mcomp}.

Let $\Tt$ be of limit length via both $\Gamma$ and $\Sigma_{N|\beta}$. Let $\left<P_\alpha\right>$ be the models of $N$'s canonical appropriate ms-array above $M(\Tt)$ through $\kappa$ stages, or until a Q-structure for $M(\Tt)$ is reached. The construction doesn't break down before reaching a Q-structure, using Claim \ref{lem:Mcomp} and that $M$ is iterable. In fact, let $Q(\Tt)$ be the $\Sigma$-blessed Q-structure for $M(\Tt)$. Then:

\begin{clm}\label{clm:innercon}
For each $\alpha$, $P_\alpha$ is a segment of a $\Sigma$-iterate of $Q(\Tt)$ above $\delta(\Tt)$ (and so a $\Sigma$-iterate of $M$).\end{clm}
\begin{proof}
We proceed by induction on $\alpha$. By Claim \ref{lem:Mcomp}, $N|\beta$ is a segment of a $\Sigma$-iterate of $M$. Since $\Tt$ is via $\Sigma_{N|\beta}$ and $P_0=M(\Tt)\ins Q(\Tt)$, the claim holds at $\alpha=0$.

The case $\alpha=0$ is trivial as $P_0=M(\Tt)$.

Assume $P_\alpha\ins R$, where $R$ is the last model of the tree $\Uu$ on $Q(\Tt)$ above $\delta(\Tt)$, and that $P_\alpha$ is not a Q-structure for $M(\Tt)$.

If $P_\alpha$ is unsound, then $P_\alpha=R$ and $M(\Tt)\ins\core_\om(P_\alpha)\pins M^\Uu_\gamma$ for some $\gamma$. (There must be a drop in model on $\Uu$'s main branch as we haven't yet reached a Q-structure.) Therefore $P_{\alpha+1}=\J_1(\core_\om(P_\alpha))\ins M^\Uu_\gamma$, so $\Uu\rest\gamma+1$ works.

Suppose $P_\alpha$ is sound. Again here, $\J_1(P_\alpha)\ins R$. So assume $P_{\alpha+1}=(P_\alpha,F)$. We must show $F$ is the last extender used in $\Uu$. Let $F^*$ be the canonical background extender for $F$ in $N$. Then $\mu=\crit(F)=\crit(F^*)$ is not Woodin in $P_\alpha$ (by tameness of $P_\alpha|\mu$); let $P_\alpha|\gamma$ be the Q-structure for $P_
\alpha|\mu$. Now $\nu_{F^*}$ is a cardinal of $N$. By \ref{lem:cardnu} there is an $M$-certificate $\pi:S\to V_\theta$ for $N|\lh(F^*)$.

Since the construction $\left<N_\alpha\right>$ reaching $M$ was countable, $\crit(\pi)=\mu=\om_1^S$. We have $\Sigma\in V_\theta$. Let $\Sigma^S$ be $S$'s (unique) $\mu+1$-strategy for $M$. It's easy to see $\Sigma^S$ actually agrees with $\Sigma$ (where they both apply). (If $b=\Sigma^S(\Vv)$ where $\Vv$ has length $\mu$, then $\pi(b)\int\mu=b$, so $b=\Sigma(\Vv)$ also.) So by comparing $M$ with $P_\alpha|\mu$, $S$ obtains $\Uubar=\Uu\rest\mu+1$ (with $M(\Uu\rest\mu)=P_\alpha|\mu$).

In $V_\theta$, $\pi(\Uubar)$ results from comparing $M$ (via $\Sigma$) with $\pi(P_\alpha|\mu)$. Since $\lh(F)<\nu_{F^*}$ and $P_\alpha\in N$, \ref{dfn:xcert} and \ref{dfn:appropriate} give
\[ \pi(P_\alpha|\mu)||\lh(F)=i_{F^*}(P_\alpha|\mu)||\lh(F)=P_\alpha. \]
Since $\Uu$ is also via $\Sigma$, we get $\Uu\rest\lambda+1=\pi(\Uubar)\rest\lambda+1$, where $\lambda$ is least such that $\lh(E^\Uu_\lambda)\geq\lh(F)$ or $\lh(E^{\pi(\Uubar)}_\lambda)\geq\lh(F)$.

Now as in (\ref{eqn:mapagree}) of \ref{lem:reachM}, $M^\Uu_\mu=M^{\pi(\Uubar)}_\mu$ and
\[ i^{\pi(\Uubar)}_{\mu,\pi(\mu)} = \pi\rest M^\Uu_\mu. \]
But $M^\Uu_\mu\int\pow(\mu)\super P_\alpha\int\pow(\mu)$ and $\pi$, $F^*$ and $F$ agree on $P_\alpha\int\pow(\mu)\cross[\nu_F]^{<\om}$. So if $\xi+1$ is least in $(\mu, \pi(\mu))_{\pi(\Uubar)}$, then $E^{\pi(\Uubar)}_\xi$ is compatible with $F$. The agreement between $\Uu$ and $\pi(\Uubar)$ implies $\lh(E^{\pi(\Uubar)}_\xi)\geq\lh(F)$. If $F$ and $E^{\pi(\Uubar)}_\xi$ measure the same sets, the initial segment condition implies $F$ is on the $M^{\pi(\Uubar)}_\xi$ sequence, and therefore on the $M^{\pi(\Uubar)}_\lambda=M^\Uu_\lambda$ sequence as required. Otherwise $F$ measures less sets, so $(\mu^+)^F<(\mu^+)^{E^{\pi(\Uubar)}}_\xi$, which means $F$ is type 1. (If $(\mu^+)^F<\nu_F$, then the identity of $(\mu^+)^F$ is coded directly into $F\rest\nu_F$). So \ref{thm:fragpassive} implies $F$ is on the $M^{\pi(\Uubar)}_\xi$ sequence in this case. (It seems plausible that this should arise when some normal measure $E$ on $\mu$ is added to the construction, which remains total after constructing through all the ordinals. Condensation then gives cofinally many levels $\xi<(\mu^+)^E$ which are active with submeasures. Their original ancestors may have higher critical points, but if the $F$ above was a submeasure's ancestor (with crit $\mu$), then $F$ measures less than $E$, but $E^{\pi(\Uubar)}_\xi$ measures all of $M^\Uu_\mu$, which contains all sets measured by $E$.)

Finally consider a limit $\alpha$. $P_\alpha$ is the lim inf of the agreeing segments of the sequence $\left<P_\beta\right>_{\beta<\alpha}$. Let $\Uu_\beta$ witness the claim for $P_\beta$. Let $\Uu$ be the lim inf of the $\Uu_\beta$'s and let $M^\Uu_\gamma$ be its last model. If $M^\Uu_\gamma|\OR^{P_\alpha}$ is passive then $P_\alpha\ins M^\Uu_\gamma$. Otherwise $\Uu\conc F$ works, where $F$ is $M^\Uu_\gamma|\OR^{P_\alpha}$'s active extender: by induction, $\Uu$ is above $\delta(\Tt)$, so $\delta(\Tt)<\lh(F)$. Since $\Uu$ is on $Q(\Tt)$ and $Q(\Tt)$ is tame (as $N|\beta$ is tame), $\crit(F)>\delta(\Tt)$ also. Therefore $\Uu\conc F$ is above $\delta(\Tt)$.\renewcommand{\qedsymbol}{$\Box$(Claim \ref{clm:innercon})}\end{proof}

Now suppose a Q-structure is not reached and let $P=P_\kappa$. Let $\pi:S\to V_\theta$ be an $M$-certificate for $E$. $(N,E)$ is an iterate of $M$, and $\pi(N|\kappa)$ is a segment of an iterate of $M$. By \ref{dfn:xcert}(c), $\pi(N|\kappa)||\lh(E)=N$, so $E$ was used in the iteration producing $\pi(N|\kappa)$. So $\delta$ is Woodin in $\pi(N|\kappa)$ (as it's Woodin in $\Ult(N,E)$), and therefore also in $\pi(P)$. (Showing Woodinness goes into $\pi(P)$ is easy in our situation, using the same method we're about to use to show the strength of $\kappa$ goes in.) Let $\Uu$ be the tree on $M$ iterating out to $P$. As before, $\Uu\in S$ and has length $\kappa+1$, and $\pi(\Uu)$ is the tree iterating out to $\pi(P)$, using an extender compatible with
\[ \pi\rest(\pow(\kappa)\int\pi(P))\cross[\delta]^{<\om}.\]
As in the last paragraph of \ref{lem:reachM}, that extender can't have length $\lambda<\delta$. Therefore it has natural length at least $\delta$, so it is a non-tame extender. This contradicts Claim \ref{clm:innercon}, since $Q(\Tt)$ is tame.

So the construction does reach a Q-structure $Q'$, and by Claim \ref{clm:innercon}, its $\delta$-hull $Q$ (described at the start of this proof) is $Q(\Tt)$. Since $\Sigma(\Tt)$ is the unique branch $b$ so that $Q(\Tt)\ins M^\Tt_b$, we get that $\Gamma(\Tt)=\Sigma(\Tt)$, as desired.

The facts established so far reflect down to $M$ (the sound version of $N$), so (confusing notation a little) let $\kappa$,$\eta$,$\beta$,$\Gamma$ now play the analagous roles in $M$ that they did in $N$ above (so $\kappa$ is the critical point of $M$'s active extender, etc.). It remains to show that $\Gamma$ is a $\kappa,\eta$-extender-full strategy in $M$. So suppose $\Tt$ is a normal tree on $M|\beta$, above $\eta$, via $\Gamma$, of length $<\kappa$, and $F$ fits on the sequence of $\Tt$'s last model, $\crit(F)<\eta$ and $\Ult_\om(N|\beta,E)$ is wellfounded. As usual, since $\eta$ is a cardinal of $M$, the wellfoundedness of $\Ult(M,E)$ follows. One can run the argument of \ref{thm:cohere}, with a small alteration. Note that $\Tt$ is actually via $\Sigma$. For it is guided by Q-structures whose iterability is guaranteed by the fact that they are built by full background extender constructions inside $M$ (or alternatively, by the fact that they lift to the correct Q-structures found inside $N$). By tameness, the correct branch is chosen, so $\Gamma$ agrees with $\Sigma$. So the phalanx of Claim \ref{clm:iter} of \ref{thm:cohere} will be $\om_1+1$-iterable in $V$, since iterations reduce to iterations of the phalanx $\Phi(\Tt)$, which is $\om_1+1$-iterable in $V$. So we get $E$ is on the sequence of $\Tt$'s last model, as desired.
\renewcommand{\qedsymbol}{$\Box$(Theorem \ref{thm:tamesi})}\end{proof}

\begin{rem} A similar argument (but simpler toward the end) can be used to show that other tame mice, such as the least mouse with an inaccessible limit of Woodins, are also extender-full self-iterable at their cardinals.

However, non-tame mice quickly produce situations where our method for proving \ref{thm:tamesi} can't work. (Note that the self-iteration strategy obtained in \ref{thm:tamesi} agreed with $V$'s strategy.) Moreover, the approach used in proving \ref{cor:AC} also fails in the following example, which is a simple variation on an observation possibly due to Steel, given in (\cite{sile}, 1.1).\end{rem}

\begin{fact}\label{fact:nontame}
Suppose $N$ is a countable mouse modelling $\ZFC$, $\tau$ is an $N$-cardinal, $N$ has a cut-point $\eta$ with $\tau<\eta<(\tau^+)^N$, and $P'\ins P''\pins N$ are such that $P'$ is active with $E'$, $\crit(E')<\tau$, and $\delta>\tau$ is Woodin in $P'$, and $P''$ projects to $\tau$. Let $\Sigma$ be an iteration strategy for $N$. Then $N$ does not know $\Sigma$ restricted to trees on $P''$, above $\tau$, of length $\leq(\eta^+)^N$.\end{fact}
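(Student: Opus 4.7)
The plan is a proof by contradiction, following Steel's observation in \cite{sile}, 1.1. Suppose $\Sigma_0 := \Sigma\rest\{T : T \text{ on } P'',\ \text{above } \tau,\ \length(T)\leq(\eta^+)^N\}$ is definable from parameters over $N$. First I would extract the geometric picture forced by the hypotheses: since $E'$ is active on $P'$ with $\crit(E')<\tau$, we have $\length(E')=\OR^{P'}$, and since $\eta$ is a cut-point of $N$ lying above $\tau$, no extender on $\es^N_+$ with critical point below $\tau$ can have length exceeding $\eta$. Therefore $\OR^{P'}\leq\eta$. In particular $\crit(E')<\tau<\delta<\OR^{P'}\leq\eta$, so $E'$ is a short extender on $N$'s sequence that overlaps the Woodin cardinal $\delta$ of $P'$ from below; this is the defining non-tame configuration.

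Second, I would use that iteration trees on $P'$ based on $P'$ (i.e.\ using only extenders of index $\leq\OR^{P'}$) and above $\tau$ are a subclass of trees on $P''$ above $\tau$, so $\Sigma_0$ induces a strategy $\Sigma_1\in N$ for such trees on $P'$ of length $\leq(\eta^+)^N$. Combining $\Sigma_1$ with the Woodinness of $\delta$ in $P'$, I would invoke Woodin's extender algebra to iterate: for any $X\in N$ codable as a subset of $(\eta^+)^N$, there is a $\Sigma_1$-iteration $\Tt_X$ on $P'$, above $\tau$, of length strictly below $(\eta^+)^N$, with iterate $Q_X$ and iteration map $i_X:P'\to Q_X$, such that $X$ becomes generic over $Q_X$ for the extender algebra at $i_X(\delta)$. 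The entire construction takes place inside $N$ since $\Sigma_1, P', \delta\in N$ and the relevant trees lie in $N|(\eta^+)^N$. Because $\Tt_X$ is above $\tau>\crit(E')$, the image $E'$ on the sequence of $Q_X$ continues to overlap $i_X(\delta)$, so $Q_X$ inherits the non-tame configuration of $P'$.

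Third, I would apply this machinery with $X$ chosen to code $\Sigma_0$ restricted to $N|(\eta^+)^N$. Then $Q_X[X]$ can internally compute (via the generic decoding of $X$ and the translation along $i_X$) an iteration strategy for $Q_X$ for trees above (the image of) $\tau$, of lengths up to $(\eta^+)^N$. But by the preservation of the overlapping configuration, this contradicts Steel's observation from \cite{sile}, 1.1: a countable premouse carrying a Woodin cardinal overlapped by an extender from below cannot know its own iteration strategy for trees above the relevant critical point, even after a mild generic extension, because a diagonal argument using the Woodin to build a self-referential tree produces a branch whose correct choice is definable from the alleged strategy, forcing that strategy into the premouse itself and then producing a tree whose branch the premouse cannot compute.

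The main obstacle is the last step — the diagonalization — which is exactly Steel's 1.1 and which I would invoke rather than redo. The bulk of the adaptation from Steel's setup is the bookkeeping of the first two paragraphs: verifying the cut-point bound $\OR^{P'}\leq\eta$, checking that trees on $P'$ based below $\OR^{P'}$ genuinely embed as trees on $P''$ covered by $\Sigma_0$, and confirming that the genericity iteration fits within trees of length $\leq(\eta^+)^N$ so that $\Sigma_0$ supplies its branches. Once these are in place, the contradiction is immediate from \cite{sile}, 1.1.
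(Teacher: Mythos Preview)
Your proposal has a genuine gap, and it misses the central mechanism of the paper's argument.

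First, the size/length mismatch in your Steps 2--3: you want to make $X=\Sigma_0$, a set of size $(\eta^+)^N=(\tau^+)^N$, generic over an iterate $Q_X$ via the extender algebra at $i_X(\delta)$. For this you need $i_X(\delta)>(\eta^+)^N$, which forces $\lh(\Tt_X)$ to be essentially $(\eta^+)^N$. But then the ``tail strategy'' for $Q_X$ that you hope to read off from $X$ is vacuous: any nontrivial tree on $Q_X$, concatenated with $\Tt_X$, already exceeds the length bound of $\Sigma_0$. So $Q_X[X]$ does not acquire a usable strategy for itself, and there is nothing for a diagonalization to act on. Your black-box invocation of \cite{sile}, 1.1 is also not justified: the paper explicitly calls this fact a \emph{variation} on Steel's observation and gives a self-contained proof, not a reduction.

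Second, and more importantly, you never use $E'$ except as scenery, whereas it is the engine of the actual argument. The paper makes $N|\eta$ (not the strategy) generic: one iterates $P$ (the least level past $P'$ projecting below $\delta$) linearly $\eta$ times with a measure on some $\kappa\in(\rho^{P}_{n+1},\delta)$, then runs the genericity iteration; this is all above $\tau$ and of total length $<(\eta^+)^N$, hence covered by the hypothesized strategy inside $N$. Let $E=i^\Tt(E')$ and form $\Ult(N,E)$. By coherence and since $\lh(E)$ is a cardinal of $\Ult(N,E)$, the ordinal $i^\Tt(\delta)$ is a cardinal of $\Ult(N,E)$, and it stays one in $\Ult(N,E)[N|\eta]$ because the extender algebra is $i^\Tt(\delta)$-c.c. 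Now let $Q\ins N$ be least with $\OR^Q\geq i^\Tt(\delta)$ projecting to $\tau$; since $\tau<i^\Tt(\delta)$ is a cardinal there, $Q\notin\Ult(N,E)[N|\eta]$. But in any collapse extension making $\OR^Q$ countable, the tree searching for a sound $R$ extending $N|\eta$, projecting to $\tau$, together with an elementary $\sigma:R\to i_E(Q)$, is illfounded (witnessed by $Q$ and $i_E\rest Q$). So the extension has such an $R$; pulling back iterability through $\sigma$ gives $R=Q$, and homogeneity of the collapse forces $Q\in\Ult(N,E)[N|\eta]$, a contradiction. The whole point of $E'$ is that it lets you pass to a model where $i^\Tt(\delta)$ is a cardinal while still retaining a copy $i_E(Q)$ of the missing level $Q$.
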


\begin{proof}
Let $P\ins N$ be least extending $P'$ projecting strictly less than $\delta$. Let $\rho^P_{n+1}<\delta\leq\rho_n^P$. Let $\kappa>\rho^{P'}_{n+1}$ be least that's measurable in $P'$. Let $\PP$ be the extender algebra of $P'$ with $\delta$ generators, using only crits above $\kappa$. In $N$, iterate $P$, first linearly with a normal measure on $\kappa$ and its images $\eta$ times, then iterate $M^\Tt_\eta$ to make $N|\eta$ generic over $i^\Tt_{0,\xi}(\PP)$, where $M^\Tt_\xi$ is $\Tt$'s last model. Let $E=i_{0,\xi}(E')$. Note $N|\eta$ is $\Ult(N,E)$ generic over $i^\Tt_{0,\xi}(\PP)$. Let $Q\ins N$ be least projecting to $\tau$, with $i^\Tt_{0,\xi}(\delta)\leq\OR^{Q}$. Since $\tau<i^\Tt_{0,\xi}(\delta)$ and the latter is a cardinal in $\Ult(N,E)[N|\eta]$, $Q\notin\Ult(N,E)[N|\eta]$. Let $G$ be $\Ult(N,E)[N|\eta]$-generic for the collapse of $\OR^{Q}$. In $\Ult(N,E)[N|\eta][G]$, let $S$ be the tree of attempts to build a sound premouse $R$ looking like $Q$: it should extend $N|\eta$, have $\eta$ a cut-point, be sound and project to $\tau$; $S$ also builds an elementary $\sigma:R\to i_E(Q)$. $S$ is illfounded because of $Q$ and $i_E\rest Q$; therefore $\Ult(N,E)[N|\eta][G]$ has such an $R,\sigma$. But clearly $i_E(Q)$ is iterable above $i_E(\eta)$, so $R$ is iterable above $\eta$, and it follows that $R=Q$. This was independent of the particular $G$, so $Q\in\Ult(N,E)[N|\eta]$; contradiction.
\renewcommand{\qedsymbol}{$\Box$(Fact \ref{fact:nontame})}\end{proof}

However, neat non-tame mice might satisfy $\VHOD$ some other way. $M_{\AD_\RR}$ is the minimal proper class mouse with a limit $\lambda$ of Woodins which is a limit of cardinals strong below $\lambda$.

\begin{ques}[Steel] Does $M_{\AD_\RR}$ satisfy $\VHOD$?\end{ques}

\begin{ques}[Steel] Suppose $M$ is a mouse modelling $\ZFC$. Does $M$ satisfy $\VHOD(X)$ for some $X\sub\om_1^M$?\end{ques}

A contender for the set $X$ here is the set of all countable elementary substructures of levels of $M$. (This was used in Woodin's portion of the proof of \ref{cor:AC}.)

\pagebreak
\section{Homogeneously Suslin Sets\protect\footnote{Footnote added January 2013: The material in this section is covered better in \cite{hsstm}, where things are explained more clearly and extensions are obtained. Also, a correction and extension of the material on Finite Support in \S 4 is made (see footnotes in \S 4), and the extension is essentially used in the proof of \ref{thm:homMn}.}}\label{sec:hom}
Kunen's analysis of the measures in $L[U]$ lead to the following observation of Steel:
\[ L[U]\sats\textrm{The homogeneously Suslin sets of reals are the } \bfPi^1_1\textrm{ sets.} \]
Here we consider the situation in $M_n$ ($n<\om$), and in certain models below $M_1$. We establish in $M_n$ an upper bound on the homogeneously Suslin sets a little below $\bfDelta^1_{n+1}$. Certainly all $\bfPi^1_n$ sets there are homogeneously Suslin (by \cite{projdet}), but we don't see how to improve either bound beyond this. We also show that in mice not too far above $0^\pistol$ and modelling $\ZFC$, all homogeneously Suslin sets are $\bfPi^1_1$.

We first define the phrase ``a little below''.

\begin{dfn}\index{correctly $\bfDelta^1_{n+1}$} Let $N$ be an inner model of $\ZFC$, which is $\bfSigma^1_{n+1}$-correct. Let $U\sub\RR\cross\RR$ be a standard universal $\bfSigma^1_{n+1}$ set, and for $z\in\RR$ let $U_z$ be the section of $U$ at $z$. Suppose $A\sub\RR$ is a $\bfDelta^1_{n+1}$ set in $N$. Then $A$ is ($N$)-\emph{correctly}-$\bfDelta^1_{n+1}$ iff there are $a,b\in\RR^N$ so that $A=U_a\int N$ and $U_a=\RR\cut U_b$.\end{dfn}

The definition is made in $V$, so $N$ might not know which sets are correctly-$\bfDelta^1_{n+1}$. But given $a,b\in\RR^N$, whether $(a,b)$ witnesses $U_a$ is correctly-$\bfDelta^1_{n+1}$ is a $\Pi^1_{n+2}(a,b)$ question. Since $M_n$ can compute $\Pi^1_{n+2}$ truth (of reals in $M_n$) by consulting the extender algebra (see \cite{outline}), we get: the class of $M_n$-correctly $\bfDelta^1_{n+1}$ sets is definable over $M_n$.

\begin{lem}\label{lem:correctbaire} Let $\delta_0$ be the least Woodin of $M_n$. The $M_n$-correctly $\bfDelta^1_{n+1}$ sets are precisely the the $\Col(\om,\delta_0)$-universally Baire sets of $M_n$.
\end{lem}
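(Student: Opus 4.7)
The plan is to prove both inclusions via tree representations and genericity iterations through the Woodin $\delta_0$. The key tool is the correct iteration strategy $\Lambda$ for countable trees on $M_n|\delta_0$: by $Q$-structure guidance and the $n$-smallness of the relevant $Q$-structures, $\Lambda$ is $\bfPi^1_{n+1}$, and by the argument behind \ref{cor:MnV=HOD}, its restriction to trees of length ${<}\delta_0$ is internal to $M_n|\delta_0$.

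For ($\Rightarrow$), suppose $A = U_a \cap \RR^{M_n}$ is $M_n$-correctly $\bfDelta^1_{n+1}$ via a complementary $\bfSigma^1_{n+1}$ pair $(U_a, U_b)$ with $a, b \in \RR^{M_n}$. Inside $M_n|\delta_0$ I would build the standard Martin--Steel tree pair $(T_a, T_b)$ representing $U_a, U_b$, using $\Lambda\rest M_n|\delta_0$ to guide witnesses for their $\bfPi^1_n$-matrices. For any $\Col(\om,\delta_0)$-generic $g$ over $M_n$ and any $x \in \RR^{M_n[g]}$, $\bfSigma^1_{n+1}$-correctness of $M_n[g]$ with $V$ together with the genericity iterations available inside $M_n[g]$ gives $x \in p[T_a]^{M_n[g]} \Leftrightarrow x \in U_a^V$, and dually for $T_b$. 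Since $(U_a, U_b)$ partition $\RR^V$, the trees project to complementary sets in every such $M_n[g]$, witnessing $\Col(\om,\delta_0)$-universal Baireness of $A$ in $M_n$.

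For ($\Leftarrow$), suppose $A$ is $\Col(\om,\delta_0)$-uB in $M_n$ via $T, S \in M_n$ with $p[T] = A$ and $p[S] = \RR^{M_n} \setminus A$. In $V$ define
\[ \varphi(x) \Leftrightarrow \exists (N, \pi)\ [\,N \text{ a countable } \Lambda\text{-iterate of } M_n|\delta_0 \text{ via } \pi,\ x \text{ is } N\text{-generic for } \pi \text{ of the extender algebra at } \delta_0,\ N[x] \models x \in p[\pi(T)]\,], \]
and the analogous $\varphi'$ from $S$. The $\bfPi^1_{n+1}$-complexity of $\Lambda$ makes each of $\varphi, \varphi'$ a $\bfSigma^1_{n+1}$ statement. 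Genericity iteration absorbs any real $x$ into such an $(N, \pi)$; elementarity of $\pi$ transfers the internal $\Col$-uB of $(T, S)$ to that of $(\pi(T), \pi(S))$ in $N$, so in $N[x]$ the projections remain complementary and decide $x$ exactly once. Hence $\varphi, \varphi'$ are complementary $\bfSigma^1_{n+1}$ sets in $V$ whose intersections with $\RR^{M_n}$ are $A$ and its complement, giving correctness.

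The main obstacle I anticipate is, in the reverse direction, independence of the choice of iterate $(N, \pi)$ in the definition of $\varphi$: a priori, two iterates absorbing the same $x$ could give conflicting answers about $x \in p[\pi(T)]$. The resolution is to compare two candidates via a common $\Lambda$-iterate above the information in $x$, exploiting uniqueness of $\Lambda$ and the fact that the images of $T$ and $S$ agree in the common iterate; this is exactly the type of coherence argument facilitated by the self-iterability of $M_n|\delta_0$ established earlier in the thesis.
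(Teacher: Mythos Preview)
Your $(\Leftarrow)$ direction is essentially the paper's argument: take a countable hull (or work directly with $M_n|\delta_0$), use genericity iterations to absorb any real, and observe that the iteration strategy's complexity makes the resulting pair of definitions $\bfSigma^1_{n+1}$. The coherence of different iterates is handled exactly as you anticipate, by comparing to a common iterate. One small correction: your justification ``$\Lambda$ is $\bfPi^1_{n+1}$'' is not quite the right bookkeeping; what makes $\varphi$ a $\bfSigma^1_{n+1}$ condition is that \emph{being a correct tree} on an $n$-small hull is $\bfPi^1_n$ (each limit stage requires a $Q$-structure which is $\Pi^1_n$-iterable), so ``there exists a correct iterate with \ldots'' is $\bfSigma^1_{n+1}$.

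Your $(\Rightarrow)$ direction has a genuine gap for even $n$. You invoke ``$\bfSigma^1_{n+1}$-correctness of $M_n[g]$ with $V$'', but after collapsing $\delta_0$ only $n-1$ Woodins remain, so by \ref{fact:Mncorrect} you get $\bfSigma^1_n$-correctness, with the bonus $\bfSigma^1_{n+1}$ only when $n-1$ is even, i.e.\ $n$ odd. For even $n$ your argument that $p[T_a]$ and $p[T_b]$ are complementary in $M_n[g]$ breaks down. The paper notes this explicitly in the remark following the proof: the route through correctness of $M_n[g]$ (which is essentially your approach) ``doesn't go through when $n$ is even because $M_n[G]$ isn't sufficiently correct''.

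The paper's fix for even $n$ is to avoid external correctness entirely and instead build, inside $M_n$, a tree searching for tuples $(x,g,N,\pi)$ with $\pi:N\to M_n|\eta$ elementary, $g$ generic for the collapse of $\pi^{-1}(\delta_0)$, $x\in N[g]$, and $N[g]$ asserting via its remaining extender algebra that the $\Pi^1_{n+1}$ formula holds of $x$. Genericity iterations over the surviving Woodins show this tree projects to $(\Pi^1_{n+1})^V\cap M_n[G]$ in $M_n[G]$; slicing by $\bfPi^1_{n+1}$ definitions of $A$ and its complement then gives the complementary pair. Your sketch would need this extra ingredient (or an equivalent) to handle even $n$.
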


\begin{thm}\label{thm:homMn}
In $M_n$, every homogeneously Suslin set of reals is correctly $\bfDelta^1_{n+1}$; equivalently, they are $\Col(\om,\delta_0)$-universally Baire, where $\delta_0$ is the least Woodin cardinal.\end{thm}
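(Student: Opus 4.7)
The plan is to decompose the homogeneity system witnessing that $A$ is homogeneously Suslin into a coherent family of finite iteration trees on $M_n$ (using \ref{thm:exactmeas} and the finite-support construction of \S\ref{subsec:fst}), and then read membership in $A$ from wellfoundedness of the direct-limit tree this yields. The relevant wellfoundedness is captured by the iteration strategy for $M_n|\delta_0$-based trees, which is $\Col(\om,\delta_0)$-universally Baire inside $M_n$ by the $Q$-structure analysis used in the proof of \ref{cor:MnV=HOD}. Combined with Lemma \ref{lem:correctbaire}, this will give correctly $\bfDelta^1_{n+1}$.

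First, suppose $A = p[T]$ is homogeneously Suslin in $M_n$, witnessed by a tower of countably complete $M_n$-measures $\langle \mu_s : s \in \om^{<\om} \rangle$. Apply \ref{thm:exactmeas} to each $\mu_s$ to obtain a finite normal iteration tree $\Tt_s$ on $M_n$ whose main-branch extender is $\mu_s$. Using the finite-support technique of \ref{lem:supptree} applied to the commuting diagrams $i_{\mu_t} = i_{\mu_s,\mu_t} \com i_{\mu_s}$ for $s \subseteq t$, I will arrange that the trees are coherent: there are natural copy maps embedding a normal extension of $\Tt_s$ into $\Tt_t$ which realize the projection $i_{\mu_s,\mu_t}$. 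This relies on the associativity of extenders (\ref{lem:extass}), the decomposition of tree-extenders via Dodd cores in the style of \ref{lem:Gcoreseq}, and the preservation of Dodd-fragment ordinals under iteration (\ref{lem:Dfpres}). Since a homogeneity system for a set of reals lives on sections of a tree of reals, one may reduce to the case where the critical points of the $\mu_s$ lie below $\delta_0$, so each $\Tt_s$ is based on $M_n|\delta_0$.

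Second, for a real $x$, countable completeness of the tower $\langle \mu_{x \rest n} \rangle$ is equivalent to wellfoundedness of the direct limit of $\langle \Tt_{x \rest n} \rangle$, equivalently, to the limit tree $\Tt_x = \bigcup_n \Tt_{x \rest n}$ having a wellfounded cofinal branch. This yields a tree $T^* \in M_n|\delta_0^+$ on $\om \times \Or$ with $A = p[T^*]$ in $V$. For a complementary tree $S^*$, I use that the strategy for $M_n|\delta_0$-based trees is $Q$-structure guided and $\Col(\om,\delta_0)$-universally Baire inside $M_n$: the unique correct branch through $\Tt_x$ is the one whose $Q$-structure is appropriately iterable (as in the proof of \ref{cor:MnV=HOD}), and this condition, while not absolute in $V$, is captured by a tree in $M_n|\delta_0^+$ that becomes absolute after collapsing $\delta_0$. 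The tree $S^*$ searches for data witnessing illfoundedness of the strategy-branch through $\Tt_x$; again $S^* \in M_n|\delta_0^+$. The pair $(T^*, S^*)$ then provides the required $\Col(\om,\delta_0)$-universally Baire representation, and Lemma \ref{lem:correctbaire} converts this to correctly $\bfDelta^1_{n+1}$.

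The main obstacle is the coherence step in the first paragraph: finite-support trees built separately for each $\mu_s$ need not be compatible under $s \subseteq t$, so arranging $\Tt_t$ as a normal extension of $\Tt_s$ up to copy maps demands that one define the $\Tt_s$ by recursion on $|s|$, at each stage appending a finite continuation which simultaneously realizes $\mu_{s\conc\langle k\rangle}$ and agrees with the previously constructed trees for initial segments. Making this work with the commuting diagrams of the homogeneity system requires finer control over the finite-support algorithm than is provided by \ref{dfn:supptree} as stated; this is one of the places where, as noted in footnote \ref{ftn:type3}, \cite{hsstm} supplies a cleaner treatment (with a stronger version of \ref{lem:supptree} and a correction to \ref{dfn:supp}). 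A secondary difficulty is reducing to measures with critical points below $\delta_0$: here one reflects inside $M_n$ to a countable hull of a sufficient level and then uses \ref{thm:exactmeas} to minimize the supports.
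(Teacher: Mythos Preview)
Your approach differs substantially from the paper's, and the obstacles you flag are real. The paper does not try to make finite trees $\Tt_s$ cohere into a system. Instead it passes to a countable hull $M = \Hull_\om^{M_n|\eta}(\emptyset)$ with uncollapse $\pi$, takes the collapsed measures $\bar{\mu}_s$, and for each real $x$ forms the single model $\bar{U}_x = \dirlim_m \Ult(M,\bar{\mu}_{x\rest m})$. The main claim is that $x \in p[T]$ iff $\bar{U}_x$ is $\Pi^1_{n+1}$-iterable (in the sense of \cite{projwell}), iff $\bar{U}_x$ embeds elementarily into a correct iterate of $M$; these give $\Pi^1_{n+1}(M)$ and $\Sigma^1_{n+1}(M)$ definitions respectively, and $\bfSigma^1_{n+1}$-correctness of $M_n$ (with extra care for $n$ odd) pushes this to $V$. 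The finite-support machinery enters only once, in the implication ``embeddable $\Rightarrow x \in p[T]$'': one must show that the measures derived from the iteration embedding $i^\Tt:N\to Q$ at the generators $\pi_{\Qbar}\com\sigma(b)$ coincide with the $\mu_{x\rest m}$, and this is done by observing that $\pi$ sends the finite support tree $\Ttbar_{\sigma(b)}$ to its copy, so the derived measures lift correctly under $\pi$.

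Your coherence problem is thus sidestepped entirely: there is no need to arrange $\Tt_t$ as a normal extension of $\Tt_s$, because the argument never compares two different $\Tt_s$'s. Your reduction to critical points below $\delta_0$ is also unjustified as stated: the measures $\mu_s$ live on $Z^{|s|}$ for whatever $Z$ the tree is on, with no bound on $|Z|$, and your sentence ``a homogeneity system for a set of reals lives on sections of a tree of reals'' conflates the real coordinate with the ordinal coordinate. Passing to a hull is how one handles this, but then you are in the paper's setup and should work with the single model $\bar{U}_x$ rather than a tree system on $M_n$. Finally, the paper does not go through universal Baireness of the strategy to reach the conclusion; it obtains correctly-$\bfDelta^1_{n+1}$ directly from the iterability characterization and invokes \ref{lem:correctbaire} only afterward for the equivalent universal-Baire formulation.
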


\begin{cor}\label{cor:whomMn}
In $M_n$, the weakly homogeneously Suslin sets of reals are precisely the $\bfSigma^1_{n+1}$ sets.\end{cor}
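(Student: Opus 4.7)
The plan is to deduce this directly from Theorem \ref{thm:homMn}, Martin and Steel's theorem (cited from \cite{projdet}) that every $\bfPi^1_n$ set in $M_n$ is homogeneously Suslin, and the $\bfSigma^1_{n+1}$-correctness of $M_n$. Two inclusions must be established, corresponding to the lower and upper bound on the pointclass of weakly homogeneously Suslin sets.

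First I would handle the easy containment $\bfSigma^1_{n+1} \subseteq$ weakly homogeneously Suslin. By Martin--Steel, every $\bfPi^1_n$ set in $M_n$ is homogeneously Suslin there, and any $\bfSigma^1_{n+1}$ set is the projection of a $\bfPi^1_n$ set; since the projection of a homogeneously Suslin set is by definition weakly homogeneously Suslin, this direction is immediate.

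For the reverse containment, I would let $B \subseteq \RR^{M_n}$ be weakly homogeneously Suslin in $M_n$. By the standard $\ZFC$-provable characterization applied inside $M_n$, $B = p[A]$ for some $A \subseteq (\RR \times \RR)^{M_n}$ which is homogeneously Suslin in $M_n$. Apply Theorem \ref{thm:homMn} to $A$: choose $a, b \in \RR^{M_n}$ witnessing that $A$ is $M_n$-correctly $\bfDelta^1_{n+1}$, so $A = U_a \cap M_n$ and $U_a = \RR \setminus U_b$ in $V$. Then $U_a \subseteq (\RR \times \RR)^V$ is genuinely $\bfDelta^1_{n+1}(a,b)$ in $V$, and the set $C := p[U_a]^V$ is $\bfSigma^1_{n+1}(a)$ in $V$. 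Now I would invoke the $\bfSigma^1_{n+1}$-correctness of $M_n$ (available because $M_n$ reads $\bfPi^1_{n+2}$ truth off of its extender algebra, as mentioned in the discussion preceding Theorem \ref{thm:homMn}; see \cite{outline}): applied to the formula $\varphi(x) \equiv \exists y\,(x,y) \in U_a$, this gives, for each $x \in \RR^{M_n}$, that $M_n \models \varphi(x)$ iff $V \models \varphi(x)$ iff $x \in C$. But also $M_n \models \varphi(x)$ iff $\exists y \in \RR^{M_n}\,(x,y) \in A$ iff $x \in B$. Hence the $\bfSigma^1_{n+1}(a)$ formula $\varphi$ defines $B$ over $M_n$, so $B$ is $\bfSigma^1_{n+1}$ in $M_n$ as required.

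The only point that is not entirely mechanical is recognizing why it is the \emph{correctness} in ``$M_n$-correctly $\bfDelta^1_{n+1}$'' that does the work, rather than just $\bfDelta^1_{n+1}$-ness inside $M_n$: correctness ensures that $U_a$ retains a $\bfDelta^1_{n+1}$ definition in $V$, so that the $V$-projection $p[U_a]^V$ is genuinely $\bfSigma^1_{n+1}$ and can be related back to the internal projection $p[A]^{M_n} = B$ via $\bfSigma^1_{n+1}$-absoluteness of $M_n$. There is no real obstacle beyond this bookkeeping; the corollary is essentially a repackaging of Theorem \ref{thm:homMn} using the closure of $\bfSigma^1_{n+1}$ under real quantification.
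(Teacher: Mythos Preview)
Your proof is correct and follows the same two-direction strategy as the paper: one inclusion from Martin--Steel, the other from Theorem~\ref{thm:homMn} together with the characterization of weakly homogeneously Suslin sets as projections of homogeneously Suslin sets. Your detour through $V$ in the second direction is unnecessary, though: since by definition an $M_n$-correctly $\bfDelta^1_{n+1}$ set is already $\bfDelta^1_{n+1}$ in $M_n$, the projection $B=p[A]$ is immediately $\bfSigma^1_{n+1}$ in $M_n$ by closure under $\exists^\RR$, without invoking $\bfSigma^1_{n+1}$-correctness.
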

\begin{proof}
One direction follows the theorem and the fact that weakly homogeneously Suslin sets are just projections of homogeneously Suslin sets (see \cite{stattower}). The other follows the Martin-Steel result of \cite{projdet}.\renewcommand{\qedsymbol}{$\Box$(Corollary \ref{cor:whomMn})}\end{proof}

We expect that throughout the interval of mice from $0^\pistol$ to $M_1$, the homogeneously Suslin sets become steadily more complex descriptive set theoretically, culminating in:

\begin{conj} In $M_1$, the homogeneously Suslin sets, the correctly $\bfDelta^1_2$ sets and the $\Col(\om,\delta_0)$ sets coincide.\end{conj}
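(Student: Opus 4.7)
The plan is to establish the single missing inclusion: in $M_1$, every $\Col(\om,\delta_0)$-universally Baire set is homogeneously Suslin. The reverse inclusion homogeneously Suslin $\subseteq$ correctly $\bfDelta^1_2$ is Theorem \ref{thm:homMn} applied at $n=1$, and the equivalence correctly $\bfDelta^1_2$ $=$ $\Col(\om,\delta_0)$-universally Baire is Lemma \ref{lem:correctbaire}. So fix $A\in M_1$ together with trees $T,S\in M_1$ on $\om\cross\gamma$ (some $\gamma<\delta_0$) witnessing the $\Col(\om,\delta_0)$-uB property.

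First I would exploit the Woodinness of $\delta_0$ in $M_1$ to set up the genericity iteration for $\delta_0$'s extender algebra $\mathbb{B}$: for any $x\in\RR^{M_1}$, there is a normal iteration tree $\Uu_x\in M_1$ on $M_1|\delta_0$ such that $x$ is generic over the final model for $i^{\Uu_x}(\mathbb{B})$, and the shifted trees $i^{\Uu_x}(T)$, $i^{\Uu_x}(S)$ then compute the $A$-membership of $x$ inside $M_1[x]$. Second, for each $s\in\om^{<\om}$ I would extract a candidate homogeneity measure $\mu_s$ on $\gamma^{|s|}$ as the pushforward along the canonical genericity iteration of the natural $\mathbb{B}$-name for a branch through $T_s$. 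Theorem \ref{thm:exactmeas} provides the essential concrete handle: every countably complete ultrafilter of $M_1$ is realized as the extender of a finite normal iteration tree, so the space of candidate measures $\mu_s$ admits a parametrization internal to $M_1|\delta_0^+$, and tower coherence ($\mu_t$ projects to $\mu_s$ for $s\subseteq t$) follows from the coherence of the iteration maps.

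The main obstacle, by a wide margin, is \emph{countable completeness} of the tower $\left<\mu_{x\rest n}\right>_{n<\om}$ for $x\in A$. In a mouse with a measurable cardinal above its Woodin, countable completeness comes essentially for free via a standard tower-of-ultrapowers argument; but $M_1$ has no measurable above $\delta_0$, so this must be engineered internally. The plan is to derive it from the $\fully$-iterability of $M_1$ together with the self-iterability analysis of \S\ref{sec:stacking}: a purported failure of countable completeness inside $M_1$ gives an $\om$-sequence of decreasing conditions in the tower, which one reflects (as in the argument of \ref{lem:Ecc} and the argument producing the phalanx $\ph$ in \ref{clm:escit}) into a countable elementary substructure, then uses the self-iteration strategy to realize it as an illfounded iterate, contradiction. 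A secondary issue, which I expect to be easier but still delicate, is verifying that the resulting measures live in $M_1$ and not merely in $V$; here the extender-full self-iterability of $M_1$ (established by the methods of \S\ref{sec:stacking}, which apply since $M_1$ is tame) should be enough to make the construction internal.

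I view the countable-completeness step as the crux and do not expect the argument to be short: it must substitute genuine fine-structural self-iterability for the missing measurable above $\delta_0$. If this step can be made to work, the rest of the plan amounts to bookkeeping with extender-algebra genericity and the parametrization of ultrafilters given by \ref{thm:exactmeas}.
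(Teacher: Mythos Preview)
The statement you are trying to prove is labeled \texttt{conj} in the paper, not \texttt{thm}: it is stated as an open conjecture and the paper gives no proof. So there is nothing in the paper to compare your proposal against; you are attempting to resolve a question the author explicitly leaves open.

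On the substance of your plan: you correctly isolate the missing inclusion (every $\Col(\om,\delta_0)$-universally Baire set of $M_1$ is homogeneously Suslin there) and the real obstruction (no measurable above $\delta_0$ in $M_1$, so the standard Martin--Steel machinery for producing homogeneity measures with the required completeness does not apply directly). But your proposed workaround is not yet an argument. The construction of the $\mu_s$ as ``pushforwards along the canonical genericity iteration of the natural $\mathbb{B}$-name for a branch through $T_s$'' is not a well-defined ultrafilter on $\gamma^{|s|}$ in $M_1$: the genericity iteration depends on the real $x$, not just on $s$, and a $\mathbb{B}$-name for a branch does not by itself pick out a single measure without a filter. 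Invoking \ref{thm:exactmeas} to ``parametrize'' ultrafilters does not help here, since that theorem analyzes a given ultrafilter rather than producing one. Most importantly, your countable-completeness step is a restatement of the difficulty rather than a resolution of it: reflecting a failure into a countable hull and appealing to self-iterability does not obviously yield a contradiction, because the putative tower $\langle\mu_{x\rest n}\rangle$ is not tied to an iteration of $M_1$ in any way you have specified. The paper's self-iterability results concern trees on $M_1$ guided by $Q$-structures; they do not supply a mechanism for showing arbitrary towers of internally defined measures are countably complete.

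In short, the paper does not prove this statement, and your sketch does not close the gap either; the hard step remains genuinely open as far as the paper is concerned.
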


Other related results have been known for some time. One was the following result of Woodin, which is discussed in \cite{kai}:

\begin{fact}[Woodin] Assume $\AD+\DC$, and suppose there exists a normal fine measure $\mu$ on $\pow_{\om_1}(\RR)$. For measure one many $\sigma\in\mu$, if $g$ is a generic enumeration of $\RR\int M_1(\sigma)$ in order-type $\om_1^{M_1(\sigma)}$, then the weakly homogeneously Suslin sets of $M_1(\sigma)[g]$ coincide with the $\bfSigma^1_2$ sets of $M_1(\sigma)[g]$.\end{fact}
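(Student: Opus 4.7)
The plan is to prove the two inclusions separately, paralleling the $n=1$ case of Corollary \ref{cor:whomMn} but inside $M_1(\sigma)[g]$, using that the generic enumeration $g$ adds no reals and that $M_1(\sigma)$ is fully iterable for $\mu$-a.e.\ $\sigma$. Setup: standard $\AD+\DC$ consequences of the normal fine measure $\mu$ on $\pow_{\om_1}(\RR)$ give that for $\mu$-a.e.\ $\sigma$, $M_1(\sigma)$ exists, is $(\om_1,\om_1+1)$-iterable, and satisfies $\RR\int M_1(\sigma)=\sigma$; moreover $\om_1^{M_1(\sigma)}$ is the correct place at which $\sigma$ is captured. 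The enumeration forcing is countably closed in $M_1(\sigma)$, so it adds no reals and no countable subsets of $M_1(\sigma)$. Consequently $\RR^{M_1(\sigma)[g]}=\sigma$, and the unique Woodin $\delta_0$ of $M_1(\sigma)$, with measurables above it, survives in $M_1(\sigma)[g]$.

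For $\bfSigma^1_2\sub\mathrm{wHS}$ in $M_1(\sigma)[g]$: apply the Martin--Steel theorem of \cite{projdet} inside $M_1(\sigma)$. The Woodin $\delta_0$ together with measurables above witnesses that every $\bfPi^1_1$ set of reals is $\delta_0$-homogeneously Suslin via a countable system of countably complete measures $\langle \mu_s\rangle_{s\in\om^{<\om}}\in M_1(\sigma)$. Since the forcing adds no countable sequences, these homogeneity towers together with their countable-completeness witnesses are absolute to $M_1(\sigma)[g]$. Projecting the resulting homogeneous trees delivers every $\bfSigma^1_2$ set of $M_1(\sigma)[g]$ as weakly homogeneously Suslin there.

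For $\mathrm{wHS}\sub\bfSigma^1_2$ in $M_1(\sigma)[g]$: let $A$ be weakly homogeneously Suslin in $M_1(\sigma)[g]$, witnessed by a tower $\vec{\mu}=\langle \mu_s\rangle$. Each $\mu_s$ is a countably complete $M_1(\sigma)[g]$-measure on some $\pow(Z_s)$ with $Z_s$ countable; being coded by a real, $\mu_s$ lies in $M_1(\sigma)$. Applying Theorem \ref{thm:exactmeas} inside $M_1(\sigma)$, represent each $\mu_s$ as the extender of a finite normal iteration tree $\Tt_s$ on a common countable elementary hull $N$ of $M_1(\sigma)$; use the finite-support construction of \S\ref{sec:meas} to choose the $\Tt_s$'s coherently, i.e., as compatible finite subtrees of a single normal iteration of $N$. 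Then for any real $x$, the tower $\langle \mu_{x\rest n}\rangle$ is wellfounded iff there exists an iterable countable premouse realizing the direct limit of the $\Tt_{x\rest n}$'s as a common normal iterate; this is equivalent to membership of $x$ in the projection of the homogeneous tree. In the $M_1$-region, iterability of countable premice is $\bfPi^1_2$ (branches of the critical length are guided by $Q$-structures), so the displayed condition is $\bfSigma^1_2(\vec{\mu})$. Using $g$ to absorb $\vec{\mu}$ into a real parameter from $\sigma$, $A$ is $\bfSigma^1_2$ in $M_1(\sigma)[g]$.

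The main obstacle is the $\mathrm{wHS}\sub\bfSigma^1_2$ direction, specifically translating the iteration-tree realization of each individual $\mu_s$ furnished by \ref{thm:exactmeas} into a $\bfSigma^1_2$ description of ``$\langle \mu_{x\rest n}\rangle$ is wellfounded.'' The finite-support machinery of \S\ref{sec:meas} is tailored exactly to this coherence problem, and it is essential that $M_1(\sigma)$ carry only one Woodin: the one-Woodin iterability complexity sits at $\bfPi^1_2$, which is what keeps the final bound at $\bfSigma^1_2$; a second Woodin would push both bounds up by one level, as happens in Corollary \ref{cor:whomMn}.
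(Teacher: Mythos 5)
The paper does not prove this Fact at all: it is stated, attributed to Woodin, and the reader is referred to Hauser's article \cite{kai} for discussion. So there is no ``paper proof'' to compare your proposal against, and you should treat this as a free-standing verification.

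Evaluating your argument on its own terms, the $\bfSigma^1_2\subseteq\mathrm{wHS}$ direction is fine in outline (Martin--Steel in $M_1(\sigma)$, then preservation of the homogeneity systems under the small, $\sigma$-closed forcing is a Levy--Solovay argument). The problem is in the converse direction, and it is not a presentational one. You write that each $\mu_s$ is a measure ``on some $\pow(Z_s)$ with $Z_s$ countable; being coded by a real, $\mu_s$ lies in $M_1(\sigma)$.'' That is simply false: a weak homogeneity system at completeness $\kappa$ consists of $\kappa$-complete ultrafilters on sets of the form $T_s\sub\kappa^{|s|}$ for a measurable $\kappa>\om_1^{M_1(\sigma)}$, so the underlying sets are not countable and the measures are not reals. (The correct reason the measures lie in $M_1(\sigma)$ is Levy--Solovay: the poset has size $\om_1^{M_1(\sigma)}<\kappa$, so in the extension every $\kappa$-complete ultrafilter on $\kappa$ is generated by its trace on the ground model.) More seriously, the final step --- ``using $g$ to absorb $\vec{\mu}$ into a real parameter from $\sigma$'' --- is the entire content of the result and is not spelled out. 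Even after you have the countable hull $\Qbar$, the finite-support trees $\Tt_s$, and the $\Pi^1_2$-iterability characterization from the analogue of Claim~\ref{clm:p[T]def}, you have a $\Sigma^1_2$ formula with the hull (or the system $\vec{\mu}$) as a parameter; explaining why this parameter becomes available as a real precisely because of the generic enumeration $g$, and why it is not available in $M_1(\sigma)$ itself, is where the proof actually lives. You also quietly invoke Theorem~\ref{thm:exactmeas} and the $\S\ref{sec:meas}$ finite-support machinery for $\sigma$-relativized premice; the paper's statements are for ordinary mice, and the relativization needs at least a sentence.
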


The analogous statement about $M_n(\sigma)$ and $\bfSigma^1_{n+1}$ sets also holds. Lower in the mouse order, Schindler and Koepke generalized the fact about $L[U]$ in \cite{hsssim} in a couple of ways. They showed that either if $0^\longsword$ does not exist, or if $0^\pistol$ does not exist, $V=K$ and $K$ is below a $\mu$-measurable, then all homogeneously Suslin sets are $\bfPi^1_1$. In fact they make do with a notion weaker than homogeneously Suslin. ($0^\longsword$ and a $\mu$-measurable are both well below $0^\pistol$. See \ref{dfn:0pistol} for a definition of $0^\pistol$.)

We now move on the proof of \ref{lem:correctbaire}, and then \ref{thm:homMn}. However, the proof of the theorem shows independently that the homogeneously Suslin sets are correctly $\bfDelta^1_{n+1}$, then just quotes the lemma for universal Baireness. So the reader can skip the proof of the lemma if desired.

The following is established in (\cite{projwell}, 4.6).

\begin{fact}[Woodin]\label{fact:Mncorrect} Let $n\in\om$, $N$ be iterable and active, $\PP\in N|\alpha$, and suppose $N$ has $n$ Woodin cardinals above $\alpha$. Let $G$ be $\PP$-generic over $N$. Then $N[G]$ is $\bfSigma^1_{n+1}$-correct, and if $n$ is even, $N[G]$ is $\bfSigma^1_{n+2}$-correct.\qed\end{fact}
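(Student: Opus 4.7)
The plan is to prove this by induction on $n$, with Woodin's genericity iteration theorem as the central tool: given an iterable premouse $N$ with Woodin cardinal $\delta > \alpha$ and any real $y \in V$, there is a normal iteration $i : N \to N^*$ with all critical points in $(\alpha, \delta)$ such that $y$ is $N^*$-generic for the extender algebra $i(\mathbb{B}_\delta)$ of $N^*$ at $i(\delta)$. Since every critical point exceeds $\alpha$, the forcing $\PP \in N|\alpha$ and its generic $G$ are fixed pointwise by $i$, so $G$ remains $N^*$-generic; $N^*$ inherits iterability and activity from $N$, and has one fewer Woodin above $i(\delta)$ than $N$ had above $\alpha$.

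For the base case $n = 0$: $\bfSigma^1_1$-correctness of $N[G]$ is Shoenfield absoluteness and needs no extra hypothesis. For the $\bfSigma^1_2$ bonus (since $n = 0$ is even), activity supplies a normal measure $U$ on $\kappa = \crit(F^N)$ derived from $F^N$. Iterability of $N$ in $V$ forces $U$ to be $\omega_1$-complete in $V$: any countable sequence of $U$-measure-one sets with empty intersection would reflect into $N$ via a countable hull and then be iterated to a contradiction. The standard Martin--Solovay construction with $U$ and its iterates produces the homogeneity system witnessing $\bfSigma^1_2$-absoluteness for $N[G]$.

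For the inductive step giving $\bfSigma^1_{n+1}$-correctness: a $\bfSigma^1_{n+1}$ statement $\exists y\, \psi(y,z)$ over $N[G]$ (with $\psi \in \bfPi^1_n$ and $z \in \RR^{N[G]}$) holding in $V$ has a witness $y \in V$. Apply the genericity iteration at the least Woodin of $N$ above $\alpha$ to obtain $i : N \to N^*$ with $y$ generic for $i(\mathbb{B}_\delta)$. Then $N^*[G][y]$ is iterable, active, and has $n-1$ Woodins above $i(\delta)$, so by induction it is $\bfSigma^1_n$-correct, hence $\bfPi^1_n$-correct. Therefore $\psi(y,z)$ holds in $N^*[G][y]$. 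Forcing absoluteness then gives $N^*[G] \models \exists y\, \psi(y,z)$, and elementarity of $i$ transfers this to $N[G]$. Descent of a $\bfSigma^1_{n+1}$ witness from $N[G]$ to $V$ is immediate, so $N[G]$ is $\bfSigma^1_{n+1}$-correct.

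The $\bfSigma^1_{n+2}$ bonus when $n$ is even is obtained by reorganising the induction so that at the final parity-aligned step the activity of the top extender is used in place of a further Woodin: the Martin--Solovay homogeneity system built from $F^N$ (as in the base case) absorbs one additional projective quantifier beyond what the Woodins alone would provide. The parity restriction reflects the fact that a single iterable measurable covers one extra projective quantifier only when the quantifier index matches, i.e., when $n+2$ is odd. The main technical obstacle is the bookkeeping through the tower of genericity iterations: one must check that after each iteration, iterability in $V$, activity of the top extender, genericity of $G$ over the new model, and the correct count of remaining Woodins are all preserved, and that the parity argument in the bonus is tracked carefully. With these verifications this is Woodin's argument as presented in \cite[4.6]{projwell}.
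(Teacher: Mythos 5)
The paper does not prove this fact; it is quoted from \cite{projwell} and dispatched with a $\qed$ and a citation, so the only comparison available is with the argument in the literature, not with anything in the text. Your plan for the $\bfSigma^1_{n+1}$-correctness clause is the standard one and is essentially sound: induct, spend one Woodin per quantifier via a genericity iteration at the least Woodin above $\alpha$, use that $\crit$ of the iteration is above $\alpha$ so that $\PP$ and $G$ are undisturbed and the embedding lifts to $N[G]\to N^*[G]$, and close the loop with the inductive hypothesis applied to $N^*[G][y]$. The ``descent'' direction is not quite ``immediate'' as you say --- it uses $\bfPi^1_n$-correctness of $N[G]$, which is again the inductive hypothesis --- but that is a small elision, not an error.

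The genuine gap is the even bonus. What you write --- ``reorganising the induction so that at the final parity-aligned step the activity of the top extender is used in place of a further Woodin'' --- is not an argument, and the one concrete assertion you do make is wrong: you say the bonus works ``when $n+2$ is odd,'' but the hypothesis is that $n$ is even, hence $n+2$ is even. This is a sign that the parity mechanism has not actually been located. A single measurable does not generically ``absorb one more projective quantifier''; the extra step here is parity-specific for a real reason (it tracks the Moschovakis periodicity picture and the scale/basis properties available at that level), and indeed the $\bfSigma^1_{n+2}$ conclusion is false for $n$ odd. To repair the proof you would need to supply the actual argument for the even case from \cite{projwell} rather than gesture at a heuristic whose parity you have inverted.

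A smaller point: for the base case you invoke the Martin--Solovay construction to get $\bfSigma^1_2$-correctness from the top measure. That is the wrong tool at this level (Martin--Solovay is for the $\bfSigma^1_3$ tree from $\om_1$ measures). The standard route is to iterate the derived normal measure $\om_1$ times so the $\om_1$st iterate contains all countable ordinals, apply Shoenfield there, and pull back along the iteration map, which fixes the reals of $N[G]$. The $\om_1$-completeness you need is just the wellfoundedness of iterated ultrapowers, which is what $(\om_1+1)$-iterability provides directly; the countable-hull reflection you sketch works but is circuitous.
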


\begin{cor}\label{cor:neven} For $n$ even, $M_n\sats A \textrm{ is } \bfDelta^1_{n+1}$ iff $A$ is $M_n$-correctly $\bfDelta^1_{n+1}$.\end{cor}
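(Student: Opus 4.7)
The plan is to derive the corollary directly from Fact \ref{fact:Mncorrect}, which for even $n$ gives that (an appropriate active segment of) $M_n$ is $\bfSigma^1_{n+2}$-correct, hence also $\bfPi^1_{n+2}$-correct. The ``if'' direction is essentially built into the definition of correctly-$\bfDelta^1_{n+1}$: if $a,b\in\RR^{M_n}$ witness that $A=U_a\int M_n$ and $U_a=\RR\cut U_b$ in $V$, then $\bfSigma^1_{n+1}$-correctness of $M_n$ identifies $U_a\int M_n = U_a^{M_n}$ and $(\RR\cut U_b)\int M_n = \RR^{M_n}\cut U_b^{M_n}$, supplying both a $\bfSigma^1_{n+1}$ and a $\bfPi^1_{n+1}$ definition of $A$ inside $M_n$.

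For the ``only if'' direction, suppose $M_n\sats$ ``$A$ is $\bfDelta^1_{n+1}$'', witnessed by reals $a,b\in\RR^{M_n}$ with $M_n\sats A=U_a=\RR\cut U_b$. The key observation is that the biconditional
\[ \all x\,(x\in U_a\ \iff\ x\notin U_b) \]
is $\bfPi^1_{n+2}$ in the parameter $(a,b)$: a counterexample is either a witness to $U_a(x)\wedge U_b(x)$ (a $\bfSigma^1_{n+1}$ property) or to $\no U_a(x)\wedge\no U_b(x)$ (a $\bfPi^1_{n+1}$ property), so existentially quantifying over $x$ makes the negation $\bfSigma^1_{n+2}$. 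Since $M_n$ is $\bfPi^1_{n+2}$-correct, the biconditional also holds in $V$, so $U_a=\RR\cut U_b$ in $V$. Combined with $A=U_a\int M_n$ (using $\bfSigma^1_{n+1}$-correctness to pass from $M_n$'s interpretation of $U_a$ to the true $U_a$), this exhibits $A$ as $M_n$-correctly $\bfDelta^1_{n+1}$.

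The one technical wrinkle is matching the hypothesis of Fact \ref{fact:Mncorrect}, which requires an iterable active premouse: I would apply it with trivial $\PP$ (so $G$ is trivial and $N[G]=N$) to $M_n^\#$, or to a suitable active initial segment of $M_n$ containing the $n$ Woodins below its top extender, noting that correctness for $V$-reals transfers from this segment to $M_n$ itself since they share the same reals. Beyond this bookkeeping, there is no substantial obstacle -- the entire content of the proof is the computation that ``the $\bfSigma^1_{n+1}$ set $U_a$ and the $\bfPi^1_{n+1}$ set $\RR\cut U_b$ coincide'' is a $\bfPi^1_{n+2}$ assertion, together with Woodin's correctness theorem.
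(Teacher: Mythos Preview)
Your proof is correct and follows essentially the same approach as the paper: apply Fact~\ref{fact:Mncorrect} to $M_n^\#$ (with trivial $\PP$) and transfer the statement ``$U_a$ and $\RR\setminus U_b$ coincide'' between $M_n$ and $V$. Your complexity computation is in fact more careful than the paper's: the paper calls this a $\Pi^1_{n+1}(z)$ statement, whereas you correctly identify it as $\Pi^1_{n+2}$ (the clause $\forall x\,(U_a(x)\vee U_b(x))$ is $\forall\,\Sigma^1_{n+1}$); either way, Fact~\ref{fact:Mncorrect} supplies $\bfSigma^1_{n+2}$-correctness for even $n$, so the transfer goes through.
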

\begin{proof} Apply \ref{fact:Mncorrect} to $M^\#_n$ and the $\Pi^1_{n+1}(z)$ statement ``$A$ is a $\Delta^1_{n+1}(z)$ set''.\\
\renewcommand{\qedsymbol}{$\Box$(Corollary \ref{cor:neven})}\end{proof}

\begin{rem} The proof we give for the ``$\implies$'' direction of the following lemma (in the $n>1$ case) was provided by Steel. Our original attempt for the $n>1$ case (which we sketch below) didn't work at all until Grigor Sargsyan pointed out the existence of \ref{fact:Mncorrect}, which makes it work for $n$ odd. Thanks to both for the guidance.\end{rem}

\begin{proof}[Proof of Lemma \ref{lem:correctbaire}]
Consider first $M_1$. If $A\in M_1$ is correctly-$\bfDelta^1_2$, use Shoenfield trees for $A$ and its complement to witness the $\Col(\om,\delta_0)$-universal Baireness of $A$ in $M_1$. Conversely, suppose
\[ M_1|\eta\sats\ZFmin\ +\ \Col(\om,\delta_0)\forces S,T\textrm{ project to complements}. \]
We may assume $S,T$ are definable points in $M_1|\eta$.
Let $M=\Hull_\om^{M_1|\eta}(\empty)$ and let $S^M,T^M$ be the collapses of $S,T$. Note that in iterating $M$, there is always a unique wellfounded branch (using 1-smallness and that $M=\Th(M)$). Now define $A$ by
\[ x\in A\ \ \iff\]
\[\textrm{there's a non-dropping normal iterate } P \textrm{ of } M \textrm{ such that } x\in p[i_{M,P}(S^M)]; \]
then
\[ x\in\RR\cut A\ \ \iff\]
\[\textrm{there's a non-dropping normal iterate } P \textrm{ of } M \textrm{ such that } x\in p[i_{M,P}(T^M)]. \]
This is standard: given $x\in\RR$, there is a non-dropping iterate of $M$ so that $x$ is generic over $\Col(\om,\delta_0)^P$ (see (\cite{outline}, \S7)), and $x\in p[i_{M,P}(S^M)]$ or $x\in p[i_{M,P}(T^M)]$ because these trees project to complements in $P[x]$. If there are iterates $P_1$ and $P_2$ so that $x\in p[i_{M,P_1}(S^M)]$ and $x\in p[i_{M,P_2}(T^M)]$, then let $P$ be the result of coiterating $P_1$ and $P_2$. Since $M=\Th(M)$, we get the same final model on both sides and $i_{P_1,P}\com i_{M,P_1}=i_{M,P}=i_{P_2,P}\com i_{M,P_2}$. But then $x\in p[i_{M,P}(S)]\int p[i_{M,P}(T)]$, so by absoluteness, there is some such $x'\in P$, a contradiction. Since the formulas above are $\Sigma^1_2(M)$, the lemma holds when $n=1$.

Now let $n>1$. To show all $\Col(\om,\delta_0)$-universally Baire sets are simple, the same proof as for $n=1$ works, except that the extra complexity of the iteration strategy for the hull $M$ leads only to a $\Delta^1_{n+1}(M)$ definition. (See \cite{projwell}, where the definability of the iteration strategy is discussed, or \ref{thm:homMn} for the $n=2$ case.)

For the converse, consider first $M_2$ and (correctly) $\bfDelta^1_3$. If $G$ is generic for $\Col(\om,\delta_0)$, $M_2[G]$ isn't $\Pi^1_3$-correct, but it can still compute $\Pi^1_3$ truth with its remaining extender algebra. This leads to a tree $T\in M_2$ which projects to $(\Pi^1_3)^V\int M_2[G]$. For this, let $\varphi(v_3)=\all v_1\ex v_2\psi(v_1,v_2,v_3)$ define a universal $\Pi^1_3$ set, with $\psi$ $\Pi^1_1$. Fix $\eta>\delta_1$ such that $M_2|\eta\sats\ZFmin$. Let $T$ be the tree building $(x,g,N,\pi)$, where $\pi:N\to M_2|\eta$ is elementary, $g$ is $N$-generic over $\pi^{-1}(\Col(\om,\delta_0))$, $x\in N[g]$, and
\[ N[g]\sats \textrm{The extender algebra at } \pi^{-1}(\delta_1) \textrm{ forces } \varphi(x). \]
Using genericity iterations, the reader can check that
\[ p[T]\int M_2[G] = \{ x\in \RR\int M_2[G]\ |\ \varphi(x)\}. \]
(Notice that the corresponding tree $S$ for $\Sigma^1_3$ doesn't work, since a $\Pi^1_2$ statement true in some $N[g]$ may be false in $V$.)

Now if $A\sub\RR$ in $M_2$ is (correctly) $\bfDelta^1_{3}$, then using $T$ and $\bfPi^1_3$ definitions for $A$ and its complement, it follows that $A$ is $\Col(\om,\delta_0)$-universally Baire in $M_2$.

In $M_3$, one obtains trees for $\Sigma^1_4$, etc. This completes the proof.
\renewcommand{\qedsymbol}{$\Box$(Lemma \ref{lem:correctbaire})}\end{proof}

\begin{rem}
We sketch another proof of the above lemma for $n$ odd. Work in $M_3$. Let $S_1$ be a homogeneous tree for $\Pi^1_2$ obtained as in \cite{projdet}, with completeness $\kappa$, $\delta_0<\kappa$. Let $S_2$ be the natural weakly homogeneous tree for $\Sigma^1_3$ obtained from $S_1$, and $S_3$ the corresponding Martin Solovay tree for $\Pi^1_3$, and $S_4$ the natural tree for $\Sigma^1_4$ obtained from $S_3$ (see \cite{stattower} for details). Then since the completeness of all measures used is above $\delta_0$, $S_4$ is still a tree for $\Sigma^1_4$ in $M_2[G]$, where $G$ collapses $\delta_0$. Given some $M_3$-correctly $\bfDelta^1_4$ definition, let $T_1$ and $T_2$ be the corresponding slices of $S_4$. The definition extends to a $\bfDelta^1_4$ set in $M_3[G]$, since $M_3[G]$ is $\Sigma^1_4$-correct, by \ref{fact:Mncorrect}. Therefore $T_1$ and $T_2$ project to complements in $M_3[G]$. This proof doesn't go through when $n$ is even because $M_n[G]$ isn't sufficiently correct. But notice the resulting tree projects to $(\Sigma^1_{n+1})^{M_n[G]}$ in $M_n[G]$, whereas the tree used in the proof of \ref{lem:correctbaire} projects to $(\Pi^1_{n+1})^V\int M_n[G]$. This gives another proof of the failure of $\Sigma^1_{n+1}$ correctness in $M_n[G]$: if these sets are complementary, then every $\bfSigma^1_{n+1}$ set is $\delta_0$-universally Baire in $M_n$, but then by \ref{lem:correctbaire}, they're $\bfDelta^1_{n+1}$; contradiction.
\end{rem}

\begin{proof}[Proof of Theorem \ref{thm:homMn}]\setcounter{clm}{0}
First consider $M_1$, where we now work. Let $\delta$ be (the) Woodin. Suppose $T\in M_1$ is a homogeneous tree, definable as a point in $N=M_1|\eta$, where $\eta$ is a double successor cardinal in $M_1$ above $\delta_0$. Let $M=\Hull^N(\empty)$. We will give a canonical representation of $p[T]$ as a correctly-$\Delta^1_2(M)$ set. This implies the theorem, since the least homogeneous tree whose projection is not correctly-$\bfDelta^1_2$ is definable.

Let $\pi:M\to N$ be elementary. $M$ is $(\omega_1+1)$-iterable since $N$ is. Moreover, because $M$ is $1$-small and pointwise definable, its strategy must always choose the unique wellfounded branch. Let $\left<\mu_s\right>_{s\in{^{<\om}\om}}\in\rg(\pi)$ be a homogeneity system for $S$. For $x\in{^\om\om}$,  let
\[ \Ubar_x=\dirlim_{m\leq n<\om}(\Ult(M,\mubar_{x\rest n}),i^M_{x\rest m,x\rest n}), \]
where bars denote preimages under $\pi$.

\begin{dfn}\index{$\Pi^1_2$-iterable}
For this proof, a countable premouse $P$ is $\Pi^1_2$\emph{-iterable} iff for every $\alpha,\Tt,x\in\RR$ such that $\alpha$ codes an ordinal and $\Tt$ codes an iteration tree on $U$, either (a) there is a $T$-maximal branch $b$, cofinal in $\lambda$, such that $M^\Tt_b$ and $M^\Tt_\gamma$, $\gamma<\lambda$, are wellfounded through $\alpha$; or (b) $\Tt$ has a final model, $x$ codes a normal one step extension $\Tt'$ of $\Tt$, and all models of $\Tt'$ are wellfounded through $\alpha$.
\end{dfn}

\begin{clm}\label{clm:p[T]def}
For $x\in{^\om}\om$, the following are equivalent:
\begin{itemize}
\item[(1)] $x\in p[T]$;
\item[(2)] There is an elementary $\psi:\Ubar_x\to M_1|\eta$;
\item[(3)] $\Ubar_x$ is $\Pi^1_2$-iterable;
\item[(4)] There is a countable $\Sigma$-iteration of $M$ to a model $Q$, and an elementary $\sigma:\Ubar_x\to Q$.
\end{itemize}
Moreover, the condition in (4) is $\Sigma^1_2(M)$.\end{clm}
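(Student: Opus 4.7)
The plan is to prove the cycle $(1)\Rightarrow(4)\Rightarrow(3)\Rightarrow(2)\Rightarrow(1)$, using the elementarity of $\pi$, the homogeneity of $T$ in $V$, and genericity iterations of $M$ via its unique branch strategy $\Sigma$. Throughout, let $U_x$ denote the analogous direct limit formed with $N$ and $\mu_{x\rest n}$ in place of $M$ and $\bar{\mu}_{x\rest n}$; since $\pi:M\to N$ is elementary and $\langle \mu_s\rangle\in\rg(\pi)$, the induced factor embeddings commute with the ultrapower maps to give a canonical elementary $\pi_\infty:\bar{U}_x\to U_x$.

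For $(1)\Rightarrow(4)$, if $x\in p[T]$ then homogeneity of $T$ in $V$ makes $U_x$ wellfounded, hence via $\pi_\infty$ also $\bar{U}_x$. Run a genericity iteration of $M$ by $\Sigma$ (possible as $M|\delta_0$ computes $\Sigma$ for trees below $\delta_0$, as in the proof of \ref{cor:MnV=HOD}) to an iterate $Q$ such that a real coding $\bar{U}_x$ is $\Col(\om,i^\Sigma(\delta_0))$-generic over $Q$; inside $Q[g]$, use that $M$ is pointwise definable (so $Q\equiv_\omega\bar{U}_x$) and a Skolem-hull argument to produce an elementary $\sigma:\bar{U}_x\to Q$; homogeneity of the Levy collapse then descends $\sigma$ back into $Q$. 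For $(4)\Rightarrow(3)$, given such $\sigma$, any putative tree $\Tt$ on $\bar{U}_x$ lifts through $\sigma$ to a tree $\Tt^*$ on $Q$, which inherits from $M$ a wellfounded cofinal branch or wellfounded one-step extension; this pulls back through $\sigma$ to witness the appropriate clause of $\Pi^1_2$-iterability. For $(3)\Rightarrow(2)$, $\Pi^1_2$-iterability lets one run a comparison of $\bar{U}_x$ with $M_1|\eta$: at each limit, pick the unique wellfounded branch. Since $M$ is pointwise definable and $1$-small, $\bar{U}_x$ does not move in this comparison, and the terminal iteration embedding on the $N$-side composed with the pullback from $Q$ yields an elementary $\psi:\bar{U}_x\to M_1|\eta$. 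For $(2)\Rightarrow(1)$, if $\psi$ exists then $\bar{U}_x$ is wellfounded, hence so is $U_x$ (combining $\psi$ and $\pi_\infty$ with a countability / absoluteness argument to promote wellfoundedness from $\bar{U}_x$ to $U_x$), whence $x\in p[T]$ by homogeneity.

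For the moreover clause, condition $(4)$ asserts the existence of reals coding a countable iteration tree $\Ww$ on $M$, a main branch $b$, and an embedding $\sigma$. The clause ``$\Ww$ is via $\Sigma$'' is $\Pi^1_1$ because $M$'s $1$-smallness and pointwise definability force $\Sigma$ to always select the unique cofinal wellfounded branch, and uniqueness and wellfoundedness are each $\Pi^1_1$ conditions on countable trees. Wellfoundedness of $M^\Ww_b$ and elementarity of $\sigma:\bar{U}_x\to Q$ are likewise $\Pi^1_1$ in the coded reals, so the full statement is $\Sigma^1_2(M)$. The step I expect to be the main obstacle is $(1)\Rightarrow(4)$: honestly producing a $\Sigma$-iterate $Q$ that absorbs $\bar{U}_x$, rather than merely witnessing wellfoundedness or embeddability into $M_1|\eta$, requires the genericity-iteration machinery together with the observation that the embedding found in $Q[g]$ must descend to $Q$ by homogeneity of the collapse and the definability of $M$.
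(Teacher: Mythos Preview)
Your cycle has a genuine gap at the step returning to (1). You attempt $(2)\Rightarrow(1)$ by saying that wellfoundedness of $\bar U_x$ ``promotes'' to wellfoundedness of $U_x$ via $\pi_\infty$ and absoluteness. But $\pi_\infty:\bar U_x\to U_x$ points the wrong way: a wellfounded structure can perfectly well embed elementarily into an illfounded one, and there is no countability or absoluteness trick that reverses this. What must be shown is that the \emph{$N$-level} tower $\langle\mu_{x\rest n}\rangle$ is wellfounded, and nothing in your argument touches those measures.

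The paper handles this as $(4)\Rightarrow(1)$, and it is the heart of the claim. Given the $\Sigma$-tree $\bar\Tt$ on $M$ with last model $\bar Q$ and $\sigma:\bar U_x\to\bar Q$, one copies $\bar\Tt$ up via $\pi$ to a tree $\Tt$ on $N$ with last model $Q$, and then \emph{defines} $\psi:U_x\to Q$ by
\[
\psi\bigl(j(f)(\pi_\infty(b))\bigr)=i^\Tt(f)\bigl(\pi_{\bar Q}\circ\sigma(b)\bigr).
\]
For this to be well-defined one must check that the measure on $N$ derived from $i^\Tt$ using seed $\pi_{\bar Q}\circ\sigma(b)$ is exactly $\mu_{x\rest n}$. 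That identification is proved using the finite support tree machinery of \S\ref{sec:meas}: the finite support tree $\bar\Tt_{\sigma(b)}$ lies in $M$, so $\pi$ carries it to the corresponding finite support tree for $\Tt$, and this transports the measure equality $\bar\mu_{x\rest n}=\mu^{\bar\Tt}_{\sigma(b)}$ to $\mu_{x\rest n}=\mu^{\Tt}_{\pi_{\bar Q}\circ\sigma(b)}$. This is the missing idea.

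There are also smaller problems elsewhere. In your $(3)\Rightarrow(2)$ you assert that $\bar U_x$ does not move in comparison because $M$ is pointwise definable; but $\bar U_x$ is a direct limit of ultrapowers of $M$ and is \emph{not} pointwise definable, so both sides move and you end up with maps $\bar U_x\to Q\leftarrow N$, not $\bar U_x\to N$. (The paper instead proves $(3)\Rightarrow(4)$ by comparing $\bar U_x$ with $M$ and using that both have the same $\Sigma_\omega$-theory to get a common final model.) Your $(1)\Rightarrow(4)$ via genericity iteration is also unconvincing: making a real coding $\bar U_x$ generic over an iterate $Q$ gives you $\bar U_x\in Q[g]$, but elementary equivalence of $\bar U_x$ and $Q$ does not produce an elementary $\sigma:\bar U_x\to Q$, and homogeneity of the collapse does not manufacture one. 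The paper's route $(1)\Rightarrow(2)$ is much cleaner: reflect the map $\pi_\infty:\bar U_x\to U_x$ inside $U_x$ using $j:N\to U_x$ and $j(\bar U_x)=\bar U_x$.
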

\pagebreak
\begin{proof}\ \\
(1) $\implies$ (2).

Let $U_x$ be the (wellfounded) direct limit in the $S$-system, with base model $N$. $\Ubar_x\in U_x$, and clearly $\pi$ yields an elementary $\pi_x:\Ubar_x\to U_x$. Let $j:N\to U_x$ be the direct limit map. We can choose $\alpha<\delta$ such that $N|\alpha\elem N$ and $\rg(\pi_x)\sub U_x|j(\alpha)$. By absoluteness, there is a $\psi':\Ubar_x\to U_x|j(\alpha)$ with $\psi'\in U_x$. Since $j(\Ubar_x)=\Ubar_x$, there's a $\psi:\Ubar_x\to N|\alpha$.\\
\\
(2) $\implies$ (3).

Immediate.\\
\\
(3) $\implies$ (4).

As in \cite{projwell}, the $\Pi^1_2$-iterability of $\Ubar_x$ allows us to run a comparison with $M$: whilst $\Tt$ on $M$ chooses non-maximal branches, they provide $Q$-structures, which guide the branch choice for the tree $\Uu$ on $\Ubar_x$. If $\Tt$ chooses a maximal branch, then $M(\Tt)$ has the same theory as $M$ and $\Ubar_x$, so the $\Pi^1_2$-iterability also provides a maximal branch for $\Uu$. Note that if the comparison reaches such a maximal stage, then it has finished. The comparison cannot run for $\omega_1$ stages, since otherwise it can be replicated in $L[M,\Ubar_x]$, which has a smaller $\omega_1$. So it terminates successfully, and since $M=\Hull^M(\empty)$, the same final model is produced by both trees, with no dropping on the main branches.\\
\\
(4) $\implies$ (1).

Let $\Ttbar$ be the $\Sigma$-tree on $M$, with final model $\Qbar$. Let $\Tt=\pi\Ttbar$ be the copied tree on $N$ and $\pi_{\Qbar}$ be the final copy map. Let $j:N\to U_x$, $\jbar:M\to\Ubar_x$, $\pi_{\infty}:\Ubar_x\to U_x$ be the natural maps. Then (ignoring $\psi$) the following diagram commutes:
\[
\setlength{\unitlength}{1mm}
\begin{picture}(100,70)(0,20)
\put(40,25){$M$}
\put(20,40){$\Ubar_x$}
\put(60,40){$\Qbar$}
\put(40,65){$N$}
\put(20,80){$U_x$}
\put(60,80){$Q$}
\put(40,28){\vector(-4,3){14}}
\put(27,31){$\jbar$}
\put(45,28){\vector(4,3){14}}
\put(55,31){$i^{\Ttbar}$}
\put(26,41){\line(1,0){15.5}}
\put(43.5,41){\vector(1,0){16}}
\put(50,42){$\sigma$}

\put(40,68){\vector(-4,3){14}}
\put(27,71){$j$}
\put(45,68){\vector(4,3){14}}
\put(55,71){$i^\Tt$}
\put(26,81){\line(1,0){3}}
\put(31,81){\line(1,0){3}}
\put(36,81){\line(1,0){3}}
\put(41,81){\line(1,0){3}}
\put(46,81){\line(1,0){3}}
\put(51,81){\line(1,0){3}}
\put(56,81){\vector(1,0){3}}
\put(41.5,82.5){$\psi$}

\put(42.5,30){\vector(0,1){33}}
\put(43.5,49){$\pi$}
\put(22.5,45){\vector(0,1){33}}
\put(23.5,59){$\pi_\infty$}
\put(62.5,45){\vector(0,1){33}}
\put(63.5,59){$\pi_\Qbar$}
\end{picture}
\]
We are missing one edge in this triagonal prism, which we need to complete the proof. We want to define $\psi:U_x\to Q$ in the only commuting way:
\begin{equation}\label{eqn:psidef} \psi(j(f)(\pi_{\infty}(b)))=i^\Tt(f)(\pi_{\Qbar}\com\sigma(b)). \end{equation}
(All elements of $U_x$ are of the form $j(f)(\pi_{\infty}(b))$ since the generators $a'$ of $\mu_{x\rest n}$ are in the range of $\pi$, so $\pi_{\infty}$ hits $j_{n,\infty}(a')$.)

We need to see that this is well-defined and elementary. This requires certain measures derived from $j$ and $i^\Tt$ to be identical.\\
\\
\noindent\emph{Notation.} Let $\Ww$ be a normal iteration tree on a premouse $P$ with last model $R$, such that $i^\Ww$ exists. Let $x\in\OR^{<\om}$, and suppose $x\in(i^\Ww(\kappa))^{<\om}$, and that $\kappa$ is in fact least such. Then $\mu^\Ww_x$ denotes the measure on $\kappa^{|x|}$ derived from $i^\Ww$ with generator $x$.\\

We may assume that in (\ref{eqn:psidef}), $b=\jbar_{n,\infty}(a)$, where $a$ is the generator of $\mubar_{x\rest n}$. So $\pi(a)=a'$ where $a'$ is as above. Now since the bottom triangle commutes, $\mubar_{x\rest n}=\mu^{i^{\Ttbar}}_{\sigma(b)}$. Let $\Ttbar_{\sigma(b)}$ be the a finite support tree for $\sigma(b)$ derived from $\Ttbar$ and $\taubar:\Rbar\to\Qbar$ be the final copy map. Then since $\taubar\com i^{\Ttbar_{\sigma(b)}}=i^{\Ttbar}$, $\mubar_{x\rest n}=\mu^{\Ttbar_{\sigma(b)}}_{\taubar^{-1}(\sigma(b))}$. Since $\Ttbar_{\sigma(b)}$ is a finite tree on $M$, it is in $M$. Moreover, $\pi(\Ttbar_{\sigma(b)})=\pi\Ttbar_{\sigma(b)}$ (the copied tree). Also, it's easy\footnote{Footnote added January 2013: See \cite{hsstm} for the generalization of \ref{lem:supptree} covering this.} to see that $\pi\Ttbar_{\sigma(b)}$ is a finite support tree for $\pi_{\Qbar}(\sigma(b))$ derived from $\Tt$. (Whilst extracting a support for $\sigma(b)$ from $\Ttbar$, maintain inductively that the copy maps $\pi_{\alpha}$ lift it to a support for $\pi_{\Qbar}(\sigma(b))$; then the derived $\Tt_{\pi_{\Qbar}(\sigma(b))}$ is just $\pi\Ttbar_{\sigma(b)}$.\footnote{\label{ftn:correction}Footnote added January 2013: Correction: Here and in the equation to follow, ``$\Tt_{\pi_{\Qbar}(\sigma(b))}$'' should be replaced by ``$\Tt^A$'', where $A$ is the support for $\pi_{\Qbar}(\sigma(b))$ given by lifting the support $\bar{A}$ in $\Ttbar$ for $\sigma(b)$, up to $\Tt$ with the copy maps. The tree $\Tt_{\pi_{\Qbar}(\sigma(b))}$ is that defined in \ref{dfn:supptree}, which depends on precisely what minimization process is used in the algorithm described after \ref{dfn:supp} for building the support $A^*$. It seems that possibly $A\neq A^*$ and $\Tt^A\neq\Tt^{A^*}$. The fact that $\Tt^A=\pi\Ttbar_{\sigma(b)}$ is proved in \cite{hsstm} (for the notion of support used there; see footnote \ref{ftn:type3} on page \pageref{ftn:type3}).}) Therefore\footnote{Footnote added January 2013: See footnote \ref{ftn:correction}.}
\[ \mu^{\Tt}_{\pi_{\Qbar}\com\sigma(b)}=\mu^{\Tt_{\pi_{\Qbar}(\sigma(b))}}_{\tau^{-1}(\pi_{\Qbar}(\sigma(b)))}=\pi(\mu^{\Ttbar_{\sigma(b)}}_{\taubar^{-1}(\sigma(b))})=\pi(\mubar_{x\rest n})=\mu_{x\rest n}. \]
\renewcommand{\qedsymbol}{$\Box$(Claim \ref{clm:p[T]def})}\end{proof}

The definability of (4) follows since a $\Sigma$-iterate of $M$ is one which chooses wellfounded branches. This completes the proof of \ref{thm:homMn} for $M_1$.\\

Now consider $M_2$. The argument is almost as for $M_1$, with appropriate modifications to the conditions (1) - (4). The difference lies in the increased complexity of the iteration strategies for the hull $M$ and the ultrapowers $\Ubar_x$. Instead of $\Pi^1_2$-iterability, we need $\Pi^1_3$-iterability, which we presently define. This notion is also taken from \cite{projwell}.

Consider a $2$-small, $\om$-sound mouse $P$ projecting to $\om$. Its unique strategy, having built a limit length tree $\Tt$, must choose the unique branch $b$ such that $Q(b,\Tt)$ is $\Pi^1_2$-iterable above $\delta(\Tt)$. As in the proof of (3) $\implies$ (4) above, such a $Q(b,\Tt)$ can be compared to the Q-structure of the correct branch, so standard arguments show they're identical, and since $P$ is sound, therefore so are the branches. This implies the statement ``$\Tt$ is a correct normal tree on $P$'' is $\Pi^1_2(P)$. Moreover, the correct branch is $\Delta^1_3$ in any real coding $\Tt$. Assuming $\bfDelta^1_2$-determinacy (true in $M_2$), $\Pi^1_3(x)$ is closed under ``$\ex b\in\Delta^1_3(x)$'' (see (\cite{mosch}, 4D.3, 6B.1, 6B.2)). This leads to the following formulation:

\begin{dfn}\label{dfn:Pi^1_3it}\index{$\Pi^1_3$-iterable}
Assume $\bfDelta^1_2$-determinacy. Let $P$ be a $2$-small, $\om$-sound premouse $P$ projecting to $\om$. $P$ is $\Pi^1_3$\emph{-iterable} iff for each countable normal tree $\Tt$ on $P$, either $\Tt$ has a last model and every one-step normal extension produces a wellfounded next model, or there is a maximal branch $b$ of $\Tt$ in $\Delta^1_3(\Tt)$ such that $Q(b,\Tt)$ is $\Pi^1_2$-iterable above $\delta(\Tt)$.
\end{dfn}

With reals coding the elements of $\HC$ in this definition, the determinacy implies $\Pi^1_3$-iterability is indeed a $\Pi^1_3$-condition.

Now in $M_2$, the hull $M$ is defined as before (in particular, it embeds in a level $M_2|\eta$ containing all Woodin cardinals). Although $M$ doesn't literally project to $\om$, by \ref{lem:esccond} we can instead work with $\J_1(M)$, which is also sound by the same lemma. Conditions (1), (2) and (4) are as in the $n=1$ case (with $M_2$ replacing $M_1$). For (4) though, we must define $\Sigma$. Since $M_2$ satsisfies ``I'm $\delta_0$-iterable'', its unique $\om_1+1$-iteration strategy for $M$ is the pullback of its strategy for $M_2|\eta$. $\Sigma$ denotes this strategy for $M$. The discussion above shows that condition (4) is then $\Sigma^1_3(M)$.

Condition (3) becomes ``$\Ubar_x$ is $\Pi^1_3$-iterable''. However, $\J_1(\Ubar_x)$ \emph{isn't} sound in general, so we need to check that $\Pi^1_3$-iterability works in this context. (John Steel pointed out that it does in fact work, thereby simplifying our original argument, which instead used ``$\Pi^1_3$-$M$-comparability''.) Assume that $\Ubar_x$ is fully iterable, via some $\Gamma$, and $\Tt$ is a normal tree of limit length on $\J_1(\Ubar_x)$, with $\Tt\conc b$ via $\Gamma$. At issue is the definability of $b$ from $\Tt$; we need to see that $b$ is the unique $b'$ such that $Q(b',\Tt)$ is $\Pi^1_2$-iterable above $\delta(\Tt)$. The reader can check that things work as in the sound case unless $b$ does not drop, and $i^\Tt_b(\delta_0^x)=\delta(\Tt)$, where $\delta_0^x$ is the least Woodin of $\Ubar_x$. In this case, $Q(b,\Tt)=M^\Tt_b$, and is $\om_1+1$-iterable above $\delta(\Tt)$. Suppose $c\neq b$ and $Q(c,\Tt)$ is $\Pi^1_2$-iterable above $\delta(\Tt)$. As in the sound case, $Q(b,\Tt)$ and $Q(c,\Tt)$ can be successfully compared, and since they're Q-structures for $M(\Tt)$, they iterate to the same model $Q$, with no dropping, and there was no drop along $c$. Let $j:\J_1(M)\to Q$ be the canonical embedding, which is continuous at $\delta_0^M$ since it's composed of ultrapower embeddings of degree $0$. Since $M$ is pointwise definable, cofinally many points below $\delta(\Tt)$ are definable in $Q|j(\OR^M)$. But these points are included in $\rg(i^\Tt_b)\int\rg(i^\Tt_c)$, a contradiction (as in (\cite{outline}, \S6)).

Using similar arguments, one can show that if $\Ubar_x$ is $\Pi^1_3$-iterable, then $\Ubar_x$ is embeddable in a correct iterate of $M$. (One must compare 3 Q-structures simultaneously to see that during such comparison, the branches chosen by $\Pi^1_3$-iterability are always cofinal in the tree on $\Ubar_x$.) Moreover, the comparison can be executed in $M_2$. For if the comparison ran through $\om_1^{M_2}$ stages, then it could be replicated in $M_1(M,\Ubar_x)$, using the extender algebra of that model to compute the correct branches. But then it runs through $\om_1^{M_1(M,\Ubar_x)}+1$ many stages there, a contradiction. This last statement also holds in $M_2$, since $\RR\int M_2$ is closed under the $M^\#_1$ operator.

The rest of the argument inside $M_2$ is as in the $n=1$ case. By \ref{cor:neven}, the resulting definition for the homogeneously Suslin set is in fact correctly-$\bfDelta^1_3$. This finishes the $M_2$ case.\\

Now consider $M_3$. Things work basically as for $M_2$; however the definition of $\Pi^1_4$-iterability has to differ from that of $\Pi^1_3$ because $\Pi^1_4$ isn't normed. The reader should consult \cite{projwell} for the elegant solution. Otherwise the only thing to check is that the $\Delta^1_4(M)$ definition obtained in $M_3$ extends to one over $V$. Since $M_3$ is only $\Sigma^1_4$-correct, this isn't as immediate as for $M_2$. However, defining $\Pi^1_4$-iterability requires only $\bfDelta^1_2$-determinacy, and the resulting closure of $\Sigma^1_3$ under $\all y\in\Delta^1_3(x)$. The statement ``$\bfDelta^1_2$ determinacy holds'' is $\Pi^1_4$, so its truth in $M_3$ implies it in $V$. Investigating the proofs of 4B.3 and 6B.1 of \cite{mosch}, one sees that $M_3$ and $V$ therefore agree about the definition of the resulting norm for $\Pi^1_3$, and in turn, the $\Pi^1_3$ definition of the quantifier $\ex y\in\Delta^1_3(x)$. So the $\Sigma^1_4(M)$ and $\Pi^1_4(M)$ formulae defining the homogeneously Suslin set in $M_3$ have the same interpretation (in terms of iterability and correct iterates) in $V$. So the argument that (3) is equivalent to (4) also works in $V$, as desired. $M_4$ and beyond involve no new ideas.
\renewcommand{\qedsymbol}{$\Box$(Theorem \ref{thm:homMn})}\end{proof}

Finally, for models of $\ZFC$ in the region of $0^\pistol$ or below, we do get an exact characterization of the homogeneously Suslin sets.

\begin{dfn}\label{dfn:0pistol}\index{$0^\pistol$} $0^\pistol$ is the least active mouse $N$ such that $N|\crit(F^N)$ satisfies ``there is a strong cardinal''.\end{dfn}

\begin{thm}\label{thm:0pistol} Let $N\sats\ZFC$ be an $\fully$-iterable mouse satisfying ``if $\mu<\kappa$ are measurables, then $\mu$ is not strong to $\kappa$''. Then in $N$, all homogeneously Suslin sets are $\bfPi^1_1$. In particular, this holds if $N\sats\ZFC$ and is below $0^\pistol$.\end{thm}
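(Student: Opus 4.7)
The plan is to imitate the $M_1$ case of Theorem \ref{thm:homMn}, exploiting the hypothesis on $N$ to collapse the characterization of $p[T]$ down to ``$\bar{U}_x$ is wellfounded'', which is a $\bfPi^1_1$ condition in a real coding $(M,\left<\bar{\mu}_s\right>)$.

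Work in $N$. Let $T \in N$ be a homogeneous tree together with a homogeneity system $\left<\mu_s\right>_{s \in \om^{<\om}}$, both definable as points in some $N|\eta$, where $\eta$ is a double successor cardinal of $N$. Let $M = \Hull^{N|\eta}(\empty)$ with hull embedding $\pi : M \to N|\eta$, set $\bar{\mu}_s = \pi^{-1}(\mu_s)$, and form
\[ \bar{U}_x = \dirlim_{m \le n}\bigl(\Ult(M,\bar{\mu}_{x\rest n}),\, i^M_{\bar{\mu}_{x\rest m},\,\bar{\mu}_{x\rest n}}\bigr). \]
Then $M$ is pointwise definable and hence countable in $N$, so wellfoundedness of $\bar{U}_x$ is $\bfPi^1_1$ in any real coding $(M,\left<\bar{\mu}_s\right>)$. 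It therefore suffices to show that $x \in p[T]$ iff $\bar{U}_x$ is wellfounded. The forward direction is routine: if $x \in p[T]$, the direct limit $U_x$ in the $\mu$-system based at $N|\eta$ is wellfounded by homogeneity, and $\pi$ induces an elementary map $\bar{U}_x \to U_x$.

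For the converse, assume $\bar{U}_x$ is wellfounded. The hypothesis on $N$ passes to $M$, hence to every iterate of $M$: no measurable is strong to a measurable, and in particular no Woodin cardinal ever appears. Consequently the $Q$-structures governing branch choice in the $M_n$-proofs of \ref{thm:homMn} are trivial here, and the unique-wellfounded-branch strategy $\Sigma$ is a genuine iteration strategy for $M$. I would then run the comparison of $M$ against $\bar{U}_x$ exactly as in (3) $\Rightarrow$ (4) of Claim \ref{clm:p[T]def}: wellfoundedness of $\bar{U}_x$ supplies a wellfounded cofinal branch on its side whenever $\Sigma$ supplies one on the $M$ side, and since $M=\Hull^M(\empty)$ the two sides terminate at a common final model $Q$ with non-dropping main branches, yielding an elementary $\sigma : \bar{U}_x \to Q$. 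From $\sigma$, the finite-support-tree portion of (4) $\Rightarrow$ (1) in \ref{clm:p[T]def} applies verbatim: for each generator of $\bar{U}_x$ arising as $\bar{j}_{n,\infty}(a)$, the measures $\bar{\mu}_{x\rest n}$ and $\mu^{\Tt}_{\pi_{\bar{Q}}\com\sigma(b)}$ (derived from the copied tree $\Tt = \pi\Ttbar$ on $N$) coincide, because both are read off the same finite support tree on $M$ transported via $\pi$. This supplies the ``missing edge'' $\psi : U_x \to Q$ in the commuting prism of \ref{clm:p[T]def}, whose existence, together with the branch through $T$ that lives in $U_x$, produces one in $V$ by elementarity, i.e. $x \in p[T]$.

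The main obstacle in making this rigorous is the iteration-theoretic claim at the heart of the converse: that below $0^\pistol$---and more generally under the non-overlap hypothesis---a normal tree on $M$ or on $\bar{U}_x$ admits at most one wellfounded cofinal branch, so that no $Q$-structure analysis is needed and wellfoundedness alone is enough to drive the comparison and identify it with the correct branch. Granted this, the rest is a direct simplification of the $M_1$ argument already carried out in \ref{thm:homMn}, and gives the desired $\bfPi^1_1$ definition.
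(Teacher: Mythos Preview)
Your converse argument has a real gap. To compare $M$ with $\bar U_x$ you must iterate \emph{both} sides, and you assert that ``wellfoundedness of $\bar U_x$ supplies a wellfounded cofinal branch on its side whenever $\Sigma$ supplies one on the $M$ side.'' But wellfoundedness of the base model $\bar U_x$ says nothing about the existence of wellfounded branches in iteration trees built \emph{on} $\bar U_x$. The obstacle you isolate---that under the hypothesis a normal tree admits at most one wellfounded cofinal branch---is a \emph{uniqueness} statement, not an existence statement; granting it still gives you no way to continue the $\bar U_x$ side at limit stages. So the step $\bar U_x$ wellfounded $\Rightarrow$ comparison succeeds is unjustified, and with it the implication $\bar U_x$ wellfounded $\Rightarrow x\in p[T]$.

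The paper closes this gap by proving something stronger and rather different from what you propose: in the comparison of $M$ with $\bar U_z$ (assuming $\bar U_z$ iterable, which is what condition~(3) supplies), the $\bar U_z$ side \emph{does not move at all}. This is essentially ``every mouse is an iterate of its core'' in this region. The argument is short: if $U\neq Q$ and $\kappa=\crit(i_{U,Q})$, then $\kappa$ is measurable in $U$, so by the hypothesis no $\mu<\kappa$ is strong to $\kappa$ in $U$ or $Q$; hence $\kappa$ is not overlapped by any extender used in the tree $\Tt$ on $M$. Since $\kappa\notin\Hull^Q_\om(\kappa)$, some $E^\Tt_\alpha$ on $\Tt$'s main branch has $\crit(E^\Tt_\alpha)=\kappa$, and the usual hull/agreement argument produces an extender on the $M$ side compatible with the first extender used on the $U$ side---contradiction. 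With $\sigma=\id$ and $Q=\bar U_x$, condition~(4) of Claim~\ref{clm:p[T]def} becomes ``$\bar U_x$ is itself a correct (equivalently, wellfounded) iterate of $M$'', and \emph{this} is the $\bfPi^1_1(M)$ characterization. The simplification from $\bfDelta^1_2$ to $\bfPi^1_1$ thus comes not from uniqueness of wellfounded branches, but from the fact that one side of the comparison is trivial, so no iterability of $\bar U_x$ is ever needed beyond what (2)$\Rightarrow$(3) already gave.
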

\begin{proof}
This is a corollary of the proof of \ref{thm:homMn} and that below $0^\pistol$, every mouse is an iterate of its core, probably due to Jensen. We give a proof of the latter in our setting. With notation as in the proof of \ref{thm:homMn}, consider the comparison of $M$ with $\Ubar_z$, for some iterable $U=\Ubar_z$, with last model $Q$. We claim $U=Q$. Otherwise let $\kappa=\crit(i_{U,Q})$. Then $\kappa$ is measurable in $U$, so no $\mu<\kappa$ is strong to $\kappa$ in $U$, or therefore in $Q$. Therefore $\kappa$ cannot be overlapped by any extender used in $\Tt$. Now there's $E=E^\Tt_\alpha$ used on $\Tt$'s main branch with $\crit(E)=\kappa$, since $\kappa\notin\Hull^Q_\om(\kappa)$. But $U$, $M^\Tt_\alpha$ and $Q$ agree about $\pow(\kappa)$, and $\pow(\kappa)\int Q\sub\Hull^Q(\kappa)$, which leads to $E$ being compatible with the first extender used on the $U$ side.

So $\Ubar_z$ is iterable iff $\Ubar_z$ is a correct iterate of $M$, iff it is a wellfounded iterate of $M$, which is $\Pi^1_1(M)$.
\renewcommand{\qedsymbol}{$\Box$(Theorem \ref{thm:0pistol})}\end{proof}

\pagebreak
\section{The Copying Construction \& Freely Dropping Iterations}\label{sec:copying}
Here we identify and solve some problems with the copying construction of \cite{fsit}. Actually, since it doesn't take much more work, we prove a generalization of ``every initial segment of a mouse is a mouse'', since this is really needed for the iterability of the phalanges used in the previous sections.

First we'll briefly discuss the problems ignored in \cite{fsit}, and give examples where these arise.

Suppose $M$ is a type 3 premouse and $\pi:M^\sq\to N^\sq$ is a lifting map being used during a copying construction. It might be that the exit extender $E$ from $M$ has $\nu_M<\lh(E)<\OR^M$, so $E\notin\dom(\pi)$, but $E$ is not the active extender of $M$. \cite{fsit} ignores this. Let $\psi:\Ult(M^\sq,F^M)\to\Ult(N^\sq,F^N)$ be the canonical embedding. If one has $\psi(\nu_M)\leq\nu_N$, the natural solution is to let $\psi(E)$ be the exit extender from $N$; otherwise one can first use $F^N$ in the upper tree, then use $\psi(E)$.

Another problem arises if $\psi(\nu_M)<\nu_N$ and $F^M$ is the exit extender from $M$. Here \cite{fsit} uses $F^N$ as the exit extender from $N$. But we have $\psi(\lh(F^M))<\nu_N$, so the next extender $E'$ used below might be such that $\psi(\lh(E'))<\nu_N$. This causes a break in the increasing length condition of the upper tree.

We now give an example of these two situations. Suppose $\nu_M=[a,f]^M_{F^M}$; then $\psi(\nu_M)=[\pi(a),\pi(f)]^N_{F^N}$. The statement ``$[a,f]\geq\nu$'' is $\Pi_1$; ``$[a,f]\leq\nu$'' is $\Pi_2$. So if $\pi$ is $\Pi_2$-elementary then $\psi(\nu_M)=\nu_N$ but it seems it might be that
\begin{itemize}
 \item[(a)] $\pi$ is a $0$-embedding and $\psi(\nu_M)>\nu_N$; or
 \item[(b)] $\pi$ is a weak $0$-embedding and $\psi(\nu_M)<\nu_N$.
\end{itemize}
Now suppose $\kappa=\cof^M(\nu_M)<\rho^M_1$, $\kappa$ is measurable in $M$, and $M$ is $1$-sound. Let $E$ be an extender over $M$ with crit $\kappa$. Let $i_0:M^\sq\to\Ult_0(M^\sq,E)$, $i_1:M^\sq\to\Ult_1(M^\sq,E)$, and $\tau:\Ult_0\to\Ult_1$ be the canonical maps. Then $i_0$ is cofinal. Let $f:\kappa\to\nu_M\in M$ be cofinal, strictly increasing and continuous. Let $f=[a,g_f]^M_{F^M}$. Then $\nu_M=\sup\rg([a,g_f])$ in $\Ult(M,F^M)$. $[i_0(a),i_0(g_f)]$ represents a strictly increasing, continuous function in $\Ult(\Ult_0,F^{\Ult_0})$, with domain $i_0(\kappa)$; denote this by $i_0(f)$. Now $i_0$ is cofinal in $\nu_{\Ult_0}$, and for any $\alpha<\kappa$, $i_0(f(\alpha))=i_0(f)(\alpha)$, so $i_0(f)(\kappa)=\nu_{\Ult_0}$. Therefore $\sup\rg(i_0(f))>\nu_{\Ult_0}$. Since $i_0$ is a $0$-embedding, it is an example of (a).

Now $i_1$ is a $1$-embedding, so is $\Pi_2$-elementary, so $\sup\rg(i_1(f))=\nu_{\Ult_1}$ (where $i_1(f)$ is defined as for $i_0(f)$). Therefore $\tau(\nu_{\Ult_0})<\nu_{\Ult_1}$. It's easy to check that $\tau$ is a weak $0$-embedding, so it is an example of (b).

Now we consider another problem with the copying construction. Suppose $\pi:M\to N$ is a weak $k+1$-embedding, and $M$ and $N$ have degree $k+1$ in some iteration trees. If $E$ is applied to $M$ with $\crit(E)=\kappa$, and $\rho^M_{k+1}\leq\kappa$, but $\pi(\kappa)<\rho^M_{k+1}$, then $E$ triggers a drop in degree in the lower tree, but the lifted extender, with crit $\pi(\kappa)$, should not cause a drop if the upper tree is to be normal. So we are forced to let the degrees of corresponding ultrapowers differ between trees; we will see this works fine.

It seems these situations can arise in the proofs of condensation and solidity of the standard parameter (see (\cite{outline}, \S5) and \cite{fsit}). In the case of condensation, suppose $\sigma:H\to M$ is the (fully elementary) embedding under consideration, with $\crit(\sigma)=\rho=\rho^H_\om=(\kappa^+)^H$. In the proof, a tree $\Tt$ on the phalanx $(M,H,\rho)$ is lifted to a tree $\Uu$ on $M$, using $\sigma$ and $\id$ as initial lifting maps. $\lh(E^\Tt_0)>\rho$, so $\lh(E^\Uu_0)>(\kappa^+)^M$. Say $E=E^\Tt_\alpha$ has crit $\kappa$. Then $E$ measures exactly $\pow(\kappa)^H$, but goes back to $M$, so the tree drops to the least $M|\xi\supseg M|\rho$ projecting to $\kappa$. However, $E^\Uu_\alpha$ measures all of $M$, so $\Uu$ does not experience a drop. We can naturally set $\pi_{\alpha+1}:M^\Tt_{\alpha+1}\to i^\Uu_{M,\alpha+1}(M|\xi)$; $\pi_{\alpha+1}$ will be a weak $k$-embedding, where $\deg^\Tt(\alpha+1)=k$. Now it might be that $M|\xi$ is type 3, with
\[ \rho_{k+1}=\rho_{k+1}^{M|\xi}\leq\kappa<\rho_k^M=\rho_k\ \ \&\ \ \cof^{M|\xi}(\rho_k)=\kappa. \]
Then similarly to the earlier example, $i^\Tt_{M|\xi,\alpha+1}$ is discontinuous at $\rho_k^{M|\xi}$ and
\[ \pi_{\alpha+1}(\rho_k^{M^\Tt_{\alpha+1}})<\rho_k^{i^\Uu_{M,\alpha+1}(M|\xi)}. \]
Suppose the exit extender $F$ from $M^\Tt_{\alpha+1}$ has \[ \rho^{M^\Tt_{\alpha+1}}_k\leq\crit(F)<i^\Tt_{M|\xi,\alpha+1}(\rho^{M|\xi}_k), \] and measures all of $M^\Tt_{\alpha+1}$. Then $F$ applies normally to $M^\Tt_{\alpha+1}$, dropping to degree $k-1$, but
\[ \rho_{k+1}^{i^\Uu_{M,\alpha+1}(M|\xi)} = i^\Uu_{M,\alpha+1}(\kappa)<\crit(\pi_{\alpha+1}(F))<\rho_k^{i^\Uu_{M,\alpha+1}(M|\xi)}. \]
It's also easy to adapt this to give the situation with type 3 premice described earlier.
\\

Before proceeding to give details of a copying construction dealing with the above problems, we show that for simple enough copying none of the above problems occur, so things work exactly as described in \cite{fsit}. (As seen above though, copying isn't as smooth in general for lifting iterations on phalanges.)

\begin{thm}\label{thm:nicecopy}
Suppose $\pi:M\to N$ is a near $k$-embedding such that if $M$ is type 3, then $\pi$ preserves representation of $\nu$; i.e. $\psi_\pi(\nu_M)=\nu_N$. Then a normal tree on $M$ lifts to a normal tree on $N$ by the prescription in \cite{fsit}. Moreover, for each $\alpha$, $\pi_\alpha$ is a near $\deg^\Tt(\alpha)$-embedding, and if $M^\Tt_\alpha$ is type 3, $\pi_\alpha$ preserves representation of $\nu_{M_\alpha}$.
\end{thm}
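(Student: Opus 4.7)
The plan is to induct on $\alpha<\lh(\Tt)$, maintaining simultaneously three properties: $\Uu\rest\alpha+1$ is normal with drop/degree pattern matching $\Tt$; $\pi_\alpha$ is a near $\deg^\Tt(\alpha)$-embedding; and whenever $M^\Tt_\alpha$ is type 3, $\psi_{\pi_\alpha}(\nu_{M^\Tt_\alpha})=\nu_{M^\Uu_\alpha}$. The base case is the hypothesis on $\pi=\pi_0$.

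For the successor step $\alpha\to\alpha+1$, first identify $E^\Uu_\alpha$. If $M^\Tt_\alpha$ is not type 3, or if $\lh(E^\Tt_\alpha)\leq\nu_{M^\Tt_\alpha}$, then $E^\Tt_\alpha\in\dom(\pi_\alpha)$ and we set $E^\Uu_\alpha=\pi_\alpha(E^\Tt_\alpha)$. Otherwise $M^\Tt_\alpha$ is type 3 and $E^\Tt_\alpha$ is either $F^{M^\Tt_\alpha}$ itself (in which case set $E^\Uu_\alpha=F^{M^\Uu_\alpha}$) or an initial segment of $F^{M^\Tt_\alpha}$ indexed strictly between $\nu_{M^\Tt_\alpha}$ and $\OR^{M^\Tt_\alpha}$, and we set $E^\Uu_\alpha=\psi_{\pi_\alpha}(E^\Tt_\alpha)$. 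This last case is well-posed precisely by the inductive $\nu$-preservation: since $\psi_{\pi_\alpha}(\nu_{M^\Tt_\alpha})=\nu_{M^\Uu_\alpha}$, $\psi_{\pi_\alpha}$ sends initial segments of $F^{M^\Tt_\alpha}$ indexed in $(\nu_{M^\Tt_\alpha},\OR^{M^\Tt_\alpha})$ to initial segments of $F^{M^\Uu_\alpha}$ indexed in $(\nu_{M^\Uu_\alpha},\OR^{M^\Uu_\alpha})$. This is exactly where the $\nu$-preservation hypothesis rules out the pathologies (a) and (b) described preceding the theorem. Normality, i.e.\ $\lh(E^\Uu_\alpha)>\lh(E^\Uu_\beta)$ for $\beta<\alpha$, then follows from the standard agreement of copy maps below $\nu^\Tt_\beta$ combined with monotonicity of $\pi_\alpha$ (resp.\ $\psi_{\pi_\alpha}$) on indices. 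Then apply the shift lemma (\cite{outline}, 4.2) to $\pi^*_{\alpha+1}:=\pi_\gamma\rest (M^*)^\Tt_{\alpha+1}$ and $\pi_\alpha\rest\lh(E^\Tt_\alpha)$ (where $\gamma=\Tt\pred(\alpha+1)$) to obtain $\pi_{\alpha+1}$; the preservation theorem for near embeddings under ultrapower (\cite{near}, 1.3) shows $\pi_{\alpha+1}$ is a near $\deg^\Tt(\alpha+1)$-embedding, and no degree mismatch arises because the $\nu$-preservation hypothesis keeps the relevant projecta aligned on the two sides. For $\nu$-preservation at stage $\alpha+1$ when $M^\Tt_{\alpha+1}$ is type 3, write $\nu_{M^\Tt_{\alpha+1}}=[a,f]^{(M^*)^\Tt_{\alpha+1}}_{E^\Tt_\alpha}$ for an appropriate representative provided by the type 3 ultrapower representation; the shift lemma formula
\[ \pi_{\alpha+1}([a,f]) = [\pi_\alpha(a),\pi^*_{\alpha+1}(f)]^{(M^*)^\Uu_{\alpha+1}}_{E^\Uu_\alpha}, \]
combined with the inductive $\nu$-preservation at the stages where $a$ and $f$ live, yields $\psi_{\pi_{\alpha+1}}(\nu_{M^\Tt_{\alpha+1}})=\nu_{M^\Uu_{\alpha+1}}$.

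At a limit stage $\lambda$, $\pi_\lambda$ is the direct limit of the $\pi_\alpha$ along $[0,\lambda)_\Tt$. Near embedding status passes to direct limits of elementary chains routinely. For $\nu$-preservation, if $M^\Tt_\lambda$ is type 3 then $\nu_{M^\Tt_\lambda}=\sup i^\Tt_{\alpha,\lambda}``\nu_{M^\Tt_\alpha}$ for all sufficiently large $\alpha<_\Tt\lambda$, with the analogous identity on the $\Uu$-side, and commutativity of the copy maps gives the required preservation. The main obstacle is the $\nu$-preservation verification at the successor step: one must unwind the type 3 squashing conventions to apply the shift lemma formula to a distinguished representative of $\nu_{M^\Tt_{\alpha+1}}$, especially in the subcases where $\deg^\Tt(\alpha+1)<\deg^\Tt(\gamma)$ and $(M^*)^\Tt_{\alpha+1}$ is a proper segment of $M^\Tt_\gamma$; but this is in the end a routine fine-structural verification using near-embedding preservation and the fact that $\pi^*_{\alpha+1}$ inherits $\nu$-preservation from $\pi_\gamma$ whenever $(M^*)^\Tt_{\alpha+1}$ is itself type 3.
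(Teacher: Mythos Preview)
Your inductive framework is right, and you correctly invoke \cite{near} for propagation of nearness. But the $\nu$-preservation step at successor stages has a real gap when $\deg^\Tt(\alpha+1)=0$.

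You write $\nu_{M^\Tt_{\alpha+1}}=[a,f]$ and then assert that the shift lemma formula, ``combined with the inductive $\nu$-preservation at the stages where $a$ and $f$ live,'' gives $\psi_{\pi_{\alpha+1}}(\nu_{M^\Tt_{\alpha+1}})=\nu_{M^\Uu_{\alpha+1}}$. This does not follow. The statement ``$[a,f]\leq\nu$'' is $\Pi_2$, and at degree $0$ the relevant maps are only $\Sigma_1$-elementary. Concretely, the degree-$0$ ultrapower map $i_E:(M^*)^\sq\to\Ult_0$ need not preserve $\nu$-representation: the paper's own example (a) preceding the theorem exhibits a $0$-embedding (hence near $0$-embedding) $i_0$ with $\psi_{i_0}(\nu_M)>\nu_{\Ult_0}$. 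So you cannot simply chase the commutative square $\tau\circ i_E=i_F\circ\pi^*$ through $\psi$. For degree $\geq 1$ the ultrapower maps are $\Sigma_2$-elementary and your argument goes through; the paper remarks on this and then isolates degree $0$ as the only case needing work.

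The paper's argument for degree $0$ is not routine: it splits on whether $\nu_{M^*}$ is regular or singular in $M^*$. In the regular case, the point is that for fixed $h:\mu\to\mu$ and fixed $\gamma$, only boundedly many $\alpha<\nu_{M^*}$ have the form $i_{F^{M^*}}(h)(c)$ for $c\in[\gamma]^{<\om}$; call the bound $\delta_{h,\gamma}$. This bounding fact is encoded in the fragment $F^{M^*}\rest\delta_{h,\gamma}\in M^*$, hence is preserved by $i_E$, and one uses it to show every $[b,g]<[i_E(a),i_E(f)]$ is below $\nu_{\Ult_0}$. The singular case is handled by adapting the cofinal-sequence calculation from the example. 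Your dismissal of this as ``a routine fine-structural verification'' skips exactly the content of the proof.

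A smaller point: the reason degrees match is not $\nu$-preservation but that ``$\alpha<\rho_k$'' is $\Sigma_{k+1}$, so $\Sigma_{k+1}$-elementarity of a near $k$-embedding gives $\pi(\rho_k^M)\geq\rho_k^N$.
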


\begin{proof}
As in \cite{near}, the nearness of embeddings is maintained inductively. This immediately knocks out the problems of differing degrees and possibility (b) above. (Note ``$\alpha<\rho_k$'' is $\Sigma_{k+1}$, so if $\pi:M\to N$ is $\Sigma_{k+1}$-elementary then $\pi(\rho^M_k)\geq\rho^N_k$.) Since ``$\nu$-preservation'' is preserved when taking ultrapowers of degree $\geq 1$, we consider only degree $0$.

Suppose $M$ is type 3, $\kappa<\nu_M$, and $E$ is an extender over $M$ with $\crit(E)=\kappa$. Say $\pi:M^\sq\to N^\sq$ is our copy map, and $E$ is lifted to $F$ over $N$. Let $\tau:\Ult_0(M^\sq,E)\to\Ult_0(N^\sq,F)$ be as usual. We need that $\tau$ preserves $\nu$-representation.

In the case that $\nu_M$ is singular in $M$, this is straightforward to show using the approach of the earlier example (which lead to examples of (a) and (b) above); we leave it to the reader.

Suppose $\nu_M$ is regular in $M$. Here we simply show that $i_E$ preserves $\nu$-representation; this suffices. Fixing $h:\mu^M\to\mu^M\in M$ and $\gamma\in M$, notice there are only boundedly many $\alpha<\nu_M$ of the form $i_{F^M}(h)(c)$ for $c\in[\gamma]^{<\om}$. Let $\delta_{h,\gamma}$ be the supremum of these ordinals. The same applies to a $\mu^M$-sequence of functions in $M$. Let $[a,f]=\nu_M$, $U^\sq=\Ult_0(M^\sq,E)$ and $b,g\in U^\sq$; suppose $[b,g]^U_{F^U}<[i(a),i(f)]^U_{F^U}$. Let $\gamma\in M$ be such that $b\in [i_E(\gamma)]^{<\om}$ and $a\sub\gamma$.

If $\mu^M<\kappa$, then we may assume $i_E(g)=g\in M$. Let $\delta=\delta_{g,\gamma}$. Let $F'=F\rest\delta$. Then $F'$ encodes the bounding property of $\delta$:
\[ \all u\in[\gamma]^{<\om}\ ([u,g]<[a,f]\ \implies\ [u,g]<\delta). \]
Clearly this is preserved by $i_E$, so $[b,g]<i_E(\delta)<\nu_{U}$.

If $\kappa\leq\mu^M$, let $g=i_E(G)(u)$, where $G\in M$ produces only functions $\mu^M\to\mu^M$. Then using a bound $\delta_{G,\gamma}$, the same argument works.
\renewcommand{\qedsymbol}{$\Box$(Theorem \ref{thm:nicecopy})}\end{proof}

\begin{dfn}\index{free iteration game}
Let the \emph{free (earliest model)}
$(n,\theta,\lambda)$ iteration game be as follows. The players build
a stack of $\lambda$ iteration trees, one in each round, with II
choosing wellfounded branches and creating wellfounded limits at all
limit stages. If in round $\alpha$, I is to play, and $\Tt$ is the
current tree, with models $M_\gamma$ and exit extenders $E_\gamma$,
$\Tt$ with length $\delta+1$, he
\begin{itemize}
\item May move to the next round (and must if the present tree has
length $\theta$); in this case he also chooses an initial segment of
$M_\delta$ and a degree $n\leq\omega$ for the root of the next tree,
where $n\leq\deg(\delta)$ if $M_\delta$ is chosen;
\item Must choose an extender $E$ from the present model with length
greater than all those already played in the current round;
\item Must choose some $\beta\leq\alpha$, such that
$\crit(E)<\nu^\Tt_\beta$ (see the remark below for a clarification); for earliest model iterations, he must choose the least such $\beta$;
\item Must choose some initial segment $M^*=(M^*)_{\delta+1}$ of
$M_\beta$ with $\crit(E)<\OR^{M^*}$ and $\pow(\crit(E))\int M^*$ measured by $E$;
\item Must choose $\deg(\delta+1)=n\leq\omega$ for the ultrapower,
such that $\crit(E)<\rho^{M^*}_n$ (clarification below) and $(M^*=M_\beta\ \implies\ n\leq\deg(\beta))$.
\end{itemize}
Payoff is as expected.
\end{dfn}

\begin{rem}\index{anomalous} 
We allow $M^*$ to be type 3, with $\crit(E)=\nu_{M^*}$. In this case $\deg(\delta+1)=0$, and $\Ult_0(M^*,E)$ is formed without squashing $M^*$, as in \S\ref{sec:meas}, Submeasures. If $\delta+1\leq_\Tt\gamma$ and $i^\Tt_{\delta+1,\gamma}$ exists, then $\gamma$ is called \emph{anomalous}. If $E^\Tt_\gamma$ is the active extender of $M^\Tt_\gamma$, we set $\nu^\Tt_\gamma=i^\Tt_{M^*,\gamma}(\crit(E))$, the largest cardinal of $M^\Tt_\gamma$. $\nu=\nu_{E^\Tt_\gamma}$ may be less than $\nu^\Tt_\gamma$ here; in fact $\nu=\sup_{\alpha<\gamma}\nu_{E^\Tt_\alpha}$. $M^\Tt_\gamma$ isn't a premouse: if $\gamma>\delta+1$ then $M^\Tt_\gamma$ doesn't satisfy the initial segment condition; if $\gamma=\delta+1$ then $\nu$ is less than $M^\Tt_\gamma$'s largest cardinal. But the iteration tree still makes sense. We'll also call $M^\Tt_\gamma$ an \emph{anomalous} structure, and let $\nu_{M^\Tt_\gamma}$ denote $\nu$. When dealing with definability over an anomalous structure, there is no squashing.
\end{rem}

\begin{dfn} The maximal (earliest model) iteration game is the game
with rules as above, except that player I must always choose
$(M^*)_{\delta+1}$, then $\deg(\delta+1)$, as large as possible.
\end{dfn}

\begin{dfn}\index{weak $m$-embedding} The definition of weak $m$-embedding is given on page 52 of \cite{fsit}, except that the condition ``$\rho_m\in X$'' shouldn't be there. Here we will also call a $\pi:M\to N$ an \emph{(anomalous) weak $0$-embedding} when $M$ is an anomalous structure, $N$ is a type 3 premouse, $\pi$ is $\Sigma_0$-elementary, there's a cofinal subset of $\OR^M$ on which $\pi$ is $\Sigma_1$-elementary, and $\pi(\nu_M)=\nu_N$. (As stated above, we're not squashing, and $\nu_M$ is the largest cardinal of $M$.)\end{dfn}

\begin{thm}\label{thm:freeiter} Suppose $N$ is maximally (earliest model)
$(n,\theta,\lambda)$-iterable, and that $\pi:M\to N|\eta$ is a weak
$m$-embedding, where $m\leq n$ if $\eta=\OR^N$. Then $M$ is freely
(earliest model) $(m,\theta,\lambda)$-iterable.\end{thm}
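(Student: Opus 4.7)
The plan is to lift a free iteration game on $M$ to a maximal iteration game on $N$, maintaining copy maps $\pi_\alpha:M_\alpha\to N_\alpha|\eta_\alpha$, so that $N$'s maximal strategy $\Sigma_N$ determines branches which pull back along the copy maps to branches on the $M$-side. The induction will maintain, at each stage $\alpha$: (i) $\pi_\alpha$ is a weak $\deg^\Tt(\alpha)$-embedding (in the anomalous sense when $M_\alpha$ is anomalous), (ii) the copy maps agree below $\nu^\Tt_\beta$ in the standard way for $\beta<\alpha$, and (iii) the image tree on $N$ is normal. The base case uses $\pi_0=\pi$. At limits, one takes the direct limit of copy maps along the branch $\Sigma_N$ selects, and observes it is cofinal (hence wellfounded) on the $M$-side.

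First I would handle the successor step. Given I's free move $(E,\beta,M^*,n)$ on $M$, with $E=E^\Tt_\alpha$, the lifted exit extender $E^\Uu_\alpha$ is defined in three cases, following the discussion preceding \ref{thm:nicecopy}: if $E$ is on $\es^{M_\alpha}$ with $\lh(E)<\OR^{M_\alpha}$ or $E=F^{M_\alpha}$ with $\psi_\alpha(\nu_{M_\alpha})=\nu_{N_\alpha}$, set $E^\Uu_\alpha=\psi_\alpha(E)$ (or $F^{N_\alpha}$ in the latter case). If $E=F^{M_\alpha}$ but $\psi_\alpha(\nu_{M_\alpha})<\nu_{N_\alpha}$, first insert $F^{N_\alpha}$ as an auxiliary extender in $\Uu$, returning to $N_\alpha$ via a trivial application, before using $\psi_\alpha(F^{M_\alpha})$ (or alternatively, just use $F^{N_\alpha}$ with appropriate degree tracking). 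The normality on the $N$-side is then verified using that $\psi_\alpha$ agrees with $\pi_\alpha$ on ordinals below the relevant exchange ordinal, combined with the nu-preservation arguments from the example discussion in the excerpt.

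Next, to handle I's choice of $(M^*,n)$, which may be \emph{free} (i.e.\ sub-maximal): lift $M^*\ins M_\beta$ to the appropriate initial segment $(N^*)$ of $N_\beta$ along $\pi_\beta$. The key subtlety is that because $\pi_\beta$ is only a weak embedding, the degree forced on the $N$-side may be strictly larger than $n$; i.e.\ the free drop on $M$ may correspond to \emph{no} drop (or a smaller drop) on $N$. So I plays the move on $N$ by always dropping to the maximal $(N^*)$ and degree compatible with the lifted crit, and the copy map $\pi_{\alpha+1}$ is then produced by the shift lemma (in its anomalous variant when $M^*$ is type 3 with crit equal to its $\nu$). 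Here one must verify that the shift lemma still yields a weak $\deg^\Tt(\alpha+1)$-embedding $\pi_{\alpha+1}$ even when the degrees differ: this is where the looser definition of weak embedding (without the $\rho_m\in X$ clause) is exactly what is needed, and is essentially a direct computation using that $\pi_\beta$ was a weak embedding together with the degree-preservation properties of ultrapowers as in (\cite{near}, 1.3).

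The hard part will be verifying that $\pi_{\alpha+1}$ truly remains a weak $\deg^\Tt(\alpha+1)$-embedding across all these cases simultaneously — particularly when multiple of the three type-3 issues and the free-drop/degree-mismatch issue interact. Specifically, one must check that when $M^*$ is type 3 and $E$ measures exactly up to $\nu_{M^*}$ (the anomalous case), the shift lemma produces an anomalous weak $0$-embedding into the non-squashed ultrapower on the $N$-side, and that $\psi_{\alpha+1}(\nu^\Uu_{\alpha+1})=\nu^\Uu_{\alpha+1}$ is preserved for the next round. Once nearness/weakness and $\nu$-preservation for anomalous cases are established at each step, the limit-stage argument is standard: $\Sigma_N$ returns a cofinal wellfounded branch $b$, the direct limit of copy maps yields a weak $\deg^\Tt(b)$-embedding witnessing that the pullback branch $b$ on the $M$-side produces a wellfounded model, and the induction continues. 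Passage between rounds (stacking trees) is immediate since the final copy map of one round serves as the starting embedding for the next.
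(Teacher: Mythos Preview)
Your overall plan (lift the free tree on $M$ to a maximal tree on $N$, maintain weak $\deg^\Tt(\alpha)$-embeddings $\pi_\alpha:M_\alpha\to N_\alpha|\eta_\alpha$, pull branches back) is correct and matches the paper. However, your case analysis for the type 3 problems is both incomplete and, in one case, wrong.

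First, when $E=F^{M_\alpha}$ is type 3 and $\psi_\alpha(\nu_{M_\alpha})<\nu_{N_\alpha|\eta_\alpha}$, your proposed fix (insert $F^{N_\alpha}$ as an auxiliary step, then use $\psi_\alpha(F^{M_\alpha})$) does not work and does not make sense: $F^{M_\alpha}$ \emph{is} the exit extender, there is no ``then use $\psi_\alpha(F^{M_\alpha})$'' afterward, and the problem with setting $F_\alpha=F^{N_\alpha}$ is precisely that later lifted extenders may have length below $\nu_{N_\alpha}$, breaking normality. The paper's fix is different: set $F_\alpha=\trivcom(F^{N_\alpha}\rest\psi_\alpha(\nu_{M_\alpha}))$, which is on the $N_\alpha$-sequence because $\psi_\alpha(\nu_{M_\alpha})$ is a cardinal there, and which has the right length for subsequent agreement.

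Second, you omit the case $\nu_{M_\alpha}<\lh(E)<\OR^{M_\alpha}$ entirely. Here $E\notin\dom(\pi_\alpha)$, and if $\psi_\alpha(\nu_{M_\alpha})>\nu_{N_\alpha|\eta_\alpha}$ then $\psi_\alpha(E)$ need not be on any $N$-model's sequence. The paper handles this by \emph{padding $\Tt$} (set $E^\Tt_\alpha=\emptyset$, $M_{\alpha+1}=M_\alpha||\OR^{M_\alpha}$) and, when $\psi_\alpha(\nu_{M_\alpha})>\nu_{N_\alpha}$, inserting an extra genuine step $F_\alpha=F^{N_\alpha}$ in $\Uu$ so that $\psi_\alpha(E)$ lands on the sequence of $N_{\alpha+1}$. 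Your ``three cases'' do not cover this.

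Third, for the free drop in model/degree, the shift lemma alone does not give $\pi_{\alpha+1}$. The shift lemma yields $\tau:\Ult_n(M^*,E)\to\Ult_n(Q^*,F)$, but the $\Uu$-side model is $\Ult_m(N^*,F)$ with $m\geq n$ possibly strict and $Q^*\ins N^*$ possibly proper. One needs the additional factor map $\sigma:\Ult_n(Q^*,F)\to i^{N^*}_{F,m}(Q^*)$, $[a,f]^{Q^*}_{F,n}\mapsto[a,f]^{N^*}_{F,m}$, and then $\pi_{\alpha+1}=\sigma\circ\tau$. Verifying that $\sigma\circ\tau$ is a weak $n$-embedding (in particular, producing the cofinal set witnessing $\Sigma_{n+1}$-elementarity) uses the commutativity of the full diagram and the cofinality of $i^{M^*}_{E,n}$ in $\rho_n$; this is the step your sketch elides.
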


\begin{rem} We leave to the reader the second half of the copying we require for the applications of earlier sections. That is the copying of an iteration on some phalanx to a freely-dropping one on $N$. It's not much different to the construction of \ref{thm:freeiter}.\end{rem}

\begin{proof}[Proof of Theorem \ref{thm:freeiter}]\setcounter{case}{0}
The idea is just to reduce a free iteration on $M$ to a normal one on $N$ via a
copying construction. There are more details than usual - but mostly
due to patches for the usual copying construction.

Given a (possibly anomalous) weak $0$-embedding $\pi:M\to N$ between active premice (or their squashes in the non-anomalous type 3 case), let
\[ \psi_\pi:\Ult(M|((\mu^M)^+)^M,F^M)\to\Ult(N|((\mu^N)^+)^N,F^N) \]
be the canonical embedding. Note $\pi\sub\psi_\pi$, and when $M$ is type 1 or 2, $\psi_\pi(\lh(F^M))=\lh(F^N)$.

Let's assume that $\lambda=1$. As $\Tt$ is being built on $M$, with
models $M_\alpha$ and extenders $E_\alpha$, we build $\Uu$ on
$N_0=N$, with models $N_\alpha$ and extenders $F_\alpha$. One new
detail is that $\Uu$ may have nodes in its tree order not corresponding to extenders used in $\Tt$. For bookkeeping purposes, $\Tt$ and $\Uu$ will in general be padded. If $E_\beta=\empty$, we'll have $M_\beta$ is type 3, and we'll set
\[ M_{\beta+1}=M_\beta||\OR^{M_\beta}, \]
the reduct of $M_\beta$. We'll also set $\nu^\Tt_\beta=\nu_{M_\beta}$ and have $\nu^\Tt_\beta\leq\nu^\Tt_{\beta+1}<\lh(E_{\beta+1})$. So an extender with crit in $[\nu_{M_\beta},\nu^\Tt_{\beta+1})$ applies to a proper segment of $M_{\beta+1}$, since $M_{\beta+1}$ is passive, and $\nu_{M_\beta+1}$ is its largest cardinal. Thus we will never take an ultrapower of the entire $M_{\beta+1}$. (So $\deg^\Tt(\beta+1)$ isn't relevant; we can set $\Tt\pred(\beta+1)=\beta$.)

For any $\alpha$, there'll be an ordinal $\eta_\alpha$ and a (possibly anomalous) weak $\deg^\Tt(\alpha)$-embedding
\[ \pi_\alpha:M_\alpha\to N_\alpha|\eta_\alpha. \]
If $M_\alpha$ is anomalous, we'll have that $\eta_\alpha<\OR^{N_\alpha}$. If $M_\alpha$ is active let
$\psi_\alpha=\psi_{\pi_\alpha}$; otherwise let $\psi_\alpha=\pi_\alpha$. When $E^\Tt_\alpha\neq\empty$, we'll have a structure $R_\alpha$ and a $\Sigma_0$-elementary
\[ \psi_\alpha\rest M_\alpha|\lh(E_\alpha):M_\alpha|\lh(E_\alpha)\to R_\alpha. \]
Here when $E_\alpha$ is type 3, the map is to literally apply to $M_\alpha|\lh(E_\alpha)$, not its squash. Usually $R_\alpha=N_\alpha|\lh(F_\alpha)$. In any case, $\psi_\alpha$ provides a map lifting $E_\alpha$ to (possibly a trivial extension of) $F_\alpha$, sufficient for the proof of the shift lemma.

For $\beta<\alpha$, we'll maintain:
\begin{equation}\label{ag:pis}
\psi_\beta\rest(\lh(E_\beta)+1)\sub\pi_\alpha.
\end{equation}
\begin{equation}\label{ag:nu}
\psi_\beta(\nu^\Tt_\beta)\geq\nu^\Uu_\beta.
\end{equation}
\begin{equation}\label{ag:<nu}
\psi_\beta``\nu^\Tt_\beta\sub\nu^\Uu_\beta.
\end{equation}
And when $E_\beta\neq\empty$,
\begin{equation}\label{ag:lh}
\psi_\beta(\lh(E_\beta))\geq\lh(F_\beta).
\end{equation}

We start at $\pi_0=\pi$. Suppose we have everything up to stage $\alpha$. Say player I chooses an exit extender $E$ from $M_\alpha$. Let $E_\alpha=E$ except for in the last case below, in which $E_\alpha=\empty$. We need to define $F_\alpha$ (and in the last case, $F_{\alpha+1}$). Let $M=M_\alpha$, $N=N_\alpha|\eta_\alpha$, $\pi=\pi_\alpha$ and $\psi=\psi_\alpha$.

\begin{case} $E\in\dom(\pi)$ or $E$ is the active type 1 or 2 extender of $M$.\end{case}

Here $F_\alpha$ is defined as usual: If $\lh(E)\in\dom(\pi)$, let $F_\alpha=\pi(E)$. Otherwise let $F_\alpha$ be the active extender of $N$. By the preservation hypotheses (\ref{ag:nu}), we then have $\lh(F_\alpha)$ is larger than previous extenders on $\Uu$. Also $R_\alpha=N|\lh(F_\alpha)$.

\begin{case} $E$ is the active type 3 extender of $M$ and $\psi(\nu_M)\geq\nu_N$, or $E$ is the active extender of an anomalous $M$.\end{case}

Let $F_\alpha=F^N$ and
\[ R_\alpha = (N|(\psi(\nu_M)^+)^{\Ult(N,F^N)},(F^N)^*), \]
where $(F^N)^*$ is the extender of length $(\psi(\nu_M)^+)^{\Ult(N,F^N)}$ derived from $i_{F^N}$, coded amenably. If $\psi(\nu_M)=\nu_N$, $R_\alpha$ is just $N$. Otherwise $R_\alpha$ isn't a premouse, but we still get
\[ \psi\rest M:M\to R_\alpha \]
is $\Sigma_0$-elementary. This is sufficient for the proof of the shift lemma. (This may seem unnecessary, since we already had all the generators of $E$ within the domain of $\pi$. But dropping to a level below $((\mu^M)^+)^M$, and applying $E\rest\nu_E$, yields a smaller ultrapower than applying $E$; in particular, the smaller ultrapower does not agree with $M$ below $\lh(E)$.) Note that when $M$ is anomalous, our assumptions on $\pi$ give $\psi(\nu^\Tt_\alpha)=\nu^\Uu_\alpha$.

\begin{case}\label{case:Fdef.tp3lownuimage} $M$ is a premouse, $E$ is its active type 3 extender, and $\psi(\nu_M)<\nu_N$.\end{case}

Here we can't set $F_\alpha=F^N$, as discussed earlier. We use a proper segment of $F^N$ instead.
$F^N\rest\sup\pi``\nu_M$ may not be on $N$'s sequence,
but it is reasonable to set
\[ F_\alpha=\trivcom(F^N \rest \psi(\nu_M)). \]
This segment is in fact on $N$'s sequence, since $\psi(\nu_M)$ is a cardinal of $N$. Clearly $\psi$ factors through
\[ \psi':\Ult(M|(\mu^M)^+,F^M)\to\Ult(N|(\mu^N)^+,F_\alpha) \]
and the canonical map $\Ult(N,F_\alpha)\to\Ult(N,F^N)$ has crit $\lh(F_\alpha)$. It follows that setting $R_\alpha=N|\lh(F_\alpha)$ works.

In either of the previous cases, if $\beta<\alpha$, then $\lh(E_\beta)<\OR^{M^\sq}=\nu_M$, so the
agreement hypotheses give $\lh(F_\beta)<\sup\pi``\nu_M<\lh(F_\alpha)$, maintaining the increasing length condition of $\Uu$.

\begin{case} $M$ is a type 3 premouse, $\OR^{M^\sq} < \lh(E) < \OR^M$ and $\psi_\pi(\nu_M)\leq\nu_N$.\end{case}

Here $\psi_\pi(\OR^M)\leq\OR^N$. Let $E_\alpha=F_\alpha=\empty$, $\nu^\Tt_\alpha=\nu_M$ and $\nu^\Uu_\alpha=\psi(\nu_M)$. Let
\[ M_{\alpha+1}=M||\OR^M, \]
and
\[ N_{\alpha+1}=N||\psi(\OR^M)=N||(\psi(\nu_M)^+)^N. \]
Let $\pi_{\alpha+1}=\psi\rest\OR^M$. Now set $F_{\alpha+1}=\psi_\pi(E)$. Again the increasing length condition
follows from (\ref{ag:nu}) and we set $R_\alpha=N|\lh(F_\alpha)$.

\begin{case} $M$ is type 3, $\OR^{M^\sq} < \lh(E) < \OR^M$ and $\psi_\pi(\nu_M)>\nu_N$.\end{case}

Here it is not clear that any extender on $N$'s sequence corresponds
to $E$. A solution is to use an extra ultrapower in the upstairs tree.
Set $E_\alpha=\empty$ but $F_\alpha=F^N$. Let
$N_{\alpha+1}$ be the maximal degree ultrapower of the model as
chosen in a normal tree. Let $\eta_{\alpha+1}=\psi(\OR^M)$ and $\pi_{\alpha+1}=\psi\rest\OR^M$. (Note $N_{\alpha+1}$ agrees with $\Ult(N,F^N)$ past $\eta_{\alpha+1}$.) Note $\psi(\nu_M)$ is the largest cardinal of $N_{\alpha+1}|\eta_{\alpha+1}$. Now set $F_{\alpha+1}=\pi_{\alpha+1}(E)$. Increasing length holds as
\[ \pi_{\alpha+1}(\lh(E))>\pi_{\alpha+1}(\nu_M)\geq\lh(F^N) \]
by case hypothesis, and that $\pi_{\alpha+1}(\nu_M)$ is a cardinal of $\Ult(N,F^N)$. Here $R_{\alpha+1}=N_{\alpha+1}|\lh(F_{\alpha+1})$.

This defines $F_\alpha$ in all cases, and $F_{\alpha+1}$ where needed. We now notationally assume there was no padding used (so $F_{\alpha+1}$ is not yet defined), but otherwise the same discussion holds with $\alpha+1$ replacing $\alpha$.

Suppose player I chooses appropriate
$M^*=(M^*)_{\alpha+1}=M_{\beta}|\xi$ and $\deg(\alpha+1)=n$. Let $P\ins
M_\beta$ be the longest possible that $E_\alpha$ can apply to, so
$(M^*)_{\alpha+1}\ins P$. Let $\kappa=\crit(E_\alpha)<\nu^\Tt_\beta$. Our use of padding gives $\kappa\in\dom(\pi_\beta)$. Let
\[ \kappa'=\crit(F_\alpha)=\pi_\alpha(\kappa)=\pi_\beta(\kappa)<\nu^\Uu_\beta. \]
(The inequality follows the agreement hypotheses.)

If we're dealing with earliest model trees and $\gamma<\beta$, then $\nu^\Tt_\gamma\leq\kappa$. So preservation gives $\nu^\Uu_\gamma\leq\kappa'$, and $N_\beta$ is the correct model to return to in $\Uu$.

Let $N^*=(N^*)_{\alpha+1}=P'\ins N_\beta$ be largest measured by $F_\alpha$. If $M^*=P=M_\beta$ let $\eta^*_{\alpha+1}=\eta_\beta$; otherwise let $\eta^*_{\alpha+1}=\psi_\beta(\OR^{M^*})$. We need to check $\eta^*_{\alpha+1}\leq\OR^{P'}$, so that $M^*$ is embedded into a level $Q^*$ of $N^*$.

If $E_\beta=\empty$ then $P=M_\beta$ as $\nu^\Tt_\beta=\nu_{M_\beta}$ is a cardinal of $M_\beta$. Also $N_\beta|\eta_\beta\ins P'$ (even when $\psi_\beta(\nu^\Tt_\beta)<\nu_{N_\beta|\eta_\beta}$, $\psi_\beta(\nu^\Tt_\beta)$ is still a cardinal of $N_\beta|\eta_\beta$).

So assume $E_\beta\neq\empty$. Suppose $\lh(E_\beta)\in\dom(\pi_\beta)$. If $P\pins M_\beta$ then $P\in\dom(\pi_\beta)$, and $P$ is the least $P_1$ such that $M_\beta|\lh(E_\beta)\ins P_1\pins M_\beta$ and $P_1$ projects to $\kappa$. It follows that $\pi_\beta(P)=P'$. If instead $P=M_\beta$ then
$(\kappa^+)^{M_\beta}<\lh(E_\beta)$ and $\pi_\beta((\kappa^+)^{M_\beta})=(\kappa'^+)^{N_\beta|\eta_\beta}$, so $N_\beta|\eta_\beta\ins P'$.

Now suppose $E_\beta$ is the active extender of $M_\beta$. So
$P=M_\beta$. If $F_\beta$ isn't active on $N_\beta|\eta_\beta$, then case \ref{case:Fdef.tp3lownuimage} applies, and the cardinality of $\psi_\beta(\nu^\Tt_\beta)$ in $N_\beta|\eta_\beta$ gives $N_\beta|\eta_\beta\ins P'$.

This covers all cases. Let $Q^*=N^*|\eta^*_{\alpha+1}$. If $M^*$ is anomalous, we inductively have $Q^*\pins N_\beta$. Note $\kappa<\nu_{M^*}$, so $\kappa'<\nu_{Q^*}$. Since $\nu_{Q^*}$ is the image of a critical point leading to $N_\beta$, it's a cardinal of $N_\beta$. Therefore $Q^*\pins N_\beta=(N^*)_{\alpha+1}$. If $M^*$ isn't anomalous but $\alpha+1$ will be, then $M^*$ is below $(\kappa^+)^{E_\alpha}$, which gives $Q^*\pins (N^*)_{\alpha+1}$. This will give $\eta_{\alpha+1}<\OR^{N_{\alpha+1}}$ later.

Letting $\pi^*=\pi^*_{\alpha+1}=\psi_\beta\rest M^*$,
\[ \pi^*:M^*\to Q^* \]
is a (possibly anomalous) weak $n$-embedding ($n=\deg^\Tt(\alpha+1)$). By the agreement hypotheses, $\psi_\alpha$ agrees with $\pi^*$ on $\pow(\kappa)$. So the shift lemma goes through with $\pi^*$ and $\psi_\alpha: M_\alpha|\lh(E_\alpha)\to R_\alpha$. So there's a weak $n$-embedding
\[ \tau:\Ult_n(M^*,E)\to\Ult_n(Q^*,F) \]
such that $\tau\comp i^{M^*}_{E,n} =
i^{Q^*}_{F,n}\comp\pi^*$. (If $M^*$ is anomalous, both ultrapowers are to be formed at the unsquashed level. It doesn't really matter here whether $i^{Q^*}_{F,n}(\nu_{Q^*})$ is the sup of generators of $\Ult_0(Q^*,F)$, but it is, since $\nu_{Q^*}$ is regular in $Q^*$.)\\

\noindent\emph{Agreement.}

We have (\ref{ag:nu}), (\ref{ag:<nu}) and (\ref{ag:lh}) hold at $\alpha$ by definition of $F_\alpha$. $\psi_\alpha\rest\lh(E_\alpha)+1\sub\tau$ by definition of $\tau$. We will now define $\pi_{\alpha+1}=\sigma\com\tau$ where $\crit(\sigma)>\lh(F_\alpha)$, which will establish (\ref{ag:pis}) between $\pi_\alpha$ and $\pi_{\alpha+1}$.

We just have to set
\[ \sigma:\Ult_n(Q^*,F)\to i^{N^*}_{F,m}(Q^*); \]
\[ \sigma([a,f]_{F,n}^{Q^*}) = [a,f]_{F,m}^{N^*}, \]
where $m=\deg^\Uu(\alpha+1)$.

First note that given $[a,f]$ in the domain, we do have that it
represents an element of the larger ultrapower. (If $Q^*\pins N^*$ then $f\in N^*$. If $Q^*=N^*$ then $n\leq m$ since $\pi^*$ is a weak $n$-embedding and $\crit(E)<\rho_n^{M^*}$.) Also
it's clear that $[a,f]_{F,m}^{N^*}$ is an element of
$i^{N^*}_{F,m}(Q^*)$, and that the map is well defined. We get $\lh(F)<\crit(\sigma)$, and the following diagram commutes (with the canonical embeddings):

\[
\begin{picture}(100,100)(50,0)
\put(30,30){$M^*$}
\put(-5,75){$\Ult_n(M^*,E)$}
\put(115,30){$Q^*$}
\put(90,75){$\Ult_n(Q^*,F)$}
\put(185,75){$i^{N^*}_{F,m}(Q^*)$}
\put(37,40){\vector(0,1){30}} 
\put(50,35){\vector(1,0){65}} 
\put(122,40){\vector(0,1){30}} 
\put(60,80){\vector(1,0){28}} 
\put(132,38){\vector(3,2){49}} 
\put(152,80){\vector(1,0){30}} 
\put(72,40){$\scriptstyle \pi^*$}
\put(72,85){$\scriptstyle \tau$}
\put(165,85){$\scriptstyle \sigma$}
\end{picture}
\]

Now $\sigma\com\tau$ is a weak $m$-embedding. If $Q^*=N^*$ and $n=m$, then $\sigma=\id$. If $n=\om$ then all maps in the diagram are fully elementary. So suppose $n<\om$. $\sigma$ is $\Sigma_n$ elementary
by Los' theorem. The preservation
properties required of a weak $n$-embedding hold because we know they hold of all embeddings other than $\sigma$ in the commuting diagram, and that $i^{M^*}_{E,n}$ is cofinal in the $\rho_n$
of $\Ult_n(M^*,E)$. Given a cofinal $X_1\sub\rho^{M^*}_n$ on which $\pi^*$ is $\Sigma_{n+1}$ elementary, $X=i^{M^*}_{E,m}``X_1$ works for
$\sigma\com\tau$. For $\tau``X = i^{Q^*}_{F,n}``(\pi_\beta``X_1)$, and both $i^{Q^*}_{F,n}$ and $i^{N^*}_{F,m}\rest Q^*$ are $\Sigma_{n+1}$ elementary, so by commutativity, $\sigma$ is $\Sigma_{n+1}$ elementary on $\tau``X$.

This finishes the successor stage of construction. Limit stages are as usual.
\renewcommand{\qedsymbol}{$\Box$(Theorem \ref{thm:freeiter})}\end{proof}

\printindex
\end{document}